\newcolumntype{C}[1]{>{\centering\arraybackslash}p{#1}}
\theoremstyle{plain}
\newtheorem{thm}{Theorem}[section]
\newtheorem{prop}[thm]{Proposition}
\newtheorem{lem}[thm]{Lemma}
\newtheorem{cor}[thm]{Corollary}
\newtheorem{defn}[thm]{Definition}
\newtheorem{question}[thm]{Question}
\newtheorem{introthm}{Theorem}
\theoremstyle{remark}
\newtheorem{ex}[thm]{Example}
\newtheorem{rmk}[thm]{Remark}
\newtheorem{remark}[thm]{Remark}
\newtheorem*{remark*}{Remark}
\DeclareMathOperator{\rank}{rk}
\newcommand{\bA}{\mathbf{A}}
\newcommand{\bR}{\mathbf{R}}
\newcommand{\bZ}{\mathbf{Z}}
\newcommand{\ba}{\mathbf{a}}
\newcommand{\bb}{\mathbf{b}}
\newcommand{\bc}{\mathbf{c}}
\newcommand{\bd}{\mathbf{d}}
\newcommand{\be}{\mathbf{e}}
\newcommand{\bm}{\mathbf{m}}
\newcommand{\bq}{\mathbf{q}}
\newcommand{\br}{\mathbf{r}}
\newcommand{\bs}{\mathbf{s}}
\newcommand{\bu}{\mathbf{u}}
\newcommand{\bv}{\mathbf{v}}
\newcommand{\bw}{\mathbf{w}}
\newcommand{\bx}{\mathbf{x}}
\newcommand{\by}{\mathbf{y}}
\newcommand{\bz}{\mathbf{z}}
\newcommand{\bepsilon}{\mathbf{\epsilon}}
\newcommand{\bbC}{\mathbb{C}}
\newcommand{\bbN}{\mathbb{N}}
\newcommand{\bbQ}{\mathbb{Q}}
\newcommand{\bbZ}{\mathbb{Z}}
\newcommand{\cF}{\mathcal{F}}
\newcommand{\cG}{\mathcal{G}}
\newcommand{\cL}{\mathcal{L}}
\newcommand{\cM}{\mathcal{M}}
\newcommand{\cP}{\mathcal{P}}
\newcommand{\sgr}{\le}
\newcommand{\rar}{\rightarrow}
\newcommand{\Rar}{\Rightarrow}
\newcommand{\xrar}[1]{\xrightarrow{#1}}
\newcommand{\ol}[1]{\overline{#1}}
\newcommand{\ot}[1]{\widetilde{#1}}
\newcommand{\abs}[1]{\left|#1\right|}
\newcommand{\divides}{\bigm|}
\newcommand{\gen}[1]{\langle #1 \rangle}
\newcommand{\intsup}[1]{\lceil #1 \rceil}
\newcommand{\pA}{\mathbf{A}^+}
\newcommand{\supp}[1]{\mathrm{supp}(#1)}
\newcommand{\cnj}{\sim_{\mathrm{c}}}
\newcommand{\lecnj}{\preceq_{\mathrm{c}}}
\newcommand{\qcnj}{\sim_{\mathrm{qc}}}
\newcommand{\qcsupp}[1]{\mathrm{supp}_\mathrm{qc}(#1)}
\newcommand{\qcmin}[1]{\mathrm{min}_\mathrm{qc}(#1)}
\newcommand{\cla}[1]{\mathrm{lin}_{\mathrm{qc}}({#1})}
\newcommand{\ee}{\sim_{\mathrm{ee}}}
\newcommand{\leee}{\preceq_{\mathrm{ee}}}
\newcommand{\qee}{\sim_{\mathrm{qee}}}
\newcommand{\qeesupp}[1]{\mathrm{supp}_\mathrm{qee}(#1)}
\newcommand{\qeemin}[1]{\mathrm{min}_\mathrm{qee}(#1)}
\newcommand{\ela}[1]{\mathrm{lin}_{\mathrm{qee}}({#1})}
\newcommand{\cnjedges}[2]{\mathrm{E}_{\mathrm{c}}({#1},{#2})}
\newcommand{\qcnjedges}[2]{\mathrm{E}_{\mathrm{qc}}({#1},{#2})}
\newcommand{\xdash}[1][3em]{\rule[0.5ex]{#1}{0.55pt}}
\newcommand{\edgedash}[1]{\ \xdash[#1]\ }
\newcommand{\edge}{\edgedash{0.6cm}}
\newcommand{\mcd}[1]{\mathrm{gcd}(#1)}
\newcommand{\gencol}[1]{\text{span}(#1)}
\newcommand{\FG}[1]{\mathrm{FG}(#1)}
\newcommand{\floating}{\text{floating}}
\newcommand{\showcommentsbox}{yes}
\newsavebox{\commentbox}
\title{On the isomorphism problem for generalized Baumslag-Solitar groups: invariants and flexible configurations}
\author{
Dario Ascari\\
{\small \textit{Department of Mathematics, University of the Basque Country,}}\\
{\small \textit{Barrio Sarriena, Leioa, 48940, Spain}}\\
{\small e-mail: \texttt{ascari.maths@gmail.com}}\\
\and
Montserrat Casals-Ruiz\\
{\small \textit{Ikerbasque - Basque Foundation for Science and Matematika Saila,}}\\
{\small \textit{UPV/EHU,  Sarriena s/n, 48940, Leioa - Bizkaia, Spain}}\\
{\small e-mail: \texttt{montsecasals@gmail.com}}\\
\and
Ilya Kazachkov\\
{\small \textit{Ikerbasque - Basque Foundation for Science and Matematika Saila,}}\\
{\small \textit{UPV/EHU,  Sarriena s/n, 48940, Leioa - Bizkaia, Spain}}\\
{\small e-mail: \texttt{ilya.kazachkov@gmail.com}}
}
\date{}
\begin{document}

\maketitle

\begin{abstract}
We prove that the isomorphism problem is decidable for generalized Baumslag-Solitar (GBS) groups  with one quasi-conjugacy class and full support gaps. In order to do so we introduce a family of invariants that fully characterize the isomorphism within this class of GBSs.
\end{abstract}



\section{Introduction}

The isomorphism problem, formulated by Dehn around the turn of the 20th century, is a fundamental decision problem in group theory. It asks whether there exists an algorithm that, given two groups defined by finite presentations, decides whether or not the groups are isomorphic. Although this problem is undecidable in general, it remains of fundamental interest to solve it for specific, relevant classes of groups.

One of the most important decidability results of the isomorphism problem was established in the context of 3-manifolds and relies on the fact that (fundamental groups of) closed orientable 3-manifolds have a unique decomposition as fundamental groups of graphs of groups with abelian edge groups. These ideas led to a rich and fruitful theory, known as the theory of JSJ decompositions for finitely presented groups \cite{RS97,DS99,FP06,GL17}. As in the case of 3-manifolds, one-ended hyperbolic groups have a unique JSJ-decomposition (even though the construction is more delicate, based on the tree of cylinders, see \cite{GL11}), and this fact is again central for solving the isomorphism problem for hyperbolic groups \cite{Sel95,DG11,DT19}.

In contrast, for other families of groups, the JSJ decomposition is far from being unique and can exhibit significantly greater flexibility: not necessarily in the vertex groups of the decomposition, but at least in the way in which they are glued together. In \cite{For02}, Forester explained how one can go from one decomposition of a group to any other using a certain set of moves, called \textit{elementary deformations}. In this way, given a splitting $T$ of a group $G$ (i.e. an action of $G$ on a tree $T$), the \textit{deformation space} is the family of all splittings of $G$ that are related to $T$ by elementary deformations. Understanding the structure of the infinite deformation spaces represents one of the main obstructions to solving the isomorphism problem (and to describing the automorphism group) for large families of groups. In this regard, the most basic and important example is the class of generalized Baumslag-Solitar groups.

A \textbf{generalized Baumslag-Solitar group} (GBS) is the fundamental group of a graph of groups where all vertex groups and edge groups are infinite cyclic. They can be characterized as the only groups of cohomological dimension $2$ containing an infinite cyclic subgroup that intersects all of its conjugates, see \cite{Kro90}. As the name suggests, GBS groups generalize the classical Baumslag–Solitar groups, which are a rich source of (counter)examples in geometric group theory. They also play a key role in the study of one-relator groups: Gersten conjectured that the presence of a GBS as a subgroup is the only obstruction to hyperbolicity in this class.  Although GBS groups were classified up to quasi-isometry nearly 25 years ago (see \cite{FM98, Why01}), their classification up to isomorphism remains a longstanding open problem.

\vspace{0.5cm}

In \cite{ACK-iso1} the authors initiated a systematic study of the isomorphism problem for GBS groups, see references therein for previously known results. They proved that for any two isomorphic GBS groups there exists an explicit (computable) bound on the number of vertices and edges in the graph of groups decompositions appearing along a sequence of moves that realizes the isomorphism. More precisely, they showed that for any GBS group, one can algorithmically compute a fully reduced representative, i.e. an isomorphic GBS group with the minimal number of vertices and edges in its isomorphism class. Moreover, for any pair of fully reduced isomorphic GBS groups, there exists a sequence of new moves that realizes the isomorphism while preserving the number of vertices and edges. As a consequence, the minimal number of vertices and edges becomes a computable invariant of the isomorphism class.

In the current paper, we continue this study, introduce several new \textbf{isomorphism invariants} for GBS groups, and prove that these invariants suffice to fully determine the isomorphism class within a large and natural subclass of GBS groups: those with one quasi-conjugacy class and full-support gaps, see Definitions \ref{def:conjugacy} and \ref{def:full-support-gaps}. The main result of this paper is summarized in the following

\begin{introthm}[\Cref{cor:isomorphism-oneqcc-fsgaps}]\label{introthm:isomorphism-oneqcc-fsgaps}
    There is an algorithm that, given two GBS graphs $(\Gamma,\psi),(\Delta,\phi)$ with one qc-class and full-support gaps, decides whether or not the corresponding GBS groups are isomorphic.
\end{introthm}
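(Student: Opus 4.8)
The plan is to reduce the isomorphism question to a comparison of computable invariants. By the results of \cite{ACK-iso1}, each GBS group admits an algorithmically computable fully reduced representative, and any two fully reduced isomorphic GBS graphs are connected by a sequence of ``new moves'' that preserve the number of vertices and edges. So the first step is to compute fully reduced representatives of both input graphs $(\Gamma,\psi)$ and $(\Delta,\phi)$, and to check that they have the same number of vertices and edges; if not, the groups are non-isomorphic. From this point on I may assume the two graphs have matching combinatorial size, and the task becomes distinguishing them \emph{within} a fixed-size deformation space.

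The heart of the argument is the family of new isomorphism invariants introduced in this paper. For GBS graphs with one qc-class and full-support gaps, the hypothesis is very rigid: the single quasi-conjugacy class and the full-support condition (\Cref{def:conjugacy}, \Cref{def:full-support-gaps}) sharply constrain how the fully reduced graphs can differ. The plan is to show that, under these hypotheses, the invariants form a \emph{complete} system: two such GBS groups are isomorphic if and only if their tuples of invariants coincide. The forward direction (invariance) should follow by checking that each invariant is preserved under the finite list of moves from \cite{ACK-iso1} — this is the routine, if laborious, verification that the quantities I extract depend only on the isomorphism type and not on the chosen decomposition. The reverse direction (completeness) is where the structural hypotheses do the real work: here I would argue that whenever two fully reduced graphs of the same size have equal invariants, one can explicitly produce a sequence of moves transforming one into the other, using the one-qc-class condition to pin down the vertex structure and the full-support-gaps condition to control the admissible edge data.

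Concretely, the decision procedure is then: compute the fully reduced representatives, verify that both lie in the restricted class of one qc-class and full-support gaps (this membership must itself be decidable, which I expect to follow from the explicit combinatorial descriptions of \Cref{def:conjugacy} and \Cref{def:full-support-gaps}), compute the finite tuple of invariants for each, and output ``isomorphic'' precisely when the tuples agree. Each invariant is a finite piece of arithmetic data — typically numbers or finite multisets extracted from the labels $\psi$, $\phi$ — and so is computable; the comparison of two such tuples is trivially decidable. Correctness reduces exactly to the completeness statement described above, giving the algorithm of \Cref{introthm:isomorphism-oneqcc-fsgaps}.

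I expect the main obstacle to be the completeness direction, and specifically the bookkeeping needed to realize equality of invariants as an actual sequence of moves. The difficulty is that even within a fixed-size deformation space the moves can interact in subtle ways, so one must argue that no ``hidden'' invariant is needed — that the list I have chosen genuinely separates every pair of non-isomorphic graphs in the class. The full-support-gaps hypothesis is presumably essential precisely because it rules out the degenerate configurations where the gap data could be redistributed without being detected by the invariants; making this rigidity precise, and showing the invariants capture exactly the residual flexibility, is the technical crux of the whole argument.
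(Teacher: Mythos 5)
Your proposal is a strategy outline rather than a proof: everything you defer as ``the technical crux'' (completeness) is the entire content of the paper, and your implicit picture of what the invariants look like is wrong in one of the cases. The paper does not compare a single uniform tuple of arithmetic data. After passing to clean representatives it splits into three cases. (1) If the graph has \emph{floating pieces} (strictly more edges than minimal points), the linear invariants of \Cref{def:linear-invariants} are complete; this is proved by constructing a normal form (\Cref{prop:existence-normal-form-floating}) and showing that any two normal forms with equal invariants are connected by moves (\Cref{prop:uniqueness-normal-form-floating}), which rests on the Nielsen-equivalence machinery for ``big'' vectors (\Cref{Nielsen-equiv-3big}). (2) If there are no floating pieces but enough minimal points, the linear invariants are \emph{not} complete: one also needs the multi-set of rigid vectors (\Cref{def:rigid-vector}) and the assignment map (\Cref{defn:assignmentmap}), which records a Nielsen equivalence class in $\cla{Q}/\gen{\bR}$ together with a sign; Examples \ref{ex10}, \ref{ex5} and \ref{ex8} exhibit non-isomorphic groups with identical linear invariants distinguished only by a determinant modulo $d$, so any tuple omitting this data fails to separate them. (3) In the exceptional case where only two edges interact (all families individual except at most one with at most two minimal points), even these invariants do not suffice: the paper reduces to one-vertex graphs with at most two edges and invokes the \emph{limit angle} analysis of the separate paper \cite{ACK-iso3}. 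Your claim that each invariant is ``a finite piece of arithmetic data --- typically numbers or finite multisets extracted from the labels'' fails precisely here, where the residual invariant is dynamical rather than arithmetic data read off the labels.

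So there are two genuine gaps. First, you never identify the invariants, and the choice is not routine: the completeness statement you assert is false for the naive list (linear invariants alone), and no list internal to this paper suffices in case (3). Second, the completeness direction you postpone is exactly the body of the proof --- normal forms, cyclic permutation of vectors via connection moves, and Nielsen equivalence for big tuples in \Cref{sec:Nielsen-equiv-big} --- and nothing in your proposal indicates how to carry it out. A smaller but real issue: the full-support-gaps hypothesis is not invariant under moves (see the remark after \Cref{def:full-support-gaps}), so one cannot ``verify membership in the class'' after reduction as you suggest; the paper is careful to use the hypothesis only on the input graphs, as in \Cref{prop:existence-normal-form-floating} and \Cref{prop:existence-normal-form-no-floating}.
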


The new isomorphism invariants - the \textbf{quasi-conjugacy classes and their linear invariants}, the set of \textbf{rigid vectors}, the \textbf{assignment map}, and the \textbf{limit angles} - play a crucial role in understanding the algebraic structure of GBS groups and in addressing the isomorphism problem. We next discuss these invariants.

\paragraph{Linear invariants and independence of quasi-conjugacy classes.}

Each GBS group is stratified by the set of quasi-conjugacy classes of its elliptic elements, see \Cref{def:conjugacy}, which, as its name indicates, generalizes the notion of conjugacy classes. The relevant set of quasi-conjugacy classes (those containing at least one edge in the affine representation) forms a finite partially ordered set and provides a natural notion of complexity for a GBS group.

Each quasi-conjugacy class can be described by a finite, algorithmically computable set of data: the set of \textit{minimal regions}, the \textit{quasi-conjugacy support}, and the \textit{linear algebra} (\Cref{def:qcsupp}). It follows that the quasi-conjugacy classes, as well as the number of edges that belong to each of them, are computable isomorphism invariants. Moreover, each quasi-conjugacy class (along with its set of edges) admits a partition into a collection of conjugacy classes, each associated with a subset of edges. We show that this partition, the set of conjugacy classes and their corresponding set of edges, is also a computable isomorphism invariant. We refer to these invariants collectively as the \textbf{linear invariants} of the quasi-conjugacy class.

\begin{introthm}[\Cref{cor:basic-invariants}]
    Let $(\Gamma,\psi),(\Gamma',\psi')$ be two GBS graphs and suppose that there is a sequence of slides, swaps, and connections going from one to the other {\rm(}preserving the number of vertices{\rm)}. Then $(\Gamma,\psi), (\Gamma',\psi')$ have the same quasi-conjugacy classes and linear invariants.
\end{introthm}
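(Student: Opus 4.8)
The plan is to reduce the statement to a purely local verification. Since ``having the same quasi-conjugacy classes and linear invariants'' is an equivalence relation and any admissible sequence decomposes into single slides, swaps, and connections, it suffices to prove that each individual move $(\Gamma,\psi)\to(\Gamma',\psi')$ of one of the three types preserves every piece of data entering the invariant: the finite poset of relevant quasi-conjugacy classes together with the number of edges in each, the minimal regions, the quasi-conjugacy support $\qcsupp{\cdot}$, the linear algebra $\cla{\cdot}$, and the refinement of each quasi-conjugacy class into conjugacy classes with its associated edge partition. The global statement then follows by induction on the length of the sequence, so I would fix a single move and track its effect on this data, proceeding from the coarsest layer to the finest.

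First I would treat the coarse invariants, namely the set of quasi-conjugacy classes and the edge count in each. The point is that quasi-conjugacy is defined (\Cref{def:conjugacy}) precisely so as to be insensitive to the reassignment of edges carried out by a slide or a swap: such a move acts on the elliptic elements, and hence on the edges in the affine representation, by a bijection respecting the relation $\qcnj$, so the partition into quasi-conjugacy classes and the cardinalities are unchanged. Connections require a separate check, since they need not preserve the total number of edges; here I would verify directly from \Cref{def:qcsupp} that the edges a connection introduces or removes lie in a single quasi-conjugacy class and do not alter its support or its count in a way visible to the invariant. Next, working inside a fixed quasi-conjugacy class, I would compute the effect of each move on the labels $\psi(e)$ of the affected edges: a slide conjugates one label by a power determined by an adjacent edge, while a swap permutes labels, and I would check that these are exactly the changes of basis that $\cla{\cdot}$ is built to quotient out, so that the linear algebra and the support are preserved; the minimal regions, being intrinsic to the affine picture, are likewise unaffected.

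The finer and more delicate claim is that the conjugacy refinement is respected: edges conjugate before the move must remain conjugate after, and no two inequivalent conjugacy classes may be merged. I expect this to be the main obstacle, because a slide can transport an edge across different local configurations, and one must rule out that it silently identifies or separates conjugacy classes sitting inside a common quasi-conjugacy class. To overcome this I would argue at the level of the Bass--Serre tree rather than the quotient graph: slides, swaps, and connections realize isomorphisms of the underlying GBS group and keep the associated splittings within a single deformation space, so the conjugacy partition of elliptic elements is intrinsic to the group action and hence manifestly move-invariant. The remaining work is then bookkeeping, matching the edge set attached to each conjugacy class before and after the move, which can be carried out move by move using the explicit label computations above. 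Assembling the coarse step, the linear-algebra step, and the conjugacy-refinement step yields preservation of the full collection of invariants under a single move, and the theorem follows by induction.
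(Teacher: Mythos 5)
Your overall skeleton --- reduce to a single move and verify that each slide, swap, and connection preserves every layer of the data --- is exactly the paper's approach; its proof of \Cref{cor:basic-invariants} is precisely such a ``direct check using the definitions''. Where you genuinely diverge is on the key point: that the conjugacy and quasi-conjugacy relations on $V(\Lambda)$, which are \emph{defined} by affine paths through the very edge set the move is modifying, come out the same before and after. The paper handles this combinatorially, by rerouting: for instance, after sliding $r\edge p+\ba$ to $r\edge q+\ba$, any old affine path through the removed edge is replaced by one through the new edge followed by a translate of the reverse edge $q\edge p$. You instead invoke the group: the moves induce isomorphisms of the fundamental group restricting to the natural identifications of vertex groups, and conjugacy of points corresponds to conjugacy of elliptic elements (a fact the paper records only as a remark after \Cref{cor:algorithmic-paths}, without proof). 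That route is viable and conceptually clean, but it carries the burden of those two imported facts, neither of which is established in this paper, whereas the paper's check is self-contained.

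As written, however, the proposal has concrete defects. First, your coarse step is circular: you assert that a slide or swap acts ``by a bijection respecting the relation $\qcnj$'', but whether the relation $\qcnj$, recomputed with the new edge set, agrees with the old one is exactly what has to be proved; nothing in \Cref{def:conjugacy} makes this automatic, and the gap is only repaired two paragraphs later by the Bass--Serre argument, so the first step as stated proves nothing. Second, your claim that connections ``need not preserve the total number of edges'' and may ``introduce or remove'' edges is simply false: a connection replaces the two edges $d,e$ by two edges $d',e'$, so the count is unchanged and the verification you propose for this non-phenomenon is vacuous. Third, the bookkeeping for item (3) of \Cref{def:linear-invariants} is subtler than you suggest: under a swap (and likewise a connection) the two edges involved exchange the positions $p$ and $q$, where $p\qcnj q$ but possibly $p\not\cnj q$, so an individual edge does \emph{not} in general stay in its conjugacy class --- only the number of edges in each class is preserved, because the two edges trade places. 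Your phrasing (``edges conjugate before the move must remain conjugate after'') misses exactly this. Finally, a slide does not ``conjugate a label by a power'': it replaces $\psi(d)=\ell n$ by $\ell m$, where $n,m$ are the two labels of the adjacent edge. These are all fixable, but fixing them amounts to actually carrying out the move-by-move verification that constitutes the paper's proof.
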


We next examine the interaction between different quasi-conjugacy classes in the context of the isomorphism problem, and show that, to a significant extent, these classes can be analyzed independently. More precisely, we categorize slide moves into two types: \textit{internal slides}, which involve edges within the same quasi-conjugacy class, and \textit{external slides}, which involve edges from different classes. We prove that performing external slides requires only to know the linear invariants of the corresponding quasi-conjugacy class. As a consequence, we deduce that edges in distinct quasi-conjugacy classes interact (at most) through external slides.

\begin{introthm}[\Cref{thm:independence-qc-classes}]\label{introthm:independence-qc-classes}
    Let $(\Gamma,\psi),(\Gamma',\psi')$ be two GBS graphs with a bijection $V(\Gamma)=V(\Gamma')$. Then the following are equivalent:
    \begin{enumerate}
        \item There is a sequence of slides, swaps, and connections going from $(\Gamma,\psi)$ to $(\Gamma',\psi')$ inducing the given bijection.
        \item For every quasi-conjugacy class $Q$, there is a sequence of swaps, connections, internal slides, and external slides {\rm(\Cref{lem:external-slide})} --- each of them involving only edges in $Q$ --- going from the configuration in $(\Gamma,\psi)$ to the configuration in $(\Gamma',\psi')$.
    \end{enumerate}
\end{introthm}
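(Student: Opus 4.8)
The plan is to prove the two implications separately, leveraging the two facts already established: that the linear invariants of every quasi-conjugacy class are preserved along any sequence of slides, swaps and connections (\Cref{cor:basic-invariants}), and that an external slide involving a class $Q$ is determined by, and realizable from, the linear invariants alone (\Cref{lem:external-slide}). The underlying principle exploited throughout is that a slide of one edge along a second edge never moves the second edge, so that a slide affects the position of exactly one edge.

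For the implication $(2)\Rightarrow(1)$ I would start from the per-class sequences and splice them into a single global sequence. The first step is to replace every external slide appearing in a per-class sequence by a genuine sequence of slides in the ambient GBS graph; such a realization exists by \Cref{lem:external-slide} and requires only the linear invariants of the class being slid over, which are present and unchanged throughout by \Cref{cor:basic-invariants}. The second step is to concatenate the realized sequences over all classes, processing the classes one at a time. Internal slides, swaps and connections within a class $Q$ move only $Q$-edges, while the realization of an external slide of $Q$ moves only $Q$-edges and leaves every other-class edge in place; hence already-treated classes stay in their final positions and classes not yet treated are undisturbed, and since the linear invariants are preserved at every stage each external slide remains realizable when its turn comes. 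If the admissibility of cross-class slides is constrained by the partial order on quasi-conjugacy classes, one simply processes the classes in a compatible order. The concatenation is then a global sequence from $(\Gamma,\psi)$ to $(\Gamma',\psi')$ inducing the prescribed vertex bijection.

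For the converse $(1)\Rightarrow(2)$ I would fix a class $Q$ and \emph{project} the given global sequence onto $Q$. Each move is classified by the classes of the edges it involves, and one checks that its effect on the configuration of $Q$ is one of the moves allowed in $(2)$: a slide of two $Q$-edges is an internal slide of $Q$; a slide of a $Q$-edge along an edge of another class is an external slide of $Q$; every other slide leaves $Q$ untouched, since a slide of a non-$Q$ edge along a $Q$-edge fixes every $Q$-edge and a slide of two non-$Q$ edges involves no $Q$-edge at all. Swaps and connections not involving $Q$-edges are trivial on $Q$, and those that do involve $Q$-edges are recorded as the corresponding swaps and connections of $Q$ after checking they respect the class decomposition. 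Reading off only the moves acting nontrivially on $Q$ yields a per-class sequence taking the configuration of $Q$ in $(\Gamma,\psi)$ to that in $(\Gamma',\psi')$, and each external slide so produced is legitimate since the linear invariants it depends on are preserved by \Cref{cor:basic-invariants}.

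The main obstacle is the treatment of the genuinely cross-class moves in the $(1)\Rightarrow(2)$ direction. A slide of a $Q$-edge along an edge of a different class $Q'$ is a single move that sees two classes at once, and I must certify that its projection to $Q$ is exactly an external slide in the precise sense of \Cref{lem:external-slide} --- rather than some move depending on the full ambient configuration --- while its projection to $Q'$ is trivial. This is where \Cref{lem:external-slide} is essential: it guarantees that the outcome of such a slide on the $Q$-edge depends only on the linear invariants of $Q'$, so the move is captured intrinsically by the data of $Q$ together with those invariants, and the projection is well defined and independent of the rest of the configuration. Verifying that the projected external slides satisfy the prerequisites of \Cref{lem:external-slide} at the exact moment they are applied, using the invariance supplied by \Cref{cor:basic-invariants}, is the technical heart of the argument.
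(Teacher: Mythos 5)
Your proposal is correct and follows essentially the same route as the paper: both rest on replacing every cross-class slide by an external slide (\Cref{lem:external-slide}) and on the fact that external slides depend only on the external equivalence relation, which is determined by the linear invariants and hence constant along any sequence (\Cref{cor:basic-invariants}, \Cref{lem:basic-invariants-imply-ee-classes}). The paper compresses this into a one-paragraph commutation argument ("moves involving different classes commute, so classes can be treated independently"), whereas you unfold the same idea into an explicit projection argument for $(1)\Rightarrow(2)$ and a splicing/concatenation argument for $(2)\Rightarrow(1)$ — a more detailed write-up, not a different proof.
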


As an application of \Cref{introthm:independence-qc-classes}, we recover the main results of \cite{For06} and \cite{Dud17}, and resolve the isomorphism problem for a concrete example that remained open in \cite{Wan25}.

\paragraph{Invariants for GBSs with one quasi-conjugacy class and full-support gaps.} 

As discussed above, the poset of quasi-conjugacy classes provides a measure of complexity of a GBS group. However, \Cref{introthm:independence-qc-classes} shows that the general case can be reduced to the case of a single quasi-conjugacy class considered relative to the linear invariants of the other classes involved in external slides. We address this central case by focusing on GBS groups with \textit{one quasi-conjugacy class}, that is, GBS groups in which all edges belong to the same quasi-conjugacy class.

We will impose an additional condition on the GBS groups under consideration, namely, that they have \textit{full-support gaps} (see \Cref{def:full-support-gaps}). This can be viewed as a  ``genericity" condition: a ``random" element of $\bbN^n$ is expected to have all its components strictly positive. Informally, the benefit of this assumption is that the conditions for applying a connection move reduce to membership in some positive orthant of $\bbN^n$; in the general case, by contrast, these conditions are constrained to a positive orthant of a potentially proper subvariety of $\bbN^n$.

For GBS groups with one quasi-conjugacy class and full-support gaps for which the number of edges is strictly greater than the number of minimal points, we prove that the linear invariants completely determine the isomorphism class. Within this subclass, we define a (coarse) normal form and prove that two such groups are isomorphic if and only if their normal forms are isomorphic (see Proposition \ref{prop:uniqueness-normal-form-floating}). The normal form is constructed by maximizing the interaction among edges, and we show that any GBS graph satisfying these conditions can be algorithmically transformed into this ``maximal interaction" normal form.

When the number of edges equals the number of minimal points, we need to introduce two new invariants: the multi-set of \textit{rigid edges} and the \textit{assignment map}.

Some edges in GBS graphs are \textit{rigid}, meaning they remain fixed under all allowed moves and thus constitute an isomorphism invariant. When a quasi-conjugacy class contains more than one edge, however, the definition of rigidity has to be slightly relaxed: rigid edges permit a very limited form of modification and can, in some cases, be concealed within cycles. This motivates the introduction of rigid vectors (see \Cref{def:rigid-vector}), which formalize the naive notion of rigidity.

The non-rigid edges define a tuple of vectors in an abelian group, and performing moves on the GBS graph, changes the tuple of vectors by Nielsen transformations. Thus we obtain the second isomorphism invariant, the \textit{Nielsen equivalence class} of the tuple of vectors obtained from the non-rigid edges. A key step in our argument is proving the converse: if two configurations yield Nielsen equivalent tuples of vectors, then there is a sequence of moves going from one to another, i.e. the associated GBS groups are isomorphic (see \Cref{sec:Nielsen-equiv-big}).

Under the assumption that the number of edges and minimal points is the same, there is a natural bijection between these two sets. While even permutations of the tuple of edges result in isomorphic GBS groups, odd permutations do not. To account for this, we introduce the assignment map, which records the positions of the rigid and non-rigid edges relative to the minimal points and allows us to track the sign of the permutation.

Finally, there is an exceptional case, when the number of edges interacting with each other is exactly two. In this setting, the results from \Cref{sec:Nielsen-equiv-big} do not apply, as they require at least three interacting vectors. Instead, a surprising and previously unexplored invariant emerges: the \textit{limit angle}. The analysis of limit angles reveals a rich and intricate dynamical structure, which we examine in detail in a separate paper \cite{ACK-iso3}.

\begin{remark*}
Nielsen equivalence in abelian groups is discussed in \Cref{sec:Nielsen-equiv-abelian}. Suppose that we are given an abelian group $A$ with minimal number of generators $k$. Then every two $k'$-tuples of generators for $A$ are Nielsen equivalent if $k'\ge k+1$. If we are given two $k$-tuples of generators for $A$, then we can compute the determinant of the change of basis from one $k$-tuple to the other: this must be considered modulo some number $d=d(A)$. The two $k$-tuples are Nielsen equivalent if and only if the determinant is $1$ modulo $d$. This determinant is thus an isomorphism invariant for GBSs in some cases, and one can produce examples of GBSs which are non-isomorphic exactly because they have different determinants (see Examples \ref{ex10}, \ref{ex5} and \ref{ex8}).
\end{remark*}

\subsection*{Acknowledgements}

This work was supported by the Basque Government grant IT1483-22. The second author was supported by the Spanish Government grant PID2020-117281GB-I00, partly by the European Regional Development Fund (ERDF), the MICIU /AEI /10.13039/501100011033 / UE grant PCI2024-155053-2.

\section{GBS graphs and quasi-conjugacy classes}

In this section, we establish the notation used throughout the paper. We review the concepts of conjugacy and quasi-conjugacy classes, and show how they can be described by a finite amount of data. We also provide explicit algorithms to compute them, and examples to illustrate them.

\subsection{Graphs of groups}

We consider graphs as combinatorial objects, following the notation of \cite{Ser77}. A \textbf{graph} is a quadruple $\Gamma=(V,E,\ol{\cdot},\iota)$ consisting of a set $V=V(\Gamma)$ of \textit{vertices}, a set $E=E(\Gamma)$ of \textit{edges}, a map $\ol{\cdot}:E\rar E$ called \textit{reverse} and a map $\iota:E\rar V$ called \textit{initial vertex}; we require that, for every edge $e\in E$, we have $\ol{e}\not=e$ and $\ol{\ol{e}}=e$. For an edge $e\in E$, we denote with $\tau(e)=\iota(\ol{e})$ the \textit{terminal vertex} of $e$. A \textbf{path} in a graph $\Gamma$, with \textit{initial vertex} $v\in V(\Gamma)$ and \textit{terminal vertex} $v'\in V(\Gamma)$, is a sequence $\sigma=(e_1,\dots,e_\ell)$ of edges $e_1,\dots,e_\ell\in E(\Gamma)$ for some integer $\ell\ge0$, with the conditions $\iota(e_1)=v$ and $\tau(e_\ell)=v'$ and $\tau(e_i)=\iota(e_{i+1})$ for $i=1,\dots,\ell-1$.  The terminal and initial vertices will often be referred to as \textit{endpoints}. A graph is \textbf{connected} if for every pair of vertices, there is a path going from one to the other. For a connected graph $\Gamma$, we define its \textbf{rank} $\rank{\Gamma}\in\bbN\cup\{+\infty\}$ as the rank of its fundamental group (which is a free group).

\begin{defn}
A \textbf{graph of groups} is a quadruple
$$\cG=(\Gamma,\{G_v\}_{v\in V(\Gamma)},\{G_e\}_{e\in E(\Gamma)},\{\psi_e\}_{e\in E(\Gamma)})$$
consisting of a connected graph $\Gamma$, a group $G_v$ for each vertex $v\in V(\Gamma)$, a group $G_e$ for every edge $e\in E(\Gamma)$ with the condition $G_e=G_{\ol{e}}$, and an injective homomorphism $\psi_e:G_e\rar G_{\tau(e)}$ for every edge $e\in E(\Gamma)$.
\end{defn}

Let $\cG=(\Gamma,\{G_v\}_{v\in V(\Gamma)},\{G_e\}_{e\in E(\Gamma)},\{\psi_e\}_{e\in E(\Gamma)})$ be a graph of groups. Define the \textbf{universal group} $\FG{\cG}$ as the quotient of the free product $(*_{v\in V(\Gamma)}G_v)*F(E(\Gamma))$ by the relations
\begin{equation*}\label{FGrelations}
\ol{e}=e^{-1} \qquad\qquad \psi_{\ol{e}}(g)\cdot e=e\cdot\psi_e(g)
\end{equation*}
for $e\in E(\Gamma)$ and $g\in G_e$.

Define the \textbf{fundamental group} $\pi_1(\cG,\ot{v})$ of a graph of group $\cG$ with basepoint $\ot{v}\in V(\Gamma)$ to be the subgroup of $\FG{\cG}$ of the elements that can be represented by words such that, when going along the word, we read a path in the graph $\Gamma$ from $\ot v$ to $\ot v$. The fundamental group $\pi_1(\cG,\ot v)$ does not depend on the chosen basepoint $\ot v$, up to isomorphism. Given an element $g$ inside a vertex group $G_v$, we can take a path $(e_1,\dots,e_\ell)$ from $\ot v$ to $v$: we define the conjugacy class $[g]:=[e_1\dots e_\ell g\ol{e}_\ell\dots \ol{e}_1]\in\pi_1(\cG,\ot v)$, and notice that this does not depend on the chosen path.

\subsection{Generalized Baumslag-Solitar groups}

\begin{defn}
A \textbf{GBS graph of groups} is a finite graph of groups
$$\cG=\left(\Gamma,\{G_v\}_{v\in V(\Gamma)},\{G_e\}_{e\in E(\Gamma)},\{\psi_e\}_{e\in E(\Gamma)}\right)$$
such that each vertex group and each edge group is $\bbZ$.
\end{defn}

A \textbf{Generalized Baumslag-Solitar group} is a group $G$ isomorphic to the fundamental group of some GBS graph of groups.

\begin{defn}
A \textbf{GBS graph} is a pair $(\Gamma,\psi)$, where $\Gamma$ is a finite graph and $\psi:E(\Gamma)\rar\bbZ\setminus\{0\}$ is a function.
\end{defn}

Given a GBS graph of groups $\cG=(\Gamma,\{G_v\}_{v\in V(\Gamma)},\{G_e\}_{e\in E(\Gamma)},\{\psi_e\}_{e\in E(\Gamma)})$, the map $\psi_e:G_e\rar G_{\tau(e)}$ is an injective homomorphism $\psi_e:\bbZ\rar\bbZ$, and thus coincides with multiplication by a unique non-zero integer $\psi(e)\in\bbZ\setminus\{0\}$. We define the GBS graph associated to $\cG$ as $(\Gamma,\psi)$ associating to each edge $e$ the factor $\psi(e)$ characterizing the homomorphism $\psi_e$, see Figure \ref{fig:GBS-graph}. Giving a GBS graph of groups is equivalent to giving the corresponding GBS graph. In fact, the numbers on the edges are sufficient to reconstruct the injective homomorphisms and thus the graph of groups.

\begin{figure}[H]
\centering
\includegraphics[scale=1]{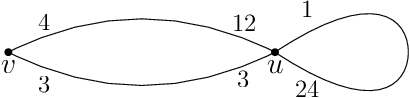}
\caption{In the figure we can see a GBS graph $(\Gamma,\psi)$ with two vertices $v,u$ and three edges $e_1,e_2,e_3$ (and their reverses). The edge $e_1$ goes from $v$ to $u$ and has $\psi(\ol{e}_1)=4$ and $\psi(e_1)=12$. The edge $e_2$ goes from $v$ to $u$ and has $\psi(\ol{e}_2)=\psi(\ol{e}_2)=3$. The edge $e_3$ goes from $u$ to $u$ and has $\psi(\ol{e}_3)=1$ and $\psi(e_3)=24$.}
\label{fig:GBS-graph}
\end{figure}

Let $\cG$ be a GBS graph of groups and let $(\Gamma,\psi)$ be the corresponding GBS graph. The universal group $\FG{\cG}$ has a presentation with generators $V(\Gamma)\cup E(\Gamma)$, the generator $v\in V(\Gamma)$ representing the element $1$ in $\bbZ=G_v$. The relations are given by $\ol{e}=e^{-1}$ and $u^{\psi(\ol{e})}e=ev^{\psi(e)}$ for every edge $e\in E(\Gamma)$ with $\iota(e)=u$ and $\tau(e)=v$.

\subsection{Reduced affine representation of a GBS graph}

\begin{defn}\label{def:set-of-primes}
    For a GBS graph $(\Gamma,\psi)$, define its \textbf{set of primes}
    $$\cP(\Gamma,\psi):=\{r\in\bbN \text{ prime } : r\divides \psi(e) \text{ for some } e\in E(\Gamma)\}.$$
\end{defn}

Given a GBS graph $(\Gamma,\psi)$, consider the finitely generated abelian group
$$\bA:=\bbZ/2\bbZ\oplus\bigoplus\limits_{r\in\cP(\Gamma,\psi)}\bbZ.$$
We denote with $\mathbf{0}\in\bA$ the neutral element. For an element $\ba=(a_0,a_r : r\in\cP(\Gamma,\psi))\in\bA$ (with $a_0\in\bbZ/2\bbZ$ and $a_r\in\bbZ$ for $r\in\cP(\Gamma,\psi)$), we denote $\ba\ge\mathbf{0}$ if $a_r\ge 0$ for all $r\in\cP(\Gamma,\psi)$; notice that we are not requiring any condition on $a_0$. We define the positive cone $\pA:=\{\ba\in\bA : \ba\ge\mathbf{0}\}$.

\begin{defn}
Let $(\Gamma,\psi)$ be a GBS graph. Define its \textbf{{\rm(}reduced{\rm)} affine representation} to be the graph $\Lambda=\Lambda(\Gamma,\psi)$ given by:
\begin{enumerate}
\item $V(\Lambda)=V(\Gamma)\times\pA$ is the disjoint union of copies of $\pA$, one for each vertex of $\Gamma$.
\item $E(\Lambda)=E(\Gamma)$ is the same set of edges as $\Gamma$, and with the same reverse map.
\item For an edge $e\in E(\Lambda)$ we write the unique factorization $\psi(e)=(-1)^{a_0}\prod_{r\in\cP(\Gamma,\psi)}r^{a_r}$ and we define the terminal vertex $\tau_\Lambda(e)=(\tau_\Gamma(e),(a_0,a_r,\dots))$, see {\rm Figure \ref{fig:aff-rep}}.
\end{enumerate}
For a vertex $v\in V(\Gamma)$ we denote $\pA_v:=\{v\}\times\pA$ the corresponding copy of $\pA$.
\end{defn}

If $\Lambda$ contains an edge going from $p$ to $q$, then we write $p\edge q$. If $\Lambda$ contains edges from $p_i$ to $q_i$ for $i=1,\dots,m$, then we write
$$
\begin{cases}
p_1\edge q_1\\
\cdots\\
p_m\edge q_m
\end{cases}
$$
This does not mean that $p_1\edge q_1,\dots,p_m\edge q_m$ are all the edges of $\Lambda$, but only that in a certain situation we are focusing on those edges. If we are focusing on a specific copy $\pA_v$ of $\pA$, for some $v\in V(\Gamma)$, and we have edges $(v,\ba_i)\edge (v,\bb_i)$ for $i=1,\dots,m$, then we say that $\pA_v$ contains edges
$$
\begin{cases}
\ba_1\edge \bb_1\\
\cdots\\
\ba_m\edge \bb_m
\end{cases}
$$
omitting the vertex $v$.

\begin{figure}[H]
\centering
\includegraphics[width=0.7\textwidth]{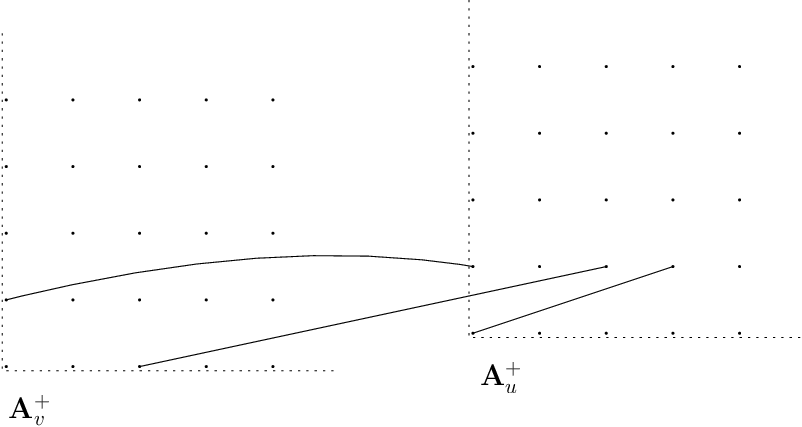}
\caption{The affine representation $\Lambda$ of the GBS graph $(\Gamma,\psi)$ of Figure \ref{fig:GBS-graph}. The set of vertices consists of two copies $\pA_v$ and $\pA_u$ of the positive affine cone $\pA$, associated to the two vertices $v$ and $u$ respectively. The edge $e_1$ going from $v$ to $u$ was labeled with $(\psi(\ol{e}_1),\psi(e_1))=(4,12)=(2^2 3^0,2^2 3^1)$, and thus now it goes from the point $(2,0)$ in $\pA_v$ to the point $(2,1)$ in $\pA_u$. Similarly for $e_2$ and $e_3$.}
\label{fig:aff-rep}
\end{figure}

\subsection{Support and control of vectors}

The following notions for elements of $\bA$ will be widely used throughout the paper.

\begin{defn}\label{def:support}
For $\bx\in\bA$ define its \textbf{support} as the set
$$\supp{\bx}:=\{r\in\cP(\Gamma,\psi) : x_r\not=0\}.$$
\end{defn}
\begin{rmk}
Note that we omit the $\bbZ/2\bbZ$ component from the definition of support.
\end{rmk}

\begin{defn}\label{def:control-a}
Let $\ba,\bb,\bw\in\pA$. We say that $\ba,\bw$ \textbf{controls} $\bb$ if any of the following equivalent conditions holds:
\begin{enumerate}
\item We have $\ba\le \bb\le \ba+k\bw$ for some $k\in\bbN$.
\item We have $\bb-\ba\ge\mathbf{0}$ and $\supp{\bb-\ba}\subseteq\supp{\bw}$.
\end{enumerate}
\end{defn}

\subsection{Affine paths and conjugacy classes}\label{sec:conjugacy-classes}

Let $(\Gamma,\psi)$ be a GBS graph and let $\Lambda$ be its affine representation. Given a vertex $p=(v,\ba)\in V(\Lambda)$ and an element $\bw\in\pA$, we define the vertex $p+\bw:=(v,\ba+\bw)\in V(\Lambda)$. For two vertices $p,p'\in V(\Lambda)$ we denote $p'\ge p$ if $p'=p+\bw$ for some $\bw\in\pA$; in particular this implies that both $p,p'$ belong to the same $\pA_v$ for some $v\in V(\Gamma)$.

\begin{defn}
An \textbf{affine path} in $\Lambda$ with \textit{initial vertex} $p\in V(\Lambda)$ and \textit{terminal vertex} $p'\in V(\Lambda)$ is a sequence $(e_1,\dots,e_\ell)$ of edges $e_1,\dots,e_\ell\in E(\Lambda)$ for some $\ell\ge0$, such that there exist $\bw_1,\dots,\bw_\ell\in\pA$ satisfying the conditions $\iota(e_1)+\bw_1=p$ and $\tau(e_\ell)+\bw_\ell=p'$ and $\tau(e_i)+\bw_i=\iota(e_{i+1})+\bw_{i+1}$ for $i=1,\dots,\ell-1$.
\end{defn}

The elements $\bw_1,\dots,\bw_\ell$ are called \textit{translation coefficients} of the path; if they exist, then they are uniquely determined by the path and by the endpoints, and they can be computed algorithmically. They mean that an edge $e\in E(\Lambda)$ connecting $p$ to $q$ allows us also to travel from $p+\bw$ to $q+\bw$ for every $\bw\in\pA$.

\begin{defn}\label{def:conjugacy}
    Let $p,q\in V(\Lambda)$.
    \begin{enumerate}
        \item We denote $p\cnj q$, and we say that $p,q$ are \textbf{conjugate}, if there is an affine path going from $p$ to $q$.
        \item We denote $p\lecnj q$ if $p\le q'$ for some $q'\cnj q$.
        \item We denote $p\qcnj q$, and we say that $p,q$ are \textbf{quasi-conjugate}, if $p\lecnj q$ and $q\lecnj p$.
    \end{enumerate}
\end{defn}

The relation $\cnj$ is an equivalence relations on the set $V(\Lambda)$. The relation $\lecnj$ is a pre-order on $V(\Lambda)$, and $\qcnj$ is the equivalence relation induced by the pre-order. Note that if $p\cnj p'$, then $p+\bw\cnj p'+\bw$ for all $\bw\in\pA$. Similarly, if $p\qcnj p'$, then $p+\bw\qcnj p'+\bw$ for all $\bw\in\pA$.

\subsection{Minimal points, quasi-conjugacy support, linear algebra}\label{sec:qc-support}

We now show how the conjugacy and quasi-conjugacy classes can be completely described by three pieces of data: the \textit{minimal points}, the \textit{quasi-conjugacy support} and the \textit{linear algebra}.

\begin{defn}\label{def:qcsupp}
    Let $(\Gamma,\psi)$ be a GBS graph and let $p$ be a vertex of its affine representation $\Lambda$.
    \begin{enumerate}
        \item Define the \textbf{minimal points}
        $$\qcmin{p}=\{q\in V(\Lambda) : q\qcnj p \text{ and for all } q'\qcnj p \text{ with } q'\le q \text{ we have } q\le q'\}.$$
        \item Define the \textbf{quasi-conjugacy support}
        $$\qcsupp{p}=\bigcup\{\supp{\bu} : \bu\in\pA \text{ and } p+\bu\qcnj p\}\subseteq\cP(\Gamma,\psi).$$
        \item Define the \textbf{linear algebra}
        \[
            \text{$\cla{p}=\{\br\in\bA : \br=\bw-\bw'$ for some $\bw,\bw'\in\pA$ such that $p+\bw\cnj p+\bw' \cnj p\}$.}
        \]
    \end{enumerate}
\end{defn}

\begin{lem}\label{lem:qcmin-finite}
    The set $\qcmin{p}$ is finite.
\end{lem}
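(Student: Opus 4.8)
The goal is to prove that $\qcmin{p}$ is finite. Recall that $\qcmin{p}$ consists of the minimal points, with respect to the partial order $\le$, among all vertices quasi-conjugate to $p$. Since each quasi-conjugate vertex lies in some $\pA_v$, and there are finitely many vertices $v \in V(\Gamma)$, it suffices to bound the number of minimal points inside each copy $\pA_v$ separately.

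The plan is to exploit the structure of $\pA \cong (\bbZ/2\bbZ) \oplus \bigoplus_{r} \bbZ_{\ge 0}$ together with a Dickson-type argument. First I would observe that the $\bbZ/2\bbZ$ component plays no role in the order $\le$ (the condition $\ba \ge \mathbf{0}$ ignores $a_0$), so it only multiplies the count by at most $2$; I can therefore reduce to analyzing the poset structure on the free commutative monoid $\bbN^{|\cP(\Gamma,\psi)|}$. On this monoid the partial order $\le$ is exactly the componentwise order, which is the product order on $\bbN^n$. The key classical fact I would invoke is Dickson's Lemma: every subset of $\bbN^n$ has only finitely many minimal elements under the componentwise order. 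Equivalently, $(\bbN^n, \le)$ is a well-quasi-order, so no infinite antichain exists.

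Next I need to verify that $\qcmin{p}$, restricted to one copy $\pA_v$, is precisely an antichain of minimal elements of the subset $S_v = \{\ba \in \pA : (v,\ba) \qcnj p\}$. By definition, $q = (v,\ba) \in \qcmin{p}$ means $q \qcnj p$ and $q$ is minimal in its $\qcnj$-class with respect to $\le$; that is, $\ba$ is a minimal element of $S_v$ under $\le$. Distinct minimal elements are incomparable, so they form an antichain. By Dickson's Lemma the set of minimal elements of $S_v$ is finite. Summing over the finitely many vertices $v$ and accounting for the harmless $\bbZ/2\bbZ$ factor yields that $\qcmin{p}$ is finite.

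I do not expect a genuine obstacle here, since the result is essentially an instance of Dickson's Lemma once the definitions are unwound. The only point requiring mild care is the bookkeeping around the $\bbZ/2\bbZ$ coordinate: I must confirm that the minimality condition in \Cref{def:qcsupp} genuinely only constrains the $\bbZ$-coordinates, so that two minimal points differing only in their $a_0$ entry are both retained but still finite in number. Once that is checked, the finiteness of $\qcmin{p}$ follows immediately.
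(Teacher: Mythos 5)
Your proof is correct, and it has the same overall structure as the paper's: reduce to each copy $\pA_v$ (finitely many, since $\Gamma$ is finite), then show that the set of points quasi-conjugate to $p$ inside $\pA_v$ has finitely many minimal elements for the componentwise order. The only divergence is how that finiteness is justified. You invoke Dickson's Lemma (equivalently, that $\bbN^n$ with the product order is a well-quasi-order) as a known fact, whereas the paper proves it on the spot: it encodes each point $\bq$ as a monomial $x^\bq$ in $\bbC[x_i : i\in\cP(\Gamma,\psi)]$, forms the monomial ideal these generate, and applies Hilbert's basis theorem to extract a finite generating set, which must contain every minimal point. These are two presentations of the same fact --- the paper's argument is precisely one of the standard proofs of Dickson's Lemma --- so nothing of substance differs; your version is purely combinatorial, while the paper's is self-contained modulo Hilbert's basis theorem. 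One small point in your favor: you explicitly handle the $\bbZ/2\bbZ$ coordinate, noting that $\le$ ignores it (so it is only a pre-order on $\pA$) and that each minimal element of the projection to $\bbN^{\abs{\cP(\Gamma,\psi)}}$ has at most two preimages, whereas the paper passes over this silently by writing $x^\bq$ only in the variables indexed by $\cP(\Gamma,\psi)$.
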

\begin{proof}
    Fix a vertex $u\in V(\Gamma)$: since $\Gamma$ has finitely many vertices, it suffices to prove that $\qcmin{p}\cap\pA_u$ is finite. Consider a set of variables $\{x_i\}_{i\in\cP(\Gamma,\psi)}$ and consider the ring of polynomials $\bbC[x_i : i\in\cP(\Gamma,\psi)]$. For each $(u,\bq)\qcnj p$ consider the monomial $x^\bq=\prod x_i^{q_i}\in\bbC[x_i,\dots]$. Consider the ideal $I=(x^\bq : (u,\bq)\qcnj p)\subseteq\bbC[x_i,\dots]$ generated by these monomials. We have that $I$ is a monomial ideal, i.e. it contains a polynomial $P$ if and only if it contains all the monomials with non-zero coefficients of $P$. By Hilbert's basis theorem, $I$ must be finitely generated, and thus we can find finitely many $\bq_1,\dots, \bq_\ell\in\pA$ such that $I=(x^{\bq_1},\dots,x^{\bq_\ell})$. It follows that $\qcmin{p}\cap\pA_u\subseteq\{(u,\bq_1),\dots,(u,\bq_\ell)\}$ must be finite, as desired.
\end{proof}

\begin{lem}
    If $p\qcnj q$, then $\qcsupp{p}=\qcsupp{q}$.
\end{lem}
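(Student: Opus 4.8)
The plan is to prove the two inclusions $\qcsupp{p}\subseteq\qcsupp{q}$ and $\qcsupp{q}\subseteq\qcsupp{p}$ separately; since the hypothesis $p\qcnj q$ is symmetric in $p$ and $q$, it suffices to establish the first one. So I would fix a prime $r\in\qcsupp{p}$ and unwind the definition: there is some $\bu\in\pA$ with $r\in\supp{\bu}$ (equivalently $u_r>0$, since $\bu\ge\mathbf{0}$) together with $p+\bu\qcnj p$. The goal is then to manufacture a vector $\bv\in\pA$ with $r\in\supp{\bv}$ and $q+\bv\qcnj q$, which is exactly what is needed to conclude $r\in\qcsupp{q}$.

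The core of the argument is transporting the ``loop vector'' $\bu$ from the base point $p$ to the base point $q$. Using $q\lecnj p$ (one half of $p\qcnj q$), I would write $q\le p''$ for some $p''\cnj p$, say $p''=q+\bw$ with $\bw\in\pA$. The key step is the claim that $p''+\bu\qcnj p''$: this follows from the shift-invariance of $\cnj$, which gives $p''+\bu\cnj p+\bu$ (hence $p''+\bu\qcnj p+\bu$, as conjugacy implies quasi-conjugacy), combined with the given $p+\bu\qcnj p$, with $p\cnj p''$ (hence $p\qcnj p''$), and the transitivity of $\qcnj$. Chaining $p''+\bu\qcnj p+\bu\qcnj p\qcnj p''$ yields $p''+\bu\qcnj p''$.

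Finally I would rewrite this in terms of $q$. Setting $\bv:=\bw+\bu\in\pA$, the relation $p''+\bu\qcnj p''$ reads $q+\bv\qcnj q+\bw$; and since $q+\bw=p''\cnj p\qcnj q$ we have $q+\bw\qcnj q$, so transitivity gives $q+\bv\qcnj q$. It remains to check $r\in\supp{\bv}$: because both $\bw$ and $\bu$ lie in the positive cone there is no cancellation in their sum, so $v_r=w_r+u_r\ge u_r>0$, whence $r\in\supp{\bv}$. This produces the required witness and proves $\qcsupp{p}\subseteq\qcsupp{q}$.

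The main obstacle --- really the only non-formal point --- is the transport claim $p''+\bu\qcnj p''$: one must move the witnessing loop from $p$ to a conjugate base point, and then down to $q$ using $q\le p''$. Enlarging $\bu$ to $\bw+\bu$ in the last step is harmless precisely because supports of elements of $\pA$ never cancel under addition, which is what keeps $r$ in the support throughout.
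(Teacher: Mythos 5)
Your proof is correct, and it rests on the same underlying idea as the paper's: fix a witness $\bu\in\pA$ with $p+\bu\qcnj p$ and transport it to the base point $q$, then conclude by the symmetry of the hypothesis. The difference is in execution. The paper invokes the translation-invariance of $\qcnj$ itself (stated right after Definition \ref{def:conjugacy}: $p\qcnj q$ implies $p+\bu\qcnj q+\bu$ for all $\bu\in\pA$), which makes the whole proof a one-line chain $q+\bu\qcnj p+\bu\qcnj p\qcnj q$ with the \emph{same} witness $\bu$, so $\supp{\bu}\subseteq\qcsupp{q}$ immediately. You instead only use the translation-invariance of $\cnj$, route the argument through an explicit conjugate point $p''=q+\bw\cnj p$ lying above $q$, and end up with the enlarged witness $\bv=\bw+\bu$; the no-cancellation observation $v_r\ge u_r>0$ then rescues the support condition. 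Both arguments are sound; the paper's is shorter, while yours shows the lemma needs nothing beyond the shift-invariance of $\cnj$ together with the definition of $\lecnj$, at the modest cost of producing a larger witness vector.
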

\begin{proof}
    If $p+\bu\qcnj p$ for some $\bu\in\pA$, then $q+\bu\qcnj p+\bu\qcnj p\qcnj q$.
\end{proof}

\begin{lem}\label{lem:qc-supp-single-vector}
    For all $p\in V(\Lambda)$ there is $\bw\in\pA$ such that $p+\bw\cnj p$ and $\supp{\bw}=\qcsupp{p}$.
\end{lem}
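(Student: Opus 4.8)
The plan is to reduce the quasi-conjugacy support, which is defined as a union of supports of vectors $\bu$ with $p+\bu\qcnj p$, to a statement purely about conjugacy, and then to exploit that the set of ``conjugacy vectors'' is closed under addition in order to combine finitely many of them into a single vector realizing the full support.

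First I would record two elementary facts. (a) The set $S:=\{\bv\in\pA : p+\bv\cnj p\}$ is a submonoid of $\pA$: it contains $\mathbf{0}$ since $p\cnj p$, and if $p+\bv_1\cnj p$ and $p+\bv_2\cnj p$, then translating the first conjugacy by $\bv_2$ (using the translation-invariance remark $p\cnj p'\Rightarrow p+\bw\cnj p'+\bw$) gives $p+\bv_1+\bv_2\cnj p+\bv_2\cnj p$, so $\bv_1+\bv_2\in S$. (b) Conjugacy implies quasi-conjugacy, hence $\supp{\bv}\subseteq\qcsupp{p}$ for every $\bv\in S$.

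Next comes the key step: for each prime $r\in\qcsupp{p}$ I would produce a vector $\bv_r\in S$ with $r\in\supp{\bv_r}$. By definition of $\qcsupp{p}$ there is $\bu\in\pA$ with $p+\bu\qcnj p$ and $r\in\supp{\bu}$; in particular $p+\bu\lecnj p$, which unwinds to $p+\bu\le p_1$ for some $p_1\cnj p$. Since $\le$ keeps both points in the same copy $\pA_v$, I can write $p_1=p+(\bu+\bw_1)$ with $\bw_1\in\pA$, so $\bv_r:=\bu+\bw_1$ lies in $S$; as $\bu,\bw_1\ge\mathbf{0}$ there is no cancellation and $r\in\supp{\bu}\subseteq\supp{\bv_r}$.

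Finally I would set $\bw:=\sum_{r\in\qcsupp{p}}\bv_r$; the sum is finite because $\qcsupp{p}\subseteq\cP(\Gamma,\psi)$ is finite, and $\bw\in S$ by the monoid property, so $p+\bw\cnj p$. Since all summands are $\ge\mathbf{0}$, we get $\supp{\bw}=\bigcup_r\supp{\bv_r}$, which contains every $r\in\qcsupp{p}$ and, by (b), is contained in $\qcsupp{p}$; hence $\supp{\bw}=\qcsupp{p}$, as required. The only real subtlety — the main obstacle — is the passage from quasi-conjugacy to conjugacy in the key step: a priori $\qcsupp$ is built from $\qcnj$-relations, and one must observe that $p+\bu\lecnj p$ can always be upgraded to an honest conjugacy $p+\bv_r\cnj p$ with support no smaller than that of $\bu$, which is exactly what makes the monoid-closure argument applicable.
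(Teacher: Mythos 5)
Your proof is correct and follows essentially the same route as the paper's: realize the quasi-conjugacy support by finitely many vectors $\bu$ with $p+\bu\qcnj p$, upgrade each quasi-conjugacy $p+\bu\lecnj p$ to an honest conjugacy $p+\bu+\bw_1\cnj p$, and sum the resulting vectors using additive closure of conjugacy (translation invariance). The only differences are cosmetic — you work prime-by-prime and make explicit the submonoid property and the inclusion $\supp{\bw}\subseteq\qcsupp{p}$, which the paper leaves implicit.
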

\begin{proof}
    Realize the quasi-conjugacy support as a finite union $\qcsupp{p}=\supp{\bu_1}\cup\ldots\cup\supp{\bu_\ell}$ for $\bu_i\in\pA$ with $p+\bu_i\qcnj p$. By the definition of quasi-conjugacy, we can find $\bv_i\in\pA$ such that $p+\bu_i+\bv_i\cnj p$. Now set $\bw=(\bu_1+\bv_1)+\dots+(\bu_\ell+\bv_\ell)$ and the statement follows.
\end{proof}

\begin{lem}\label{lem:cla-subgroup}
    In the above notation, $\cla{p}$ is a subgroup of $\bA$.
\end{lem}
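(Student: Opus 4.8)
The plan is to verify the three subgroup axioms directly from the definition of $\cla{p}$, exploiting the symmetry of the defining condition and the fact that $\cnj$ is an equivalence relation that is compatible with translation by elements of $\pA$. Recall that $\cla{p}$ consists of all differences $\bw - \bw'$ with $\bw, \bw' \in \pA$ satisfying $p + \bw \cnj p + \bw' \cnj p$. The key structural feature I would exploit is that given any finite collection of elements $\bw_1, \dots, \bw_k \in \pA$ with $p + \bw_i \cnj p$, I can add a common ``large'' vector to shift everything into $\pA$ without leaving the conjugacy class, because $p + \bw_i \cnj p$ implies $p + \bw_i + \bv \cnj p + \bv$ for all $\bv \in \pA$.

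First I would show $\mathbf{0} \in \cla{p}$, which is immediate by taking $\bw = \bw' = \mathbf{0}$ (or any single $\bw$ with $p+\bw\cnj p$, e.g. $\bw=\mathbf 0$ since $p \cnj p$). Next, closure under negation is free from the symmetric form of the definition: if $\br = \bw - \bw'$ witnesses $\br \in \cla{p}$, then $\bw' - \bw = -\br$ is witnessed by the same pair with the roles swapped, since the condition $p + \bw \cnj p + \bw' \cnj p$ is symmetric in $\bw$ and $\bw'$. The main work is closure under addition. Suppose $\br_1 = \bw_1 - \bw_1'$ and $\br_2 = \bw_2 - \bw_2'$ both lie in $\cla{p}$, with $p + \bw_i \cnj p + \bw_i' \cnj p$. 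I want to produce vectors in $\pA$ exhibiting $\br_1 + \br_2$. The natural candidate is $(\bw_1 + \bw_2) - (\bw_1' + \bw_2')$, so I need $p + \bw_1 + \bw_2 \cnj p$ and $p + \bw_1' + \bw_2' \cnj p$ (and both still in $\pA$, which is automatic as sums of elements of $\pA$).

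To establish $p + \bw_1 + \bw_2 \cnj p$, I would chain the translation-compatibility of $\cnj$: from $p + \bw_1 \cnj p$ I get, adding $\bw_2 \in \pA$, that $p + \bw_1 + \bw_2 \cnj p + \bw_2$; and from $p + \bw_2 \cnj p$ I directly have $p + \bw_2 \cnj p$. Transitivity of $\cnj$ then yields $p + \bw_1 + \bw_2 \cnj p$. The identical argument with primes gives $p + \bw_1' + \bw_2' \cnj p$. Hence $\br_1 + \br_2 = (\bw_1 + \bw_2) - (\bw_1' + \bw_2')$ is a difference of two elements of $\pA$, each conjugate to $p$, so $\br_1 + \br_2 \in \cla{p}$.

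I do not anticipate a genuine obstacle here; the only subtlety is making sure that the elements used in the differences genuinely lie in $\pA$ and are conjugate to $p$ (not merely to $p + \bw_i$), which is exactly what the translation-compatibility remark following \Cref{def:conjugacy} guarantees. The one point to state carefully is that the definition requires \emph{both} $p+\bw$ and $p+\bw'$ to be conjugate to $p$ itself (a two-sided condition), and the addition argument above respects this because each summand individually satisfies it and transitivity propagates it to the sum; no auxiliary shifting vector is even needed.
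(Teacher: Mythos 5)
Your proof is correct and follows essentially the same route as the paper's: closure under addition is obtained by taking the sum of the witnessing vectors, using that $\cnj$ is preserved under translation by elements of $\pA$ together with transitivity, while the identity and inverse axioms fall out of the symmetry of the definition (which the paper leaves implicit). No gaps.
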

\begin{proof}
    Suppose that we are given $\br,\bs\in\cla{p}$. Thus, we have $\br=\bw-\bw'$ and $\bs=\bu-\bu'$ for $\bw,\bw',\bu,\bu'\in\pA$ with $p\cnj p+\bw\cnj p+\bw'\cnj p+\bu\cnj p+\bu'$. In particular we have that $p+\bw+\bu\cnj p+\bw'+\bu\cnj p+\bw'+\bu'$ and $(\bw+\bu)-(\bw'+\bu')=\br+\bs$. Thus, $\br+\bs\in\cla{p}$. The statement follows.
\end{proof}

\begin{lem}[Equivalent characterization of linear algebra]\label{lem:linear-algebra}
    For $p\in V(\Lambda)$ and $\br\in\bA$, we have that $\br\in\cla{p}$ if and only it satisfies the following two conditions:
    \begin{enumerate}
        \item\label{itm:la-1} $\Lambda$ contains vertices $(u,\ba)\cnj(u,\ba+\br)\qcnj p$ for some $u\in V(\Gamma)$ and $\ba\in\bA$ with $\ba,\ba+\br\ge\mathbf{0}$.
        \item\label{itm:la-2} $\supp{\br}\subseteq\qcsupp{p}$.
    \end{enumerate}
\end{lem}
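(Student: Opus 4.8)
The plan is to prove both implications directly from the definitions, the only real work being in the ``if'' direction, where I would use the single full-support vector produced by \Cref{lem:qc-supp-single-vector} to compensate for the fact that $\br$ need not lie in $\pA$.

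\textbf{The ``only if'' direction} should be routine. Suppose $\br\in\cla{p}$, so $\br=\bw-\bw'$ with $\bw,\bw'\in\pA$ and $p+\bw\cnj p+\bw'\cnj p$. Write $p=(v,\bc)$ with $\bc\in\pA$. For condition \eqref{itm:la-1} I would simply take $u=v$ and $\ba=\bc+\bw'$: then $(u,\ba)=p+\bw'$ and $(u,\ba+\br)=(v,\bc+\bw)=p+\bw$ are conjugate to each other (both are conjugate to $p$), hence quasi-conjugate to $p$, and $\ba=\bc+\bw'\ge\mathbf{0}$, $\ba+\br=\bc+\bw\ge\mathbf{0}$. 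For condition \eqref{itm:la-2}, since $p+\bw\cnj p$ gives $p+\bw\qcnj p$ with $\bw\in\pA$, the definition of $\qcsupp{p}$ yields $\supp{\bw}\subseteq\qcsupp{p}$, and likewise $\supp{\bw'}\subseteq\qcsupp{p}$; as $\supp{\br}\subseteq\supp{\bw}\cup\supp{\bw'}$, this gives $\supp{\br}\subseteq\qcsupp{p}$.

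\textbf{The ``if'' direction} is where the content lies, and the main obstacle is exactly that the hypothesised conjugacy $(u,\ba)\cnj(u,\ba+\br)$ sits at points that are only \emph{quasi}-conjugate to $p$ (and possibly at a different vertex $u$), while the difference vector $\br$ may have negative components, so it cannot be ``added'' by an affine translation. I would first unwind the quasi-conjugacy $(u,\ba)\qcnj p$ from condition \eqref{itm:la-1} into two witnesses: from $p\lecnj(u,\ba)$ obtain $\bs\in\pA$ with $p+\bs\cnj(u,\ba)$, and from $(u,\ba)\lecnj p$ obtain $\bt\in\pA$ with $(u,\ba+\bt)\cnj p$. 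To handle the sign of $\br$, invoke \Cref{lem:qc-supp-single-vector} to fix $\bz\in\pA$ with $p+\bz\cnj p$ and $\supp{\bz}=\qcsupp{p}$; by condition \eqref{itm:la-2} we have $\supp{\br}\subseteq\supp{\bz}$, so $k\bz+\br\in\pA$ for all sufficiently large $k$.

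The final step assembles a conjugacy anchored at $p$ by translating each of the three relations by a suitable element of $\pA$ (using the translation-invariance remark, i.e. $p_1\cnj p_2\Rightarrow p_1+\bu\cnj p_2+\bu$). Translating $p+\bs\cnj(u,\ba)$ by $\bt+k\bz$, then $(u,\ba)\cnj(u,\ba+\br)$ by $\bt+k\bz$ (both endpoints are nonnegative since $\ba,\ba+\br\ge\mathbf{0}$), and finally $(u,\ba+\bt)\cnj p$ by $k\bz+\br$, I would obtain the chain
$$p+(\bs+\bt+k\bz)\cnj (u,\ba+\bt+k\bz)\cnj(u,\ba+\bt+k\bz+\br)\cnj p+(k\bz+\br).$$
On the other hand $p+(\bs+\bt)\cnj p$ (translate $p+\bs\cnj(u,\ba)$ by $\bt$ and chain with $(u,\ba+\bt)\cnj p$), so translating by $k\bz$ and using $p+k\bz\cnj p$ gives $p+(\bs+\bt+k\bz)\cnj p$. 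Combining with the displayed chain yields $p+(k\bz+\br)\cnj p$. Setting $\bw=k\bz+\br$ and $\bw'=k\bz$, both in $\pA$, we have $p+\bw\cnj p+\bw'\cnj p$ and $\bw-\bw'=\br$, so $\br\in\cla{p}$ by definition. I expect no further difficulties; the delicate point to double-check throughout is that every intermediate vertex has nonnegative $\bbZ$-coordinates so that the translation remark applies, which is guaranteed by the nonnegativity in condition \eqref{itm:la-1} together with the choice of $k$.
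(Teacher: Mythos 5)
Your proof is correct and follows essentially the same route as the paper's: both handle the ``only if'' direction by reading off the definition of $\cla{p}$, and both prove the converse by invoking \Cref{lem:qc-supp-single-vector} to get a full-support returning vector, taking a large multiple $k\bz$ so that $k\bz+\br\in\pA$, and transporting the conjugacy $(u,\ba)\cnj(u,\ba+\br)$ up to points above $p$ via translation-invariance of $\cnj$, ending with $\bw=k\bz+\br$, $\bw'=k\bz$. The only (cosmetic) difference is that you unwind both witnesses of the quasi-conjugacy $(u,\ba)\qcnj p$, whereas the paper's chain gets by with just the witness $p'\cnj p$ lying above $(u,\ba)$; this costs you a slightly longer chain but changes nothing essential.
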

\begin{proof}
    The definition of $\cla{p}$ obviously implies the two conditions. 
    
    Suppose now that $\supp{\br}\subseteq\qcsupp{p}$ and there are vertices $(u,\ba)\cnj(u,\ba+\br)\qcnj p$. Since $\supp{\br}\subseteq\qcsupp{p}$, we can find $\bw\in\pA$ such that $p+\bw\cnj p$ and $\bw+\br\ge\mathbf{0}$. Since $p\qcnj (u,\ba)$ we can find $p'\cnj p$ with $(u,\ba)\le p'$; in particular we also have $p+\bw\cnj p'+\bw$ and $p+(\bw-\br)\cnj p'+(\bw-\br)$. But since $(u,\ba)\le p'+\bw$, the condition $(u,\ba)\cnj(u,\ba+\br)$ implies $p'+\bw\cnj p+\bw+\br$. Therefore we have $p+\bw\cnj p'+\bw\cnj p'+(\bw-\br)\cnj p+(\bw-\br)$. The statement follows.
\end{proof}

\begin{rmk}
    Condition \ref{itm:la-2} of Lemma \ref{lem:linear-algebra} can not be omitted. In fact, one might try to define the linear algebra of a point $p$ as the set of elements $\br\in\bA$ satisfying Condition \ref{itm:la-1} of Lemma \ref{lem:linear-algebra}. However, in general this definition gives a set which is different from the set $\cla{p}$ of Definition \ref{def:qcsupp}, and which might not be a subgroup.
\end{rmk}

It follows that if $p\qcnj q$, then $\qcmin{p}=\qcmin{q}$ and $\qcsupp{p}=\qcsupp{q}$ and $\cla{p}=\cla{q}$; thus, for a quasi-conjugacy class $Q$, the objects $\qcmin{Q},\qcsupp{Q},\cla{Q}$ are well-defined. The following Proposition \ref{prop:description-qc-class} explains how the minimal points and the quasi-conjugacy support give a complete description of a quasi-conjugacy class. The subsequent Proposition \ref{prop:description-c-class} explains how the linear algebra gives a complete description of the conjugacy classes contained in the quasi-conjugacy class.

\begin{prop}\label{prop:description-qc-class}
    Let $(\Gamma,\psi)$ be a GBS graph and let $p$ be a vertex of its affine representation $\Lambda$. Then the following are equivalent:
    \begin{enumerate}
        \item $q\qcnj p$
        \item $q=m+\bw$ for some $m\in\qcmin{p}$ and for some $\bw\in\pA$ with $\supp{\bw}\subseteq\qcsupp{p}$.
    \end{enumerate}
\end{prop}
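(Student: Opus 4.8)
The plan is to prove the two implications separately. The implication $(2)\Rightarrow(1)$ is the easier one and only requires \Cref{lem:qc-supp-single-vector} together with the basic fact that $\cnj$ is preserved under adding a vector of $\pA$; the implication $(1)\Rightarrow(2)$ is the substantive direction, carried out by a descent argument inside the fixed copy $\pA_v$ containing $q$.

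For $(2)\Rightarrow(1)$, I would write $q=m+\bw$ with $m\in\qcmin{p}$ (so $m\qcnj p$) and $\supp{\bw}\subseteq\qcsupp{p}=\qcsupp{m}$. Since $\bw\in\pA$ we have $m\le q$, which gives $m\lecnj q$ immediately. For the reverse relation $q\lecnj m$, I would apply \Cref{lem:qc-supp-single-vector} to $m$ to obtain $\bv\in\pA$ with $m+\bv\cnj m$ and $\supp{\bv}=\qcsupp{m}$; iterating the fact that $\cnj$ is preserved under translation by elements of $\pA$ yields $m+k\bv\cnj m$ for every $k\in\bbN$. Because $\supp{\bw}\subseteq\supp{\bv}$ and every prime coordinate of $\bv$ on its support is strictly positive, for $k$ large enough $k\bv-\bw\ge\mathbf{0}$, i.e. $q=m+\bw\le m+k\bv$. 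Thus $q\le m+k\bv\cnj m$ witnesses $q\lecnj m$, and combining the two relations gives $q\qcnj m\qcnj p$, as required.

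The core of the argument is $(1)\Rightarrow(2)$. Fix $q\qcnj p$ and let $\pA_v$ be the copy containing $q$. I would consider the set $T$ of all $q'\in\pA_v$ such that $q'\qcnj p$ and $q=q'+\bw$ for some $\bw\in\pA$ with $\supp{\bw}\subseteq\qcsupp{p}$; note $q\in T$ via $\bw=\mathbf{0}$, and every element of $T$ lies below $q$. Passing to the prime coordinates presents $T$ as a nonempty subset of $\bbN^{\cP(\Gamma,\psi)}$, which is well-founded for the componentwise order, so I may choose $m\in T$ whose prime-coordinate vector is minimal. I claim $m\in\qcmin{p}$. Suppose there were $q''\qcnj p$ with $q''\le m$ but not $m\le q''$; writing $m=q''+\bu$ with $\bu\in\pA$, the failure of $m\le q''$ means $q''$ is strictly below $m$ in the prime coordinates. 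The key observation is that $q''\qcnj m$ together with $m=q''+\bu$ forces $\supp{\bu}\subseteq\qcsupp{q''}=\qcsupp{p}$, directly from the definition of the quasi-conjugacy support. Writing $q=m+\bw_0$ with $\supp{\bw_0}\subseteq\qcsupp{p}$ (valid since $m\in T$), we obtain $q=q''+(\bu+\bw_0)$ with $\supp{\bu+\bw_0}\subseteq\qcsupp{p}$, so $q''\in T$, contradicting the minimality of $m$. Hence no such $q''$ exists, $m\in\qcmin{p}$, and $q=m+\bw_0$ is the desired decomposition.

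The step I expect to be the main obstacle is precisely this support bookkeeping in the descent: it is not enough to descend to \emph{any} minimal quasi-conjugate point below $q$, since one must guarantee that the vector removed stays supported inside $\qcsupp{p}$. The observation that any quasi-conjugate point $q''$ below $m$ is reached from $m$ by a vector $\bu$ whose support automatically lies in $\qcsupp{m}=\qcsupp{p}$ is what makes $T$ closed under such descents and forces the prime-coordinate-minimal element of $T$ to be genuinely minimal in its quasi-conjugacy class.
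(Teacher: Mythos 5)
Your proof is correct. The paper's own ``proof'' of this proposition is just the phrase ``Immediate from the definitions,'' and your argument --- the translation direction via \Cref{lem:qc-supp-single-vector}, plus a well-founded descent in prime coordinates whose key point is that any gap between quasi-conjugate points automatically has support inside $\qcsupp{p}$ --- is exactly the routine verification being alluded to, including the correct handling of the $\bbZ/2\bbZ$ component in the minimality step.
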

\begin{proof}
    Immediate from the definitions.
\end{proof}

\begin{prop}\label{prop:description-c-class}
    Let $(\Gamma,\psi)$ be a GBS graph and let $p$ be a vertex of its affine representation $\Lambda$. Then we have the following:
    \begin{enumerate}
        \item\label{itm:c-class-above-p} For all $\bw,\bw'\in\pA$ with $\supp{\bw},\supp{\bw'}\subseteq\qcsupp{p}$, we have that
        $$p+\bw\cnj p+\bw' \Leftrightarrow \bw-\bw'\in\cla{p}.$$
        \item\label{itm:lin-alg-natural-inclusion} There is a natural inclusion of subgroups $\cla{p} \sgr \bbZ/2\bbZ \oplus \bbZ^{\qcsupp{p}} \sgr \bA$.
        \item\label{itm:c-classes-bijection} There is a bijection between the following two sets:
        
        (i) The set of conjugacy classes contained in the quasi-conjugacy class of $p$.
        
        (ii) The quotient $(\bbZ/2\bbZ\oplus\bbZ^{\qcsupp{p}})/\cla{p}$.
    \end{enumerate}
\end{prop}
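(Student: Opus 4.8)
The plan is to establish the three parts in order, treating the equivalence in part (\ref{itm:c-class-above-p}) as the engine from which the subgroup statement (\ref{itm:lin-alg-natural-inclusion}) and the bijection (\ref{itm:c-classes-bijection}) follow with only modest extra bookkeeping. For part (\ref{itm:c-class-above-p}) I would prove the two implications separately. The direction ``$\bw-\bw'\in\cla{p}\Rightarrow p+\bw\cnj p+\bw'$'' is purely formal: writing $\bw-\bw'=\bu-\bu'$ with $\bu,\bu'\in\pA$ and $p+\bu\cnj p\cnj p+\bu'$, the identity $\bu+\bw'=\bu'+\bw$ (both vectors lying in $\pA$) together with translation-invariance of $\cnj$ (that is, $p\cnj p'\Rightarrow p+\bx\cnj p'+\bx$) gives $p+\bw'\cnj p+\bu+\bw'=p+\bu'+\bw\cnj p+\bw$. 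For the converse I would invoke \Cref{lem:qc-supp-single-vector} to fix $\bz\in\pA$ with $p+\bz\cnj p$ and $\supp{\bz}=\qcsupp{p}$. Since $\supp{\bw'}\subseteq\qcsupp{p}$, a large multiple satisfies $k\bz\ge\bw'$, so $k\bz-\bw'\in\pA$; adding this vector to $p+\bw\cnj p+\bw'$ yields $p+(\bw+k\bz-\bw')\cnj p+k\bz\cnj p$. Setting $\bu:=\bw+k\bz-\bw'\in\pA$ and $\bu':=k\bz\in\pA$ we obtain $p+\bu\cnj p\cnj p+\bu'$ with $\bu-\bu'=\bw-\bw'$, which is precisely the defining condition for $\bw-\bw'\in\cla{p}$.

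Part (\ref{itm:lin-alg-natural-inclusion}) is then immediate. The group $\bbZ/2\bbZ\oplus\bbZ^{\qcsupp{p}}$ is the evident coordinate subgroup of $\bA$ obtained by restricting the free summands to the index set $\qcsupp{p}\subseteq\cP(\Gamma,\psi)$, and every $\br\in\cla{p}$ satisfies $\supp{\br}\subseteq\qcsupp{p}$ by \Cref{lem:linear-algebra}(\ref{itm:la-2}) (equivalently, directly from the definition, since $\br=\bw-\bw'$ with $p+\bw,p+\bw'\qcnj p$ forces $\supp{\bw},\supp{\bw'}\subseteq\qcsupp{p}$). Combined with \Cref{lem:cla-subgroup} this exhibits $\cla{p}$ as a subgroup of $\bbZ/2\bbZ\oplus\bbZ^{\qcsupp{p}}$.

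For the bijection in (\ref{itm:c-classes-bijection}), the crucial point is that every conjugacy class inside the quasi-conjugacy class $Q$ of $p$ has a representative of the form $p+\bw$ with $\bw\in\pA$ and $\supp{\bw}\subseteq\qcsupp{p}$: given any $q\qcnj p$, the relation $p\lecnj q$ supplies $q'\cnj q$ with $q'\ge p$, so $q'=p+\bw$ for some $\bw\in\pA$, and $p+\bw\qcnj p$ forces $\supp{\bw}\subseteq\qcsupp{p}$. I would then define $\Phi$ sending a conjugacy class to the coset $\bw+\cla{p}\in(\bbZ/2\bbZ\oplus\bbZ^{\qcsupp{p}})/\cla{p}$ of such a representative. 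Well-definedness and injectivity are both immediate from part (\ref{itm:c-class-above-p}): two representatives $p+\bw,p+\bw'$ are conjugate if and only if $\bw-\bw'\in\cla{p}$, i.e. if and only if they determine the same coset. Surjectivity uses that $\bz\in\cla{p}$ (since $p+\bz\cnj p\cnj p$) with $\supp{\bz}=\qcsupp{p}$: given any $\bx\in\bbZ/2\bbZ\oplus\bbZ^{\qcsupp{p}}$, a large multiple yields $\bx+k\bz\in\pA$ with support still inside $\qcsupp{p}$, and the conjugacy class of $p+(\bx+k\bz)$ maps to $\bx+\cla{p}$.

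The main obstacle is the forward implication of (\ref{itm:c-class-above-p}): one must convert an abstract conjugacy $p+\bw\cnj p+\bw'$ into the rigid form $\bu-\bu'$ with both $p+\bu$ and $p+\bu'$ conjugate to $p$ itself, as demanded by the definition of $\cla{p}$. This is exactly where the support hypothesis $\supp{\bw},\supp{\bw'}\subseteq\qcsupp{p}$ is essential and where \Cref{lem:qc-supp-single-vector} carries the real weight, letting one absorb $\bw$ and $\bw'$ into a single large conjugacy-preserving translation $k\bz$ while keeping all intermediate vectors inside the positive cone $\pA$; the only remaining care is choosing $k$ large enough. The complementary subtlety is the existence-of-representative step in (\ref{itm:c-classes-bijection}), which is what guarantees that the ``local'' computation of conjugacy among points above $p$ in fact captures \emph{all} conjugacy classes of $Q$.
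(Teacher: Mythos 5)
Your proof is correct and takes essentially the same route as the paper: the paper obtains part (\ref{itm:c-class-above-p}) by citing its equivalent characterization of the linear algebra (Lemma \ref{lem:linear-algebra}), whereas you reprove that special case directly via the full-support vector of \Cref{lem:qc-supp-single-vector}, which is the same absorption trick the paper uses to prove that lemma. Your treatment of part (\ref{itm:c-classes-bijection}) --- choosing a representative $p+\bw$ above $p$, getting well-definedness and injectivity from part (\ref{itm:c-class-above-p}), and proving surjectivity by adding a large multiple of a conjugacy-preserving full-support vector --- matches the paper's argument almost verbatim.
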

\begin{rmk}
    The bijection in Item \ref{itm:c-classes-bijection} of Proposition \ref{prop:description-c-class} is not canonical. It is canonical only up to translations in $(\bbZ/2\bbZ\oplus\bbZ^{\qcsupp{p}})/\cla{p}$.
\end{rmk}
\begin{proof}
    Item \ref{itm:c-class-above-p} follows from Lemma \ref{lem:linear-algebra} and Item \ref{itm:lin-alg-natural-inclusion} is trivial.

    Take a conjugacy class $Q$ contained in the quasi-conjugacy class of $p$. This means that we can find $q\in Q$ such that $q=p+\bw$ for some $\bw\in\pA$ with $\supp{\bw}\subseteq\qcsupp{p}$. By Lemma \ref{lem:linear-algebra}, a different choice of $q\in Q$ changes $\bw$ by adding an element of $\cla{p}$. Thus from $Q$ we obtain a well-defined coset $\bw+\cla{p}$. This defines a map from the set of conjugacy classes contained in the quasi-conjugacy class of $p$ to $(\bbZ/2\bbZ\oplus\bbZ^{\qcsupp{p}})/\cla{p}$.

    To prove surjectivity, take $\bx\in\bA$ with $\supp{\bx}\subseteq\qcsupp{p}$ and consider the coset $\bx+\cla{p}$. Since $\supp{\bx}\subseteq\qcsupp{p}$, we can find $\bw\in\pA$ such that $p+\bw\cnj p$ and $\bx+\bw\ge\mathbf{0}$. In particular $\bx+\cla{p}=(\bx+\bw)+\cla{p}$ and this coset is realized by the conjugacy class of $p+\bx+\bw$.

    To prove injectivity, suppose that we are given $\bw,\bw'\in\pA$ with $\supp{\bw},\supp{\bw'}\subseteq\qcsupp{p}$, and such that $p+\bw$ and $p+\bw'$ define the same coset $\bw+\cla{p}=\bw'+\cla{p}$. But then $\bw-\bw'\in\cla{p}=\cla{p}$ and we find $\bu,\bu'\in\pA$ such that $\bw-\bw'=\bu-\bu'$ and $p+\bu\cnj p+\bu'\cnj p$. This implies that $p+\bw\cnj p+\bw+\bu'=p+\bw'+\bu\cnj p+\bw'$, and injectivity follows.
\end{proof}

\subsection{The graph of families and the modular map}\label{sec:graph-of-families}

Let $(\Gamma,\psi)$ be a GBS graph and let $\Lambda$ be its affine representation.

\begin{defn}
    We say that $p,q\in V(\Lambda)$ are \textbf{in the same family} if $p\qcnj q$ and they are of the form $p=(v,\ba)$ and $q=(v,\bb)$ with $\supp{\bb-\ba}\subseteq\qcsupp{p}$, for some $v\in V(\Gamma)$ and $\ba,\bb\in\pA$.
\end{defn}

Being in the same family is an equivalence relation on $V(\Lambda)$; an equivalence class is called a \textbf{family} of vertices. Every family of vertices is contained in a quasi-conjugacy class (by definition), and it always contains at least one minimal point of the quasi-conjugacy class (by \Cref{prop:description-c-class}). In particular, each quasi-conjugacy class is partitioned into finitely many families.

\begin{defn}\label{def:graph-of-families}
    Define the \textbf{graph of families} $\cF(Q)$ of a quasi-conjugacy class $Q$ as follows:
    \begin{enumerate}
        \item $V(\cF(Q))$ is the finite set of families contained in the quasi-conjugacy class $Q$.
        \item $E(\cF(Q))$ is the set of triples $(F,e,F')\in V(\cF(Q))\times E(\Lambda)\times V(\cF(Q))$ such that there is a {\rm(}possibly trivial{\rm)} vector $\bw\in\pA$ with $\iota(e)+\bw\in F$ and $\tau(e)+\bw\in F'$.
        \item We set $\ol{(F,e,F')}=(F',\ol{e},F)$ and $\iota((F,e,F'))=F$ and $\tau((F,e,F'))=F'$.
    \end{enumerate}
\end{defn}

\begin{lem}\label{lem:family-unique-edge}
    Suppose that the vertices of $e\in E(\Lambda)$ lie in the quasi-conjugacy class $Q$. Then there are unique families $F,F'\in V(\cF(Q))$ such that $(F,e,F')\in E(\cF(Q))$, and they are the families containing $\iota(e)$ and $\tau(e)$ respectively.
\end{lem}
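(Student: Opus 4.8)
The plan is to prove existence and uniqueness separately, in both cases by directly unwinding Definition \ref{def:graph-of-families} together with the definition of families and of the quasi-conjugacy support in Definition \ref{def:qcsupp}. The whole argument is a bookkeeping exercise on supports, so the content is concentrated in a single observation about the witnessing translation vector.

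For existence, I would take the witnessing vector to be trivial. Write $F_0$ for the family containing $\iota(e)$ and $F_0'$ for the family containing $\tau(e)$. Choosing $\bw=\mathbf{0}$ in Definition \ref{def:graph-of-families} gives $\iota(e)+\mathbf{0}=\iota(e)\in F_0$ and $\tau(e)+\mathbf{0}=\tau(e)\in F_0'$. Since by hypothesis both endpoints of $e$ lie in $Q$, and every family is contained in a single quasi-conjugacy class, the families $F_0,F_0'$ are vertices of $\cF(Q)$; hence $(F_0,e,F_0')\in E(\cF(Q))$, establishing existence and identifying the families as those containing the endpoints.

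For uniqueness, suppose $(F,e,F')\in E(\cF(Q))$, witnessed by some $\bw\in\pA$ with $\iota(e)+\bw\in F$ and $\tau(e)+\bw\in F'$. I would show that $\iota(e)$ and $\iota(e)+\bw$ lie in the same family, forcing $F=F_0$, and symmetrically at $\tau(e)$. Write $\iota(e)=(v,\ba)$, so $\iota(e)+\bw=(v,\ba+\bw)$. Since $F\subseteq Q$, the vertex $\iota(e)+\bw$ lies in $Q$, as does $\iota(e)$; hence $\iota(e)+\bw\qcnj\iota(e)$. Because $\bw\in\pA$ and $\iota(e)+\bw\qcnj\iota(e)$, the definition of the quasi-conjugacy support (Definition \ref{def:qcsupp}, item 2) yields $\supp{\bw}\subseteq\qcsupp{\iota(e)}$. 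As $\supp{(\ba+\bw)-\ba}=\supp{\bw}$, this is exactly the condition for $\iota(e)$ and $\iota(e)+\bw$ to lie in the same family, so $F=F_0$. Applying the identical argument to $\tau(e)$, using the same vector $\bw$, gives $F'=F_0'$.

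The argument is a direct unwinding of definitions, so there is no serious obstacle; the one point requiring care is the uniqueness step, where it is essential that the translation vector $\bw$ witnessing the edge is itself a legitimate witness for the quasi-conjugacy support. This is precisely where the hypothesis that both endpoints of $e$ lie in $Q$ is used: it guarantees $\iota(e)+\bw\qcnj\iota(e)$ (and likewise at $\tau(e)$), so that $\bw$ appears in the union defining $\qcsupp{Q}$ and hence $\supp{\bw}\subseteq\qcsupp{Q}$. Without this, the relation $\supp{\bw}\subseteq\qcsupp{Q}$ could fail and the vertices $\iota(e)+\bw$ and $\iota(e)$ might lie in distinct families, so the statement would be false.
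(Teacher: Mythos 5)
Your proof is correct and follows essentially the same route as the paper: the paper's (terse) proof is exactly your uniqueness step — any witnessing vector $\bw$ lands $\iota(e)+\bw$ in $Q$, hence $\supp{\bw}\subseteq\qcsupp{Q}$ by the definition of quasi-conjugacy support, hence $\iota(e)+\bw$ is in the same family as $\iota(e)$, and similarly at $\tau(e)$. You additionally spell out the existence part (taking $\bw=\mathbf{0}$), which the paper leaves implicit; that is a fine, harmless addition.
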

\begin{proof}
    If for some $\bw\in\pA$ the vertex $\iota(e)+\bw$ lies in $Q$, then by the definition of quasi-conjugacy support, we have $\supp{\bw}\subseteq\qcsupp{Q}$, and by the definition of family, we have that $\iota(e)+\bw$ lies in the same family as $\iota(e)$. Similarly for $\tau(e)$.
\end{proof}

If the vertices of $e\in E(\Lambda)$ do not lie in $Q$, then the graph of families can contain several edges of the form $(F,e,F')$ corresponding to different translates of $e$. However, it will always contain finitely many edges, as there are finitely many possible triples in $V(\cF(Q))\times E(\Lambda)\times V(\cF(Q))$. Note also that the graph of families is connected: for every two families in the same quasi-conjugacy class, there must be an affine path connecting two points in the two families.

\begin{defn}\label{defn:modular_map}
    Define the \textbf{modular map} $q:E(\Gamma)\rightarrow\bA$ given by $q(e)=\bb-\ba$, where $\iota_\Lambda(e)=(v,\ba)$ and $\tau_\Lambda(e)=(u,\bb)$ for some $v,u\in V(\Gamma)$ and $\ba,\bb\in\pA$.
\end{defn}

Note that $q(\ol{e})=-q(e)$. In particular, we obtain a homomorphism $q:\pi_1(\Gamma)\rightarrow\bA$, which we call the \textbf{modular homomorphism}.

Let $Q$ be a quasi-conjugacy class and let $\cF(Q)$ be the corresponding graph of families. Then we have a modular map $q_Q:E(\cF(Q))\rightarrow\bA$ defined by $q_Q((F,e,F'))=q(e)$. Thus we can define a homomorphism $q_Q:\pi_1(\cF(Q))\rightarrow \bA$, which we call the \textbf{modular homomorphism of $Q$} .

\begin{prop}\label{prop:cla-and-modular-homomorphism}
    Let $Q$ be a quasi-conjugacy class and let $\cF(Q)$ be the corresponding graph of families. Then the image of $q_Q:\pi_1(\cF(Q))\rightarrow\bA$ is exactly $\cla{Q}$.
\end{prop}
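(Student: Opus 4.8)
The plan is to prove the two inclusions $\image(q_Q) \subseteq \cla{Q}$ and $\cla{Q} \subseteq \image(q_Q)$ separately, using the equivalent characterization of the linear algebra from \Cref{lem:linear-algebra}. First I would fix a basepoint family $F_0 \in V(\cF(Q))$ containing a distinguished vertex $p$ with $p \qcnj Q$, and recall that $\pi_1(\cF(Q))$ is generated by loops based at $F_0$. Since $q_Q$ is a homomorphism, it suffices to understand the image of such loops.

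For the inclusion $\image(q_Q) \subseteq \cla{Q}$, I would take a loop $\gamma = (F_0, e_1, F_1)(F_1, e_2, F_2)\cdots(F_{\ell-1}, e_\ell, F_0)$ in $\cF(Q)$ and show $q_Q(\gamma) \in \cla{Q}$. The idea is that each edge $(F_{i-1}, e_i, F_i)$ of the graph of families, by \Cref{def:graph-of-families}, comes with a translate $\bw_i \in \pA$ so that $\iota(e_i) + \bw_i$ and $\tau(e_i) + \bw_i$ lie in $F_{i-1}$ and $F_i$. Reading the loop as an affine path and chaining these translates, one can arrange a single vertex $(u, \ba)$ with $(u,\ba) \cnj (u, \ba + q_Q(\gamma))$ and both endpoints $\qcnj Q$. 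Because the modular map of an edge is $\bb - \ba = q(e)$, the total displacement around the loop is exactly $\sum_i q(e_i) = q_Q(\gamma)$, which lies in $\qcsupp{Q}$ since each family lives inside $Q$; thus both conditions of \Cref{lem:linear-algebra} hold and $q_Q(\gamma) \in \cla{Q}$.

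For the reverse inclusion $\cla{Q} \subseteq \image(q_Q)$, I would take $\br \in \cla{Q}$ and use \Cref{lem:linear-algebra} to obtain vertices $(u,\ba) \cnj (u, \ba+\br) \qcnj Q$ with $\supp{\br} \subseteq \qcsupp{Q}$. The conjugacy $(u,\ba) \cnj (u,\ba+\br)$ is witnessed by an affine path in $\Lambda$; the two endpoints differ by $\br$ and, since $\supp{\br} \subseteq \qcsupp{Q}$, they lie in the same family. Projecting this affine path to $\cF(Q)$ by recording, for each edge $e_i$ traversed, the triple of families it connects, yields a loop $\gamma$ in $\cF(Q)$ whose modular image $q_Q(\gamma) = \sum_i q(e_i)$ equals the total displacement $\br$ of the affine path. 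Hence $\br \in \image(q_Q)$.

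The main obstacle I expect is the careful bookkeeping in translating between affine paths in $\Lambda$ and combinatorial loops in $\cF(Q)$, in both directions. In particular, one must verify that the translation coefficients $\bw_i$ attached to edges of $\cF(Q)$ can be chosen consistently so that an abstract loop genuinely lifts to an affine path realizing the conjugacy $(u,\ba)\cnj(u,\ba+q_Q(\gamma))$, and conversely that an affine path witnessing a conjugacy projects to a well-defined loop with the correct modular image; the subtlety is that a single edge of $\Lambda$ may correspond to several edges of $\cF(Q)$ (for edges not lying in $Q$), so I must use \Cref{lem:family-unique-edge} to pin down the relevant families when the edges do lie in $Q$, and track the chosen translate otherwise. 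Ensuring the support condition $\supp{\br} \subseteq \qcsupp{Q}$ is preserved throughout — rather than merely the conjugacy condition — is what makes invoking \Cref{lem:linear-algebra} (as opposed to its weaker Condition \ref{itm:la-1}) essential.
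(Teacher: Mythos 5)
Your proposal is correct and follows essentially the same route as the paper's proof: both directions are established by translating between closed loops in $\cF(Q)$ and affine paths in $\Lambda$, with the same bookkeeping on translation coefficients, and the support condition in the forward direction coming from the fact that the lifted path's endpoints lie in a common family (your stated reason ``each family lives inside $Q$'' should be phrased this way, but your construction already guarantees it). The only cosmetic difference is that you invoke \Cref{lem:linear-algebra} in both inclusions, whereas the paper works directly from \Cref{def:qcsupp} — the content is identical.
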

\begin{proof}
    Fix $p\in Q$ and take $\br\in\cla{p}$. Then there is an affine path $(e_1,\dots,e_\ell)$, with translation coefficients $\bw_1,\dots,\bw_\ell$, going from $\iota(e_1)+\bw_1=p+\bw$ to $\tau(e_\ell)+\bw_\ell=p+\bw'$ with $\bw,\bw'\in\pA$ and $\bw'-\bw=\br$. For all $i=1,\dots,\ell-1$ we have that $\tau(e_i)+\bw_i=\iota(e_{i+1})+\bw_{i+1}$ must lie in some family $F_i\subseteq Q$. Thus we obtain a path $\sigma=((F_1,e_1,F_2),\dots,(F_{\ell-1},e_\ell,F_1))$ in $\cF(Q)$. It is easy to check that $q_Q(\sigma)=\bw'-\bw=\br$.

    Conversely, suppose that we are given a closed path $\sigma=((F_1,e_1,F_2),\dots,(F_{\ell-1},e_\ell,F_1))$ in $\cF(Q)$, for some families $F_i\subseteq Q$ with $i=1,\dots,\ell$. Then we can find $\bw_i\in\pA$ such that $\iota(e_i)+\bw_i\in F_i$ and $\tau(e_i)+\bw_i\in F_{i+1}$. It is easy to see that we can adjust $\bw_i$ in such a way that $\tau(e_i)+\bw_i=\iota(e_{i+1})+\bw_{i+1}$, obtaining an affine path $(e_1,\dots,e_\ell)$ from $\iota(e_1)+\bw_1$ to $\tau(e_\ell)+\bw_\ell$. In particular, we can check that $q_Q(\sigma)=\bw_\ell-\bw_1\in\cla{p}$ as $\iota(e_1)+\bw_1$ and $\tau(e_\ell)+\bw_\ell$ lie in the same family $F_1$.

    This proves that the image of the homomorphism $q_Q:\pi_1(\cF(Q))\rar\bA$ is exactly $\cla{p}$, as desired.
\end{proof}

\begin{rmk}
    Applying the modular map $q$ to a single edge is not usually very meaningful. If the two endpoints of $e$ are different vertices of $\Gamma$, then the difference $\bb-\ba$ does not make much sense, as we are comparing powers of generators of different vertices. It only makes sense when applied to paths that begin and terminate at the same vertex.

    However, if we are interested in studying a certain quasi-conjugacy class $Q$, then we need to refine the modular homomorphism even more. For example, if the two endpoints of $e$ are in the same vertex of $\Gamma$, but lie in different families of $Q$, then the difference $\bb-\ba$ still needs to be ignored. This is because it will give some linear algebra which is ``external'' to the quasi-conjugacy class $Q$. This is why we consider the modular homomorphism $q_Q:\pi_1(\cF(Q))\rightarrow\bA$ instead.

    For example, in \cite{ACK-iso1} we describe a procedure to make a GBS graph into a one-vertex GBS graph. The procedure will add a lot of loops to $\pi_1(\Gamma)$, adding noise to the modular homomorphism $q:\pi_1(\Gamma)\rightarrow\bA$. However, the procedure will preserve the graphs of families and the modular homomorphisms associated with the quasi-conjugacy classes. Considering the graph of families makes the noise invisible to us.
\end{rmk}

\subsection{Algorithmic computation of conjugacy classes}

All the concepts discussed in the previous Section \ref{sec:qc-support} can be computed algorithmically, as we now explain.

\begin{prop}\label{prop:qc-algorithm}
There is an algorithm that, given a GBS graph $(\Gamma,\psi)$ and a vertex $p$ of its affine representation $\Lambda$, computes the following:
\begin{enumerate}
    \item $\qcmin{p}$.
    \item $\qcsupp{p}$.
    \item For every $m\in\qcmin{p}$, an affine path from $m$ to a point $\ge p$.
    \item An affine path from $p$ to $p+\bw$ for some $\bw\in\pA$ with $\supp{\bw}=\qcsupp{p}$.
\end{enumerate}
\end{prop}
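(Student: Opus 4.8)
The plan is to recognise affine-path reachability in $\Lambda$ as reachability in a vector addition system with states (VASS), and then to extract all four objects using only \emph{coverability} (forward and backward), which is governed by Dickson's lemma and so never requires the Ackermann-hard full reachability problem. First I would encode the situation as follows: take the VASS whose states are the pairs $(v,\epsilon)\in V(\Gamma)\times\bbZ/2\bbZ$ (so the sign coordinate of $\bA$ is folded into the state) and whose counters are indexed by $\cP(\Gamma,\psi)$. An edge $e$ with $\iota_\Lambda(e)=(v,\ba)$ and $\tau_\Lambda(e)=(u,\bb)$ becomes a transition from a state $(v,\epsilon)$ to a state $(u,\epsilon')$ that is guarded by $\ba\le\bc$ on the $\cP(\Gamma,\psi)$-counters, increments those counters by the corresponding coordinates of $\bb-\ba$, and updates the sign by $b_0-a_0$; such a threshold guard is realised in a standard VASS by routing the transition through an auxiliary state. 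By the definition of affine path, a vertex $q$ is reachable from $p$ in this VASS if and only if $p\cnj q$, and reversing a run along the edges $\ol{e}$ shows that reachability is symmetric, so the reachable set $R(p)$ \emph{is} the conjugacy class of $p$.

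Next I would read off the data from two coverability computations. For the support, note that
$$r\in\qcsupp{p}\iff p+\be_r\in D,\qquad D:=\{q:\exists\,p'\cnj p\text{ with }q\le p'\},$$
the downward closure of $R(p)$: indeed $p+\be_r\in D$ provides $p'\cnj p$ with $p'\ge p+\be_r$, and $\bu:=p'-p\in\pA$ then satisfies $p+\bu\qcnj p$ with $r\in\supp{\bu}$; conversely any $\bu\in\pA$ with $p+\bu\qcnj p$ and $r\in\supp{\bu}$ yields, via $p+\bu\lecnj p$, such a $p'$. The set $D$ is the coverability set of $p$, computed by the Karp--Miller tree (its finite family of maximal $\omega$-markings), whose termination is an instance of Dickson's lemma; this produces $\qcsupp{p}$ (item 2). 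For the minimal points, observe that
$$\{q:q\qcnj p\}=D\cap U,\qquad U:=\{q:p\lecnj q\},$$
since $q\qcnj p$ means $q\lecnj p$ (that is, $q\in D$) together with $p\lecnj q$ (that is, $q\in U$). Here $U$ is the set of configurations from which $p$ is coverable; it is upward closed and a finite basis for it is computed by the backward saturation $U=\mathrm{Pre}^*(\mathord{\uparrow}p)$, again terminating by Dickson's lemma. Intersecting the downward-closed set $D$ with the upward-closed set $U$ and extracting the $\le$-minimal elements yields $\qcmin{p}$ (item 1), which is finite by \Cref{lem:qcmin-finite}.

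It remains to output the witnessing affine paths. For item 3, every $m\in\qcmin{p}$ lies in $U$, so from $m$ one can cover $p$; the firing sequence of a covering run is exactly an affine path from $m$ to a point $\ge p$. For item 4, for each $r\in\qcsupp{p}$ the membership $p+\be_r\in D$ is witnessed by a forward run from $p$ to a point $p+\bu_r\cnj p$ with $r\in\supp{\bu_r}$; concatenating suitably translated copies of these runs, exactly as in the proof of \Cref{lem:qc-supp-single-vector}, produces an affine path from $p$ to $p+\bw$ with $p+\bw\cnj p$ and $\supp{\bw}=\qcsupp{p}$.

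The main conceptual point is the reduction of all four items to coverability rather than to reachability: we never decide $p\cnj q$ for a prescribed $q$, only the downward closure $D$ and the cover-predecessor set $U$, both produced by Dickson-terminating saturations. The genuinely delicate step is turning the accelerated $\omega$-markings of the Karp--Miller construction into \emph{explicit} affine paths: an $\omega$ in coordinate $r$ records only that arbitrarily large values are coverable, so recovering a concrete witness (needed for items 3 and 4) requires unfolding the pumping loop that triggered the acceleration and checking that the guards $\ba\le\bc$ survive along the unfolding. The remaining obstacles are bookkeeping that must nonetheless be discharged: encoding the threshold guards and the sign coordinate $\bbZ/2\bbZ$ as a genuine VASS, and verifying that the order on configurations (which ignores the sign coordinate) is still a well-quasi-order, so that every saturation terminates.
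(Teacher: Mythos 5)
Your proposal is correct, but it takes a genuinely different route from the paper's proof. The paper runs a bespoke forward saturation directly on the affine representation: it maintains a candidate set $M$ of minimal points, a candidate support $S$, and witnessing affine paths $\sigma_m,\sigma$; it fires edges from the cone $M^{+S}$, updating $M$ or $S$ whenever this escapes the cone; termination is proved via monomial ideals (Hilbert's basis theorem, i.e.\ Dickson's lemma in disguise), and correctness by showing that the saturated $M^{+S}$ is closed under affine moves and hence equals the quasi-conjugacy class. You instead reduce everything to standard VASS technology: affine reachability is exactly reachability in your guarded VASS (the threshold-guard encoding via auxiliary states and the sign-in-the-state bookkeeping are both sound; note that the guards and the order on $V(\Lambda)$ ignore the sign coordinate, transitions remain monotone for that coarser well-quasi-order, and the quasi-conjugacy class is itself sign-blind, so the coarsening loses nothing), and the class is recovered as $D\cap U$, with $D$ the Karp--Miller coverability set and $U$ the backward-coverability predecessor set. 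Your two characterizations --- $r\in\qcsupp{p}\iff p+\be_r\in D$, and $\{q: q\qcnj p\}=D\cap U$ --- are correctly proved in your text, and extracting $\qcmin{p}$ as the minimal elements of $D\cap U$ from the ideal representation of $D$ and the basis of $U$ is indeed routine. What your route buys is off-the-shelf termination and correctness (Dickson's lemma enters only through cited algorithms) together with a clean conceptual dictionary: conjugacy is VASS reachability, $\lecnj$ is coverability, and only coverability-type procedures are ever needed; what the paper's route buys is self-containedness and the fact that the witnessing paths of items 3 and 4 come for free, since they are carried along in the data structure rather than reconstructed afterwards. One simplification for the step you flag as delicate: there is no need to unfold Karp--Miller accelerations to produce explicit paths. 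Once the relevant coverability facts have been decided affirmatively, a witnessing firing sequence exists, so exhaustive enumeration of runs terminates and finds one; your concatenation of translated runs, as in the proof of \Cref{lem:qc-supp-single-vector}, then completes item 4.
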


\begin{proof}[Description of the algorithm]
To run the algorithm, we can ignore the component $\bbZ/2\bbZ$, as long as we remember to add it to all minimal points at the end of the procedure. If the GBS graph $(\Gamma,\psi)$ has negative labels, we replace them by their absolute values. Thus, below we work with $\bA=\bbZ^{\cP(\Gamma,\psi)}$. At each step, the algorithm keeps in memory
\begin{enumerate}
    \item A finite set of vertices $M\subseteq V(\Lambda)$.
    \item A finite set of components $S\subseteq\cP(\Gamma,\psi)$.
    \item\label{itm:sigmaq} For every $m\in M$, an affine path $\sigma_m$ from $m$ to $p+\bx_m$ for some $\bx_m\in\pA$ with $\supp{\bx_m}\subseteq S$.
    \item\label{itm:sigma} An affine path $\sigma$ from $p$ to $p+\bz$ for some $\bz\in\pA$ with $\supp{\bz}=S$.
\end{enumerate}

CLEAN UP ROUTINE. For every  distinct $m,m'\in M$  we check whether or not $m'\ge m$. If so, we write $m'=m+\br$ for some $\br\in\pA$ and we perform the following operations:
\begin{enumerate}
    \item We change $S$ into $S\cup\supp{\br}$
    \item We change the path $\sigma$ into $\sigma^d\ol{\sigma}_{m'}\sigma_m$ for some sufficiently large natural number $d$.
    \item We remove $m'$ from $M$, and we forget the path $\sigma_{m'}$.
\end{enumerate}
We reiterate until there is no $m,m'\in M$ so that $m'\ge m$.

MAIN ALGORITHM. We initialize $M=\{p\}$ with $\sigma_p$ trivial. We initialize $S=\emptyset$ with $\sigma$ trivial. Then we repeat the following iteration until either of the sets $M,S$ changes. 

\textbf{Iteration:} We write a list of all the pairs $(m,e)\in M\times E(\Lambda)$. For each pair $(m,e)$, we look for the minimum $\bw\in\pA$ such that $\iota(e)+\bw=m+\bs$ for some $\bs\in\pA$ with $\supp{\bs}\subseteq S$. If it does not exist, then we just ignore that pair, and go on examining the other pairs $(m,e)$. If it exists, then it is unique, and we proceed as follows:
\begin{enumerate}
    \item Suppose that there is no $m'\in M$ such that $\tau(e)+\bw\ge m'$. In this case we change $M$ into $M\cup\{\tau(e)+\bw\}$ and we set $\sigma_{\tau(e)+\bw}=\ol{e}\sigma_m$. Then we run the clean up routine, and we start a new iteration (i.e. we write again the list of all pairs $(m,e)\in M\times E(\Lambda)$ and so on).
    \item Suppose that there is $m'\in M$ such that $\tau(e)+\bw=m'+\br$ with $\br\in\pA$ and $\supp{\br}\not\subseteq S$. Then we change $S$ into $S\cup\supp{\bs'}$, and we change the path $\sigma$ into $\sigma^d\ol{\sigma}_me\sigma_{m'}$ for some big enough natural number $d$. Then we run the clean up routine, and we start a new iteration (i.e. we write again the list of all pairs $(m,e)\in M\times E(\Lambda)$ and so on).
    \item Suppose that there is $m'\in M$ such that $\tau(e)+\bw\ge m'$, and for each such $m'$ we have that $\tau(e)+\bw=m'+\br$ with $\supp{\br}\subseteq S$. Then we just ignore that pair, and proceed examining the other pairs $(m,e)$.
\end{enumerate}
When we run a full iteration without changing the sets $M,S$, then we exit the cycle. The algorithm terminates and outputs $\qcmin{p}=M$, $\qcsupp{p}=S$, the affine paths $\sigma_m$ for $m\in M$, the affine path $\sigma$.
\end{proof}

\begin{proof}[Proof that the algorithm works]
Observe that we can add elements to the set $S$ only finitely many times (since $S$ is a subset of a finite set). As in the proof of Lemma \ref{lem:qcmin-finite}, we consider the ring of polynomials $\bbC[x_i : i\in\cP(\Gamma,\psi)]$ and the ideal $(x^\bm : (u,\bm)\in M)$. This ideal strictly increases every time that the algorithm adds a new element to $M$; since rings of polynomial are Noetherian, this can happen only finitely many times. This proves that we can change the set $M$ only finitely many times, and thus the algorithm terminates.

It is easy to prove by induction that at each step of the algorithm $M$ is contained in the quasi-conjugacy class of $p$, $S\subseteq\qcsupp{p}$, the paths $\sigma_m$ for $m\in M$ has the property of \Cref{itm:sigmaq}, the path $\sigma$ has the property of \Cref{itm:sigma}. In particular, in the algorithm, when we substitute $\sigma$ by $\sigma^d\ol{\sigma}_me\sigma_{m'}$, we are always able to (algorithmically) find an integer $d$ such that the substitution gives another affine path starting at $p$.

When the algorithm terminates, consider the set of vertices $M^{+S}=\{m+\bw : m\in M$ and $\bw\in\pA$ with $\supp{\bw}\subseteq S\}$ and observe that $M^{+S}$ is contained in the quasi-conjugacy class of $p$. But since no pair $(m,e)$ changes the sets $M,S$, we have that no translate of any edge $e$ can be used to move from a point in $M^{+S}$ to a point outside $M^{+S}$. This proves that $M^{+S}$ is closed under quasi-conjugacy, and thus $M^{+S}$ is the quasi-conjugacy class of $p$. Since every time we add a vertex to $M$ we also run the clean up routine, it follows that $M=\qcmin{p}$. This also implies that $S=\qcsupp{p}$. We conclude that the output of the algorithm is correct.
\end{proof}

\begin{prop}\label{prop:c-algorithm}
    There is an algorithm that, given a GBS graph $(\Gamma,\psi)$ and a vertex $p$ of its affine representation $\Lambda$, computes the following:
    \begin{enumerate}
        \item A finite set of generators for $\cla{p}$.
        \item For each generator $\br$, an affine path $\sigma_\br$ satisfying condition {\rm\ref{itm:la-1}} of {\rm Lemma \ref{lem:linear-algebra}}.
    \end{enumerate}
\end{prop}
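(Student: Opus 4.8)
The plan is to identify $\cla{p}$ with the image of the modular homomorphism on the fundamental group of the graph of families, as established in \Cref{prop:cla-and-modular-homomorphism}, and then make every ingredient of that identification effective. Let $Q$ be the quasi-conjugacy class of $p$, so that $\cla{p}=\cla{Q}$ equals the image of $q_Q:\pi_1(\cF(Q))\to\bA$. Since $\cF(Q)$ is a finite connected graph, $\pi_1(\cF(Q))$ is free of finite rank, and the images under $q_Q$ of any free generating set generate $\cla{p}$. Thus it suffices to (i) build $\cF(Q)$ explicitly, (ii) read off a free generating set of $\pi_1(\cF(Q))$ from a spanning tree, and (iii) push these generators through $q_Q$, lifting each resulting loop to the required affine path.

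First I would run the algorithm of \Cref{prop:qc-algorithm} on $(\Gamma,\psi)$ and $p$ to obtain the finite set $\qcmin{p}$, the support $\qcsupp{p}$, and the auxiliary affine paths it produces. Using these, I partition $\qcmin{p}$ into families: by the definition preceding \Cref{def:graph-of-families}, two minimal points $(v,\ba),(v,\bb)$ at the same vertex lie in the same family exactly when $\supp{\bb-\ba}\subseteq\qcsupp{p}$, a condition that is trivially checkable, and this recovers all families since every family contains a minimal point. I would then construct the edge set of $\cF(Q)$: for each edge $e\in E(\Lambda)$ and each ordered pair of families $(F,F')$, I must decide whether there is a translate $\bw\in\pA$ with $\iota(e)+\bw\in F$ and $\tau(e)+\bw\in F'$. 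Membership of a translate in $Q$ is decidable by \Cref{prop:description-qc-class} (a point lies in $Q$ iff it has the form $m+\bu$ with $m\in\qcmin{p}$ and $\supp{\bu}\subseteq\qcsupp{p}$), and deciding which family it falls into is the same support test; enumerating over the finitely many edges and pairs of families yields the complete, finite, connected graph $\cF(Q)$.

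Finally, choosing a spanning tree $T$ of $\cF(Q)$, each geometric edge not in $T$ contributes one free generator of $\pi_1(\cF(Q))$, so the corresponding elements $q_Q(g_a)\in\bA$ form a finite generating set of $\cla{p}$. For each such generator $\br=q_Q(g_a)$ I would produce the required affine path by following the lifting construction in the proof of \Cref{prop:cla-and-modular-homomorphism}: the closed path $g_a$ in $\cF(Q)$ lifts to an affine path in $\Lambda$ whose endpoints are conjugate and differ by $\br$, which is precisely condition {\rm\ref{itm:la-1}} of \Cref{lem:linear-algebra}.

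The hard part will be the effective construction of the edge set of $\cF(Q)$ --- that is, certifying, for each edge and each candidate pair of families, the existence or non-existence of a translate carrying both endpoints into those families --- since this is the step where $Q$-membership and the support conditions must be combined into a single decidable feasibility test over the positive cone $\pA$. Once this is in place, the spanning-tree choice, the extraction of free generators, and the lifting of loops to affine paths are routine, so I expect the correctness proof to reduce to verifying that the enumeration is exhaustive and that the lifts indeed satisfy the stated condition.
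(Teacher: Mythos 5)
Your proposal is correct and follows essentially the same route as the paper's proof: compute $\qcmin{p}$ and $\qcsupp{p}$ via Proposition \ref{prop:qc-algorithm}, construct the graph of families $\cF(Q)$ algorithmically, and invoke Proposition \ref{prop:cla-and-modular-homomorphism} to read off a finite generating set of $\cla{p}$ (with the corresponding affine paths) from the images under $q_Q$ of free generators of $\pi_1(\cF(Q))$. The step you flag as the hard part --- deciding, for an edge $e$ and a pair of families $(F,F')$, whether a translate $\bw\in\pA$ places both endpoints in $F$ and $F'$ --- reduces to a finite coordinatewise feasibility check over the minimal points of $F$ and $F'$ (equality forced on coordinates outside $\qcsupp{p}$, a lower bound on the rest), which is why the paper simply records $E(\cF(Q))$ as a computable subset of the finite set $V(\cF(Q))\times E(\Gamma)\times V(\cF(Q))$; there is no gap.
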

\begin{proof}
    Let $Q$ be the quasi-conjugacy class of $p$. By Proposition \ref{prop:qc-algorithm}, we can algorithmically compute $\qcsupp{Q}$ and the finite set $M=\qcmin{Q}$. We can also algorithmically compute the graph of families $\cF(Q)$ (see \Cref{def:graph-of-families}), as $V(\cF(Q))$ is a computable quotient of the finite set $M$ and the set of edges is a computable subset of the finite set $V(\cF(Q))\times E(\Gamma)\times V(\cF(Q))$. Moreover, we can give an algorithmic description of the modular homomorphism $q_Q:\pi_1(\cF(Q))\rightarrow\bA$: given a loop $\theta$ in $\cF(Q)$, we can algorithmically compute $q_Q(\theta)\in\bA$. In particular we can compute a set of generators for $\pi_1(\cF(Q))$ and their images through the homomorphism $q_Q$. According to \Cref{prop:cla-and-modular-homomorphism}, the images are the desired set of generators for $\cla{Q}$ and the loops in $\pi_1(\cF(Q))$ are the corresponding affine paths (for some suitable translation coefficients), as desired.
\end{proof}

\begin{cor}\label{cor:algorithmic-paths}
    There is an algorithm that, given a GBS graph $(\Gamma,\psi)$ and two vertices $p,q$ of its affine representation $\Lambda$ 
    \begin{itemize}
        \item Decides whether $p\qcnj q$, and in case they are, computes two affine paths, one from each of them to a vertex above the other.
        \item Decides whether $p\cnj q$, and in case they are, computes an affine path from one to the other.
    \end{itemize}
\end{cor}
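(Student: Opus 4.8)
The plan is to reduce both decision problems entirely to the constructive data produced by the algorithms of \Cref{prop:qc-algorithm} and \Cref{prop:c-algorithm}, interpreted through the structural descriptions of \Cref{prop:description-qc-class} and \Cref{prop:description-c-class}. Throughout I would exploit the fact that every affine path can be translated by any $\bw\in\pA$, reversed, iterated, and concatenated, so that assembling witnesses amounts to careful bookkeeping of translation coefficients.

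First I would settle quasi-conjugacy. Running the algorithm of \Cref{prop:qc-algorithm} on $p$ produces the finite set $\qcmin{p}$, the support $\qcsupp{p}$, an affine path $\sigma_m$ from each $m\in\qcmin{p}$ to a vertex $p+\bx_m\ge p$ with $\supp{\bx_m}\subseteq\qcsupp{p}$, and an affine path $\sigma$ from $p$ to $p+\bz$ with $\supp{\bz}=\qcsupp{p}$. By \Cref{prop:description-qc-class}, deciding $q\qcnj p$ reduces to checking, for each of the finitely many $m\in\qcmin{p}$, whether $q\ge m$ and $\supp{q-m}\subseteq\qcsupp{p}$; this is a finite computation. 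If some $m$ works, write $q=m+\bv$ with $\bv\in\pA$ and $\supp{\bv}\subseteq\qcsupp{p}$. Translating $\sigma_m$ by $\bv$ gives an affine path from $q$ to the vertex $p+\bx_m+\bv\ge p$. For the opposite direction, I would iterate $\sigma$ to reach $p+d\bz$ and, choosing $d$ large enough that $d\bz\ge\bx_m+\bv$ (possible since $\supp{\bz}=\qcsupp{p}$ contains both supports), concatenate with the appropriate $(d\bz-\bx_m)$-translate of $\ol{\sigma}_m$ to arrive at $m+(d\bz-\bx_m)\ge q$. This yields the two required affine paths.

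For conjugacy I would first note that $p\cnj q$ forces $p\qcnj q$, so if the previous step reports $p\not\qcnj q$ we conclude $p\not\cnj q$. Otherwise that step already produces an explicit affine path $\rho$ from $q$ to a vertex $p+\bw$ with $\bw\in\pA$ and $\supp{\bw}\subseteq\qcsupp{p}$. Applying Item \ref{itm:c-class-above-p} of \Cref{prop:description-c-class} with second argument $\mathbf{0}$ gives that $q\cnj p$ if and only if $p+\bw\cnj p$ if and only if $\bw\in\cla{p}$. Since \Cref{prop:c-algorithm} computes a finite generating set of the subgroup $\cla{p}\le\bA$, membership $\bw\in\cla{p}$ is a decidable linear-algebra problem over $\bbZ$ together with the $\bbZ/2\bbZ$ factor (via integer linear systems / Smith normal form). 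This decides conjugacy.

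It remains, when $\bw\in\cla{p}$, to exhibit an explicit affine path from $p$ to $q$. Expressing $\bw$ as an integer combination of the generators of $\cla{p}$, each of which comes with a realizing path of the type in condition \ref{itm:la-1} of \Cref{lem:linear-algebra}, I would assemble a path $\alpha$ witnessing $p+\bw_0\cnj p+\bw_0+\bw$, following the additivity construction in the proof of \Cref{lem:cla-subgroup} together with the passage of \Cref{lem:linear-algebra} that transports such data to the base point $p$; here $\bw_0=d\bz$ is chosen so that $p+\bw_0\cnj p$ (via $\sigma^d$) and $\bw_0+\bw\ge\mathbf{0}$. Concatenating $\sigma^d$ (from $p$ to $p+\bw_0$), then $\alpha$ (from $p+\bw_0$ to $p+\bw_0+\bw$), then the $\bw$-translate of $\ol{\sigma^d}$ (from $p+\bw_0+\bw$ to $p+\bw$), and finally $\ol{\rho}$ (from $p+\bw$ to $q$) produces the desired affine path from $p$ to $q$; alternatively one can read off the path $\alpha$ directly from the converse direction of \Cref{prop:cla-and-modular-homomorphism} applied to a loop in $\cF(Q)$ mapping to $\bw$. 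The main obstacle is this last assembly: every ingredient is already constructive, but one must track the translation coefficients precisely so that the concatenated edges genuinely form a single affine path beginning exactly at $p$ and ending exactly at $q$, rather than at merely conjugate or translated vertices.
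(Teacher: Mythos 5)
Your proof is correct and takes essentially the approach the paper intends: the paper states this result as an immediate corollary of \Cref{prop:qc-algorithm} and \Cref{prop:c-algorithm} (read through \Cref{prop:description-qc-class} and \Cref{prop:description-c-class}) without writing out a proof, and your argument is precisely that derivation, with membership in $\cla{p}$ decided by integer linear algebra on the computed generating set. The translation bookkeeping you flag as the ``main obstacle'' is exactly the detail the paper leaves implicit, and your assembly of the witnessing affine paths (either via the generators' realizing paths or via a loop in $\cF(Q)$ as in \Cref{prop:cla-and-modular-homomorphism}) handles it correctly.
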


\begin{remark}
Notice that two points are conjugate if and only if the corresponding elliptic elements in the GBS group are conjugate. Therefore, Corollary \ref{cor:algorithmic-paths} provides a self-contained algorithm to solve the conjugacy problem for elliptic elements in a GBS group. For hyperbolic elements the conjugacy problem follows essentially from Britton's lemma. Note that the conjugacy problem for GBS groups was proven to be decidable in \cite{L92, B15}, see also \cite{W16} for complexity bounds.
\end{remark}

\subsection{Examples}

We give two examples aimed to help the reader understand how conjugacy classes and quasi-conjugacy classes work. In the examples along the paper, for simplicity of notation, we will only deal with GBS graphs $(\Gamma,\psi)$ with positive labels. For this reason we will use $\bA=\bbZ^{\cP(\Gamma,\psi)}$ omitting the $\bbZ/2\bbZ$ summand.

\begin{ex}\label{ex1part1}
    Later in the paper (see Example \ref{ex1part2}), we will describe the isomorphism problem for this example. Consider the GBS graph $(\Gamma,\psi)$ with a single vertex $v$ and nine edges, with labels as in Figure \ref{fig:example1}. We call $g$ the generator of $G_v\cong\bbZ$. Since the labels only use the prime factors $2$ and $3$, we have that $\cP(\Gamma,\psi)=\{2,3\}$ and we use $\bA=\bbZ^2$. The number $2^a3^b$ will be associated to the element $(a,b)\in\pA$ for $a,b\in\bbN$.
    
    Consider the point $p=(v,(1,3))$ corresponding to the element $g^{2^13^3}=g^{54}$. The quasi-conjugacy class of $p$ consists of only two points $(v,(1,3))$, $(v,(3,2))$, corresponding to the elements $g^{54},g^{72}$ respectively, see Figure \ref{fig:example1-qcclasses}. In this case $\qcsupp{p}=\emptyset$ and $\qcmin{p}$ is the set of the two points of the quasi-conjugacy class, which is also a conjugacy class. This means that $g^n$ for $n\ge1$ is conjugate to $g^{54}$ if and only if $n=54,72$. In this case $\cla{p}=0$.

    Consider the point $p=(v,(4,1))$ corresponding to the element $g^{2^43^1}=g^{48}$. The quasi-conjugacy class of $p$ consists of the points $(v,(k,1))$ for $k\ge 4$, corresponding to the elements $g^{2^k\cdot 3}$ for $k\ge 4$. In this case $\qcsupp{p}=\{2\}$ and $\qcmin{p}=\{(v,(4,1))\}$. This quasi-conjugacy class is partitioned into two conjugacy classes, depending on the parity of $k$. This means that $g^n$ for $n\ge1$ is conjugate to $g^{48}$ if and only if $n=2^k\cdot 3$ for $k\ge 4$ even. In this case $\cla{p}=\gen{(2,0)}$.

    Consider the point $p=(v,(2,3))$ corresponding to the element $g^{108}$. In this case $\qcsupp{p}=\{2,3\}$ and $\qcmin{p}=\{(2,3),(4,2)\}$ and $\cla{p}=\gen{(2,0),(0,1)}$. The quasi-conjugacy class is partitioned into two conjugacy classes, as $(\bbZ^{\qcsupp{p}})/\cla{p}\cong\bbZ/2\bbZ$ which has cardinality $2$.
\end{ex}

\begin{figure}[H]
\includegraphics[width=\textwidth]{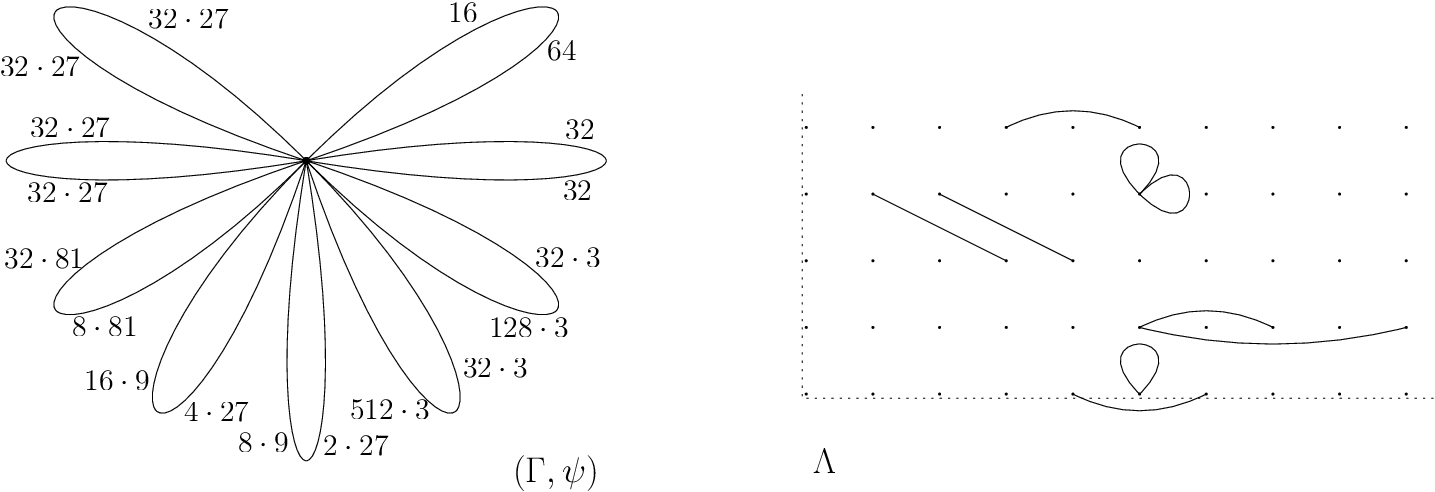}
\centering
\caption{On the left a GBS graph $(\Gamma,\psi)$ with one vertex and nine edges. On the right the corresponding affine representation $\Lambda$. On the horizontal axis the number of factors $2$ and on the vertical axis the number of factors $3$.}
\label{fig:example1}
\end{figure}

\begin{figure}[H]
\includegraphics[width=\textwidth]{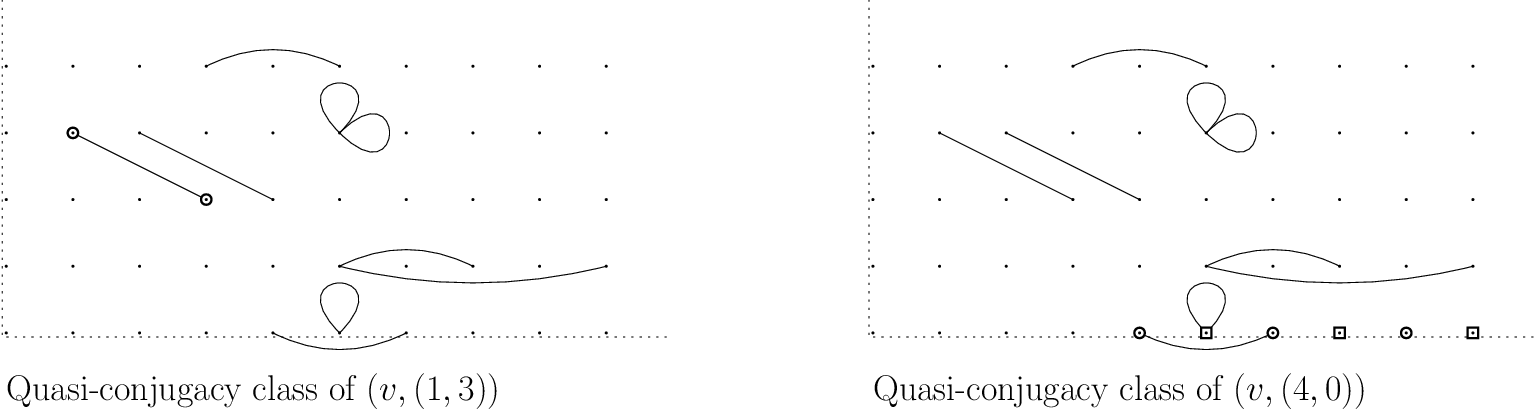}

\ 

\includegraphics[width=\textwidth]{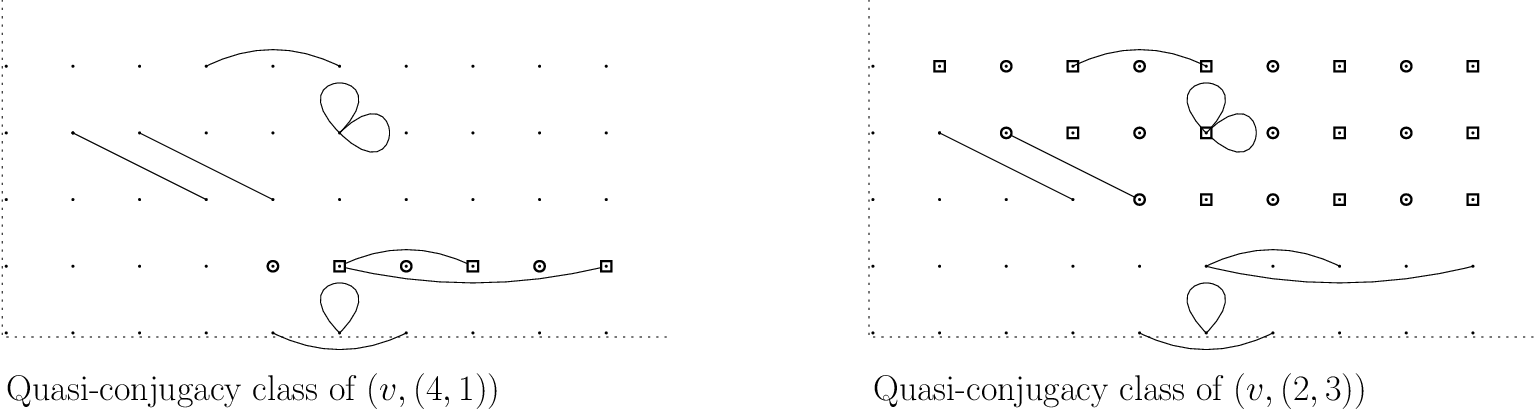}
\centering
\caption{In the figure we see four copies of the same affine representation $\Lambda$ associated with Example \ref{ex1part1}. In each copy, we put in evidence a different quasi-conjugacy class. We can see the quasi-conjugacy class of $(1,3)$ (top left), of $(4,0)$ (top right), of $(4,1)$ (bottom left), of $(2,3)$ (bottom right). To distinguish different conjugacy classes inside a common quasi-conjugacy class, we mark the vertices with different symbols (circles, squares).}
\label{fig:example1-qcclasses}
\end{figure}

\begin{ex}\label{ex2part1}
    This example is taken from \cite{Wan25}. Later in the paper (see Example \ref{ex2part2}), we will describe the isomorphism problem for this example. Consider the GBS graph with a single vertex $v$ and four edges, with labels as in Figure \ref{fig:example2}. The prime numbers of this GBS graph are $\cP(\Gamma,\psi)=\{2,3,5,7\}$. The number $2^a 3^b 5^c 7^d$ will be associated to the element $(a,b,c,d)\in\pA$ for $a,b,c,d\in\bbN$. We call $\Lambda$ the affine representation of $(\Gamma,\psi)$: this consists of a single copy $\pA_v$ of $\pA$ containing four edges
    $$\begin{cases}
    (3,0,0,0)\edge (1,0,1,0)\\
    (1,1,0,0)\edge (0,1,1,0)\\
    (0,1,0,1)\edge (1,1,1,0)\\
    (1,0,0,1)\edge (1,1,1,0)
    \end{cases}$$

    Consider the point $p=(v,(3,0,0,0))$. Its quasi-conjugacy class is finite, with $\qcsupp{p}=\emptyset$ and minimal points $(3,0,0,0),(1,0,1,0)$ inside $\pA_v$. It coincides with a conjugacy class and $\cla{p}=0$.

    Consider the point $p=(v,(1,1,0,0))$. Its quasi-conjugacy class is finite, with $\qcsupp{p}=\emptyset$ and minimal points $(1,1,0,0),(0,1,1,0)$ inside $\pA_v$. It coincides with a conjugacy class and $\cla{p}=0$.

    Consider the point $p=(v,(0,1,0,1))$. Its quasi-conjugacy class has $\qcsupp{p}=\{2,3,5,7\}$ and minimal points $(2,1,0,0),(1,1,1,0),(0,1,2,0),(1,0,0,1),(0,1,0,1)$ inside $\pA_v$. It coincides with a conjugacy class since $\cla{p}=\bbZ^{\qcsupp{p}}$.
\end{ex}

\begin{figure}[H]
\centering
\includegraphics[width=0.4\textwidth]{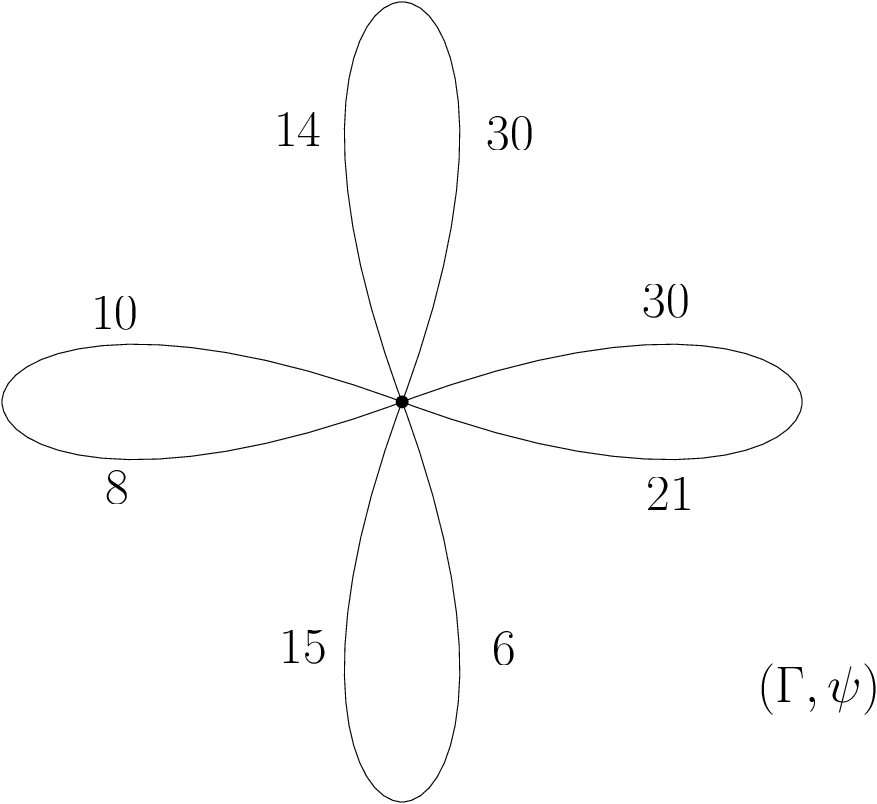}
\centering
\caption{In the picture a GBS graph $(\Gamma,\psi)$ with one vertex and four edges.}
\label{fig:example2}
\end{figure}

\section{Moves on GBS graphs}

In this section, we recall from \cite{For06} and \cite{ACK-iso1} the definition of several moves that, given a GBS graph, produce another GBS graph with isomorphic fundamental group, and review some literature on the topic.

\subsection{List of moves}

Let $(\Gamma,\psi)$ be a GBS graph.

\paragraph{Vertex sign-change.}

Let $v\in V(\Gamma)$ be a vertex. Define the map $\psi':E(\Gamma)\rar\bZ\setminus\{0\}$ such that $\psi'(e)=-\psi(e)$ if $\tau(e)=v$ and $\psi'(e)=\psi(e)$ otherwise. We say that the GBS graph $(\Gamma,\psi')$ is obtained from $(\Gamma,\psi)$ by a \textbf{vertex sign-change}. If $\cG$ is the GBS graph of groups associated to $(\Gamma,\psi)$, then the vertex sign change move corresponds to changing the chosen generator for the vertex group $G_v$.

\paragraph{Edge sign-change.}

Let $(\Gamma,\psi)$ be a GBS graph. Let $d\in E(\Gamma)$ be an edge. Define the map $\psi':E(\Gamma)\rar\bZ\setminus\{0\}$ such that $\psi'(e)=-\psi(e)$ if $e=d,\ol{d}$ and $\psi'(e)=\psi(e)$ otherwise. We say that the GBS graph $(\Gamma,\psi')$ is obtained from $(\Gamma,\psi)$ by an \textbf{edge sign-change}. If $\cG$ is the GBS graph of groups associated to $(\Gamma,\psi)$, then the vertex sign change move corresponds to changing the chosen generator for the edge group $G_d$.

\paragraph{Slide.}

Let $d,e$ be distinct edges with $\tau(d)=\iota(e)=u$ and $\tau(e)=v$; suppose that $\psi(\ol{e})=n$ and $\psi(e)=m$ and $\psi(d)=\ell n$ for some $n,m,\ell\in\bbZ\setminus\{0\}$ (see Figure \ref{fig:slide}). Define the graph $\Gamma'$ by replacing the edge $d$ with an edge $d'$; we set $\iota(d')=\iota(d)$ and $\tau(d')=v$; we set $\psi(\ol{d'})=\psi(\ol{d})$ and $\psi(d')=\ell m$. We say that the GBS graph $(\Gamma',\psi)$ is obtained from $(\Gamma,\psi)$ by a \textbf{slide}. At the level of the affine representation, we have an edge $p\edge q$ and we have another edge with an endpoint at $p+\ba$ for some $\ba\in\pA$. The slide has the effect of moving the endpoint from $p+\ba$ to $q+\ba$ (see Figure \ref{fig:slide}),
$$\begin{cases}
p\edge  q\\
r\edge  p+\ba
\end{cases}
\xrightarrow{\text{slide}}\quad
\begin{cases}
p\edge  q\\
r\edge  q+\ba
\end{cases}$$

\begin{figure}[H]
\centering

\includegraphics[width=0.8\textwidth]{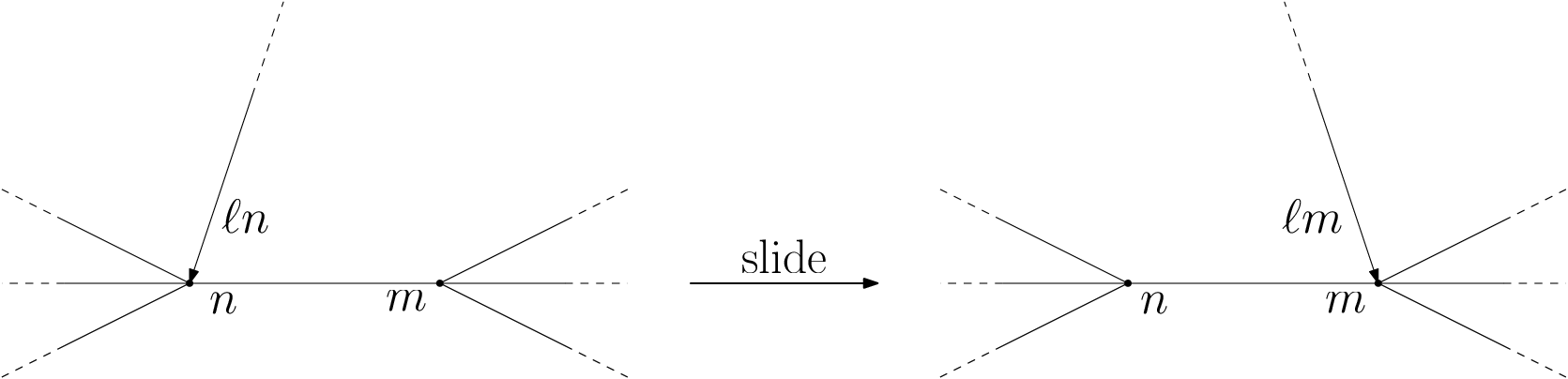}

\vspace{1cm}

\includegraphics[width=0.8 \textwidth]{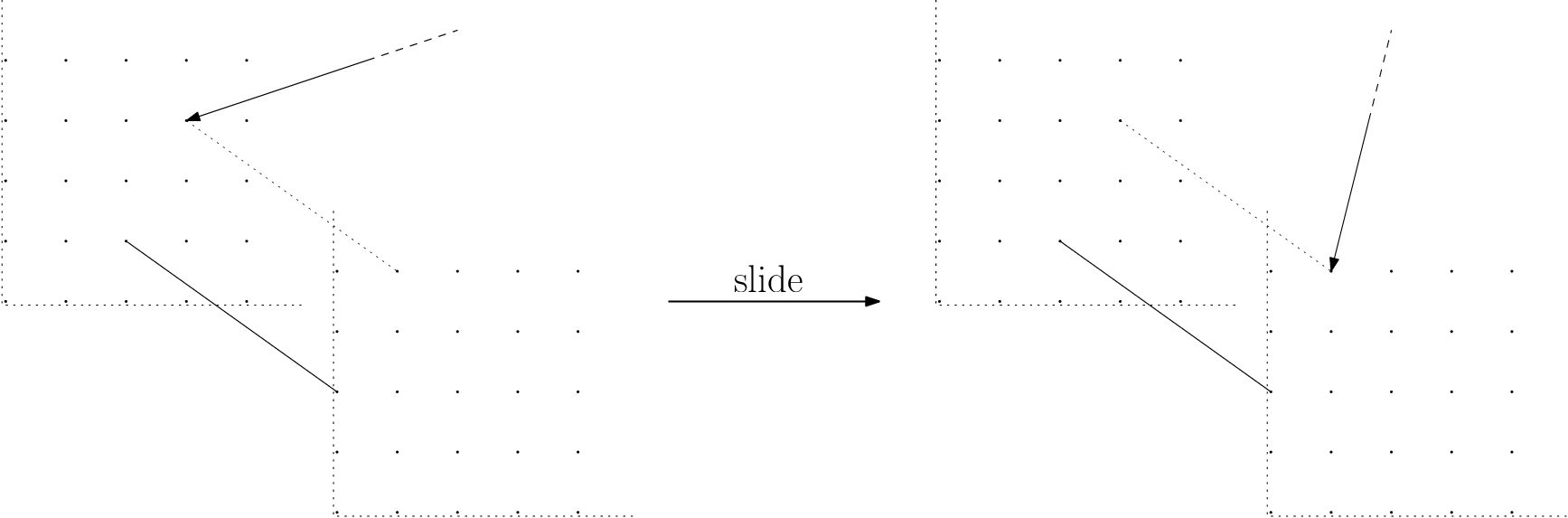}

\caption{An example of a slide move. Above you can see the GBS graphs. Below you can see the corresponding affine representations.}
\label{fig:slide}
\end{figure}

\paragraph{Induction.}

Let $(\Gamma,\psi)$ be a GBS graph. Let $e$ be an edge with $\iota(e)=\tau(e)=v$; suppose that $\psi(\ol{e})=1$ and $\psi(e)=n$ for some $n\in\bbZ\setminus\{0\}$, and choose $\ell\in\bbZ\setminus\{0\}$ and $k\in\bbN$ such that $\ell\divides n^k$. Define the map $\psi'$ equal to $\psi$ except on the edges $d\not=e,\ol{e}$ with $\tau(e)=v$, where we set $\psi'(d)=\ell\cdot\psi(d)$. We say that the GBS graph $(\Gamma,\psi')$ is obtained from $(\Gamma,\psi)$ by an \textbf{induction}. At the level of the affine representation, we have an edge $(v,\mathbf{0})\edge (v,\bw)$. We choose $\bw_1\in\pA$ such that $\bw_1\le k\bw$ for some $k\in\bbN$; we take all the vertices of other edges lying in $\pA_v$, and we translate them up by adding $\bw_1$ (see Figure \ref{fig:ind}).

\begin{figure}[H]
\centering
\vspace{1cm}

\includegraphics[width=0.65\textwidth]{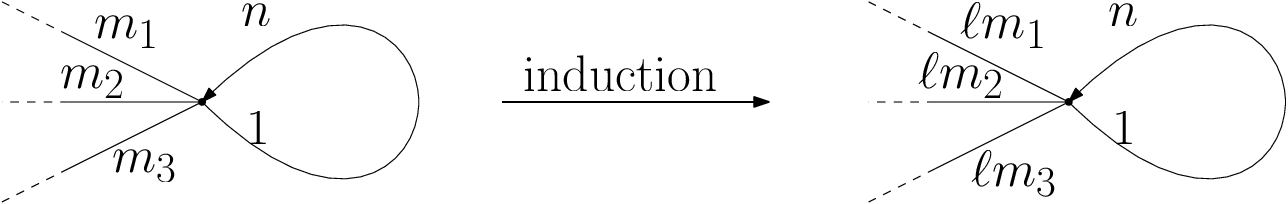}

\vspace{1cm}

\includegraphics[width=0.8\textwidth]{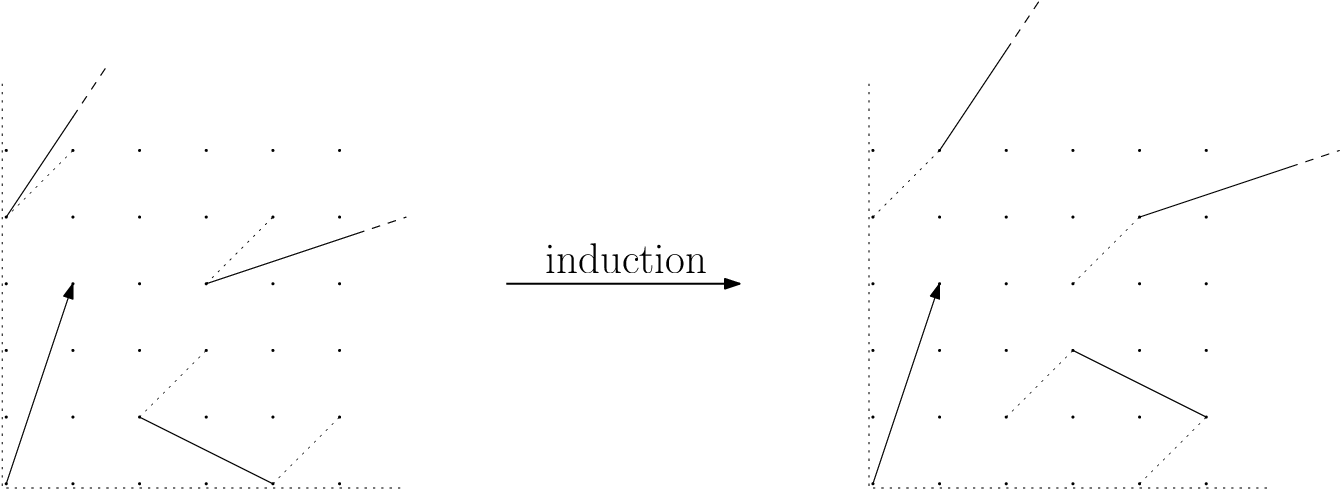}

\caption{An example of an induction move. Above you can see the GBS graphs; here $\ell\divides n^k$ for some integer $k\ge0$. Below you can see the corresponding affine representations.}
\label{fig:ind}
\end{figure}

\paragraph{Swap.}

Let $(\Gamma,\psi)$ be a GBS graph. Let $e_1,e_2$ be distinct edges with $\iota(e_1)=\tau(e_1)=\iota(e_2)=\tau(e_2)=v$; suppose that $\psi(\ol{e}_1)=n$ and $\psi(e_1)=\ell_1 n$ and $\psi(\ol{e}_2)=m$ and $\psi(e_2)=\ell_2 m$ and $n\divides m$ and $m\divides \ell_1^{k_1} n$ and $m\divides \ell_2^{k_2} n$ for some $n,m,\ell_1,\ell_2\in\bbZ\setminus\{0\}$ and $k_1,k_2\in\bbN$ (see Figure \ref{fig:swap}). Define the graph $\Gamma'$ by substituting the edges $e_1,e_2$ with two edges $e_1',e_2'$; we set $\iota(e_1')=\tau(e_1')=\iota(e_2')=\tau(e_2')=v$; we set $\psi(\ol{e_1'})=m$ and $\psi(e_1')=\ell_1 m$ and $\psi(\ol{e_2'})=n$ and  $\psi(e_2')=\ell_2 n$. We say that the GBS graph $(\Gamma',\psi)$ is obtained from $(\Gamma,\psi)$ by a \textbf{swap move}. Let $\bw_1,\bw_2\in\pA$ and $p,q\in V(\Lambda)$ be such that $p\le q\le p+k_1\bw_1$ and $p\le q\le p+k_2\bw_2$ for some $k_1,k_2\in\bbN$. At the level of the affine representation, we have an edge $e_1$ going from $p$ to $p+\bw_1$ and an edge $e_2$ going from $q$ to $q+\bw_2$. The swap has the effect of substituting them with $e_1'$ from $q$ to $q+\bw_1$ and with $e_2'$ from $p$ to $p+\bw_2$ (see Figure \ref{fig:swap}),
$$\begin{cases}
p\edge  p+\bw_1\\
q\edge  q+\bw_2
\end{cases}
\xrightarrow{\text{swap}}\quad
\begin{cases}
q\edge  q+\bw_1\\
p\edge  p+\bw_2
\end{cases}$$

\begin{figure}[H]
\centering
\vspace{1cm}

\includegraphics[width=0.6\textwidth]{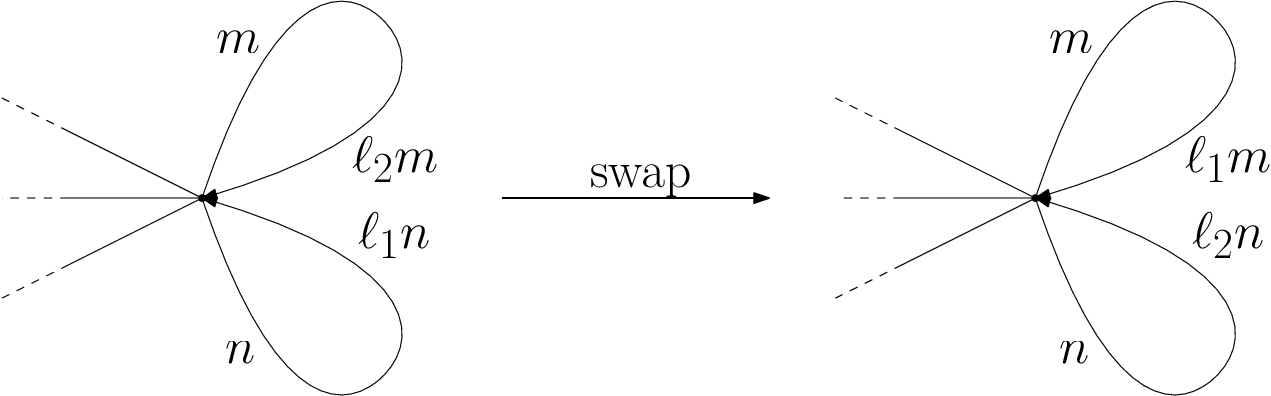}

\vspace{1cm}

\includegraphics[width=0.8 \textwidth]{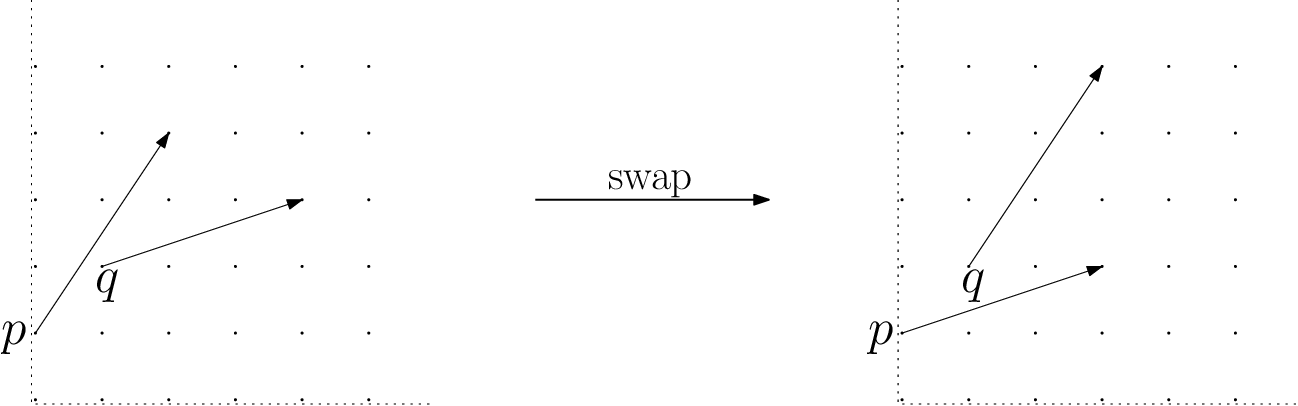}

\caption{An example of a swap move. Above you can see the GBS graphs; here $n\divides m\divides\ell_1^{k_1}n$ and $n\divides m\divides\ell_2^{k_2}n$ for some integers $k_1,k_2\ge0$. Below you can see the corresponding affine representations.}
\label{fig:swap}
\end{figure}

\paragraph{Connection.}\label{sec:connection}

Let $(\Gamma,\psi)$ be a GBS graph. Let $d,e$ be distinct edges with $\iota(d)=u$ and $\tau(d)=\iota(e)=\tau(e)=v$; suppose that  $\psi(\ol{d})=m$ and $\psi(d)=\ell_1 n$ and $\psi(\ol{e})=n$ and $\psi(e)=\ell n$ and $\ell_1\ell_2=\ell^k$ for some $m,n,\ell_1,\ell_2,\ell\in\bbZ\setminus\{0\}$ and $k\in\bbN$ (see Figure \ref{fig:connection}). Define the graph $\Gamma'$ by substituting the edges $d,e$ with two edges $d',e'$; we set $\iota(d')=v$ and $\tau(d')=\iota(e')=\tau(e')=u$; we set  $\psi(\ol{d'})=n$ and $\psi(d')=\ell_2 m$ and $\psi(\ol{e'})=m$ and $\psi(e')=\ell m$. We say that the GBS graph $(\Gamma',\psi)$ is obtained from $(\Gamma,\psi)$ by a \textbf{connection move}. Let $\bw,\bw_1,\bw_2\in\pA$ and $k\in\bbN$ be such that $\bw_1+\bw_2=k\cdot\bw$. At the level of the affine representation, we have a two edges $q\edge  p+\bw_1$ and $p\edge  p+\bw$. The connection move has the effect of replacing them with two edges $p\edge  p+\bw_2$ and $q\edge  q+\bw$ (see Figure \ref{fig:connection}),
$$\begin{cases}
q\edge  p+\bw_1\\
p\edge  p+\bw
\end{cases}
\xrightarrow{\text{connection}}\quad
\begin{cases}
p\edge  q+\bw_2\\
q\edge  q+\bw
\end{cases}$$

\begin{rmk}
In the definition of connection move, we also allow the two vertices $u,v$ to coincide.
\end{rmk}

\begin{figure}[H]
\centering

\includegraphics[width=0.8\textwidth]{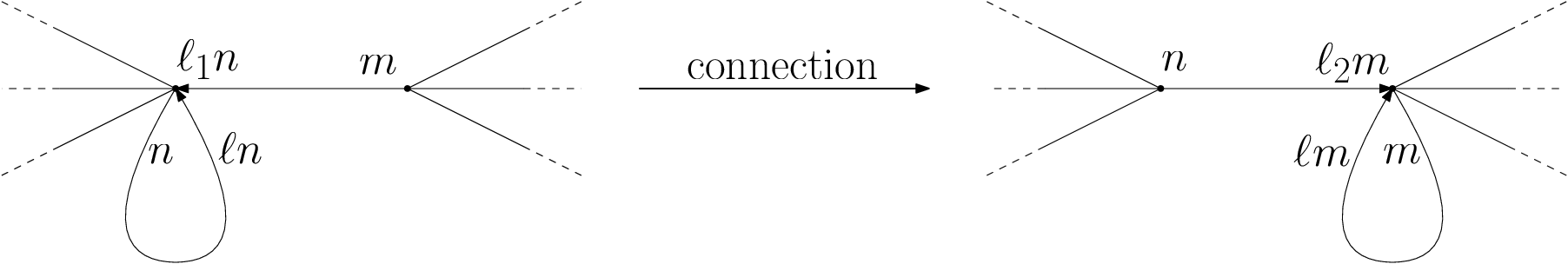}

\vspace{1cm}

\includegraphics[width=0.9 \textwidth]{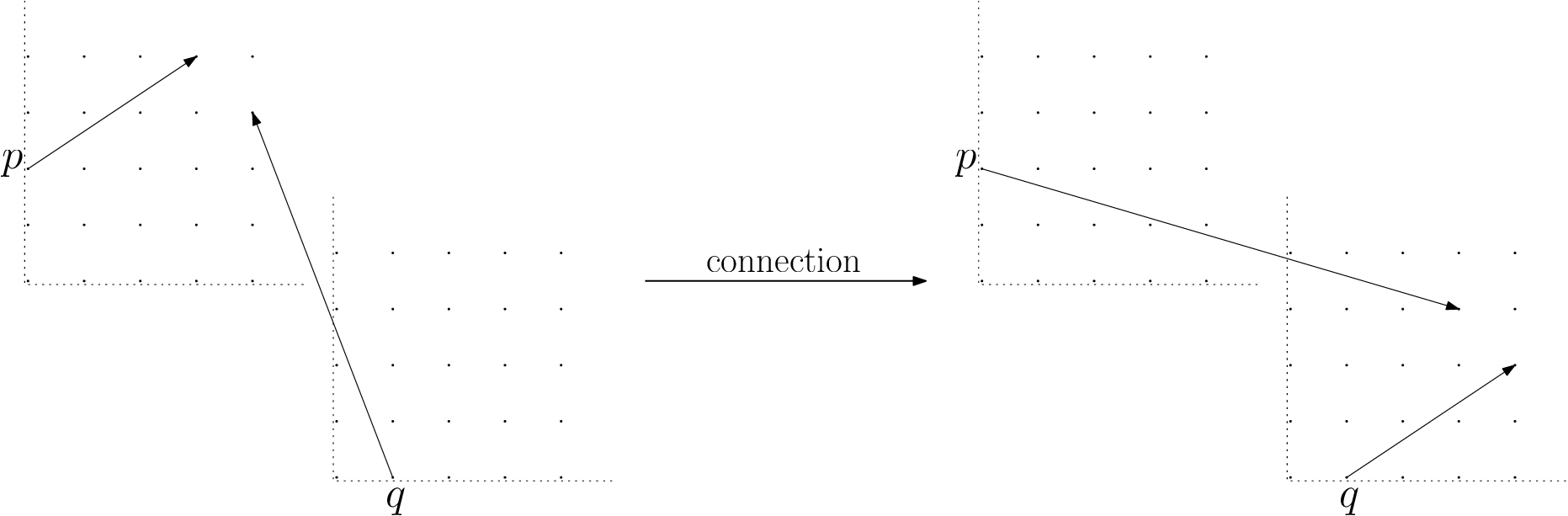}

\caption{An example of a connection move. Above you can see the GBS graphs; here $\ell_1\ell_2=\ell^k$ for some integer $k\ge0$. Below you can see the corresponding affine representations.}
\label{fig:connection}
\end{figure}

\subsection{Further moves}

The following Lemmas \ref{lem:self-slide} and \ref{reverse-slide} introduce two additional moves that will be used in the next sections.

\begin{lem}[Self-slide]\label{lem:self-slide}
Let $\ba,\bb,\bw\in\pA$ and $\bx\in\bA$. Suppose $\ba,\bw$ controls $\bb$ and $\bb+2\bx$. Then we can change
$$
\begin{cases}
\ba\edge  \ba+\bw\\
\bb\edge  \bb+\bx
\end{cases}
\qquad\text{into}\qquad
\begin{cases}
\ba\edge  \ba+\bw\\
\bb+\bx\edge  \bb+2\bx
\end{cases}
$$
by a sequence of slides and swaps.
\end{lem}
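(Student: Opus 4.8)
The plan is to realize the self-slide as a composition of the two primitive moves we have on hand, using $\ba \edge \ba+\bw$ as a "rail" along which we can push the endpoint of the second edge. The key observation is that the self-slide takes the edge $\bb \edge \bb+\bx$ (which shares its footpoint region with the rail) and reattaches it one rung higher, to $\bb+\bx \edge \bb+2\bx$. Since the hypothesis says $\ba,\bw$ controls both $\bb$ and $\bb+2\bx$, both the old and the new footpoints are reachable from $\ba$ by sliding along the rail, so the difficulty is purely one of finding a legal sequence of slide/swap moves that implements the net translation by $\bx$ of both endpoints of the second edge.

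\medskip

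First I would unwind the control hypothesis via \Cref{def:control-a}: from "$\ba,\bw$ controls $\bb$" we get $\bb - \ba \ge \mathbf{0}$ with $\supp{\bb-\ba}\subseteq\supp{\bw}$, and from "$\ba,\bw$ controls $\bb+2\bx$" we get $\bb+2\bx-\ba \ge \mathbf{0}$ with $\supp{\bb+2\bx-\ba}\subseteq\supp{\bw}$. Crucially, note that $\bx$ itself need not be $\ge \mathbf{0}$, which is why we must route the slide through the rail $\ba \edge \ba+\bw$ rather than sliding directly. The natural strategy is: (1) slide the footpoint $\bb$ of the second edge down along (translates of) the rail until it sits at $\ba$ or at some common reference point controlled by $\bw$; (2) perform a swap between the second edge and an appropriate translate of the rail edge, which has the effect of exchanging their heads while keeping the shared footpoint — this is exactly the mechanism that lets us move by $\bx$ even when $\bx$ is not positive; (3) slide the footpoint back up by $\bx$ to land at $\bb+\bx$. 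The requirement $\ba,\bw$ controls $\bb+2\bx$ (rather than just $\bb+\bx$) is what guarantees that the new edge $\bb+\bx \edge \bb+2\bx$ is again in "slide-compatible" position with the rail, so that the swap's divisibility conditions $m \divides \ell_1^{k_1} n$, $m \divides \ell_2^{k_2} n$ from the swap definition are met; I would check these divisibility/support conditions explicitly against the support containments just derived.

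\medskip

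The main obstacle I expect is the bookkeeping in the swap step: the swap move as defined requires $p \le q \le p+k_1\bw_1$ and $p \le q \le p+k_2\bw_2$ for the two edges involved, and translating the abstract requirement "shift the second edge up by $\bx$" into a pair of edges satisfying these nested inequalities is where the two control hypotheses must be used in tandem. Concretely, I anticipate choosing the rail translate so that its base and the second edge's base coincide at a point $\bc$ with $\ba \le \bc \le \ba + k\bw$, letting $\bw_1 = \bw$ and $\bw_2 = \bx$ (or a positive adjustment thereof), and then verifying the swap is legal precisely because $\supp{\bx}$-related quantities are absorbed into $\supp{\bw}$ by the control hypothesis on $\bb+2\bx$. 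A secondary subtlety is that $\bx \in \bA$ may have a nonzero $\bbZ/2\bbZ$-component, which the support does not see; I would confirm that the sign-change moves are not needed, or fold them in if they are, since the support conditions only constrain the $\bbZ$-components. Once the swap is set up correctly, the flanking slides in steps (1) and (3) are routine applications of the slide move along the rail.
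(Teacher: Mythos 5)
Your proposal has a genuine gap, and it traces back to one misconception about what the moves can actually do. You assert that ``both the old and the new footpoints are reachable from $\ba$ by sliding along the rail'', and your steps (1) and (3) ask to slide $\bb$ down to $\ba$ and later to ``slide the footpoint back up by $\bx$''. Neither is a legal operation: a slide along the rail $\ba\edge\ba+\bw$ moves an endpoint by exactly $\pm\bw$ (an endpoint at $\ba+\bc$ with $\bc\in\pA$ goes to $\ba+\bw+\bc$, and conversely), so under rail-slides the position of any endpoint stays in a fixed coset of $\bbZ\bw$. Control by $\ba,\bw$ (\Cref{def:control-a}) only gives $\bb-\ba\ge\mathbf{0}$ with $\supp{\bb-\ba}\subseteq\supp{\bw}$; it does not put $\bb$ in $\ba+\bbN\bw$, so $\bb$ can in general never be slid to $\ba$, and no available move displaces anything ``by $\bx$'' at that stage --- the only edge whose vector is $\bx$ is the edge you are trying to move, and an edge cannot slide along itself. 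Your step (2) also mis-describes the swap: it does not keep a shared footpoint and exchange heads (if the two edges share a footpoint, a swap changes nothing at all); rather, it exchanges the two vectors $\bw_1,\bw_2\in\pA$ between two comparable footpoints $p\le q$ with $\supp{q-p}\subseteq\supp{\bw_1}\cap\supp{\bw_2}$, and both vectors must be positive, so $\bw_2=\bx$ is simply not admissible --- your parenthetical ``positive adjustment thereof'' is the essential point, not a side remark.

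For comparison (the paper itself defers the proof to \cite{ACK-iso1}, so the issue is the mechanism, not a mismatch with a written argument): a working proof first derives $\supp{\bx}\subseteq\supp{\bw}$ and $\ba\le\bb+\bx$ from the two control hypotheses, and then runs: (i) pad the vector, sliding the head of the second edge up along the rail $N$ times so it becomes $\bb\edge\bb+\by$ with $\by=\bx+N\bw\in\pA$ and $\supp{\by}=\supp{\bw}$; (ii) swap the two edges, giving $\bb\edge\bb+\bw$ and $\ba\edge\ba+\by$; (iii) slide both endpoints of the displaced rail up by $\by$ along the other edge, giving $\bb+\by\edge\bb+\by+\bw$; (iv) swap again, which restores the rail $\ba\edge\ba+\bw$ and leaves the other edge at $\bb+\by\edge\bb+2\by$; (v) strip the padding by sliding that edge's endpoints down along the rail, $N$ and $2N$ times respectively, landing at $\bb+\bx\edge\bb+2\bx$. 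The support inclusions you derived are exactly what legitimize the two swaps, while the hypotheses $\ba\le\bb$ and $\ba\le\bb+2\bx$ (plus the derived $\ba\le\bb+\bx$) are precisely what make the final down-slides legal. The structural ingredients absent from your plan --- padding by $N\bw$, \emph{two} swaps rather than one, and slides of the rail along the other edge --- are not optional refinements; without them no sequence of the claimed shape produces a net translation by $\bx$.
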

\begin{proof}
    See \cite{ACK-iso1}.
\end{proof}

\begin{lem}[Reverse slide]\label{reverse-slide}
Let $\ba,\bb,\bw\in\pA$ and $\bx\in\bA$ with $\bw+\bx\ge\mathbf{0}$. Suppose $\ba,\bw$ controls $\bb,\bb+\bx$ and suppose $\ba,\bw+\bx$ controls $\bb,\bb+\bx$. If $\pA_v$ contains edges $\ba\edge  \ba+\bw$ and $\bb\edge  \bb+\bx$, then we can change
$$
\begin{cases}
\ba\edge  \ba+\bw\\
\bb\edge  \bb+\bx
\end{cases}
\qquad\text{into}\qquad
\begin{cases}
\ba\edge  \ba+\bw+\bx\\
\bb\edge  \bb+\bx
\end{cases}
$$
by a sequence of slides and swaps.
\end{lem}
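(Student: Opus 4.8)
The plan is to realize the reverse slide as a composition of a self-slide (Lemma~\ref{lem:self-slide}) followed by a single ordinary slide, exploiting the two control hypotheses to guarantee that each intermediate configuration is legal. Throughout, we work inside the single copy $\pA_v$ and keep the edge $\ba\edge\ba+\bw$ as the ``track'' along which we slide endpoints; the edge $\bb\edge\bb+\bx$ is the one whose endpoint we want to push from $\bb+\bx$ up to $\bb+\bw+\bx$ while leaving its lower endpoint at... wait --- the target asks to change the \emph{first} edge from $\ba\edge\ba+\bw$ into $\ba\edge\ba+\bw+\bx$, so it is really the endpoint $\ba+\bw$ of the track edge that must be advanced by $\bx$, using the second edge as the new track.

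First I would check the hypotheses translate into usable slide conditions. A slide that moves an endpoint sitting at $p+\bc$ along an edge $p\edge p'$ requires exactly that $\bc\ge\mathbf0$; here the relevant translations are controlled by ``$\ba,\bw$ controls $\bb,\bb+\bx$'' (which says $\bb-\ba$ and $\bb+\bx-\ba$ are $\ge\mathbf0$ with support in $\supp\bw$) and by ``$\ba,\bw+\bx$ controls $\bb,\bb+\bx$''. The second hypothesis, together with $\bw+\bx\ge\mathbf0$, is what makes $\ba+\bw+\bx$ a legitimate target vertex (it lies in $\pA_v$, i.e.\ is $\ge\mathbf0$) and guarantees that the edge we end up with is controllable from $\ba$ along the new direction $\bw+\bx$.

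The key steps, in order: (1) apply the self-slide of Lemma~\ref{lem:self-slide} to duplicate the displacement $\bx$, producing from $\bb\edge\bb+\bx$ the configuration $\bb+\bx\edge\bb+2\bx$ (its hypotheses $\ba,\bw$ controls $\bb$ and $\bb+2\bx$ follow from the stated controls, since $\supp\bx\subseteq\supp\bw$); this positions an edge whose endpoints straddle the point $\bb+\bx$ that we will need as a pivot. (2) Use this relocated edge as a track to slide the endpoint $\ba+\bw$ of the original track edge by $\bx$: since $\ba,\bw$ controls $\bb+\bx$ we have $\ba+\bw$ reachable, and sliding along the $\bx$-edge advances it to $\ba+\bw+\bx$, which is legal precisely because $\ba,\bw+\bx$ controls the relevant vertices. (3) Finally reverse the self-slide (another application of Lemma~\ref{lem:self-slide}, read right-to-left, which is symmetric) to restore $\bb\edge\bb+\bx$, leaving exactly $\ba\edge\ba+\bw+\bx$ and $\bb\edge\bb+\bx$ as desired.

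The main obstacle I anticipate is \emph{bookkeeping the control conditions at the intermediate step}, rather than any deep idea. The delicate point is step~(2): sliding the endpoint of the track edge requires that the pivot edge $\bb+\bx\edge\bb+2\bx$ actually touch the path between $\ba+\bw$ and $\ba+\bw+\bx$, and that after the slide the displacement vector still has support inside $\supp(\bw+\bx)$ so that subsequent moves remain defined; this is exactly where the hypothesis ``$\ba,\bw+\bx$ controls $\bb,\bb+\bx$'' is indispensable and cannot be weakened. I would verify carefully that every slide invoked has its lower endpoint $\ge\mathbf0$ and that the swap moves hidden inside Lemma~\ref{lem:self-slide} do not disturb the track edge, so that the net effect is precisely the claimed replacement.
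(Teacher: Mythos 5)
The paper itself does not contain a proof of this lemma (its ``proof'' is a citation to \cite{ACK-iso1}), so there is no in-paper argument to compare against; judged on its own, your plan has two genuine gaps. The first is in step (1): the hypotheses of the self-slide (\Cref{lem:self-slide}) do \emph{not} follow from those of the reverse slide. You need $\ba,\bw$ to control $\bb+2\bx$, which requires in particular $\bb+2\bx-\ba\ge\mathbf{0}$; the reverse-slide hypotheses only give $\bb+\bx-\ba\ge\mathbf{0}$, and when $\bx$ has negative components the point $\bb+2\bx$ need not even lie in $\pA$. Concretely, working in $\bA=\bbZ^2$ (ignoring the torsion summand), take $\ba=(0,0)$, $\bw=(6,1)$, $\bb=(5,5)$, $\bx=(-5,3)$: then $\bw+\bx=(1,4)\ge\mathbf{0}$ and all four control hypotheses of the lemma hold, but $\bb+2\bx=(-5,11)\notin\pA$, so the configuration $\bb+\bx\edge\bb+2\bx$ does not exist and the self-slide cannot be invoked. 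Your justification (``since $\supp{\bx}\subseteq\supp{\bw}$'') conflates the support half of the definition of control with the positivity half; it is positivity that fails.

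The second, more fundamental, gap is in step (2). To slide the endpoint $\ba+\bw$ of the track edge along an edge with initial vertex $\bb+\bx$, the slide move requires $\ba+\bw\ge\bb+\bx$, i.e.\ the endpoint must be a translate of that initial vertex by an element of $\pA$. The hypothesis ``$\ba,\bw$ controls $\bb+\bx$'' only gives $\bb+\bx\le\ba+k\bw$ for \emph{some} $k\in\bbN$, not for $k=1$. In the one-prime example $\ba=0$, $\bw=1$, $\bb=5$, $\bx=3$ all hypotheses of the lemma hold, yet $\ba+\bw=1$ lies below both endpoints of the other edge, before or after your step (1), so the slide you call for does not exist. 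Note that if $\ba+\bw\ge\bb$ held, the lemma would be a one-slide triviality; the lemma exists precisely to handle $k>1$, and your plan never bridges that gap. Relatedly, your argument makes no essential use of the hypothesis that $\ba,\bw+\bx$ controls $\bb,\bb+\bx$, and uses no swaps other than those hidden inside the self-slide (which keep the track fixed). That hypothesis is exactly what legitimizes swap moves between the two edges --- for instance, after lengthening $\bb\edge\bb+\bx$ to $\bb\edge\bb+\bx+\bw$ by one slide, the swap conditions $\supp{\bb-\ba}\subseteq\supp{\bw}$ and $\supp{\bb-\ba}\subseteq\supp{\bw+\bx}$ are precisely the two control hypotheses --- and some such bodily repositioning of the edges by swaps is the mechanism that overcomes the $k>1$ obstruction. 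A proof that never needs one of the stated hypotheses, for a statement that genuinely needs it, cannot be correct as written.
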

\begin{proof}
    See \cite{ACK-iso1}.
\end{proof}

\subsection{Sequences of moves}

In this section we recall how the isomorphism problem for generalized Baumslag-Solitar groups is reduced to checking whether two GBS graphs can be related by  a sequence of moves. The following \Cref{thm:sequence-new-moves} is one of the main results from \cite{ACK-iso1}, which will be the starting point for all the results of this paper. For the notion of \textit{totally reduced} GBS graph, we refer the reader to \cite{ACK-iso1}. We point out that the requirement of being totally reduced is not restrictive at all: every GBS graph can be algorithmically changed into a totally reduced one by a sequence of moves.

\begin{thm}\label{thm:sequence-new-moves}
Let $(\Gamma,\psi),(\Delta,\phi)$ be totally reduced GBS graphs and suppose that the corresponding graphs of groups have isomorphic fundamental group. Then $\abs{V(\Gamma)}=\abs{V(\Delta)}$ and there is a sequence of slides, swaps, connections, sign-changes and inductions going from $(\Delta,\phi)$ to $(\Gamma,\psi)$. Moreover, all the sign-changes and inductions can be performed at the beginning of the sequence.
\end{thm}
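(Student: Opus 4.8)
The plan is to deduce the statement from the deformation theory of $G$-trees, with the combinatorics of Sections 2--3 used to repackage the abstract moves. Since $(\Gamma,\psi)$ and $(\Delta,\phi)$ present isomorphic GBS groups $G$, their Bass--Serre trees $T_\Gamma,T_\Delta$ are cocompact minimal $G$-trees all of whose vertex and edge stabilizers are infinite cyclic. The first step is to check that they lie in a single deformation space $\mathcal D(G)$: by Kropholler's characterization (\cite{Kro90}) a GBS group contains a cyclic subgroup meeting all its conjugates, and the commensurability class of this subgroup is exactly the set of elements elliptic in \emph{every} such splitting, so $T_\Gamma$ and $T_\Delta$ share elliptic subgroups up to the relevant equivalence and hence determine the same point of the space of deformations. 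The finitely many elementary GBS groups ($\bbZ$, $\bbZ^2$, the Klein-bottle group, and the $BS(1,n)$) are handled directly.

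With both trees in $\mathcal D(G)$, I would invoke Forester's deformation theorem (\cite{For02}) together with its refinement for reduced trees (\cite{For06}): any two reduced cocompact trees in one deformation space are joined by a finite sequence of slides, inductions, and $A_{\pm1}$-moves. Translating into the affine/GBS-graph language, slides become slides and inductions become inductions verbatim, while each $A_{\pm1}$-move is realized by a combination of the swap and connection moves of Section 3 (the divisibility and support conditions in those definitions are precisely what make the realization possible), and the freedom in choosing generators of the cyclic vertex and edge groups is recorded by vertex- and edge-sign-changes. This produces a sequence of the five listed moves from $(\Delta,\phi)$ to $(\Gamma,\psi)$. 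The equality $|V(\Gamma)|=|V(\Delta)|$ comes from the total-reduction theory of \cite{ACK-iso1}: all five moves preserve the number of vertices, and ``totally reduced'' means the representative realizes the minimal vertex (and edge) count of the isomorphism class, which is an invariant; this minimality is exactly what lets us avoid the expansion/collapse moves that would otherwise change the vertex count.

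For the ``moreover'' clause I would set up commutation relations that migrate all sign-changes and inductions to the front. A vertex- or edge-sign-change commutes past any slide, swap, or connection at the cost of adjusting the sign data of the affected edges, so every sign-change can be pushed leftward and collected at the start. Inductions are more delicate: an induction rescales the labels of edges incident to a vertex by a common factor dividing a power of a loop label, and I would verify that an induction performed \emph{after} a structural move equals a (suitably modified) induction performed \emph{first}, followed by the structural move --- the point being that rescaling preserves the divisibility and control conditions $\ba,\bw$ \emph{controls} $\bb$, etc., that enable slides, swaps, and connections. Iterating these two commutations puts the sequence into the desired normal form.

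The main obstacle I expect is twofold. First, the faithful translation of the abstract $A_{\pm1}$-moves into concrete swaps and connections: one must show that within a \emph{totally reduced} graph the divisibility/support hypotheses of those definitions are automatically satisfied, and this is where the total-reduction hypothesis does genuine work rather than serving as mere normalization. Second, the normal-ordering step, where the induction commutations must be carried out without silently reintroducing an expansion or collapse (and hence a change in vertex count); controlling the interaction between inductions, which scale labels multiplicatively, and the structural moves, which depend sensitively on exact divisibility, is the technical heart of the argument.
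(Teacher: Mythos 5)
The paper itself contains no argument for this statement: its ``proof'' is the single line ``See \cite{ACK-iso1}'', since the theorem is one of the main results of that companion paper and is only imported here. Your proposal must therefore stand on its own, and its central step fails. You invoke a deformation theorem asserting that any two reduced cocompact trees in one deformation space are joined by slides, inductions, and $\mathcal{A}^{\pm1}$-moves, and then claim that each $\mathcal{A}^{\pm1}$-move ``is realized by a combination of the swap and connection moves''. That dictionary cannot be right, and no theorem of the required form exists in the cited literature. Inductions and $\mathcal{A}^{\pm1}$-moves are defined only at strict ascending loops (a loop with one label $\pm1$), whereas swaps and connections are needed precisely where no ascending loop is available. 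The paper's Example~\ref{ex3} is exactly such a case: the one-vertex graph with loops $(8,48)$ and $(27,72)$ has no label $\pm 1$, nor does any configuration reachable from it, so inductions and $\mathcal{A}^{\pm1}$-moves never apply; slides alone reach only the configurations $(8,48),(27,72\cdot 6^k)$; yet the connection move produces $(8,108),(27,162)$. Since slides, inductions and $\mathcal{A}^{\pm1}$-moves all preserve the quotient graph (so every intermediate tree would remain a reduced one-vertex tree), this example shows that the move set you invoke does not connect all reduced trees in a deformation space. A connection is not a repackaged $\mathcal{A}^{\pm1}$-move: it encodes an expansion--slide--collapse passage through \emph{non-reduced} trees with more vertices, which is exactly what Forester's slide theorem (valid only for non-ascending spaces, cf.\ \cite{For06}) and the Clay--Forester moves \cite{CF08} do not supply in general. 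Producing a count-preserving move set that does suffice, and proving it suffices, is precisely the content of \cite{ACK-iso1}; your steps 2--3 either cite a nonexistent theorem or silently assume the theorem being proved.

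Two further gaps. First, the vertex-count claim is circular: you justify $\abs{V(\Gamma)}=\abs{V(\Delta)}$ by appealing to ``the total-reduction theory of \cite{ACK-iso1}'', i.e.\ to the package of results this theorem belongs to; within your own framework the equality would have to fall out of the (missing) deformation argument, since all the moves you are allowed to use preserve vertex count only once the connecting sequence is known to exist. Second, the ``moreover'' clause is only a plan: you state that you ``would verify'' that an induction performed after a structural move equals a modified induction performed first, followed by structural moves. This commutation is a substantive claim, not a routine check --- an induction rescales labels multiplicatively, while the hypotheses of swap ($m\divides\ell_i^{k_i}n$) and connection ($\ell_1\ell_2=\ell^k$) depend delicately on exact divisibility --- and you yourself identify it as the technical heart without supplying it. As it stands, the proposal is a plausible research outline that correctly identifies the ambient theory (a single deformation space for all GBS trees of a non-elementary GBS group), but both steps that make the theorem true are asserted rather than proved.
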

\begin{proof}
    See \cite{ACK-iso1}.
\end{proof}

Thus it suffices to deal with the following problem: given two GBS graphs $(\Gamma,\psi),(\Gamma',\psi')$, determine whether there is a sequence of edge sign-changes, inductions, slides, swaps, and connections going from one to the other. Note that a sign-change (resp. a slide, an induction, a swap, a connection) induces a natural bijection between the set of vertices of the graph before and after the move. Of course we can ignore the issue of guessing the bijection among the sets of vertices and the sign-changes at the beginning of the sequence, as these choices can be done only in finitely many ways. In what follows, we will also ignore the issue of guessing the inductions at the beginning of the sequence, as this hopefully represents a marginal issue - even though sometimes there are infinitely many possibilities, and thus this issue should be dealt with. In this paper, we will focus on the following question:

\begin{question}\label{quest:main}
    Given two totally reduced GBS graphs $(\Gamma,\psi),(\Gamma',\psi')$ and a bijection $b:V(\Gamma')\rar V(\Gamma)$, is there a sequence of {\rm(}edge sign-changes,{\rm)} slides, swaps, and connections going from $(\Gamma,\psi)$ to $(\Gamma',\psi')$ and inducing the bijection $b$ on the set of vertices?
\end{question}

In \cite{ACK-iso1} we also show that the Question \ref{quest:main} can be reduced to the case of one-vertex graphs, and with all edge-labels positive and $\not=1$. However, the tools we develop in this paper are general and do not make use of these additional assumptions.

\subsection{Examples}

Theorem \ref{thm:sequence-new-moves} can already be used to explicitly describe, in some cases, the list of all the possible configurations which can be reached from a given one. We show through an example how this can help in solving the isomorphism problem for GBSs. As usual, in the examples we use $\bA=\bbZ^{\cP(\Gamma,\psi)}$, omitting the $\bbZ/2\bbZ$ summand.

\begin{ex}\label{ex3}
    Consider the GBS graph $(\Gamma,\psi)$ with one vertex $v$ and two edges, with labels $8,48$ and $27,72$ (see Figure \ref{fig:example3-GBSgraph}). If we apply a connection move, we obtain another GBS graph with one vertex $v$ and two edges, the new labels being $8,108$ and $27,162$. It is easy to see that, using slide moves, we can reach all the configurations of the following list $L$:
\begin{enumerate}
    \item Two edges with labels $8,48$ and $27,72\cdot 6^k$, for some $k\ge0$ integer.
    \item Two edges with labels $8,108\cdot 6^k$ and $27,162$, for some $k\ge0$ integer.
\end{enumerate}
It is also routinge to check that if we start at a configuration in $L$, then we can not apply a swap move, and if we apply a slide or a connection, we fall again in a configuration listed in $L$. Therefore, $L$ is a complete list of all the configurations which can be reached from $(\Gamma,\psi)$ using only slides, swaps, and connections. By Theorem \ref{thm:sequence-new-moves}, we can algorithmically determine whether a GBS graph $(\Delta,\phi)$ encodes a GBS group isomorphic to that of $(\Gamma,\psi)$. In order to do this, we first transform $(\Delta,\phi)$ into a totally reduced GBS graph (this can be done algorithmically). Then we apply sign-changes in all the (finitely many) possible ways, and for each of the resulting GBS graphs, we check whether it appears in the list $L$. Note that induction is not possible on $(\Gamma,\psi)$, so we do not need to worry about it in this example.
\end{ex}

\begin{figure}[H]
\centering
\includegraphics[width=0.5\textwidth]{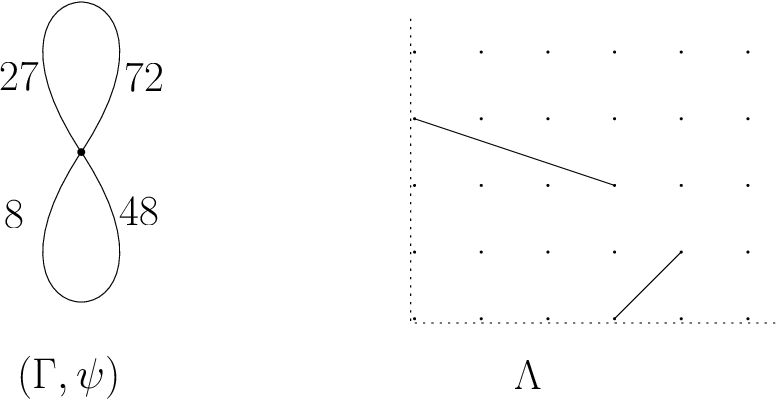}
\centering
\caption{On the left, a GBS graph $(\Gamma,\psi)$ with one vertex and two edges. On the right, the corresponding affine representation $\Lambda$ (on the horizontal axis the number of factors $2$, on the vertical axis the number of factors $3$).}
\label{fig:example3-GBSgraph}
\end{figure}

\begin{figure}[H]
\centering
\includegraphics[width=0.6\textwidth]{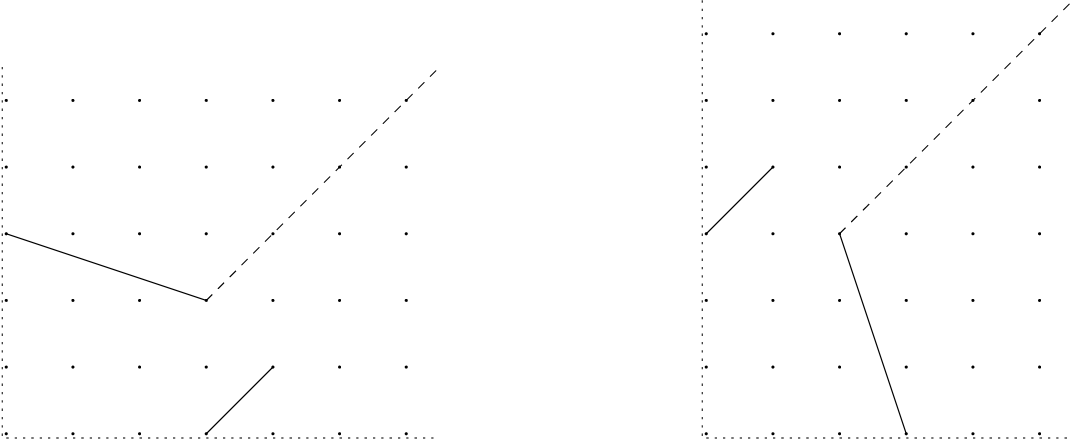}
\centering
\caption{On the left, the configurations that can be reached from $(\Gamma,\psi)$ by applying only slide moves - the endpoint of one of the edges is allowed to move along the dashed line, while the other three endpoints are required to stay fixed. On the right, the configurations that can be reached from $(\Gamma,\psi)$ by applying a connection and then only slide moves - again, the endpoint of one of the edges is allowed to move along the dashed line, while the other endpoints stay fixed.}
\label{fig:example3-deformation-space}
\end{figure}

\section{Independence of quasi-conjugacy classes}

In this section we describe the linear invariants of the moves (which are also essentially isomorphism invariants). These are listed in \Cref{def:linear-invariants}, and are the set of primes, the conjugacy and quasi-conjugacy classes, and the number of edges in each conjugacy class.

We divide slide moves into two types: \textit{internal} (when they involve two edges in the same quasi-conjugacy class) and \textit{external} (otherwise). We introduce a notion of \textit{external equivalence}, we show that it can be computed from the linear invariants, and that it can be used to describe external slide moves (without any need to know the exact position of the edges). We further prove that edges in different quasi-conjugacy classes can only interact with each other using the external equivalence. For the isomorphism problem, this means that distinct quasi-conjugacy classes can be dealt with separately and independently of each other, see \Cref{thm:independence-qc-classes}.

Finally, we introduce the notions of \textit{minimal region}. Every endpoint of every edge must lie above at least one minimal region, making them into excellent ``starting points'' for slide moves. We show that each of these regions is forced to contain at least one endpoint of some edge, see \Cref{lem:escape-minimal-regions}. This reduces considerably the number of possible configurations that one has to consider.

We show with concrete examples how useful these notions are when describing the isomorphism problem.

\subsection{Linear invariants}

Let $(\Gamma,\psi)$ be a GBS graph and let $\Lambda$ be its affine representation. In Section \ref{sec:conjugacy-classes}, we defined a partition of the set of vertices $V(\Lambda)$, given by the equivalence relation of conjugacy. We also defined a pre-order $\lecnj$ on $V(\Lambda)$, by saying that $p\lecnj q$ if $p\le q'\cnj q$ for some other vertex $q'$. This pre-order $\lecnj$ induces another partition of $V(\Lambda)$, given by the equivalence relation of quasi-conjugacy and, at the same time, defines an order relation on the set of quasi-conjugacy classes. These relations can be extended to edges, as follows.

\begin{defn}\label{def:conj-edges}
    For $e,f\in E(\Lambda)$ we denote:
    \begin{enumerate}
        \item $e\cnj f$ if $\iota(e)\cnj\iota(f)$, and we say that $e,f$ are \textbf{conjugate}.
        \item $e\lecnj f$ if $\iota(e)\lecnj\iota(f)$.
        \item $e\qcnj f$ if $\iota(e)\qcnj\iota(f)$, and we say that $e,f,$ are \textbf{quasi-conjugate}.
    \end{enumerate}
\end{defn}

We observe that for $e\in E(\Lambda)$ we have that $\iota(e)\cnj\tau(e)$. In particular, in Definition \ref{def:conj-edges}, we can use $\tau(e),\tau(f)$ instead of $\iota(e),\iota(f)$. As for vertices, conjugacy and quasi-conjugacy are equivalence relations on the sets of edges, and $\lecnj$ induces a partial order on the set of quasi-conjugacy classes of edges. In particular, we obtain a finite poset of quasi-conjugacy classes of edges with the order relation $\lecnj$.

\begin{defn}\label{def:conj-edge-vertex}
    For $e\in E(\Gamma)$ we say that:
    \begin{enumerate}
        \item $e$ belongs to the conjugacy class $C$ if $\iota(e)\in C$. We define the set
        $$\cnjedges{\Lambda}{C}:=\{e\in E(\Lambda) : \iota(e)\in C\}$$
        of the edges in the conjugacy class $C$.
        \item $e$ belongs to the quasi-conjugacy class $Q$ if $\iota(e)\in Q$. We define the set
        $$\qcnjedges{\Lambda}{Q}:=\{e\in E(\Lambda) : \iota(e)\in Q\}$$
        of the edges quasi-conjugate to $Q$.
    \end{enumerate}
\end{defn}

We are now ready to define the linear invariants of a GBS graph.

\begin{defn}[Linear invariants]\label{def:linear-invariants}
    Let $(\Gamma,\psi)$ be a GBS graph with affine representation $\Lambda$. We define the \textbf{linear invariants} of $(\Gamma,\psi)$ as the following finite list of data:
    \begin{enumerate}
        \item\label{itm:inv-1} The set of primes $\cP(\Gamma,\psi)$.
        \item\label{itm:inv-2} The finite list of quasi-conjugacy classes $Q$ containing at least one edge, each of them given through the finite data $\qcmin{Q},\qcsupp{Q},\cla{Q}$. For each such quasi-conjugacy class $Q$, the number $\abs{\qcnjedges{\Lambda}{Q}}$.
        \item\label{itm:inv-3} The finite list of conjugacy classes $C$ containing at least one edge. For each such conjugacy class $C$, the number $\abs{\cnjedges{\Lambda}{C}}$.
    \end{enumerate}
\end{defn}

Direct check using the definitions shows that the linear invariants are indeed invariant under slides, swaps, and connections. Sign-changes and induction preserve the invariants of \Cref{itm:inv-1,itm:inv-2}. An edge sign-change changes the numbers $\abs{\cnjedges{\Lambda}{C}}$, as it moves an edge from a conjugacy class $C$ to the conjugacy class $C+\be$, where $\be\in\bA$ is the $2$-torsion element. Both, a vertex sign-change and an induction, change the structure of the conjugacy classes $C$ and the numbers $\abs{\cnjedges{\Lambda}{C}}$; however, we point out that these changes occur in quite a controlled way, as vertex sign-changes and inductions just move the points inside a copy $\pA_v\subseteq V(\Lambda)$ by translation. In particular, we have the following:

\begin{cor}\label{cor:basic-invariants}
    Let $(\Gamma,\psi),(\Gamma',\psi')$ be two GBS graph and suppose that there is a sequence of slides, swaps, and connections going from one to the other and inducing a bijection $V(\Gamma)=V(\Gamma')$. Then this induces a bijection $V(\Lambda)=V(\Lambda')$, and $(\Gamma,\psi),(\Gamma',\psi')$ have the same linear invariants {\rm(see \Cref{def:linear-invariants})}.
\end{cor}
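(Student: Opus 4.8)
The plan is to reduce to a single move and then verify that slides, swaps, and connections each leave every piece of data in \Cref{def:linear-invariants} unchanged; by transitivity it suffices to treat one move at a time. Each of these moves keeps the vertex set of $\Gamma$ fixed, so once one knows that the set of primes does not change (handled below) the induced bijection $V(\Lambda)=V(\Lambda')$ is just the identity on the common $\pA$-factor. The heart of the matter is a single claim: \emph{each move preserves the conjugacy relation $\cnj$ on $V(\Lambda)$}. Everything else will follow formally, since all the remaining invariants are defined purely in terms of $\cnj$, the partial order $\le$, and the initial-vertex map $\iota$.

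To prove invariance of $\cnj$ I would show that the few edges altered by a move can be mutually simulated by affine paths built from the edges that survive or are created, so that the reachability relation generated by the edges — hence $\cnj$ — is unchanged. For a slide the two configurations share the edge $p\edge q$, and since $\iota(e)\cnj\tau(e)$ for every edge, the old edge $r\edge p+\ba$ is realized as the new edge $r\edge q+\ba$ followed by $p\edge q$ traversed backwards, and symmetrically; one only checks that all translation coefficients stay in $\pA$. For a swap the hypotheses $p\le q\le p+k_1\bw_1$ and $p\le q\le p+k_2\bw_2$ are used: the loop at $p$ with step $\bw_1$ already adds $\bw_1$ from any point above $p$, in particular above $q$, while the loop at $p$ with step $\bw_2$ is recovered by first climbing above $q$ using $\bw_1$, adding $\bw_2$ there, and climbing back down. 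For a connection the identity $\bw_1+\bw_2=k\bw$ routes the new loop and the new connecting edge through the old ones. The main obstacle is precisely these simulations: they are elementary, but one must keep every intermediate vertex $\ge$ the relevant base point so that each travel step is a legitimate affine-path step, and this is exactly where the defining inequalities of each move are consumed.

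Granting invariance of $\cnj$, the rest is bookkeeping. The order $\le$ on $V(\Lambda)$ depends only on the ambient $\pA$ and not on the edges, so $\lecnj$ and therefore $\qcnj$ are unchanged; consequently $\qcmin{Q}$, $\qcsupp{Q}$ and $\cla{Q}$ coincide for the two graphs, as does the whole poset of quasi-conjugacy classes, and the conjugacy and quasi-conjugacy classes of edges, being read off from $\iota$, are preserved as well. For the edge counts $\abs{\cnjedges{\Lambda}{C}}$ and $\abs{\qcnjedges{\Lambda}{Q}}$ one checks that each move induces a bijection of edges preserving the multiset of initial vertices: a slide fixes $\iota$ of the moved edge (shifting only its terminal endpoint within its conjugacy class), while a swap and a connection merely interchange the two base points $p,q$ between a pair of edges; combined with $\cnj=\cnj'$ this yields equality of all the counts. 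Finally, invariance of $\cP$ is a short direct check, since each move replaces labels by products of labels that already occur — a slide, for instance, turns a label $\ell n$ into $\ell m$, where $n,m$ are the labels of the untouched edge and the primes of $\ell$ survive in $\ell m$ — so no prime is created or destroyed. With $\cP=\cP'$ in hand the identification $V(\Lambda)=V(\Lambda')$ is justified and all three items of \Cref{def:linear-invariants} match, proving the corollary.
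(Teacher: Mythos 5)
Your proposal is correct and takes essentially the same route as the paper: the paper's proof of this corollary is precisely the asserted ``direct check using the definitions'' that slides, swaps, and connections preserve the linear invariants, and your move-by-move simulation of the removed edges by affine paths (using each move's defining inequalities to keep all translation coefficients in $\pA$) is exactly that check carried out in detail. The remaining bookkeeping --- invariance of $\lecnj$ and $\qcnj$, hence of the class data, and the edge counts via the preserved multiset of base points --- matches the paper's intended argument.
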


\subsection{External equivalence}

Let $(\Gamma,\psi)$ be a GBS graph and let $\Lambda$ be its affine representation.

\begin{defn}\label{def:external-equivalence}
    Let $p,q\in V(\Lambda)$ with $p\qcnj q$.
    \begin{enumerate}
        \item We say that $p,q$ are \textbf{externally equivalent}, denoted $p\ee q$, if there is an affine path, going from $p$ to $q$, which does not use edges quasi-conjugate to $p$ and $q$.
        \item  We denote $p\leee q$ if $p\le q'$ for some $q'\ee q$.
        \item We say that $p,q$ are \textbf{quasi-externally equivalent}, denoted $p\qee q$, if $p\leee q$ and $q\leee p$.
    \end{enumerate}
\end{defn}

The (quasi-)external equivalence class of $p$ is its (quasi-)conjugacy class in the GBS graph where we remove all edges quasi-conjugate to $p$. In particular, they can be described in terms of the same data as for (quasi-)conjugacy classes.

\begin{defn}\label{def:eesupp}
    Let $(\Gamma,\psi)$ be a GBS graph and let $p$ be a vertex of its affine representation $\Lambda$.
    \begin{enumerate}
        \item Define the \textbf{external minimal points} as follows
        $$\qeemin{p}=\{q\in V(\Lambda) : q\qee p \text{ and for all } q'\leee p \text{ with } q'\le q \text{ we have } q\le q'\}.$$
        \item Define the \textbf{external support} as follows
        $$\qeesupp{p}=\bigcup\{\supp{\bu} : \bu\in\pA \text{ and } p+\bu\qee p\}\subseteq\cP(\Gamma,\psi).$$
        \item Define the \textbf{external linear algebra} as follows
        \[
            \text{$\ela{p}=\{\br\in\bA : \br=\bw-\bw'$ for some $\bw,\bw'\in\pA$ such that $p+\bw\ee p+\bw' \ee p\}$.}
        \]
    \end{enumerate}
\end{defn}

Lemmas \ref{lem:qcmin-finite} \ref{lem:qc-supp-single-vector} \ref{lem:cla-subgroup} \ref{lem:linear-algebra} hold for external equivalence in the exact same way as they do for conjugacy. Similarly, it is easy to see that $\qeemin{p},\qeesupp{p},\ela{p}$ are invariant if we change $p$ by quasi-external equivalence. Propositions \ref{prop:description-qc-class} and \ref{prop:description-c-class} also hold for external equivalence, meaning that $\qeemin{p},\qeesupp{p},\ela{p}$ can be used to describe external equivalence class using a finite set of data. Finally, the algorithms of Propositions \ref{prop:qc-algorithm} and \ref{prop:c-algorithm}, and Corollary \ref{cor:algorithmic-paths} can be used for external equivalence as well, making everything algorithmically computable.

The following \Cref{lem:basic-invariants-imply-ee-classes} shows that the external equivalence classes are uniquely determined by the linear invariants.

\begin{lem}\label{lem:basic-invariants-imply-ee-classes}
    Let $(\Gamma,\psi),(\Gamma',\psi')$ be GBS graphs with a bijection $V(\Gamma)=V(\Gamma')$. Suppose that they have the same set of primes and the same quasi-conjugacy classes containing at least one edge. Then the GBS graphs $(\Gamma,\psi),(\Gamma',\psi')$ have the same external equivalence and quasi-external equivalence relations.
\end{lem}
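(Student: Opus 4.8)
The plan is to show that the hypotheses already pin down the full conjugacy and quasi-conjugacy relations on $V(\Lambda)=V(\Lambda')$, and then to read off external and quasi-external equivalence from these. First I would reduce the statement: since $\leee$ and $\qee$ are built from $\ee$ together with the order $\le$ on $\pA$ (which is intrinsic to $\bA$ and hence common to both graphs), it suffices to show that $\ee$, viewed as a subset of $V(\Lambda)\times V(\Lambda)$, is the same for $\Lambda$ and $\Lambda'$. I will obtain this by first establishing that the relations $\cnj$ and $\qcnj$ coincide and then expressing $\ee$ in terms of them; conceptually this is exactly the content of the remark after \Cref{def:external-equivalence}, which identifies the external equivalence class of $p$ with its conjugacy class in the graph obtained by deleting the edges quasi-conjugate to the class $Q$ of $p$.

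The second step is a locality property of $\ee$. If $p\cnj q$ along an affine path, then every vertex visited is conjugate to $p$ and hence lies in $Q$, and every edge $e$ used satisfies $\iota(e)+\bw\in Q$ for its translation coefficient $\bw$, so $\iota(e)\lecnj Q$. Therefore a path witnessing $p\ee q$, which by definition avoids the edges quasi-conjugate to $Q$, uses only edges whose quasi-conjugacy class is \emph{strictly} below $Q$. Decomposing such a path edge by edge, I get that $\ee$ restricted to $Q$ is precisely the equivalence relation generated by the pairs $(x+\bw,\,y+\bw)$ where $x\cnj y$ for $x,y$ lying in a class strictly below $Q$ and $x+\bw,\,y+\bw\in Q$. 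In particular $\ee$ on $Q$ is determined once one knows the quasi-conjugacy partition together with the relation $\cnj$ on the classes strictly below $Q$.

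I would then run an induction on the finite poset of quasi-conjugacy classes, from the minimal ones upward (note that $\lecnj$ on classes is read off from the classes as subsets, so the comparison is unambiguous once the lower classes have been matched), proving simultaneously that the quasi-conjugacy partition and the relation $\cnj$ agree in $\Lambda$ and $\Lambda'$. For the partition: a point is non-isolated exactly when it lies above an endpoint of some edge (discarding the inessential edges with $\iota_\Lambda(e)=\tau_\Lambda(e)$), and by the construction of minimal points in \Cref{prop:qc-algorithm} every minimal point of a class containing an edge lies above such an endpoint; hence the set of non-isolated points equals the up-closure of the given classes-with-edges and is common to both graphs, which identifies the trivial classes, while the nontrivial ones are matched by the inductive hypothesis. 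Within a class $Q$: if $Q$ contains an edge its linear algebra $\cla{Q}$ is part of the hypothesis, so \Cref{prop:description-c-class} determines $\cnj$ on $Q$ outright; if $Q$ contains no edge then deleting the edges quasi-conjugate to $Q$ deletes nothing, so $\ee$ and $\cnj$ coincide on $Q$, and via \Cref{lem:linear-algebra} the group $\cla{Q}$ is recovered as the set of differences of translated-up conjugacies coming from strictly lower classes, already fixed by induction. With $\cnj$ and $\qcnj$ determined, the generating description of the second step then yields that $\ee$, and hence $\leee$ and $\qee$, coincide.

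The hard part will be the inductive step for the edge-free classes, namely checking that the conjugacy relation such a class $Q$ inherits depends only on the data $\cla{Q'}$ of the strictly lower classes $Q'$ and on the (already matched) positions of the classes, and not on the individual placement of the lower edges. The point to verify is that when a lower edge $e$ is translated so as to join two families of $Q$, the component of its modular value $q(e)$ transverse to $\qcsupp{Q}$ is forced by the ordered pair of families, whereas only the component inside $\qcsupp{Q}$ contributes to $\cla{Q}\le\bbZ/2\bbZ\oplus\bbZ^{\qcsupp{Q}}$, and that component is constrained to lie in $\cla{[\iota(e)]}$; organizing these contributions through the graph of families by \Cref{prop:cla-and-modular-homomorphism} should show that the generated relation is insensitive to the precise edges and depends only on data shared by $\Lambda$ and $\Lambda'$.
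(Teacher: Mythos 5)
Your reduction and your locality step are correct, and they coincide with the paper's own argument: the paper proves the lemma by characterizing $p\ee q$, for $p,q$ in a class $Q$, through chains $p=p_1,\dots,p_\ell=q$ whose consecutive points are translates $(r_i+\bw_i,s_i+\bw_i)$ of conjugate pairs $r_i\cnj s_i$ lying in quasi-conjugacy classes $Q_i\neq Q$ containing an edge (with, as you observe, $Q_i$ automatically strictly below $Q$). The divergence is in how invariance of this characterization is justified: the paper asserts it in one sentence, while you try to prove it by recovering the \emph{full} conjugacy relation by induction up the poset. That induction cannot be completed from the data you allow yourself, and the two claims it rests on are false.

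Claim (i): that knowing $Q$ as a subset together with $\cla{Q}$ determines $\cnj$ on $Q$ via \Cref{prop:description-c-class}. That proposition only compares points lying over a common basepoint, and the bijection in its third item is explicitly non-canonical (see the remark after it): it does not record the offsets, modulo $\cla{Q}$, between different families of $Q$. Claim (ii): that the $\qcsupp{Q}$-component of $q(e)$, for a lower edge $e$ translated so as to join two families of $Q$, lies in the linear algebra of the class of $e$. Both fail in the following example. Let $\Gamma,\Gamma'$ have two vertices $u,v$, primes $\{2,3\}$, and edges (in the affine representation, writing $(a,b)$ for $2^a3^b$)
\[
\Gamma:\quad\begin{cases}
(u,(1,0))\edge(u,(5,0))\\
(v,(1,0))\edge(v,(5,0))\\
(u,(0,1))\edge(u,(0,1))\\
(u,(1,0))\edge(v,(1,0))
\end{cases}
\qquad\qquad
\Gamma':\quad\begin{cases}
(u,(1,0))\edge(u,(5,0))\\
(v,(1,0))\edge(v,(5,0))\\
(u,(0,1))\edge(u,(0,1))\\
(u,(1,0))\edge(v,(3,0))
\end{cases}
\]
Both graphs have the same quasi-conjugacy classes containing an edge, namely $Q=\{(u,(a,0)),(v,(a,0)):a\ge1\}$ and $\{(u,(0,1))\}$, with the same $\qcmin{Q}$, the same $\qcsupp{Q}=\{2\}$, and the same $\cla{Q}=\gen{(4,0)}$. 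Yet $(u,(1,0))\cnj(v,(1,0))$ in $\Gamma$, whereas in $\Gamma'$ one has $(u,(1,0))\cnj(v,(3,0))$ and $(u,(1,0))\not\cnj(v,(1,0))$ --- so (i) fails; and the connecting edge of $\Gamma'$ has modular value $(2,0)\notin\gen{(4,0)}$ --- so (ii) fails. Worse, the discrepancy propagates exactly where your induction needs control: on the edge-free class $Q''=\{(u,(a,1)),(v,(a,1)):a\ge1\}$ external equivalence coincides with conjugacy (as you note), and $(u,(1,1))\ee(v,(1,1))$ holds in $\Gamma$ but not in $\Gamma'$. So the external equivalence relations genuinely differ, even though the two graphs share all the data your proof uses; no rearrangement of the induction can repair this.

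What the example shows is that the hypothesis must be understood as also including the conjugacy classes containing at least one edge, i.e.\ item 3 of \Cref{def:linear-invariants} --- data available wherever the lemma is applied (cf.\ \Cref{cor:basic-invariants}), and which the paper's concluding sentence implicitly invokes, since its chain condition refers to conjugacy \emph{inside} the classes $Q_i$. Under that reading no induction is needed at all: the full conjugacy relation is the equivalence relation generated by the translates $(x+\bw,y+\bw)$ of pairs $x,y$ lying in a common conjugacy class containing an edge (every translated edge is such a pair, and conjugacy is translation-invariant), hence $\cnj$, then $\qcnj$, and then $\ee$ and $\qee$ via your locality step, are all determined.
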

\begin{proof}
    Let $p\qcnj q$ be two vertices in a quasi-conjugacy class $Q$ in the affine representation $\Lambda$ of $(\Gamma,\psi)$. Then $p\ee q$ if and only if there is a sequence $p=p_1,p_2,\dots,p_{\ell-1},p_\ell=q$ with the following properties:
    \begin{enumerate}
        \item For all $i=1,\dots,\ell-1$ there is a quasi-conjugacy class $Q_i\not=Q$ containing at least one edge, and points $r_i\cnj s_i\in Q_i$.
        \item For all $i=1,\dots,\ell-1$ we have that $p_i=r_i+\bw_i$ and $p_{i+1}=s_i+\bw_i$ for some $\bw_i\in\pA$.
    \end{enumerate}
    If $p\ee q$ then the existence of such a sequence follows from the definition. Conversely, if there is such a sequence, then we can easily construct an affine path from $p$ to $q$ which does not use any edge in $Q$. But the existence of such a sequence only depends on the quasi-conjugacy classes containing at least one edge. The statement follows.
\end{proof}

\subsection{External slide moves}

The following Lemma \ref{lem:external-slide} introduces a new move, based on external equivalence classes. As we shall see, this is the key to separate the main Question \ref{quest:main} into several independent problems, each of them internal to a single quasi-conjugacy class.

\begin{lem}[External slides]\label{lem:external-slide}
    Let $(\Gamma,\psi)$ be a GBS graph and let $\Lambda$ be its affine representation. Suppose that $\Lambda$ contains an edge $p\edge q$ and that $q\ee q'$ for some $p,q,q'\in V(\Lambda)$. Then we can change $p\edge q$ into $p\edge q'$ by a sequence of slide moves. We call the move that changes the edge $p\edge q$ into $p\edge q'$ an external slide.
\end{lem}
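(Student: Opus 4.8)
The plan is to use the affine path witnessing $q\ee q'$ directly as a sequence of ``rails'' for slide moves, transporting the mobile endpoint $q$ of the edge $p\edge q$ one rail at a time until it reaches $q'$, while the endpoint $p$ stays fixed. First I would unwind the definition of external equivalence: since $q\ee q'$, there is an affine path $(e_1,\dots,e_\ell)$ from $q$ to $q'$ that uses no edge quasi-conjugate to $q$ (equivalently, to $p$, as $p\cnj q$). By the definition of affine path, there are translation coefficients $\bw_1,\dots,\bw_\ell\in\pA$ with $\iota(e_1)+\bw_1=q$, with $\tau(e_i)+\bw_i=\iota(e_{i+1})+\bw_{i+1}$ for each $i$, and with $\tau(e_\ell)+\bw_\ell=q'$.

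The key step is to slide along this path. At stage $i$ the edge being moved has its mobile endpoint at $\iota(e_i)+\bw_i$. Since $\bw_i\in\pA$, this point lies above the initial vertex $\iota(e_i)$ of the rail edge $e_i$; passing to the GBS graph, ``lying above $\iota(e_i)$'' is exactly the divisibility condition $\psi(\ol{e_i})\mid\psi(\text{sliding edge})$ needed to perform a slide with $e_i$ as the rail (the two edges share the vertex $\iota_\Gamma(e_i)$, and the orientation of the sliding edge is irrelevant, so I may always take its mobile endpoint to be the $\tau$-end). Applying the slide moves the endpoint from $\iota(e_i)+\bw_i$ to $\tau(e_i)+\bw_i$, which by the path relations equals $\iota(e_{i+1})+\bw_{i+1}$, the correct starting position for the next slide. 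After $\ell$ slides the endpoint reaches $\tau(e_\ell)+\bw_\ell=q'$, as desired; the degenerate case $\ell=0$ gives $q=q'$ and nothing to prove.

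The only point requiring genuine care is that every slide is legitimate, and here the hypothesis on external equivalence does all the work: the slide move requires the rail and the sliding edge to be \emph{distinct}, and since each $e_i$ avoids the quasi-conjugacy class of $q$ while the edge $p\edge q$ itself lies in that class (its endpoints satisfy $p\cnj q$), no rail ever coincides with the edge being slid. The positivity $\bw_i\ge\mathbf{0}$ and the shared-vertex incidence are automatic from the affine-path structure. Consequently I do not expect a real obstacle: the entire content of the lemma is the translation of ``affine path avoiding the quasi-conjugacy class'' into ``finite sequence of valid slides'', and the avoidance hypothesis is precisely what guarantees the distinctness condition at each step.
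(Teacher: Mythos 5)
Your proof is correct and follows the same route as the paper's: take the affine path witnessing $q\ee q'$, observe that it cannot contain the edge $p\edge q$ itself (since that edge lies in the quasi-conjugacy class of $q$, which the path avoids), and slide the mobile endpoint along the path rail by rail. The paper's own proof is exactly this argument stated compactly; your version merely makes explicit the routine verifications (translation coefficients, divisibility, distinctness at each step) that the paper leaves implicit.
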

\begin{proof}
    Since $q\ee q'$ there must be an affine path from $q$ to $q'$ that does not use any edge quasi-conjugate to $q$. In particular, the affine paths does not use the edge $p\edge q$. Thus we can slide $p\edge q$ along the affine path in order to change it into $p\edge q'$.
\end{proof}

Let $(\Gamma,\psi)$ be a GBS graph and let $\Lambda$ be its affine representation. Observe that, if we perform a swap or a connection move involving two edges $e,f$, then $f\qcnj e$. Instead, when changing an edge $e$ by a slide over an edge $f$, there are two possibilities:
\begin{enumerate}
    \item $f\qcnj e$, in which case we say that the slide is \textbf{internal}.
    \item $f\not\qcnj e$, in which case we must have $f\lecnj e$, and the slide move is a particular case of the external slide move from Lemma \ref{lem:external-slide}.
\end{enumerate}
In particular, this means that we can deal with distinct quasi-conjugacy classes separately and independently from each other.

\begin{thm}[Independence of quasi-conjugacy classes]\label{thm:independence-qc-classes}
    Let $(\Gamma,\psi),(\Gamma',\psi')$ be two GBS graphs with a bijection $V(\Gamma)=V(\Gamma')$. Then the following are equivalent:
    \begin{enumerate}
        \item\label{fja} There is a sequence of slides, swaps, and connections going from $(\Gamma,\psi)$ to $(\Gamma',\psi')$ inducing the given bijection.
        \item\label{tioa} For every quasi-conjugacy class $Q$, there is a sequence of swaps, connections, internal slides, and external slides {\rm(\Cref{lem:external-slide})} - each of them involving only edges in $Q$ - going from the configuration in $(\Gamma,\psi)$ to the configuration in $(\Gamma',\psi')$.
    \end{enumerate}
\end{thm}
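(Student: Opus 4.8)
The plan is to treat every move as acting only on the edges it actually displaces, and to exploit that external equivalence is a \emph{static} relation: by \Cref{lem:basic-invariants-imply-ee-classes} the relations $\ee$, $\leee$ and $\qee$ depend only on the set of primes and on which quasi-conjugacy classes contain an edge, and both of these data are preserved by every slide, swap and connection. Hence $\ee$ never changes as we move edges around, and this is exactly what will let me decouple the quasi-conjugacy classes. Two preliminary observations set up the bookkeeping. First, recalling the trichotomy stated just before the theorem, swaps and connections always join two edges in the same quasi-conjugacy class, while a slide of an edge $e$ over a rail $f$ is either \emph{internal} (when $f\qcnj e$) or external (when $f\lecnj e$ and $f\not\qcnj e$); in particular every move is attached to the unique quasi-conjugacy class $Q$ whose edges it displaces, and a slide never moves its rail. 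Second, a single external slide of this kind is precisely a one-step instance of \Cref{lem:external-slide}: the moving endpoint travels from $q$ to $q'$ along $f$, and since $f\not\qcnj e$ we have $q\ee q'$, so it can be re-read inside the external-slide formalism.

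For \ref{fja}$\,\Rightarrow\,$\ref{tioa} I would fix a quasi-conjugacy class $Q$ and \emph{project} the given sequence onto $Q$: I keep every swap, connection and internal slide whose edges lie in $Q$, I keep every slide whose moving edge lies in $Q$ (re-reading it, when the rail lies outside $Q$, as the external slide from $q$ to $q'$ provided by \Cref{lem:external-slide}), and I discard all remaining moves. By induction on the length of the sequence, the configuration of the edges of $Q$ just before each kept move coincides with the configuration at the corresponding moment of the original sequence, since the discarded moves leave the edges of $Q$ untouched. Consequently swaps, connections and internal slides remain applicable, as their hypotheses only involve edges of $Q$, and each external slide remains applicable because the relation $q\ee q'$ it records is preserved. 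The projected sequence therefore carries the $Q$-configuration of $(\Gamma,\psi)$ to that of $(\Gamma',\psi')$, as required.

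For \ref{tioa}$\,\Rightarrow\,$\ref{fja} I would list the quasi-conjugacy classes $Q_1,\dots,Q_n$ and simply concatenate their sequences. Running the $Q_i$-sequence changes only the edges of $Q_i$: its swaps, connections and internal slides act inside $Q_i$, while its external slides are realized through \Cref{lem:external-slide} by sliding an edge of $Q_i$ along rails lying in other classes, which leaves those rails fixed. Because $\ee$ is unaffected by the earlier sequences for $Q_1,\dots,Q_{i-1}$, every external slide needed for $Q_i$ is still applicable after those classes have been brought to their target positions, and performing it does not disturb them. Concatenating all $n$ sequences thus moves every class simultaneously into its $(\Gamma',\psi')$-configuration.

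The main obstacle is exactly this decoupling: one must guarantee that displacing the edges of one quasi-conjugacy class neither destroys nor silently alters the external slides available to another class. This is precisely what \Cref{lem:basic-invariants-imply-ee-classes} secures, and the two bookkeeping observations above --- that a single slide over a non-quasi-conjugate rail is a one-step external slide, and that a rail is never moved by a slide over it --- are what make the argument go through cleanly in both directions.
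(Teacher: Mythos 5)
Your proposal is correct and follows essentially the same route as the paper: both replace any slide over a rail in a different quasi-conjugacy class by an external slide (\Cref{lem:external-slide}), and both rely on the fact that the external equivalence relation is static along the sequence (\Cref{lem:basic-invariants-imply-ee-classes} together with \Cref{cor:basic-invariants}), so that moves attached to different classes commute. Your projection and concatenation arguments are simply a careful, spelled-out version of the paper's terse statement that one can therefore treat each quasi-conjugacy class separately and independently.
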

\begin{proof}
    Every slide move which involves two edges from different quasi-conjugacy classes can be replaced by an external slide as in \Cref{lem:external-slide}. On the other hand, external slides do not depend on the position of the edges in the other quasi-conjugacy classes (but only on the external equivalence relation). Thus, different moves involving edges in different quasi-conjugacy classes now commute. Therefore, we can deal with each quasi-conjugacy class separately and independently.
\end{proof}

In some particular cases, the possible configurations of certain quasi-conjugacy classes are extremely easy to describe.

\begin{cor}\label{cor:only-one-edge}
    Let $(\Gamma,\psi)$ be a GBS graph with affine representation $\Lambda$, and let $Q$ be a quasi-conjugacy class. Suppose that $Q$ contains only one edge $e$. Then, all the configurations inside $Q$ obtained by sequences of external slides, internal slides, swaps, and connections, can also be obtained by performing two external-slides on the two endpoints of $e$.
\end{cor}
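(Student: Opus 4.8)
The plan is to observe that, once $Q$ contains a single edge $e$, the list of available moves collapses almost entirely, and then to exploit that external equivalence is an equivalence relation which is unaffected by moving $e$.

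First I would rule out every move except external slides. As recorded in the discussion preceding \Cref{thm:independence-qc-classes}, a swap and a connection each involve two edges that are quasi-conjugate, so both lie in the same quasi-conjugacy class; an internal slide, by definition, likewise involves two distinct edges of $Q$. Since $Q$ contains only the edge $e$, none of these three moves can be carried out with all its edges inside $Q$. Hence every move in a sequence internal to $Q$ is an external slide (\Cref{lem:external-slide}) acting on one of the two endpoints of $e$.

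Next I would record two facts about $\ee$ restricted to $Q$. First, $\ee$ is an equivalence relation: reflexivity and symmetry are immediate, and transitivity follows by concatenating two affine paths that each avoid the edges of $Q$ into a single such path (the hypothesis $p\qcnj q$ built into the definition means that ``avoiding edges quasi-conjugate to the endpoints'' is the same as ``avoiding edges of $Q$'' all along the concatenation). Second, because the witnessing paths use only edges \emph{outside} $Q$, and a sequence of moves internal to $Q$ leaves every edge outside $Q$ untouched, the relation $\ee$ on $Q$ does not change during the process. I would also note that $p\ee q$ implies $p\cnj q$, since a path witnessing $\ee$ is in particular an affine path.

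With these in hand the two inclusions are short. Writing the edge as going from initial endpoint $p$ to terminal endpoint $q$, an external slide on one endpoint replaces it by an $\ee$-equivalent vertex while fixing the other. By transitivity of $\ee$, after any sequence of external slides the initial endpoint has moved from $p_0$ to some $p'$ with $p_0\ee p'$ and the terminal endpoint from $q_0$ to some $q'$ with $q_0\ee q'$, so every reachable configuration has this form. Conversely, given such a pair $(p',q')$ I would perform exactly two external slides, first sending $p_0$ to $p'$ and then $q_0$ to $q'$; the second is valid because the path witnessing $q_0\ee q'$ avoids all edges of $Q$, in particular the relocated edge $e$, and the outcome is a genuine placement of $e$ since $p'\cnj p_0\cnj q_0\cnj q'$ gives $p'\cnj q'$. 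The only point needing care — what I would flag as the main (and modest) obstacle — is the bookkeeping that $\ee$ on $Q$ is genuinely invariant under the moves and that the two endpoint-slides are independent; both reduce to the single observation that all witnessing paths live entirely outside $Q$ and so never touch the edge being moved.
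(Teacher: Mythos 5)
Your proof is correct and follows essentially the same route as the paper: the paper's entire proof is the one-line observation that with a single edge in $Q$ no internal slide, swap, or connection can be performed, leaving only external slides. Your additional bookkeeping (transitivity of $\ee$ and its invariance under moving $e$, which collapses any sequence of external slides to just two) is exactly the implicit content the paper leaves to the reader, so this is a faithful, slightly more detailed version of the same argument.
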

\begin{proof}
    Note that with only one edge in the quasi-conjugacy class, it is impossible to perform internal slides, swaps, and connections.
\end{proof}

\begin{cor}\label{cor:qcclasses=eeclasses}
    Let $(\Gamma,\psi)$ be a GBS graph with affine representation $\Lambda$, and let $Q$ be a quasi-conjugacy class. Suppose that, for all $p,p'\in Q$, we have that $p\cnj p'$ if and only if $p\ee p'$. Then, all the configurations inside $Q$ obtained by sequences of external slides, internal slides, swaps, and connections, can also be obtained by performing only one external-slide on each endpoint of every edge.
\end{cor}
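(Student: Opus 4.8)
The plan is to show that, under the hypothesis $\cnj\,=\,\ee$ on $Q$, each of the moves (internal slide, swap, connection) can be realized, at the level of configurations, purely by external slides, and then to compress the resulting sequence of external slides into a single one per endpoint. The guiding observation is that all moves are confined to $Q$, so the edges \emph{not} quasi-conjugate to $Q$ are never touched. Since the external equivalence relation $\ee$ is computed in the GBS graph obtained by deleting all edges quasi-conjugate to $Q$, it stays fixed throughout the whole sequence; hence every external slide produced below is available at every stage, and each external slide alters only the one endpoint it acts on, its affine path avoiding $Q$ entirely.

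First I would dispose of internal slides and swaps. An internal slide moves an endpoint from $p+\ba$ to $q+\ba$ along an edge $p\edge q$ of $Q$; since $p\cnj q$ we get $p+\ba\cnj q+\ba$, and the hypothesis upgrades this to $p+\ba\ee q+\ba$, so the internal slide is literally an external slide (\Cref{lem:external-slide}). For a swap of loops $p\edge p+\bw_1$ and $q\edge q+\bw_2$, I would first record that $\bw_1,\bw_2\in\cla{Q}$: from $p\cnj p+\bw_1$ and $\supp{\bw_1}\subseteq\qcsupp{Q}$ (as $p+\bw_1\qcnj p$) we get $\bw_1\in\cla{Q}$ by \Cref{prop:description-c-class}, and likewise for $\bw_2$. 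Hence $\bw_1-\bw_2\in\cla{Q}$, so $p+\bw_1\cnj p+\bw_2$ and $q+\bw_1\cnj q+\bw_2$, which by hypothesis are external equivalences. Sliding the top endpoint of the first loop from $p+\bw_1$ to $p+\bw_2$ and that of the second from $q+\bw_2$ to $q+\bw_1$ produces exactly the swapped configuration $\{\,p\edge p+\bw_2,\ q\edge q+\bw_1\,\}$.

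The main work is the connection move. Here I take edges $d\colon q\edge p+\bw_1$ and the loop $e\colon p\edge p+\bw$, with $\bw\in\cla{Q}$ and $\bw_1+\bw_2=k\bw$ for some $k\in\bbN$. I would verify the two conjugacies
$$p+\bw_1\cnj q+\bw,\qquad p+\bw\cnj q+\bw_2.$$
For the first, adding $\bw_1$ to $p\cnj p+\bw$ gives $p+\bw_1\cnj p+\bw+\bw_1$, while adding $\bw$ to $q\cnj p+\bw_1$ gives $q+\bw\cnj p+\bw_1+\bw$, and the two right-hand sides coincide. For the second, adding $\bw_2$ to $q\cnj p+\bw_1$ gives $q+\bw_2\cnj p+\bw_1+\bw_2=p+k\bw$, while iterating $p\cnj p+\bw$ gives $p+k\bw\cnj p\cnj p+\bw$. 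By hypothesis both conjugacies are external equivalences, so I can slide the endpoint $p+\bw_1$ of $d$ to $q+\bw$ and the endpoint $p+\bw$ of $e$ to $q+\bw_2$; the edges become $q\edge q+\bw$ and $p\edge q+\bw_2$, which is precisely the output of the connection move.

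Finally I would assemble these realizations: the entire sequence of moves becomes a sequence of external slides. Because an external slide moves a single endpoint within its (fixed) external equivalence class and never merges or relabels edges, after the full sequence each edge of $Q$ has both of its endpoints displaced within their external classes relative to the starting configuration. Since $\ee$ is transitive and one external slide already reaches the whole external class of an endpoint (\Cref{lem:external-slide}), this net displacement is achieved by performing exactly one external slide on each endpoint of each edge, as claimed. The step I expect to be the main obstacle is the connection case: unlike slides and swaps it mixes the two endpoints of two edges and turns a loop into a non-loop, so everything hinges on the two conjugacy computations above, which use $\bw\in\cla{Q}$ and $\bw_1+\bw_2=k\bw$ to bring both displaced endpoints back inside $Q$ in their prescribed external classes.
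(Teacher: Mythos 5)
Your proposal is correct and follows essentially the same route as the paper, whose proof simply asserts that each internal slide, swap, or connection inside $Q$ can be substituted by external slides; you have filled in exactly that verification (via $\bw_1,\bw_2,\bw\in\cla{Q}$ and the hypothesis upgrading $\cnj$ to $\ee$) together with the compression to one external slide per endpoint. The only cosmetic difference is that your simulation of the connection move interchanges which underlying edge carries which position, but since the statement concerns configurations rather than labeled edges, this is immaterial.
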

\begin{proof}
    It is easy to check that a single move that is either an internal slide, swap or connection, can be substituted with external slides. The statement follows.
\end{proof}

\begin{cor}\label{cor:qcclasses-empty-support}
    Let $(\Gamma,\psi)$ be a GBS graph with affine representation $\Lambda$, and let $Q$ be a quasi-conjugacy class. Suppose that $\qcsupp{Q}=\emptyset$. Then only finitely many configurations can be obtained in $Q$ by means of sequences of external slides, internal slides, swaps, and connections.
\end{cor}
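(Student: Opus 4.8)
The plan is to show that the hypothesis $\qcsupp{Q}=\emptyset$ forces the quasi-conjugacy class $Q$ to be a finite set, and then to observe that every move in question keeps the endpoints of the edges of $Q$ inside this finite set; finiteness of the number of configurations will then follow by a trivial counting argument. The main work is conceptual (identifying that $Q$ is finite and that no move can escape it) rather than computational.

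First I would record that $Q$ is finite. Fix $p\in Q$. By \Cref{prop:description-qc-class}, every vertex $q\qcnj p$ can be written as $q=m+\bw$ with $m\in\qcmin{p}$ and $\supp{\bw}\subseteq\qcsupp{Q}=\emptyset$. Empty support means that $\bw$ vanishes on every component indexed by $\cP(\Gamma,\psi)$, so $\bw\in\{\mathbf{0},\be\}$, where $\be$ is the $2$-torsion element. Since $\qcmin{Q}$ is finite by \Cref{lem:qcmin-finite}, we obtain $\abs{Q}\le 2\abs{\qcmin{Q}}<\infty$.

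Next I would check that each allowed move keeps the endpoints of the edges of $Q$ inside $Q$. Any edge $e$ quasi-conjugate to $Q$ satisfies $\iota(e)\cnj\tau(e)$, so both its endpoints lie in $Q$. An internal slide moves an endpoint from $p+\ba$ to $q+\ba$ along another edge $p\edge q$ of $Q$, and since $p+\ba\cnj q+\ba$ the new endpoint stays in $Q$; swaps and connections likewise only reposition endpoints among conjugate vertices of $Q$. The one point deserving care is the external slide of \Cref{lem:external-slide}: it replaces $p\edge q$ by $p\edge q'$ with $q\ee q'$, but by \Cref{def:external-equivalence} external equivalence is only defined between quasi-conjugate vertices, so $q\qcnj q'$ and hence $q'\in Q$. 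Thus no move can push an endpoint of an edge of $Q$ out of the finite set $Q$; this verification is the only genuine subtlety, since a priori an external slide travels along edges outside $Q$.

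Finally I would conclude by counting. The number $\abs{\qcnjedges{\Lambda}{Q}}$ of edges in $Q$ is preserved by all the moves, being one of the linear invariants, and a configuration inside $Q$ is entirely determined by this finite list of edges together with the positions of their endpoints, each of which ranges over the finite set $Q$. Hence there are at most $\abs{Q}^{2\abs{\qcnjedges{\Lambda}{Q}}}$ possible configurations, which is finite, as desired.
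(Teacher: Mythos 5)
Your proof is correct and follows essentially the same route as the paper, whose entire argument is the one-line observation that $\qcsupp{Q}=\emptyset$ forces $Q$ to be finite, whence only finitely many configurations are possible. You have simply filled in the details the paper leaves implicit: the finiteness of $Q$ via \Cref{prop:description-qc-class} and \Cref{lem:qcmin-finite} (including the $2$-torsion factor), the check that all four kinds of moves keep endpoints inside $Q$, and the final count $\abs{Q}^{2\abs{\qcnjedges{\Lambda}{Q}}}$.
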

\begin{proof}
    If $\qcsupp{Q}=\emptyset$, then $Q$ is finite, and therefore only finitely many configurations are possible.
\end{proof}

\subsection{Mobile edges}

Clay and Forester introduced a notion of \textit{mobile edge} \cite[Definition 3.12]{CF08}, which played an important role in the subsequent literature on GBSs (e.g. \cite{Dud17, Wan25}). An edge $e\in E(\Lambda)$ is mobile if and only if it belongs to a quasi-conjugacy class $Q$ with $\qcsupp{Q}\not=\emptyset$. As a corollary of the above discussion, we recover the following well-known results:

\begin{cor}[\textnormal{\cite[Theorem 8.2]{For06}}]
    Let $(\Gamma,\psi)$ be a GBS graph and suppose that $q(\pi_1(\Gamma))\cap\pA=\{\mathbf{0}\}$, where $q$ is the modular homomorphism, see {\rm Definition \ref{defn:modular_map}}. Then only finitely many configurations can be obtained from $(\Gamma,\psi)$ by means of sequences of sign-changes, inductions, slides, swaps, and connections.
\end{cor}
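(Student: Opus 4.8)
The plan is to show that the hypothesis $q(\pi_1(\Gamma))\cap\pA=\{\mathbf{0}\}$ forces every quasi-conjugacy class to have empty support, and then to assemble finiteness out of the class-by-class results already established (\Cref{cor:qcclasses-empty-support}) together with the independence of quasi-conjugacy classes (\Cref{thm:independence-qc-classes}), after disposing of sign-changes and inductions. So the first task is to translate the global condition on the modular homomorphism into the local statement $\qcsupp{Q}=\emptyset$ for every quasi-conjugacy class $Q$ containing an edge.

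I would argue this by contradiction. Suppose some such $Q$ has $\qcsupp{Q}\neq\emptyset$ and fix $p\in Q$. By \Cref{lem:qc-supp-single-vector} there is $\bw\in\pA$ with $p+\bw\cnj p$ and $\supp{\bw}=\qcsupp{Q}\neq\emptyset$, so $\bw\neq\mathbf{0}$. Taking the trivial witness on one side, the relation $p+\bw\cnj p\cnj p$ shows $\bw=\bw-\mathbf{0}\in\cla{p}=\cla{Q}$. By \Cref{prop:cla-and-modular-homomorphism} we have $\cla{Q}=\image\bigl(q_Q:\pi_1(\cF(Q))\to\bA\bigr)$, and since the projection $\cF(Q)\to\Gamma$ forgetting families sends each loop $(F_1,e_1,F_2),\dots$ of $\cF(Q)$ to a genuine loop $e_1,\dots,e_\ell$ of $\Gamma$ with $q_Q=q$ edge-by-edge, we get $\cla{Q}\subseteq q(\pi_1(\Gamma))$. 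Hence $\bw\in q(\pi_1(\Gamma))\cap\pA$ with $\bw\neq\mathbf{0}$, contradicting the hypothesis. Therefore all relevant supports are empty.

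Next I would record that emptiness of all supports is preserved by every move: slides, swaps and connections preserve the linear invariants (\Cref{cor:basic-invariants}), while sign-changes and inductions preserve the invariants of \Cref{itm:inv-1,itm:inv-2}; in all cases the list of quasi-conjugacy classes together with their supports is unchanged, so every reachable configuration again has all supports empty. This rules out nontrivial inductions at every stage: an induction acts on a loop edge whose affine image is $(v,\mathbf{0})\edge(v,\bw)$ with $\bw\in\pA$, and since the endpoints of an edge are always conjugate we have $(v,\mathbf{0})+\bw\cnj(v,\mathbf{0})$, whence $\supp{\bw}\subseteq\qcsupp{Q}=\emptyset$ and $\bw=\mathbf{0}$; the induction is trivial and may be deleted from any sequence of moves. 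Sign-changes only flip the $\bbZ/2\bbZ$-components of edge endpoints, and as $\abs{V(\Gamma)}$ and $\abs{E(\Gamma)}$ are constant along any sequence of moves, the group they generate is finite (of order at most $2^{\abs{V(\Gamma)}+\abs{E(\Gamma)}}$), contributing only a finite multiplicative factor to the count of reachable configurations.

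It then remains to bound, for a fixed vertex bijection, the configurations obtainable by slides, swaps and connections. By \Cref{thm:independence-qc-classes} such a configuration is determined by its restriction to each quasi-conjugacy class $Q$, and its reachability is equivalent to the reachability of each restriction by the $Q$-internal moves (internal slides, external slides, swaps, connections within $Q$). Since $\qcsupp{Q}=\emptyset$ makes $Q$ finite, \Cref{cor:qcclasses-empty-support} yields only finitely many configurations inside each $Q$; as there are finitely many classes, the map sending a configuration to its tuple of per-class configurations is injective with finite image, so only finitely many configurations arise for each bijection. Combining with the $\abs{V(\Gamma)}!$ vertex bijections and the finite sign-change factor, and having discarded inductions, we conclude that only finitely many configurations are reachable in total. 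I expect the main obstacle to be the first step: correctly producing, from a nonempty support, an element $\bw\in q(\pi_1(\Gamma))\cap\pA\setminus\{\mathbf{0}\}$ through the identification $\cla{Q}=\image(q_Q)\subseteq\image(q)$, together with verifying that emptiness of all supports is genuinely invariant under all five moves, which is exactly what legitimizes suppressing inductions at every stage.
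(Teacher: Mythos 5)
Your proposal is correct and follows essentially the same route as the paper: the paper's (one-line) proof likewise deduces $\qcsupp{Q}=\emptyset$ for every quasi-conjugacy class from the hypothesis $q(\pi_1(\Gamma))\cap\pA=\{\mathbf{0}\}$ and then invokes \Cref{cor:qcclasses-empty-support}. Your additional details — deriving the empty-support claim via $\cla{Q}=\mathrm{im}(q_Q)\subseteq q(\pi_1(\Gamma))$, checking invariance under moves, and disposing of sign-changes and (necessarily trivial, since $\supp{\bw}=\emptyset$ forces $\psi(e)=\pm1$) inductions — are exactly the steps the paper leaves implicit.
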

\begin{proof}
    If $q(\pi_1(\Gamma))\cap\pA=\{\mathbf{0}\}$, then for every quasi-conjugacy class $Q$ we must have $\qcsupp{Q}=\emptyset$, and the result follows from \Cref{cor:qcclasses-empty-support}.
\end{proof}

\begin{cor}[\textnormal{\cite[Theorem 2]{Dud17}}]
    There is an algorithm that, given GBS graphs $(\Gamma,\psi),(\Delta,\phi)$ such that $(\Gamma,\psi)$ has at most one mobile edge, decides whether or not the corresponding GBS groups are isomorphic.
\end{cor}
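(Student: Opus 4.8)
The plan is to reduce the isomorphism question to \Cref{quest:main} and then exploit the independence of quasi-conjugacy classes. First I would algorithmically replace both $(\Gamma,\psi)$ and $(\Delta,\phi)$ by totally reduced GBS graphs, as in \cite{ACK-iso1}, so that \Cref{thm:sequence-new-moves} applies: the two GBS groups are isomorphic if and only if, for one of the finitely many vertex bijections and one of the finitely many initial sign-changes, there is a sequence of slides, swaps, and connections realising it. (The inductions permitted at the start of the sequence are the one genuinely delicate point; I return to them below.) For each such choice I would first compute and compare the linear invariants of the two graphs using \Cref{prop:qc-algorithm} and \Cref{prop:c-algorithm}: by \Cref{cor:basic-invariants}, if they differ then no sequence of slides, swaps, and connections exists, and this choice is discarded. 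When the invariants agree, \Cref{thm:independence-qc-classes} lets me treat each quasi-conjugacy class $Q$ containing an edge separately, asking only whether the configuration of the edges of $Q$ in $(\Gamma,\psi)$ can be carried to that in $(\Gamma',\psi')$ by swaps, connections, internal slides, and external slides inside $Q$.

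The hypothesis of at most one mobile edge now splits these finitely many classes into two easy types. A class $Q$ with $\qcsupp{Q}=\emptyset$ contains only non-mobile edges; by \Cref{cor:qcclasses-empty-support} only finitely many configurations are reachable inside $Q$, so I would simply enumerate the configurations reachable from $(\Gamma,\psi)$ and test whether the one coming from $(\Gamma',\psi')$ occurs among them. Since an edge is mobile exactly when its class has nonempty support, the assumption forces at most one class $Q_0$ with $\qcsupp{Q_0}\neq\emptyset$ to contain an edge, and that class then contains the unique mobile edge $e$ and nothing else.

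For $Q_0$ I would invoke \Cref{cor:only-one-edge}: every reachable configuration of the single edge $e$ is obtained by performing two external slides, one on each endpoint of $e$. By \Cref{lem:external-slide} an external slide moves an endpoint to any vertex in its external equivalence class, and these two classes can be handled independently since there is only one edge to move. Hence, writing $e$ with endpoints $p,q$ in $(\Gamma,\psi)$ and the corresponding mobile edge $e'$ with endpoints $p',q'$ in $(\Gamma',\psi')$, the configurations of $Q_0$ are related if and only if $\{p,q\}$ matches $\{p',q'\}$ up to external equivalence, that is ($p\ee p'$ and $q\ee q'$) or ($p\ee q'$ and $q\ee p'$). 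Because the linear invariants already agree, \Cref{lem:basic-invariants-imply-ee-classes} guarantees that the external equivalence relation is the same in $\Lambda$ and $\Lambda'$, so this is a well-posed comparison; and it is decidable by the external-equivalence version of \Cref{cor:algorithmic-paths}. Collecting the outcomes over all classes $Q$ and over the finitely many initial choices yields the decision.

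The main obstacle is the treatment of the inductions that \Cref{thm:sequence-new-moves} allows at the start of the sequence, since in general there may be infinitely many of them. Here the restriction to one mobile edge is exactly what keeps them under control: a nontrivial induction is driven by a mobile loop, whose quasi-conjugacy class necessarily has nonempty support, so only the unique mobile edge can power an induction, and the resulting translations act on the remaining (non-mobile, hence finitely supported) edges in a way that feeds back into the finite analysis above. I would make this precise by bounding, as in \cite{ACK-iso1}, the inductions that can affect the finitely many reachable configurations, and then enumerate them. Everything else --- computing totally reduced forms, linear invariants, external equivalence classes, and the finite configuration sets --- is effective by the algorithms of \Cref{prop:qc-algorithm}, \Cref{prop:c-algorithm}, and \Cref{cor:algorithmic-paths}.
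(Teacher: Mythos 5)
Your proposal is correct and takes essentially the same route as the paper: the paper's proof is exactly the dichotomy you identify --- under the hypothesis, every quasi-conjugacy class containing an edge either has empty support (handled by \Cref{cor:qcclasses-empty-support}) or contains a single edge (handled by \Cref{cor:only-one-edge}) --- with the surrounding reductions (totally reduced forms, \Cref{thm:sequence-new-moves}, \Cref{cor:basic-invariants}, \Cref{thm:independence-qc-classes}) left implicit. Your extra attention to the initial inductions actually goes beyond the paper's own proof, which, like the discussion preceding \Cref{quest:main}, sets that issue aside.
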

\begin{proof}
    Let $Q$ be a quasi-conjugacy class in the affine representation $\Lambda$ of $(\Gamma,\psi)$. If $Q$ contains at least one edge, then either $\qcsupp{Q}=\emptyset$, or $Q$ contains only one edge. In the former case, we can solve the isomorphism problem using \Cref{cor:qcclasses-empty-support}, and in the latter, using \Cref{cor:only-one-edge}.
\end{proof}

\subsection{Minimal regions of a quasi-conjugacy class}

Let $(\Gamma,\psi)$ be a GBS graph and let $\Lambda$ be its affine representation. Let $Q\subseteq V(\Lambda)$ be a quasi-conjugacy class.

\begin{defn}\label{def:region}
    A quasi-external equivalence class $R\subseteq Q$ is called a \textbf{region} of $Q$.
\end{defn}

The set of regions of $Q$ is a partition of $Q$. Moreover, the relation $\leee$ makes the set of regions of $Q$ into a partially ordered set. A \textbf{minimal region} of $Q$ is a region $M\subseteq Q$ which is minimal with respect to the partial order $\leee$. Every minimal region must contain at least one point from $\qcmin{Q}$ - possibly more than one. Each quasi-conjugacy class contains finitely many minimal regions, and always at least one.

\begin{rmk}
    Not every point in $\qcmin{Q}$ needs to belong to a minimal region.
\end{rmk}

The reason for which we are interested in the minimal regions of $Q$ is that they often give us important information about the position of the edges in the quasi-conjugacy class (especially when the quasi-conjugacy class contains few edges compared to the number of minimal regions, or when the configuration is particularly ``rigid'').

\begin{lem}\label{lem:escape-minimal-regions}
    Let $Q$ be a quasi-conjugacy class containing at least one edge. Then every minimal region of $Q$ must contain at least one endpoint of some edge. In particular, if $Q$ contains $d\ge1$ minimal regions, then $Q$ contains at least $\intsup{\frac{d}{2}}$ edges.
\end{lem}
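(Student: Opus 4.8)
The plan is to prove the main assertion---that every minimal region contains an endpoint of some edge---and then deduce the counting bound by a short pigeonhole argument. Fix a minimal region $M$ of $Q$; since $Q$ contains at least one edge, I may pick an edge $e_0$ lying in $Q$, so that $\iota(e_0),\tau(e_0)\in Q$. If one of its endpoints already lies in $M$ we are done, so the substantive case is $M\subsetneq Q$. The first structural fact I would isolate is that \emph{external edges preserve regions}: if an affine path traverses an edge $f$ not quasi-conjugate to $Q$, then its two translated endpoints $p,p'$ lie in $Q$ but the single-edge path between them avoids edges quasi-conjugate to $p,p'$, so $p\ee p'$ and hence $p\qee p'$; thus $p,p'$ lie in the same region. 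Contrapositively, only edges lying in $Q$ (the \emph{internal} edges) can move an affine path from one region into another.

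The second and, in my view, main ingredient is a ``pull-down'' observation exploiting minimality of $M$: if $e$ lies in $Q$ and $\iota(e)+\bw\in M$ for some $\bw\in\pA$, then in fact $\iota(e)\in M$. Indeed $\iota(e)\in Q$ by definition of an edge in $Q$, and $\iota(e)\le\iota(e)+\bw$ gives $\iota(e)\leee\iota(e)+\bw$, so the region containing $\iota(e)$ is $\leee M$ in the poset of regions; minimality of $M$ forces equality, whence $\iota(e)\in M$. This resolves the genuine subtlety that the literal endpoint of an edge may sit below the quasi-conjugacy class while only a translate of it enters $M$.

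With these in hand the argument assembles as follows. I may assume $\iota(e_0)\notin M$. Since $\iota(e_0)\qcnj m$ for any $m\in M$, we have $\iota(e_0)\lecnj m$, so there is $q'\cnj m$ with $\iota(e_0)\le q'$, i.e.\ $q'=\iota(e_0)+\bw$; the pull-down observation shows $q'\notin M$, for otherwise $\iota(e_0)\in M$. Realizing the conjugacy $m\cnj q'$ by an affine path from $m$ to $q'$ (all of whose intermediate vertices are conjugate to $m$, hence lie in $Q$), I traverse it starting at $m\in M$. External edges keep the path inside $M$, yet $q'\notin M$, so at least one internal edge must be used; taking the first such edge $e_j$, the vertex immediately preceding it still lies in $M$ and equals $\iota(e_j)+\bw_j$, so the pull-down observation yields $\iota(e_j)\in M$. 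Thus $M$ contains an endpoint of an edge.

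For the counting statement I would assign to each of the $d$ minimal regions an edge having an endpoint in it; since the regions are pairwise disjoint and an edge has only two endpoints, each edge is assigned to at most two regions, so the number of distinct edges is at least $\intsup{\frac{d}{2}}$. The step I expect to be most delicate is the pull-down observation together with the bookkeeping of internal versus external edges along the affine path, as this is precisely where the interplay between the order $\le$, the region order $\leee$, and the minimality of $M$ must be handled carefully.
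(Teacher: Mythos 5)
Your proof is correct and follows essentially the same approach as the paper's (much terser) three-sentence proof: external edges preserve regions, so escaping a minimal region $M$ requires an edge of $Q$, and minimality forces that edge to have an endpoint in $M$. Your ``pull-down'' observation (using that $\iota(e)\leee\iota(e)+\bw$ together with minimality of $M$ in the poset of regions) and the first-internal-edge traversal argument are precisely the details the paper leaves implicit, and both are handled correctly.
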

\begin{proof}
    Edges from other quasi-conjugacy classes do not allow for a change of region. Therefore, to move away from a minimal region $M$, one has to use an edge belonging to the quasi-conjugacy class $Q$. By minimality, this means that there must be an edge with an endpoint in $M$.
\end{proof}

\begin{rmk}
    The lower bound $\intsup{\frac{d}{2}}$ in the above Lemma \ref{lem:escape-minimal-regions} is optimal.
\end{rmk}

\begin{defn}
    An edge $e$ is called \textbf{floating} if none of its endpoints $\iota(e),\tau(e)$ is in a minimal region.
\end{defn}

Note that the number of floating edges is not invariant when performing moves.

\subsection{Examples}

As usual, in the examples we use $\bA=\bbZ^{\cP(\Gamma,\psi)}$, omitting the $\bbZ/2\bbZ$ summand.

\begin{ex}\label{ex-extra1}
    In \Cref{fig:example-extra1} we can see two GBS graphs. The corresponding GBS groups are non-isomorphic. In fact, the linear invariants (more specifically: the conjugacy classes) are different in the two examples.

    Equivalently, we have the following. The generator $g$ of the vertex group is definable (up to conjugation and up to taking the inverse), for example as the unique elliptic element with no proper root. This means that any isomorphism from one GBS to the other must send the generator of the vertex group to the generator of the vertex group (or to a conjugate, or to a conjugate of the inverse). In the GBS group on the right we have that $g^4$ is not conjugate to $g^3$. In the GBS group on the right we have that $g^4$ is conjugate to $g^3$. This proves that they are not isomorphic.

    Note that the modular homomorphism will not distinguish these two examples. For both groups, the modular homomorphism is a map from a free group of rank three to $\bbQ^*$, sending the three elements of some bases to $1,2,3\in\bbQ^*$.
\end{ex}

\begin{figure}[H]
\centering
\includegraphics[width=0.7\textwidth]{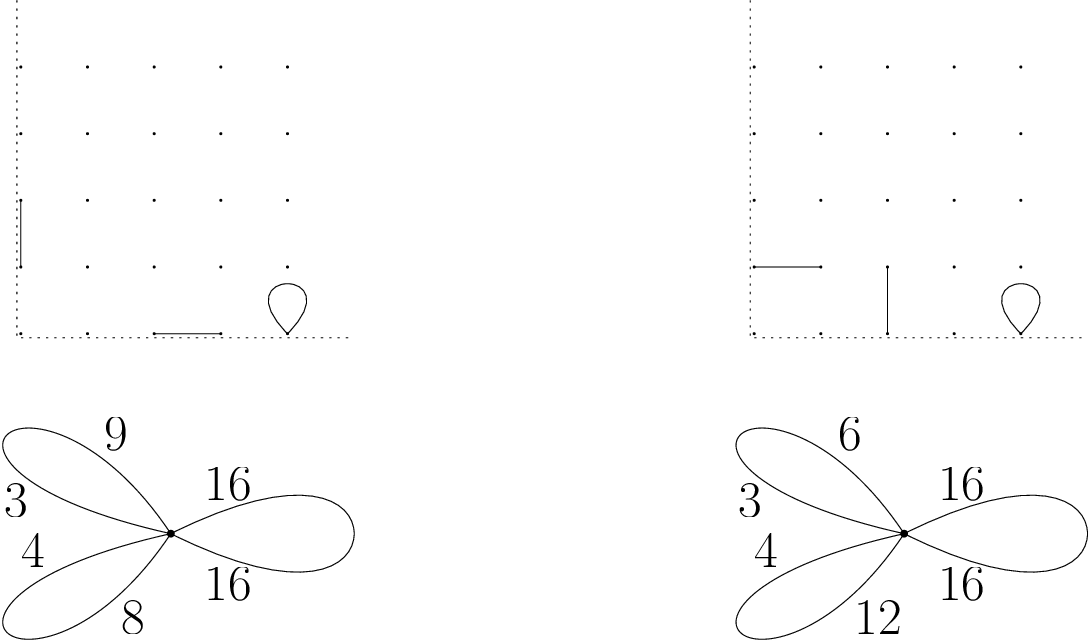}
\centering
\caption{Two non-isomorphic GBS groups, as in \Cref{ex-extra1}.}
\label{fig:example-extra1}
\end{figure}

\begin{ex}\label{ex-extra2}
    In \Cref{fig:example-extra2}, we can see three GBS graphs. The corresponding GBS groups are pairwise non-isomorphic. More generally, we can substitute the labels $8,8$ in the graph on the left with labels $2^k,2^k$ for $k\ge1$, and we obtain an infinite sequence of pairwise non-isomorphic GBS groups. Once again, the linear invariants are different in the three examples. However this time the conjugacy classes are the same; the difference lies in the number of edges in each conjugacy class. 

    Equivalently, we have the following. Consider the generator $g$ of the vertex group (which is definable up to inverse and conjugation). Consider the generators of the stabilizers of the edge groups. In this case, the invariant that distinguishes the three isomorphism classes is the number of generators of edge stabilizers that are conjugate to $g^8$, $g^{16}$ and$g^{32}$. In the GBS on the left, there is one edge whose generator is conjugate to $g^8$, and no edge whose generator is conjugate to $g^{16}$ or to $g^{32}$. For the GBS in the middle, there are $0,1,0$ edges conjugate to $g^8, g^{16}$, and $g^{32}$, respectively. For the GBS on the right, there are $0,0,1$. Since these numbers are isomorphism invariants, they prove that the three GBSs are pairwise non-isomorphic.

    Note that these numbers are only invariant among fully reduced GBS graphs representing the same GBS group. If one allows for expansions, then it is possible to add a new edge with stabilizer in any prescribed conjugacy class.

    We are also using the fact that in these examples induction is not possible. If induction was possible, then the generator $g$ of the vertex group would not be definable. In this case, variants of the same argument will work, but one has to be more careful.
\end{ex}

\begin{figure}[H]
\centering
\includegraphics[width=\textwidth]{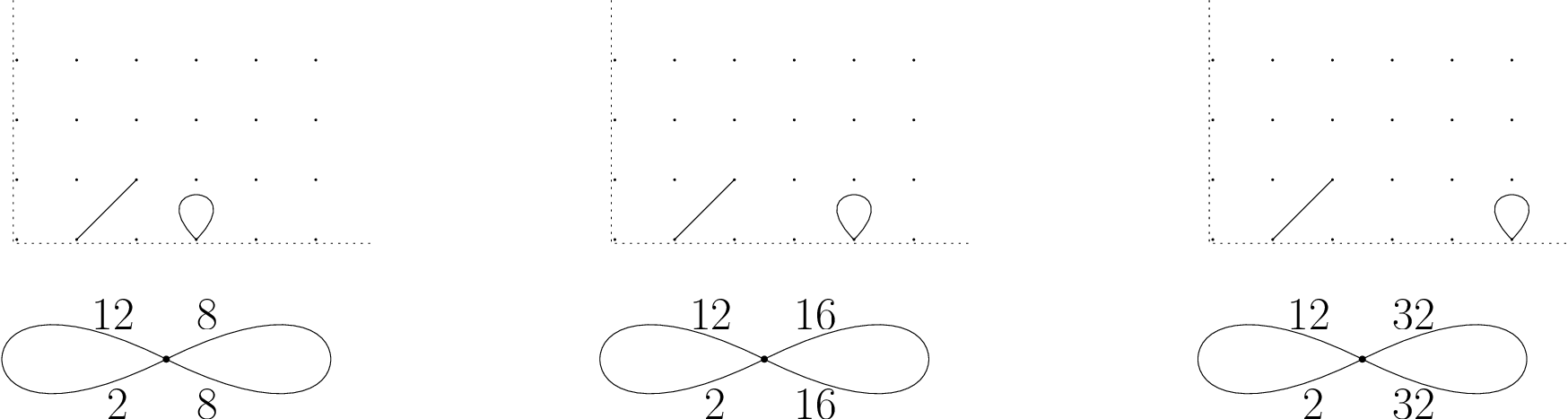}
\centering
\caption{Three pairwise non-isomorphic GBS groups, as in \Cref{ex-extra2}.}
\label{fig:example-extra2}
\end{figure}

\begin{ex}\label{ex1part2}
    We can now solve the isomorphism problem for the GBS graph of Example \ref{ex1part1}. Using the same notation as in Example \ref{ex1part1}, call $(\Gamma,\psi)$ the GBS graph, with a single vertex $v$, and $\Lambda$ its affine representation. The edges of $\Lambda$ are partitioned into four quasi-conjugacy classes $Q_1,Q_2,Q_3,Q_4$, see \Cref{fig:example1-qcclasses-minimals}, which had been described in \Cref{ex1part1}. Suppose that we try and perform a sequence of slides, swaps, and connections starting from this GBS graph.
    
    The quasi-conjugacy class $Q_1$ contains only the edge $(1,3)\edge(3,2)$, and thus by \Cref{cor:only-one-edge} that edge can not be changed. In the quasi-conjugacy classes $Q_3,Q_4$, the notions of conjugacy and external equivalence coincide, and thus by \Cref{cor:qcclasses=eeclasses} we can change the endpoints of the edges at will using external slides, and that is a complete list of all configurations that can be reached with slides, swaps, and connections.

    The quasi-conjugacy class $Q_2$ contains two edges, belonging to two distinct conjugacy classes, and one minimal region $M_2=\{(4,0)\}$; we have $\cla{Q_2}=\gen{(2,0)}$. For simplicity of notation, call $\ba=(4,0)$ and $\bu=(2,0)$. Let $e$ be the edge belonging to the conjugacy class of $(4,0)$: by Lemma \ref{lem:escape-minimal-regions}, the edge $e$ must always be given by $(4,0)\edge(4+2a,0)$ for some integer $a\ge0$. If $a=0$ then $M_2$ would be a quasi-conjugacy class by itself, contradiction, so we also always have $a\not=0$. Let $f$ be the edge belonging to the other conjugacy class: the edge $f$ must always be given by $(5+2b,0)\edge(5+2c,0)$ for some integers $b,c\ge0$. Finally, we must always have that $\gen{(2a,0),(2b-2c,0)}=\gen{(2,0)}$, otherwise the linear algebra of $Q_2$ would not be the correct one. On the other hand, using the results of \cite{ACK-iso1}, every configuration satisfying the above conditions can actually be reached. To summarize, the configurations for $Q_2$, that we can reach with a sequence of slides, swaps, and connections, are exactly the ones such that:
    \begin{enumerate}
        \item $e$ is given by $(4,0)\edge(4+2a,0)$ for some integer $a\ge1$.
        \item $f$ is given by $(5+2b,0)\edge(5+2c,0)$ for some integers $b,c\ge0$.
        \item $(a,b-c)=1$.
    \end{enumerate}

    Finally, we take into account the (finitely many) sign-changes, and we observe that induction moves do not apply to this example. This gives algorithmic solution to the isomorphism problem for an arbitrary GBS graph against this specific one.
\end{ex}

\begin{figure}[H]
\centering
\includegraphics[width=\textwidth]{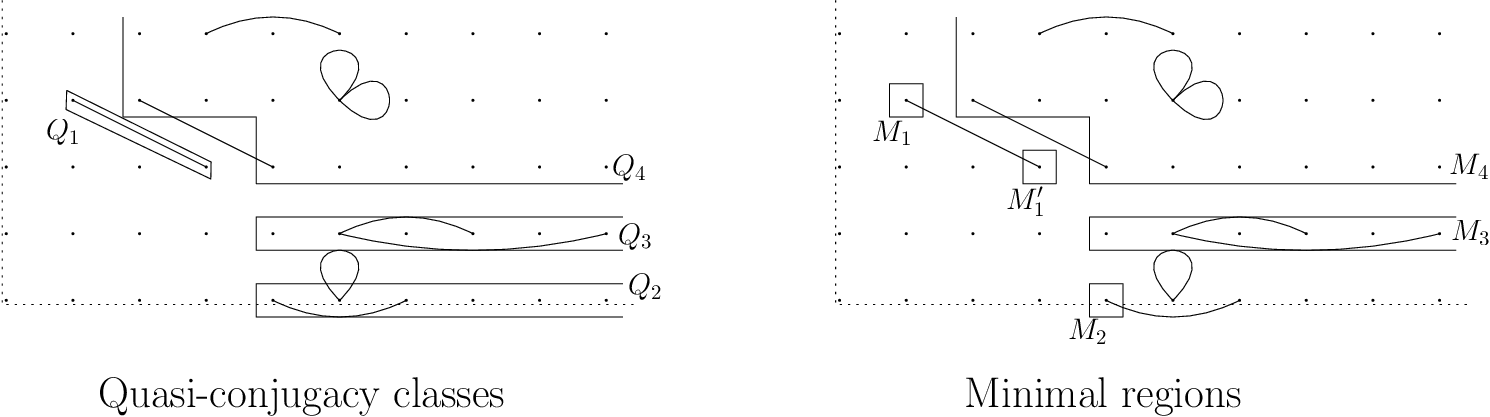}
\centering
\caption{The conjugacy classes and the minimal regions for the GBS graph of \Cref{ex1part2}.}
\label{fig:example1-qcclasses-minimals}
\end{figure}

\begin{ex}\label{ex2part2}
    We can now solve the isomorphism problem for the GBS graph of Example \ref{ex2part1}. Using the notation of Example \ref{ex2part1}, let $(\Gamma,\psi)$ be the GBS graph with a single vertex $v$, and $\Lambda$ be its affine representation.
    
    The edges of $\Lambda$ are partitioned into three quasi-conjugacy classes: one in the conjugacy class $P_1$ of the point $(3,0,0,0)$ in $\pA_v$, one in the conjugacy class $P_2$ of the point $(1,1,0,0)$ in $\pA_v$, and two in the conjugacy class $Q$ of the point $(0,1,0,1)$ in $\pA_v$. For a description of the quasi-conjugacy classes $P_1,P_2,Q$ see Example \ref{ex2part1}. These three quasi-conjugacy classes form a poset given by $P_1\lecnj Q$ and $P_2\lecnj Q$. By \Cref{thm:independence-qc-classes} we can examine different quasi-conjugacy classes independently.
    
    The quasi-conjugacy class $P_1$ contains only  the edge $(3,0,0,0)\edge(1,0,1,0)$, so by \Cref{cor:only-one-edge} that edge can not be changed by any move. A similar analysis on $P_2$ shows that the edge $(1,1,0,0)\edge(0,1,1,0)$ can not be changed either.

    The quasi-conjugacy class $Q$ has three minimal regions $M_1=\{(1,0,0,1)\}$ and $M_2=\{(0,1,0,1)\}$ and $M_3=\{(a,1,c,0) : a+c\ge2\}$. By Lemma \ref{lem:escape-minimal-regions}, along any sequence of slides, swaps, and connections, there is always (at least) one endpoint in $M_1$, one endpoint in $M_2$, one endpoint in $M_3$. We only need to understand where the fourth endpoint is. Consider the following list $L$ of configurations:
    \begin{enumerate}
        \item For $a+c,a'+c'\ge2$ the configurations
        $$\begin{cases}
            (1,0,0,1)\edge(a,1,c,0)\\
            (0,1,0,1)\edge(a',1,c',0)
        \end{cases}$$
        \item For $a+c\ge2$ and $b,d\ge0$ the configurations
        $$\begin{cases}
            (1,0,0,1)\edge(a,1,c,0)\\
            (0,1,0,1)\edge(1+b,0,d,1)
        \end{cases}$$
        \item For $a+c\ge2$ and $b,d\ge0$ the configurations
        $$\begin{cases}
            (1,0,0,1)\edge(b,1,d,1)\\
            (0,1,0,1)\edge(a,1,c,0)
        \end{cases}$$
    \end{enumerate}
    It is easy to see that all the configurations in the list $L$ can be reached from $(\Gamma,\psi)$ by internal slide moves and external slide moves. Moreover, for any configuration in $L$, no swap or connection is possible, and performing an (internal or external) slide yields another configuration in $L$. It follows that $L$ is a complete list of all the configurations that can be obtained from $(\Gamma,\psi)$ by slides, swaps, and connections. Given a GBS graph $(\Delta,\phi)$, in order to decide whether or not it encodes the same group as $(\Gamma,\psi)$, we change $(\Delta,\phi)$ into a totally reduced GBS graph (this can be done algorithmically), we perform sign-changes in all the possible (finitely many) ways, and we check whether or not one of the resulting graphs appears in the list $L$. Note that, since induction can not be performed on $(\Gamma,\psi)$, we do not have to worry about it in this case.
\end{ex}

\begin{ex}\label{ex4}
    Consider the GBS graph $(\Gamma,\psi)$ with one vertex $v$ and four edges, as in Figure \ref{fig:example4-GBSgraph}, and call $\Lambda$ its affine representation. The edges are partitioned into two different quasi-conjugacy classes $P\lecnj Q$, see Figure \ref{fig:example4-qcclasses}. The quasi-conjugacy class $P$ contains one edge and one minimal region $N=\{(1,0)\}$. The quasi-conjugacy class $Q$ contains three edges and two minimal regions $M_1=\{(k,1) : k\ge 1\}$ and $M_2=\{(0,3)\}$. We now prove that the linear invariants are sufficient to determine the isomorphism-type of $(\Gamma,\psi)$.

    Suppose that we are given another GBS graph $(\Gamma',\psi')$ with the same linear invariants. Of course we must have an edge $(1,0)\edge(4,0)$ in $P$. Let $e_1,e_2,e_3$ be the three edges in $Q$, with $e_1\cnj(2,1)$ and $e_2\cnj(3,0)$ and $e_3\cnj(2,2)$. We observe that, by Lemma \ref{lem:escape-minimal-regions}, there must always be at least one edge with an endpoint in $M_2$, and by looking at the conjugacy classes, this edge must be $e_2$. We consider three cases.

    CASE 1: The other endpoint of $e_2$ falls inside $M_1$. In this case $e_2$ must be given by $(0,3)\edge(a,1)$ for some integer $a\ge1$, and up to external slide we can assume that $e_2$ is given by $(0,3)\edge(1,1)$. If we now keep $e_2$ fixed, we can move the endpoints of $e_1,e_3$ at will inside their conjugacy class. In particular, we can arrive at the configuration with $e_1$ given by $(1,3)\edge(2,1)$ and $e_3$ given by $(1,4)\edge(2,2)$, from which we can reach $(\Gamma,\psi)$.

    CASE 2: $e_2$ is given by $(0,3)\edge(a,b)$ for some integers $(a,b)\ge(1,3)$. By \Cref{lem:escape-minimal-regions}, we obtain that $e_1$ must have one endpoint inside $M_1$. It can not have both endpoints inside $M_1$, otherwise $M_1$ would be a quasi-conjugacy class by itself, contradiction; up to external slide, we can assume that $e_1$ is of the form $(2,1)\edge(c,d)$ for some integers $(c,d)\ge(0,3)$. Up to external slide, we can obtain $(a,b)\ge(0,3),(2,1)$, and up to sliding $e_1$ along $e_2$, we can obtain $(c,d)\ge(0,3),(2,1)$. Using slides, we can now make all the edges very long, and then we can use \Cref{Nielsen-equiv-3big} to reach $(\Gamma,\psi)$ - here we are using that the quotient $\cla{Q}/\gen{(3,0)}$ is $2$-generated, and thus every triple of generators is Nielsen equivalent to any other.
    
    CASE 3: $e_2$ is given by $(0,3)\edge(0,a)$ for some integer $a\ge3$. We can not have $a=3$, otherwise $M_2$ would be a quasi-conjugacy class by itself, contradiction; thus $a\ge4$. One of $e_1,e_3$ must be of the form $(0,b)\edge(c,d)$ for integers $b\ge3$ and $(c,d)\ge(1,1)$, otherwise the set $\{(0,k) : k\ge3\}$ would be a (union of) quasi-conjugacy class(es) by itself, contradiction. But then we can perform a connection move and reduce ourselves to case 2.

    To summarize, a GBS graph can be obtained from $(\Gamma,\psi)$ by means of a sequence of slides, swaps, and connections, if and only if it has the same linear invariants as $(\Gamma,\psi)$. This characterizes the isomorphism problem for $(\Gamma,\psi)$. As usual, it remains to check the (finitely many) sign-changes. Note that induction does not apply to this example.
\end{ex}

\begin{figure}[H]
\centering
\includegraphics[width=0.8\textwidth]{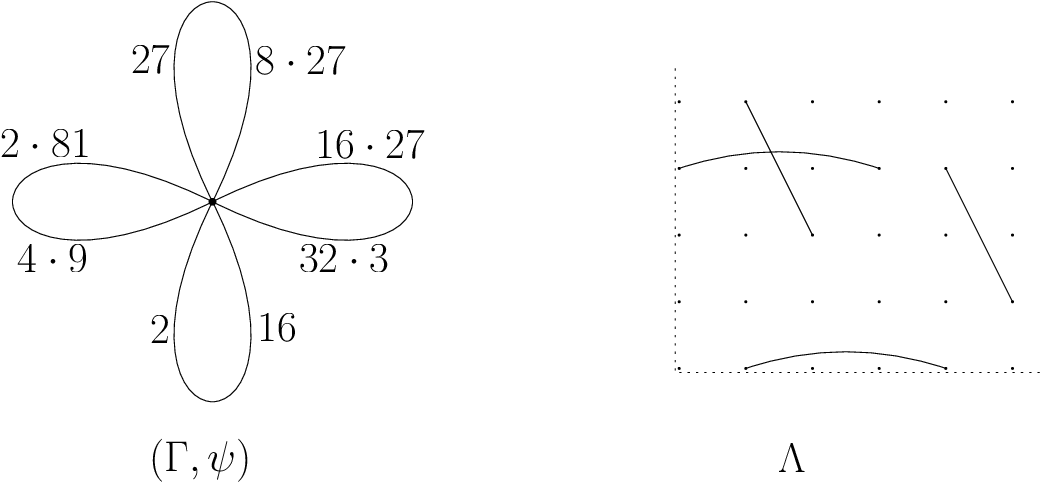}
\centering
\caption{The GBS graph $(\Gamma,\psi)$ and the affine representation $\Lambda$ of \Cref{ex4}.}
\label{fig:example4-GBSgraph}
\end{figure}

\begin{figure}[H]
\centering
\includegraphics[width=0.8\textwidth]{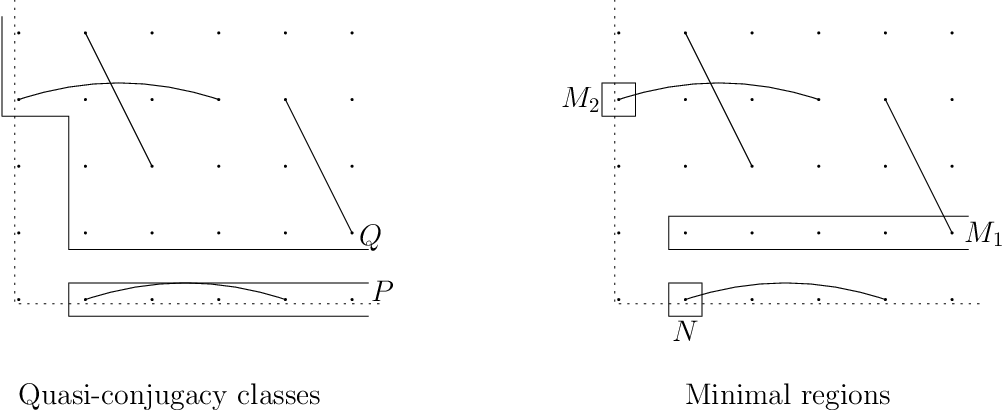}
\centering
\caption{The quasi-conjugacy classes and the minimal regions for the GBS graph of \Cref{ex4}.}
\label{fig:example4-qcclasses}
\end{figure}

\section{GBSs with one qc-class and full-support gaps}\label{sec:one-qc-class-full-supp-gaps}

The aim of this section is to classify GBS graphs with one qc-class and full-support gaps.

\subsection{One qc-class}

\begin{defn}\label{def:one-qc-class}
    Let $(\Gamma,\psi)$ be a GBS graph with affine representation $\Lambda$. We say that $(\Gamma,\psi)$ has \textbf{one qc-class} if all of its edges belong to the same quasi-conjugacy class.
\end{defn}

Note that the property of having one qc-class is an isomorphism invariant. In this case the external equivalence relation is trivial in that quasi-conjugacy class, and in particular the minimal regions are essentially points (to be formal, couples of points, with only the $\bbZ/2\bbZ$ component changing). \textit{Along this section, we will abuse notation and talk about minimal points instead of minimal regions.}

In what follows, we will also assume that the quasi-conjugacy class has non-trivial quasi-conjugacy support: for a quasi-conjugacy class $Q$ with $\qcsupp{Q}=\emptyset$, the isomorphism problem can be solved using \Cref{cor:qcclasses-empty-support}.

\subsection{Graph of families}

Let $(\Gamma,\psi)$ be a totally reduced GBS graph with affine representation $\Lambda$. Suppose that $(\Gamma,\psi)$ has one qc-class $Q$.

In \Cref{sec:one-qc-class-full-supp-gaps}, we consider the graph of families $\cF(Q)$ as defined in \Cref{sec:graph-of-families}. In this setting, it has following properties:
\begin{enumerate}
    \item Each minimal point of $Q$ belongs to exactly one family. Each family contains at least one minimal point. Therefore, the finite set of minimal points of $Q$ is partitioned into $\abs{V(\cF(Q))}$ non-empty subsets.
    \item According to \Cref{lem:family-unique-edge}, we have exactly one edge in $\cF(Q)$ for every edge in $\Gamma$. In other words, we have that $E(\cF(Q))=E(\Gamma)=E(\Lambda)$.
    \item We have the modular homomorphism $q_Q:\pi_1(\cF(Q))\rightarrow\bA$ as defined in \Cref{sec:graph-of-families}.
\end{enumerate}

\begin{defn}\label{def:individual-family}
    A family $F\in V(\cF(Q))$ is called \textbf{individual} if it contains exactly one minimal point, and \textbf{non-individual} otherwise.
\end{defn}

\begin{rmk}
    In the particular case when $\qcsupp{Q}=\cP(\Gamma,\psi)$, the graph of families is exactly $\cF(Q)=\Gamma$. For simplicity, the reader can go through this whole section with this extra hypothesis in mind (this extra assumption does not make any of the arguments easier).
\end{rmk}

\subsection{One endpoint of an edge at each minimal point}\label{sec:projection-clean}

Let $(\Gamma,\psi)$ be a totally reduced GBS graph with affine representation $\Lambda$. Suppose that $(\Gamma,\psi)$ has one qc-class $Q$ with $\qcsupp{Q}\not=\emptyset$.

\begin{defn}
    We say that $(\Gamma,\psi)$ is \textbf{clean} if every minimal point contains exactly one endpoint of exactly one edge, and no edge has both endpoints at minimal points.
\end{defn}

Since we are assuming that $\qcsupp{Q}\not=\emptyset$, we can always arrange for the GBS graph to be clean, as we now explain. Define the graph $\cL$ as follows:
\begin{itemize}
    \item $V(\cL)$ is the set of minimal points of $(\Gamma,\psi)$, together with an extra vertex $v_\floating$.
    \item $E(\cL)=E(\Lambda)$, with the same reverse map. For $e\in E(\Lambda)$, if $\tau_\Lambda(e)$ is a minimal point then we set $\tau_\cL(e)=\tau_\Lambda(e)$, otherwise we set $\tau_\cL(e)=v_\floating$.
\end{itemize}
Note that loops in $\cL$ at $v_\floating$ correspond to floating edges in $(\Gamma,\psi)$. Since $(\Gamma,\psi)$ has one qc-class $Q$, we must have that $\cL$ is connected. Note that $(\Gamma,\psi)$ is clean if and only if in $\cL$ every minimal point $m$ has valence one, and the only edge at $m$ connects $m$ to $v_\floating$ (see \Cref{fig:cleanprojection}).

We now define a \textit{clean projection} from $(\Gamma,\psi)$ to a clean GBS graph. Choose a maximal tree of $\cL$. Define the GBS graph $(\Gamma',\psi')$ as follows. If $e\in E(\cL)$ lies in the maximal tree, we consider its two endpoints $\iota(e),\tau(e)$, we take the one that lies nearer to $v_\floating$, and we slide it along the maximal tree until it ends up at $v_\floating$. We do this for all edges that lie in the chosen maximal tree: after this, the maximal tree is a star around $v_\floating$. Now, if $e\in E(\cL)$ is an edge which does not lie in the maximal tree, then we change $\tau(e)$ with a single slide move, along a single edge in the maximal tree, so that it ends up at $v_\floating$. We do this for all endpoints of all edges that do not lie in the maximal tree. The result of this procedure is a clean GBS graph $(\Gamma',\psi')$.

\begin{figure}[H]
\centering
\includegraphics[scale=0.6]{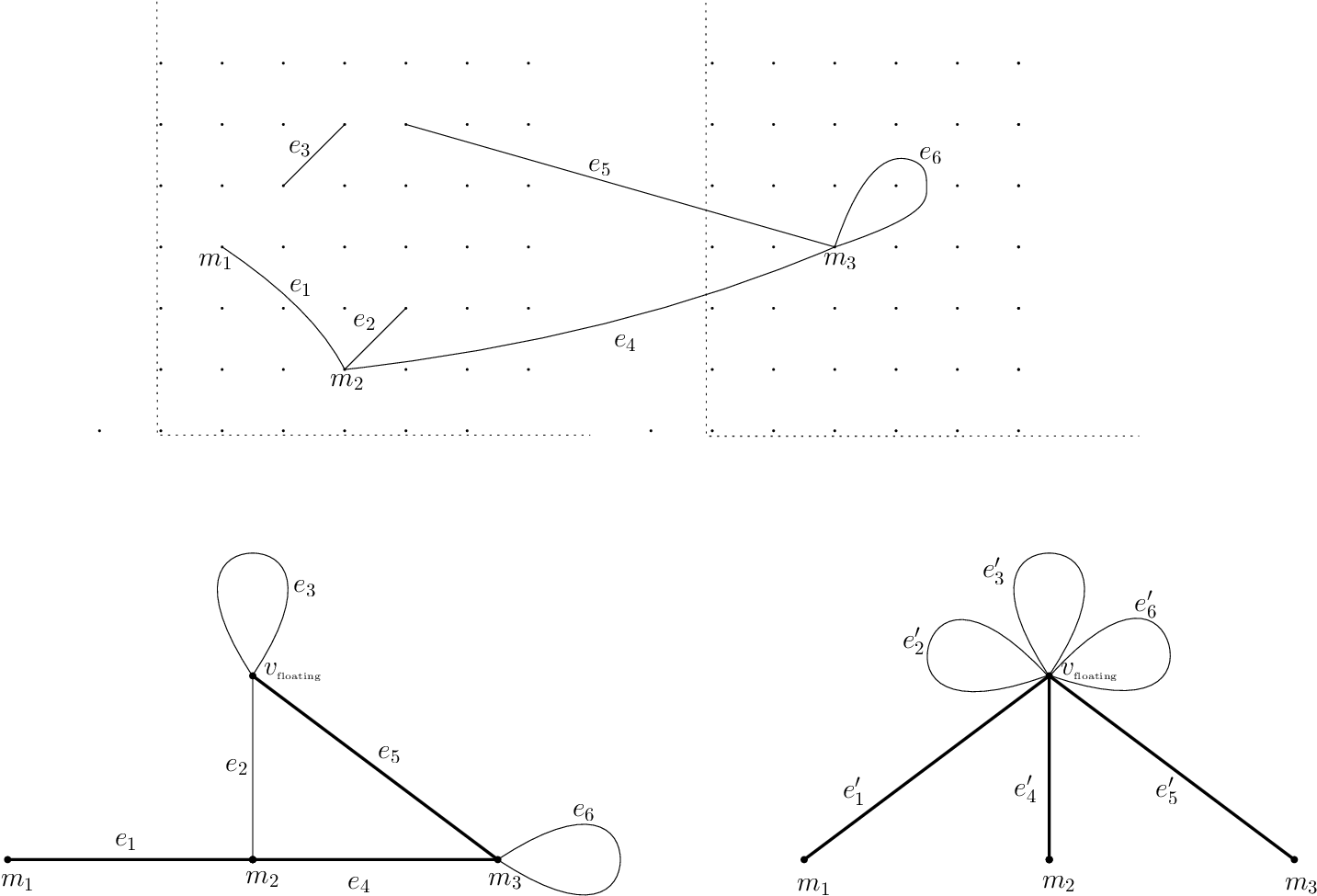}
\caption{On the top, an affine representation of a GBS graph $(\Gamma, \psi) $(with one qc-class and non-empty support); on the bottom left, the associated graph $\mathcal L$, with thick edges describing a maximal tree; on the bottom right, the clean projection obtained from $(\Gamma, \psi)$.}
\label{fig:cleanprojection}
\end{figure}

\begin{lem}\label{lem:clean-projection-1}
    The above clean projection has the following properties:
    \begin{enumerate}
        \item\label{ahfa} For a fixed maximal tree, performing the slide moves in a different order will produce the same clean GBS graph $(\Gamma',\psi')$.
        \item\label{pgoa} For different choices of maximal trees, producing clean GBS graphs $(\Gamma',\psi'),(\Gamma'',\psi'')$, there is a sequence of edge sign-changes and slides going from $(\Gamma',\psi')$ to $(\Gamma'',\psi'')$ and such that every GBS graph along the sequence is clean.
    \end{enumerate}
\end{lem}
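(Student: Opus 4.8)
The plan is to recast both statements in terms of the auxiliary graph $\cL$ and then treat them separately: Part \ref{ahfa} becomes an order-independence (commutation) statement for slides along a fixed tree, while Part \ref{pgoa} becomes a connectivity statement between the clean graphs attached to different spanning trees. First I would record the structural dictionary implicit in the construction: since $\qcsupp{Q}\not=\emptyset$, a GBS graph is clean exactly when its graph $\cL$ is a star centred at $v_\floating$ together with loops at $v_\floating$, the loops being the floating edges. Under this dictionary a single slide of an edge-endpoint over a tree edge is just a reattachment in $\cL$ together with a translation of the moved endpoint by the translation coefficient of the edge slid over.

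For Part \ref{ahfa} I would avoid reasoning about the procedure and instead give an explicit closed description of $(\Gamma',\psi')$. Rooting the chosen tree $T$ at $v_\floating$, each minimal point $m$ has a unique geodesic $\gamma_m$ in $T$ to $v_\floating$; I claim that in $(\Gamma',\psi')$ the endpoint originally at $m$ is translated by the accumulated translation coefficient along $\gamma_m$, while each non-tree edge is reattached by its single prescribed slide. These accumulated coefficients depend only on the unique tree paths, so the description is manifestly independent of ordering, and the only remaining point is that every admissible ordering of the prescribed slides realises it. This reduces to a local commutation observation — two slides moving distinct endpoints over tree edges whose positions are already fixed commute — so any two admissible orderings are joined by transpositions of adjacent commuting slides. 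I expect this step to be routine.

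For Part \ref{pgoa} I would invoke the spanning-tree exchange property of the graphic matroid: any two maximal trees $T,T'$ of $\cL$ are joined by a finite sequence of elementary swaps $T\rightsquigarrow (T\setminus\{f\})\cup\{g\}$, where $g\notin T$ and $f$ lies on the fundamental cycle $C$ of $g$ with respect to $T$. It therefore suffices to handle a single swap. Here I would analyse the cycle $C$ and exhibit an explicit sequence of slides, each moving one floating endpoint around a portion of $C$, carrying the clean graph $(\Gamma',\psi')$ for $T$ to the clean graph $(\Gamma'',\psi'')$ for $T'$, checking after each individual slide that every minimal point still carries exactly one endpoint and that no edge acquires both endpoints at minimal points, so that cleanness holds along the whole sequence.

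The hard part will be exactly this last step: rerouting the attachments of the minimal points across $C$ without ever temporarily covering a minimal point twice or uncovering one, which is what forces the moves to be chosen in a specific order rather than performed naively. The bookkeeping of the $\bbZ/2\bbZ$ (sign) coordinate is the other delicate point, since traversing $C$ in the opposite orientation can flip the sign attached to a reattached endpoint; it is precisely to absorb this discrepancy that edge sign-changes appear in the statement. I would control this by tracking the $a_0$-coordinate of each moved endpoint as it is carried around $C$ and inserting an edge sign-change whenever the orientation of traversal forces a sign flip, after which the resulting labels agree with those of $(\Gamma'',\psi'')$ and all intermediate graphs are clean.
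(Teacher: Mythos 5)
Your overall skeleton matches the paper's: part (1) is disposed of by an explicit description of the clean projection (the paper simply declares it immediate from the definitions), and part (2) is reduced to a single spanning-tree exchange, realized by slides and edge sign-changes. One small caveat on part (1): different orderings of the procedure involve literally different elementary slides (sliding an endpoint over a tree edge before or after that tree edge has itself been moved are different moves, and even different numbers of moves), so "transpositions of adjacent commuting slides" is not quite the right mechanism; what does work, and what your closed-form description already gives, is an induction showing that in any admissible order each moved endpoint ends at the position prescribed by the accumulated translation along its tree geodesic.

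The genuine gap is in the core of part (2), precisely the step you defer as "the hard part". Rerouting the attachments of the minimal points across the cycle $C$ while "checking after each individual slide" that cleanness persists cannot be carried out as described: in a clean graph every minimal point carries exactly one endpoint, so any slide that moves an endpoint off a minimal point uncovers it, and any slide that moves an endpoint onto a minimal point double-covers it. Hence \emph{no} slide that changes which endpoints lie at minimal points preserves cleanness, and your rerouting would need to do exactly that at every minimal point along the path between $f$ and $g$ — no choice of ordering fixes this. The paper's proof rests on an idea absent from your proposal: it only performs \emph{adjacent} exchanges, where the excluded edge shifts one step along an embedded cycle $e_1\dots e_\ell$ through $v_\floating$. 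For such an exchange the two clean projections differ only in the pair $e_k,e_{k+1}$: writing $\tau_\Lambda(e_i)=\iota_\Lambda(e_{i+1})+\bepsilon_i$ with $\bepsilon_i$ torsion, one configuration consists of the covering edge $\tau_\Lambda(e_k)\edge p+\bepsilon_1+\dots+\bepsilon_{k-1}$ and the floating edge $p+\bepsilon_1+\dots+\bepsilon_k\edge q+\bepsilon_{k+1}+\dots+\bepsilon_{\ell-1}$, and the other is obtained from it by a \emph{single} slide of the non-minimal endpoint $p+\bepsilon_1+\dots+\bepsilon_{k-1}$ over the floating edge, followed by edge sign-changes absorbing the torsion discrepancies. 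Crucially, no endpoint sitting at a minimal point ever moves: the covering edge and the floating edge trade roles purely by relabelling, not by physical transport of endpoints across minimal points, so cleanness is never threatened. A general exchange $(T\setminus\{f\})\cup\{g\}$ is then a composition of such adjacent shifts, recovering your matroid-exchange reduction — but only once this relabelling mechanism, which is the actual content of the lemma, is in place.
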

\begin{proof}
    Statement \ref{ahfa} is immediate from the definitions.

    For statement \ref{pgoa}, we consider the following particular case first. Suppose that we are given an embedded cycle $e_1e_2\dots e_\ell$ in $\cL$ such that $\iota_\cL(e_1)=\tau_\cL(e_\ell)=v_\floating$. Suppose that a maximal tree $T$ for $\cL$ contains all the cycle except for $e_k$, for some $1\le k\le \ell-1$. Suppose that the maximal tree $T'$ for $\cL$ is obtained from $T$ by removing $e_k$ and adding $e_{k+1}$. We want to prove that $T$ and $T'$ give the same clean projection (up to a sequence of slides and edge sign-changes going only through clean GBS graphs).

    Let $p=\iota_\Lambda(e_1)$ and $q=\tau_\Lambda(e_\ell)$. Write $\tau_\Lambda(e_i)=\iota_\Lambda(e_{i+1})+\bepsilon_i$ for $\bepsilon_i\in\bA$ torsion element, for $i=1,\dots,\ell-1$. The edges $e_i$ for $i\not=k,k+1$ give the same result, when changed using the maximal tree $T$ or using the maximal tree $T'$. The edges $e_k,e_{k+1}$ are changed into
    \[
        \begin{cases}
            \tau_\Lambda(e_k)\edge p+\bepsilon_1+\dots+\bepsilon_{k-1}\\
            p+\bepsilon_1+\dots+\bepsilon_k\edge q+\bepsilon_{k+1}+\dots+\bepsilon_{\ell-1}
        \end{cases}
       \text{or}\hspace{0.25cm}
        \begin{cases}
            p+\bepsilon_1+\dots+\bepsilon_{k-1}\edge q+\bepsilon_k+\dots+\bepsilon_{\ell-1}\\
            \iota_\Lambda(e_{k+1})\edge q+\bepsilon_{k+1}+\dots+\bepsilon_{\ell-1}
        \end{cases}
    \]
    when using the maximal tree $T$ or when using the maximal tree $T'$, respectively. It is immediate to see that we can pass from one configuration to the other by a slide move and possibly an edge sign-change move (while keeping the GBS graph clean). All the edges different from $e_1,\dots,e_\ell$ are changed into different expressions by using $T$ or $T'$, but each endpoint can be adjusted with at most a slide move (while keeping the GBS graph clean).

    Thus we can take an embedded cycle, contained in $T$ except for one edge, and passing through $v_\floating$, and we can change the choice of which edge is outside $T$. A similar computation shows that the same is true for embedded cycles not passing through $v_\floating$. But by means of moves like that, we can pass from any maximal tree to any other maximal tree. The result follows.
\end{proof}

\begin{lem}\label{lem:clean-projection-2}
    Let $(\Gamma,\psi),(\Delta,\phi)$ be related by a edge sign-change, slide, swap or connection; let $(\Gamma',\psi'),(\Delta',\psi')$ be their respective clean projections. Then there is a sequence of edge sign-changes, slides, swaps, and connections passing from $(\Gamma',\psi')$ to $(\Delta',\psi')$ and going only through clean GBS graphs.
\end{lem}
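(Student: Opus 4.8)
\section*{Proof proposal}

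The plan is to lean on the tree-independence established in \Cref{lem:clean-projection-1}: since any two maximal trees yield clean projections that are related by a sequence of clean moves, we are free to compute each clean projection with whatever maximal tree is most convenient, and it suffices to produce \emph{one} clean realizing sequence for a suitable choice. The key structural observation is that the set of minimal points of $Q$ is an invariant of the quasi-conjugacy class and is therefore the same for $(\Gamma,\psi)$ and $(\Delta,\phi)$; hence the two graphs $\cL$ share the same vertex set (the minimal points together with $v_\floating$) and the same edge set $E(\Lambda)$, and they differ only in the attaching data $\tau_\cL$ of the one or two edges that the move actually modifies. I would thus choose maximal trees $T_\Gamma,T_\Delta$ that coincide away from these edges, so that the two clean projections perform identical slides on every untouched edge and agree there verbatim. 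The whole problem then localises to comparing the projected images of the few edges involved in the move (for a slide, the sliding edge $f$ together with its track edge $e$).

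Next I would run the case analysis on the type of move. The edge sign-change is immediate: adding the $2$-torsion element leaves every support unchanged, hence leaves $\cL$ and every minimal region unchanged, so the two clean projections differ by a single edge sign-change, which is clean. For a slide with track edge $e\colon p\edge q$ carrying an endpoint of $f$ from $p+\ba$ to $q+\ba$, I would split into subcases according to which of $p+\ba$, $q+\ba$ (and the endpoints of $e$) are minimal points and which are floating. In each subcase, once the projection has pushed the relevant floating endpoints to $v_\floating$, the relocation of the moved endpoint is realised by sliding it along the image of the track edge $e$ --- which after projection joins a minimal point to $v_\floating$ or is a floating loop --- yielding a single slide between the two clean projections.

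The swap and connection moves are the delicate cases, since they rearrange loops and can convert a floating edge into one meeting a minimal point or conversely, genuinely changing the combinatorics of $\cL$. For the connection move $\{q\edge p+\bw_1,\ p\edge p+\bw\}\to\{p\edge q+\bw_2,\ q\edge q+\bw\}$ I would pick the maximal trees adapted to the loop, so that after projection the loop becomes a floating loop at $v_\floating$ while the companion edge joins the minimal point it covers to $v_\floating$; matching the two projections then reduces to performing the analogous connection (respectively swap) directly on the clean graphs. Because the trees on the two sides can be chosen to route the loop identically, only the companion edge's endpoint needs to be repositioned, again by a single move.

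The main obstacle I anticipate is maintaining cleanness \emph{throughout} the realising sequence, not merely at its endpoints: a naive slide in the clean projection may momentarily leave two endpoints at one minimal point, or leave an edge with both endpoints minimal. To avoid this I would replace each offending slide by its clean counterpart assembled from \Cref{lem:self-slide} and \Cref{reverse-slide}, which reposition an endpoint while keeping a designated edge anchored at its minimal point, and I would order the moves so that at every intermediate stage each minimal point still carries exactly one endpoint and no edge is minimal-to-minimal. Verifying that the control hypotheses of these two lemmas are met in each subcase --- essentially that the translation vectors appearing have support inside $\qcsupp{Q}$, which holds because all edges lie in the single class $Q$ --- is the routine but load-bearing computation that the whole argument rests upon.
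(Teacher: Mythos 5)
Your proposal is correct and follows essentially the same route as the paper: the paper's entire proof of this lemma is the remark that it is ``an easy check with the definitions, using \Cref{lem:clean-projection-1}'', and your argument --- invariance of the minimal points, choosing maximal trees for the two graphs $\cL$ that agree away from the edges touched by the move, and then a case-by-case verification (sign-change trivial, slide realized along the projected track edge, swap/connection matched by the analogous move on the clean graphs), all hinging on the tree-independence of \Cref{lem:clean-projection-1} --- is precisely that check, carried out in more detail than the paper itself provides. The only point to watch is your final paragraph: \Cref{lem:self-slide} and \Cref{reverse-slide} are themselves composite sequences of slides and swaps, so if you invoke them you must still argue (or arrange) that their internal intermediate configurations remain clean, rather than only the configurations before and after.
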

\begin{proof}
    This is an easy check with the definitions, using the above \Cref{lem:clean-projection-1}.
\end{proof}

Note that \Cref{lem:clean-projection-1} and \Cref{lem:clean-projection-2} allow us to assume without loss of generality that the GBS graphs are clean and that all the sequences of moves go only through clean GBS graphs. Accordingly, we shall assume this for the rest of this section.

In particular, if $(\Gamma,\psi)$ is a GBS graph with one qc-class $Q$ and $\qcsupp{Q}\not=\emptyset$, then a strong version of \Cref{lem:escape-minimal-regions} holds. To be precise, up to taking the clean projection of $(\Gamma,\psi)$, for every minimal point $m$ we have exactly one edge with exactly one endpoint at $m$, and all the other edges are floating edges. It follows that a clean GBS graph $(\Gamma,\psi)$ has at least as many edges as minimal points; moreover, the inequality is strict if and only if $(\Gamma,\psi)$ has floating edges. This motivates the following definition.

\begin{defn}
    Let $(\Gamma,\psi)$ be a GBS graph with one qc-class $Q$ and with $\qcsupp{Q}\not=\emptyset$. We say that $(\Gamma,\psi)$ has \textbf{floating pieces} if it has strictly more edges than minimal points.
\end{defn}

\subsection{One endpoint in each individual family} \label{sec:projection-polished}

Let $(\Gamma,\psi)$ be a totally reduced GBS graph with affine representation $\Lambda$. Suppose that $(\Gamma,\psi)$ has one qc-class $Q$ with $\qcsupp{Q}\not=\emptyset$.

\begin{defn}
    If $F\in V(\cF(Q))$ is an individual family {\rm(\Cref{def:individual-family})}, then we say that $(\Gamma,\psi)$ is \textbf{$F$-polished} if $F$ contains exactly one endpoint of one edge.
\end{defn}

Let $F\in V(\cF(Q))$ be an individual family, and let $m$ be the unique minimal point in $F$. A \textbf{set of $F$-polishing data} is given by pairwise distinct edges $e_1,\dots,e_n,e$ satisfying the following properties:
\begin{enumerate}
\item For $j=1,\dots,n$ we have that $e_j$ goes from $m+\ba_j$ to $m+\bb_j$.
\item For $j=1,\dots,n$ we have $\supp{\ba_j}\subseteq(\supp{\bb_{j-1}}\cup\ldots \cup\supp{\bb_1})$.
\item The edge $e$ goes from $m+\bc$ to $p$ for some $p\not\ge m$.
\item We have $\supp{\bc}\subseteq(\supp{\bb_n}\cup\ldots \cup\supp{\bb_1})$.
\end{enumerate}

If $p$ belongs to a family $F'\in V(\cF(Q))$, then we say that the set of polishing data is \textbf{pointing to $F'$}. As in \cite[Section 6.3]{ACK-iso1} 
we can choose sufficiently large natural numbers $k_1,\dots,k_n$, and define
$$\bw_j=(\bb_j-\ba_j)+k_{j-1}(\bb_{j-1}-\ba_{j-1})+k_{j-1}k_{j-2}(\bb_{j-2}-\ba_{j-2})+\dots +k_{j-1}k_{j-2}\dots k_1(\bb_1-\ba_1)$$
for $j=1,\dots,n$, and $\bw=k_n\bw_n-\bc$. This allows us to define an \textbf{$F$-polishing procedure} that changes the edges
$$
\begin{cases}
m+\ba_1\edge  m+\bb_1\\
m+\ba_2\edge m+\bb_2\\
m+\ba_3\edge m+\bb_3\\
\dots \\
m+\ba_n\edge m+\bb_n\\
m+\bc \edge p
\end{cases}
\quad\text{into}\qquad
\begin{cases}
m+\ba_1 \edge p+\bw+\ba_1\\
p+\bw+\ba_2\edge p+\bw+\ba_2+\bw_1\\
p+\bw+\ba_3\edge p+\bw+\ba_3+\bw_2\\
\dots \\
p+\bw+\ba_n\edge p+\bw+\ba_n+\bw_{n-1}\\
p\edge  p+\bw_n\\
\end{cases}
$$
and all other endpoints of the form $m+\bd$ into $p+\bw+\bd$.

In the same way as in \cite[Section 6.3]{ACK-iso1}, 
we can see that for big enough $k_1,\dots,k_n$ the $F$-polishing procedure can be obtained through a sequence of moves, giving an $F$-polished GBS graph as a result (see \Cref{fig:polishing}).

\begin{figure}[H]
\centering
\includegraphics[scale=0.45]{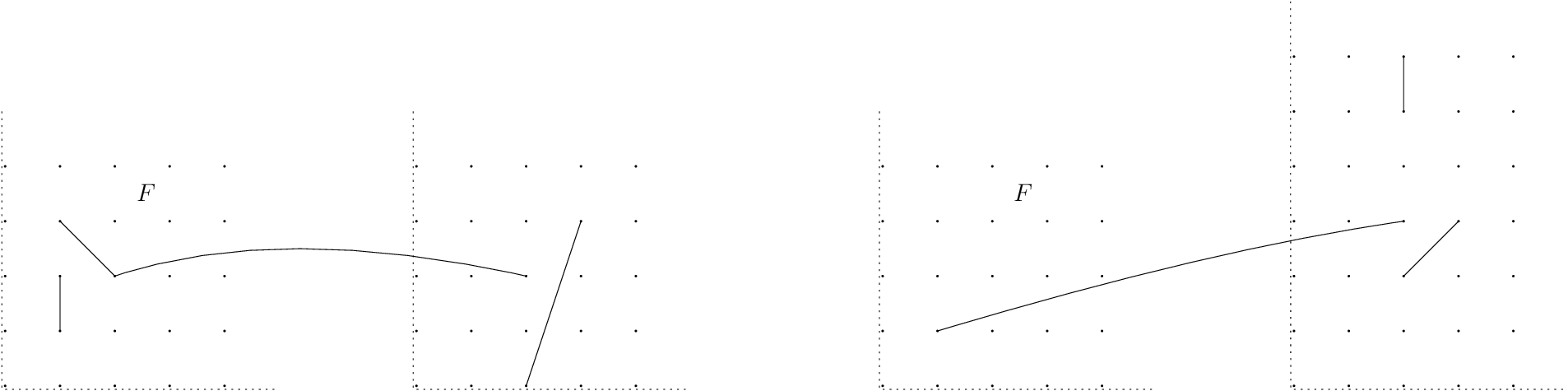}
\caption{On the left, an affine representation of a GBS graph (with one qc-class and non-empty support); on the right, an affine representation of an $F$-polished GBS graph.}
\label{fig:polishing}
\end{figure}

\begin{lem}\label{lem:polished-projection-1}
    For different choices of the set of $F$-polishing data and of sufficiently large constants, producing $F$-polished GBS graphs $(\Gamma',\psi'),(\Gamma'',\psi'')$, there is a sequence of edge sign-changes, slides, swaps, and connections going from $(\Gamma',\psi')$ to $(\Gamma'',\psi'')$ and such that every GBS graph along the sequence is $F$-polished.
\end{lem}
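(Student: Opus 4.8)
The plan is to follow the template of \Cref{lem:clean-projection-1}, item \ref{pgoa}: isolate the finitely many ``degrees of freedom'' in a set of $F$-polishing data, reduce the passage between any two choices to a bounded list of elementary modifications, and exhibit for each elementary modification an explicit sequence of edge sign-changes, slides, swaps, and connections that stays inside the class of $F$-polished graphs. Since the starting graph $(\Gamma,\psi)$ is fixed, the set $\{e_1,\dots,e_n\}$ of edges having \emph{both} endpoints in $F$ is intrinsic to $(\Gamma,\psi)$; hence two sets of $F$-polishing data can differ only in (i) the sufficiently large constants $k_1,\dots,k_n$, (ii) the admissible ordering chosen for $e_1,\dots,e_n$ (subject to the support-propagation conditions $1$--$2$), and (iii) the special edge $e$, hence the family $F'$ to which the data points. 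I would treat these three sources of ambiguity in turn, using edge sign-changes to absorb the $\bbZ/2\bbZ$-torsion discrepancies exactly as in the proof of \Cref{lem:clean-projection-1}.

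First, for fixed data I would establish independence of the constants. Throughout the procedure the only endpoint left in $F$ is the initial endpoint of $e_1$, anchored at $m$; replacing $(k_1,\dots,k_n)$ by larger constants merely slides the far endpoints of $e_1,\dots,e_n,e$ further along tracks already present near $p$, none of which meets $m$. Thus any two sufficiently large choices are joined by slides through $F$-polished graphs, as in \cite[Section 6.3]{ACK-iso1}. Second, for the ordering I would show that an admissible transposition of two consecutive edges $e_j,e_{j+1}$ is realizable by a bounded sequence of slides and swaps: the support conditions $1$--$2$ supply precisely the control hypotheses required by \Cref{lem:self-slide} and \Cref{reverse-slide} to carry out such a transposition while keeping the anchor at $m$ fixed, so that the intermediate graphs remain $F$-polished.

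The main obstacle is (iii): changing the special edge, and in particular the target family. Here I would argue as follows. After polishing with a given datum, $e_1$ is the unique bridge joining $m\in F$ to the family $F'$, and every endpoint formerly in $F$ now lies in $F'$; in particular the edges with a single endpoint in $F$ become edges emanating from $F'$. To pass to a datum whose special edge $e'$ points to $F''$, I would consolidate this structure near $p$ and then transport the bridge together with the whole bundle from $F'$ to $F''$ along an affine path furnished by the connectivity of the graph of families $\cF(Q)$ (\Cref{lem:family-unique-edge} and the connectivity of $\cF(Q)$ noted after \Cref{def:graph-of-families}), sliding only far endpoints and never the endpoint anchored at $m$, so that $F$-polishedness is preserved throughout. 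Because $\qcsupp{Q}\neq\emptyset$, \Cref{prop:cla-and-modular-homomorphism} guarantees enough translation vectors in $\cla{Q}$ to perform these slides, and the transport can be organized as slides, swaps, and connections through $F$-polished graphs. The essential bookkeeping step is then to verify that the configuration so obtained agrees, up to the already-treated ambiguities (i)--(ii), with the $F$-polished graph produced directly from the datum based at $e'$; this is where the precise support-propagation conditions $3$--$4$ are used, and it is the delicate (though routine) heart of the argument.
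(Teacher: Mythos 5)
The paper gives no internal proof of this lemma --- it is dispatched with ``analogous to \cite[Proposition 6.16]{ACK-iso1}'' --- so your attempt can only be judged on its own merits, and on those merits it has two genuine gaps. The first is a false premise at the very start: a set of $F$-polishing data is \emph{not} required to contain every edge with both endpoints in $F$. Conditions 1--2 only ask for \emph{some} pairwise distinct edges $e_1,\dots,e_n$ admitting a support-propagating order, and the final clause of the procedure (``all other endpoints of the form $m+\bd$'') silently takes care of any edges left out of the chain. In fact the full collection of edges with both endpoints above $m$ may admit no valid ordering at all --- e.g.\ one loop $m\edge m+\bb_1$ with $\supp{\bb_1}=\{2\}$ and another edge $m+\ba\edge m+\bb$ with $\supp{\ba}=\supp{\bb}=\{3\}$ can never be ordered compatibly with Condition 2 --- while a subcollection does. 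Hence two valid data may use chains of different lengths, with different underlying sets of edges, and in particular with different first edges $e_1$; since $e_1$ is precisely the edge left anchored at $m$ in the polished graph, this freedom governs \emph{which} edge ends up being the bridge. This whole dimension of ambiguity (the analogue of the change of maximal tree in \Cref{lem:clean-projection-1}, where the hand-off between edges is the actual content of the proof) is absent from your case analysis (i)--(iii).

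The second gap is that, even inside your own framework, the crucial case (iii) is postponed rather than proved. Transporting ``the bridge together with the whole bundle'' from $F'$ to $F''$ must be done by slides along edges of the \emph{current} configuration, each of which requires a positional inequality of the form $q'\ge q$ to hold; you never identify which edges furnish these affine paths after polishing has concentrated everything above $p$, nor verify those inequalities. The appeal to \Cref{prop:cla-and-modular-homomorphism} does not fill this hole: that proposition computes the image of the modular homomorphism $q_Q:\pi_1(\cF(Q))\rar\bA$, which is information about $\cla{Q}$, not a license to perform slides between prescribed positions. Finally, you explicitly defer the verification that the transported configuration agrees, up to (i)--(ii), with the polished graph produced directly from the second datum, calling it ``the delicate (though routine) heart of the argument''. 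That verification \emph{is} the lemma; an argument that defers it is an outline of a strategy, not a proof.
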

\begin{proof}
    Analogous to \cite[Proposition 6.16]{ACK-iso1}. 
\end{proof}

\begin{lem}\label{lem:polished-projection-2}
    Let $(\Gamma,\psi),(\Delta,\psi)$ be related by an edge sign-change, slide, swap or connection; let $(\Gamma',\psi'),(\Delta',\phi')$ be corresponging $F$-polished GBS graphs. Then there is a sequence of edge sign-changes, slides, swaps, and connections passing from $(\Gamma',\psi')$ to $(\Delta',\psi')$ and going only through $F$-polished GBS graphs.
\end{lem}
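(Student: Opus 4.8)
The plan is to follow the scheme of Lemma \ref{lem:clean-projection-2}, reducing the statement to a finite case analysis on the type of move and exploiting the freedom in the choice of $F$-polishing data granted by Lemma \ref{lem:polished-projection-1}. Since any two $F$-polished projections of the same GBS graph are joined by a sequence of moves passing only through $F$-polished graphs, it suffices to exhibit one $F$-polished projection $P_\Gamma$ of $(\Gamma,\psi)$ and one $F$-polished projection $P_\Delta$ of $(\Delta,\psi)$, together with a sequence of moves through $F$-polished graphs from $P_\Gamma$ to $P_\Delta$. Indeed, appending the sequences supplied by Lemma \ref{lem:polished-projection-1} then connects $(\Gamma',\psi')$ to $P_\Gamma$, and $P_\Delta$ to $(\Delta',\psi')$, all through $F$-polished graphs, which gives the claim.

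First I would dispose of the moves that do not disturb the family $F$. If the move is an edge sign-change, or a slide, swap, or connection none of whose edges has an endpoint at the minimal point $m$ of $F$, then I choose a single set of $F$-polishing data, avoiding the edges involved in the move, and use it to polish both $(\Gamma,\psi)$ and $(\Delta,\psi)$. The polishing procedure is itself a sequence of moves acting on endpoints disjoint from those touched by the single move, so the two commute; performing the single move on the polished graph $P_\Gamma$ leaves the unique endpoint in $F$ untouched, hence yields $P_\Delta$ while remaining $F$-polished throughout.

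The substance of the argument is the case in which the move involves an edge incident to $m$. Here I would select the $F$-polishing data so that the edges participating in the move appear among $e_1,\dots,e_n,e$, and track explicitly how the single move transforms the vectors $\ba_j,\bb_j,\bc$ and the target $p$. Feeding the perturbed data into the polishing formulas $\bw_j=(\bb_j-\ba_j)+k_{j-1}(\bb_{j-1}-\ba_{j-1})+\dots+k_{j-1}\cdots k_1(\bb_1-\ba_1)$ and $\bw=k_n\bw_n-\bc$, one checks that the two resulting $F$-polished configurations differ by a single slide, possibly together with one edge sign-change, which can be carried out while keeping exactly one endpoint in $F$. This is the direct analogue of the embedded-cycle computation in the proof of Lemma \ref{lem:clean-projection-1} and of the verification in \cite[Proposition 6.16]{ACK-iso1}.

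I expect the main obstacle to be the bookkeeping in this last case. One must verify that, for sufficiently large constants $k_1,\dots,k_n$, the control relations of Definition \ref{def:control-a} required to realize each intermediate slide as a genuine move persist after the data has been perturbed by the move, and that no intermediate graph accidentally acquires a second endpoint inside $F$. Both facts follow from the nested support conditions imposed on the polishing data together with the freedom to enlarge the constants, but they must be checked uniformly and separately across the slide, swap, and connection cases, which is where the bulk of the work lies.
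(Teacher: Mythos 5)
Your overall scheme --- reduce to a single pair of polished projections via \Cref{lem:polished-projection-1}, then check compatibility move by move --- is exactly the paper's strategy (the paper defers the details to the analogue \cite[Proposition 6.17]{ACK-iso1}, just as \Cref{lem:clean-projection-2} defers to \Cref{lem:clean-projection-1}). The gap is in your case division. You split according to whether the move uses an edge with an endpoint \emph{at} the minimal point $m$, but the polishing procedure relocates \emph{every} endpoint of the form $m+\bd$, i.e.\ every endpoint lying above $m$, not only the endpoints of the polishing-data edges $e_1,\dots,e_n,e$. Hence a slide, swap or connection involving an endpoint $m+\bd$ with $\bd\neq\mathbf{0}$ lands in your Case 1, where the claimed disjointness between the endpoints touched by the move and those touched by the polishing procedure is false. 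Your Case 2 device does not catch these moves either: an edge with both endpoints strictly above $m$ can be inserted into a set of polishing data only if the nested support conditions on the $\ba_j$ can be arranged (note $\supp{\ba_1}=\emptyset$ is forced), which need not be possible. There is a second, related problem in Case 1: you ask for polishing data ``avoiding the edges involved in the move'', but if the move involves the unique edge joining the family $F$ to the rest of the graph --- and such an edge need not have an endpoint at $m$ --- then every admissible choice of the edge $e$ in the polishing data is precisely that edge, so no avoiding choice exists.

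These cases are repairable, but the repair is an argument you have not written: the defining conditions of slides, swaps and connections are preserved by the translation $m+\bd\mapsto p+\bw+\bd$ that polishing applies to the endpoints above $m$, so the given move can be re-performed at the relocated positions (now outside $F$, hence keeping all intermediate graphs $F$-polished), and the resulting graph differs from the polished projection of $(\Delta,\phi)$ only by a different admissible choice of polishing data and target point, a difference that \Cref{lem:polished-projection-1} absorbs. Until the dichotomy is restated as ``the move touches endpoints above $m$ or edges needed by every set of polishing data'' versus ``it does not'', and this translation argument is supplied for the first alternative, the proof as written fails for a nonempty class of moves.
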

\begin{proof}
    Analogous to \cite[Proposition 6.17]{ACK-iso1}. 
\end{proof}

Note that, if $F,G\in V(\cF(Q))$ are two individual families, and if $(\Gamma,\psi)$ is $G$-polished, then there must be a set of $F$-polishing data pointing to some $F'\not=G$ (unless $F,G$ are the only families). If we use such a set of data to perform the above procedure, the result will be $G$-polished. Moreover, if we restrict only to set of data pointing to families $F'\not=G$, then \Cref{lem:polished-projection-1} and \Cref{lem:polished-projection-2} still hold, producing sequences of moves going only through $G$-polished GBS graphs. In particular, in what follows we can assume that all our GBS graphs are $F$-polished with respect to all individual families $F\in V(\cF(Q))$, and we can consider only sequences of moves going through $F$-polished GBS graphs.

\begin{remark}
    The $F$-polishing procedure is also compatible with the notion of clean. All of the above discussion, \Cref{lem:polished-projection-1} and \Cref{lem:polished-projection-2} still hold if we add the additional requirement that all GBS graphs appearing are clean. In particular, we can restrict our attention to GBS graphs which are clean and $F$-polished for all individual families $F\in V(\cF(Q))$, and to sequences of moves going only through GBS graphs of this kind.
\end{remark}

\subsection{Full-support gaps}\label{sec:full-support-gaps}

Let $(\Gamma,\psi)$ be a GBS graph with affine representation $\Lambda$. Suppose that we are given two vertices $p,q\in V(\Lambda)$ with $q\ge p$: then we can write $q=p+\bw$ for a unique $\bw\in\pA$, which we call the \textit{gap} from $p$ to $q$. We say that the gap from $p$ to $q$ is \textit{full-support} if $\supp{\bw}=\qcsupp{p}$ (which in particular implies that $q\qcnj p$).

\begin{defn}\label{def:full-support-gaps}
    Let $(\Gamma,\psi)$ be a GBS graph and let $Q$ be a quasi-conjugacy class. We say that $Q$ has \textbf{full-support gaps} if for every edge $e$ in $Q$ we have that either $\tau(e)$ lies in a minimal region, or $\tau(e)$ lies above a minimal point of $Q$ with full-support gap.
\end{defn}

This means that whenever an endpoint of an edge that falls strictly above a minimal region, the gap can be taken to have the full quasi-conjugacy support. Note that the property of being full-support gap is not an isomorphism invariant, nor invariant under performing moves.

\subsection{GBSs graphs with floating pieces}\label{sec:invariants-floating}

Let $(\Gamma,\psi)$ be a totally reduced GBS graph with affine representation $\Lambda$. Suppose that $(\Gamma,\psi)$ has one qc-class $Q$ with $\qcsupp{Q}\not=\emptyset$, and that $(\Gamma,\psi)$ is clean. Suppose that $(\Gamma,\psi)$ has floating pieces.

Fix a family $F_0\in V(\cF(Q))$. For every family $F\not=F_0$ we fix a minimal point $m_F$ in that family.

\begin{defn}[Normal form for GBSs with floating pieces]
    We say that $(\Gamma,\psi)$ is in \textbf{normal form} {\rm(}with respect to the family $F_0$ and the minimal points $\{m_F\}_{F\not=F_0}${\rm)} if the following conditions hold:
    \begin{enumerate}
        \item For every family $F\not=F_0$ and for the minimal point $m_F$, take the unique edge $e$ with $\iota(e)=m_F$. We require that $\tau(e)$ is in $F_0$, and lies above all minimal points of $F_0$ with full-support gap.
        \item For every family $F$ and for every minimal point $m$ {\rm(}except $m_F${\rm)}, take the unique edge $e$ with $\iota(e)=m$. We require that $\tau(e)$ is in $F$, and lies above all minimal points of $F$ with full-support gap.
        \item For every floating edge $e$, we require that $\iota(e),\tau(e)$ are in $F_0$, and lie above all minimal points of $F_0$ with full-support gap.
    \end{enumerate}
\end{defn}

\begin{figure}[H]
\centering
\includegraphics[scale=0.5]{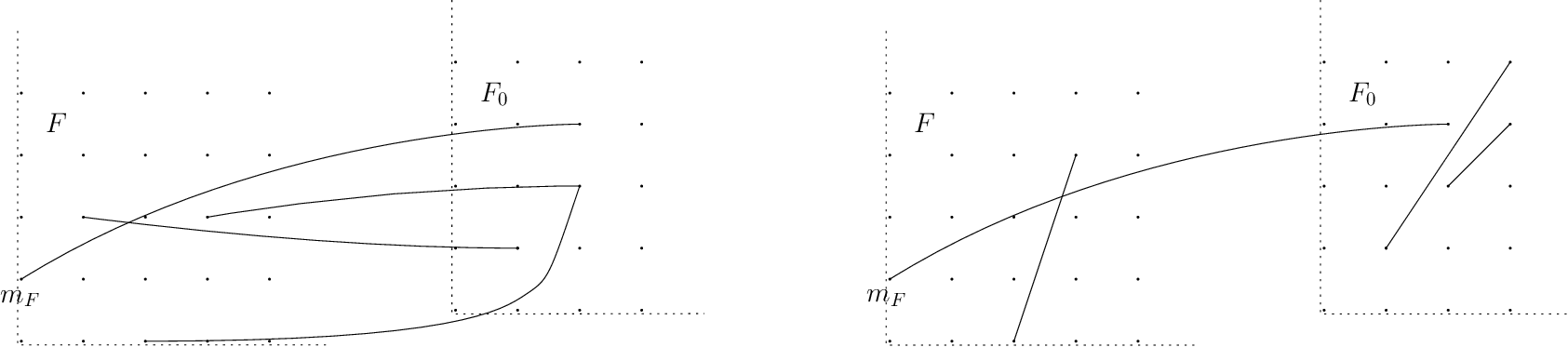}
\caption{On the left, an affine representation of a clean GBS graph (with one qc-class and non-empty support, and floating edges). On the right, the affine representation associated to a normal form.}
\label{fig:normalform}
\end{figure}

\begin{prop}\label{prop:existence-normal-form-floating}
    Suppose that $(\Gamma,\psi)$ has full-support gaps. Then, for every family $F_0$ and minimal points $\{m_F\}_{F\not=F_0}$, we can bring $(\Gamma,\psi)$ to normal form by a sequence of slides, swaps, and connections.
\end{prop}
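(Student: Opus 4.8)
The plan is to realize the three defining conditions of the normal form one family at a time, using the full-support gaps hypothesis to supply, at each relevant minimal point, a vector of support equal to $\qcsupp{Q}$, and using the floating pieces to manufacture inside $F_0$ a \emph{full-support loop} that acts as an engine for all subsequent moves. Throughout I would work modulo the reductions already secured: by cleanness each minimal point carries exactly one edge endpoint and no edge joins two minimal points, and by the results of \Cref{sec:projection-polished} I may also assume the graph is $F$-polished for every individual family. The heart of the matter is that every move I need is either an elongation against the full-support loop or a connection, and the full-support gaps assumption is exactly what makes the connection moves legal.

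First I would build the full-support loop in $F_0$. Since $(\Gamma,\psi)$ has strictly more edges than minimal points there is a floating edge $e$, and by full-support gaps both endpoints $\iota(e),\tau(e)$ lie above minimal points with full-support gap, so each is reached from a minimal point by a vector of support $\qcsupp{Q}$. Using that $\cF(Q)$ is connected, I slide $\iota(e)$ and $\tau(e)$ along edges emanating from the minimal points of $F_0$ until $e$ becomes a loop $p\edge p+\bw$ based in $F_0$ with $\supp{\bw}=\qcsupp{Q}$. The self-slide (\Cref{lem:self-slide}) and the reverse slide (\Cref{reverse-slide}) then let me lengthen this loop, and any endpoint lying in $F_0$, as much as I wish, so that endpoints can be pushed above any prescribed finite set of minimal points with full-support gap.

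Next I would normalize the inter-family edges to obtain condition~(1). For each family $F\neq F_0$ take the unique edge $e$ with $\iota(e)=m_F$; since $\cF(Q)$ is connected I can slide $\tau(e)$ along a path of edges from $F$ to $F_0$, and where a plain slide is not available I use a connection move performed against the full-support loop to reroute $\tau(e)$ into $F_0$. This is the step where full-support gaps is indispensable: as noted in the introduction, the divisibility constraint $\ell_1\ell_2=\ell^k$ of a connection becomes automatically solvable once the gap involved has full support, because the auxiliary vector can be taken in the positive cone $\pA$ rather than in a proper subvariety of it. Having landed $\tau(e)$ in $F_0$, I elongate it (again via self-slides and reverse slides against the loop) until it dominates every minimal point of $F_0$ with full-support gap. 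The intra-family edges (condition~(2)) and the remaining floating edges (condition~(3)) are handled identically inside the appropriate family: for each minimal point $m\neq m_F$ of $F$ the clean hypothesis gives a unique edge from $m$, which I slide so as to stay in $F$ and rise above all full-support minimal points of $F$, while every floating edge is brought entirely into $F_0$ and elongated there.

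The operations should be ordered so that placements already achieved are never undone: because each move is an elongation or a connection against the full-support loop, normalizing one edge only pushes the other endpoints \emph{further} up, so the three conditions accumulate rather than conflict, and one can re-establish the loop after each connection. The main obstacle I anticipate is precisely the bookkeeping around the connection moves in the third step --- checking that full-support gaps makes every required connection legal, and that performing it does not destroy the full-support loop that powers the rest of the argument. Once this verification is carried out, the elongation steps are routine and parallel to \cite[Section~6.3]{ACK-iso1}.
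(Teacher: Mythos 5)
Your proposal is built around constructing a ``full-support loop'' in $F_0$ first and then using it as an engine, but the construction of that engine is circular, and this is a genuine gap. The tools you invoke to lengthen the floating edge --- the self-slide (\Cref{lem:self-slide}), the reverse slide (\Cref{reverse-slide}), and repeated slides that push an endpoint ``above any prescribed finite set of minimal points'' --- all presuppose the existence of a \emph{positive} edge, i.e.\ an edge of the form $m\edge m+\bu$ with $\bu\in\pA$ of full support, to slide against. At the start of the procedure no such edge need exist: full-support gaps only guarantees that $\tau(e_m)$ lies above \emph{some} minimal point $m'$ with full-support gap, and $m'$ may well differ from $m$ for every edge (e.g.\ two minimal points $m_1,m_2$ in one family with edges $m_1\edge m_2+\bu_1$ and $m_2\edge m_1+\bu_2$, neither endpoint dominating its own initial point). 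In such a configuration your first step cannot even begin: the floating edge's endpoints can only be slid along the existing non-positive edges, which relocate them rather than elongate them, and no controlling edge is available for \Cref{lem:self-slide} or \Cref{reverse-slide}. The paper's proof opens with exactly the idea you are missing: follow the chain $m\to m'\to m''\to\cdots$ determined by which minimal point each $\tau(e_m)$ dominates; since the set of minimal points is finite this chain either reaches a positive edge or closes into a cycle, and sliding one edge of the cycle along the others creates the first positive edge (the displacements around the cycle telescope to a sum of full-support gaps, which is positive). Only after this bootstrap (the paper's Steps 1--2) do elongation arguments like yours become available; your engine is a consequence of that step, not a substitute for it.

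There is a secondary problem in how you dispose of condition~(2). The engine you build lives in $F_0$, but the edges at minimal points $m\neq m_F$ of a family $F\neq F_0$ must be elongated \emph{inside $F$}, where you have no positive edge; saying these are ``handled identically inside the appropriate family'' hides the need to transport positivity into $F$ by connection moves, which is precisely the content of the paper's Steps 3 and 5. Relatedly, your closing claim that ``normalizing one edge only pushes the other endpoints further up, so the three conditions accumulate rather than conflict'' is false for connections: a connection modifies two edges simultaneously (it exchanges which minimal point hosts the loop and reverses the inter-family edge), so earlier placements can be undone; the paper needs an explicit final step (its Step 5, a pair of connections) to put the non-positive edge back at the prescribed minimal point $m_F$ after all other normalizations. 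Your overall strategy (full-support gaps legalizes connections; elongate against a full-support vector; order the operations) is the right intuition and parallels the paper's Steps 3--5, but without the cycle-sliding bootstrap and the connection bookkeeping the argument does not go through.
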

\begin{proof}
    For every minimal point $m$, let $e_m$ be the unique edge such that $\iota(e_m)=m$. We say that $e_m$ is \textit{positive} if $\tau(e_m)$ is above $m$ with full-support gap.
    
    In steps 1 and 2, we maximize the number of positive edges, and we observe that the remaining edges at the minimal points form a tree in the graph of families $\cF(Q)$. In step 3, we change the tree, and we assume that all non-positive edges at the minimal points point at $F_0$. In step 4, we make all positive edges very long (taking advantage of floating edges). In step 5, we arrange for the non-positive edges to be at the prescribed minimal points $m_F$.
    
    STEP 1: If $e_m$ is not positive, then we choose a minimal point $m'\not=m$ such that $\tau(e_m)$ is above $m'$ with full-support gap. If $e_{m'}$ is not positive, then we choose $m''$ such that $\tau(e_{m'})$ is above $m''$ with full-support gap; and so on. If the sequence does not terminate with a positive edge, then we find a cycle, and by sliding one edge of the cycle along the others, we increase the number of positive edges. Note that the hypothesis of having full-support gaps is preserved.

    We reiterate this argument until, for every sequence $m,m',m'',\dots$, the sequence eventually ends at some $m^{(k)}$ with $e_{m^{(k)}}$ positive. At this point, we can slide $\tau(e_m)$ along $e_{m'},e_{m''},\dots$ in order to get that $\tau(e_m)$ is above $m^{(k)}$ with full-support gap. Similarly for all the other non-positive edges.

    Thus we can assume that, for all minimal points $m$, either the edge $e_m$ is positive, or $\tau(e_m)$ lies above some other minimal point $m'$ with full-support gap and with $e_{m'}$ positive.

    STEP 2: Now consider the graph of families $\cF(Q)$, and remove the floating edges to get a graph $\cF'$.
    
    Suppose that $\cF'$ contains an embedded (unoriented) closed cycle $\gamma$ containing a non-positive edge $e_m$. Thus $\tau(e_m)$ lies above some minimal point $m'$ with full-support gap and with $e_{m'}$ positive. By sliding $\tau(e_m)$ along $e_{m'}$ many times, we can assume that all components of $\tau(e_m)$ in $\qcsupp{Q}$ are very big. We can then slide $\tau(e_m)$ along the other edges of $\gamma$ until it becomes positive. This increases the number of positive edges.
    
    By reiterating this procedure, we can assume that every embedded closed cycle in $\cF'$ is a single loop consisting of a positive edge. This means that $\cF'$ is a tree plus some loops at the vertices consisting of positive edges.

    STEP 3: We now want all non-positive edges of $\cF'$ to point towards a prescribed family $F_0$. As we said before, non-positive edges of $\cF'$ form a tree. Suppose that $e_m$ is a non-positive edge, and $\tau(e_m)$ is further from $F_0$ than $\iota(e_m)$ (in the tree). Then we have that $\tau(e_m)$ lies above $m'$ such that $e_{m'}$ is positive, and thus we can perform a connection move. This substitutes $e_m,e_{m'}$ with $f_m,f_{m'}$ respectively, where $f_m$ is positive and $f_{m'}$ is non-positive; what we gain is that now $\tau(f_{m'})$ is nearer to $F_0$ than $\iota(f_{m'})$. If some other non-positive edge $e_{m''}$ has $\tau(e_{m''})$ above $m'$, then we now slide $\tau(e_{m''})$ along $f_{m'}$, in such a way that $\tau(e_{m''})$ ends up above $m$ with positive edge $f_m$.

    Thus we can assume that all non-positive edges point towards $F_0$. For every non-positive edge $e_m$, we have that $\tau(e_m)$ is above a positive edge, and thus up to slide we can make all components of $\tau(e_m)$ in $\qcsupp{Q}$ very big, and then slide $\tau(e_m)$ so that it falls in $F_0$, and above all minimal points of $F_0$ with full-support gap.

    Thus now $\cF'$ is made of non-positive edges (one for each family $F\not=F_0$, all pointing towards $F_0$, and with terminal vertex having all components in $\qcsupp{Q}$ very big), and of positive edges at every other minimal point.

    STEP 4: Up to slides, we can assume that every floating edge has both endpoints in $F_0$.

    Let $e_m$ be a positive edge given by $m\edge m+\bu$ for $\bu\in\pA$ with $\supp{\bu}=\qcsupp{Q}$. Take a floating edge $e$ and, up to slides, assume that $\iota(e)$ lies above $m$ and $\tau(e)=\iota(e)+\bw$ for some $\bw\in\pA$ with $\supp{\bw}=\qcsupp{Q}$ and with all non-zero components of $\bw$ very big. Then we perform a swap move, and we get an edge $m\edge m+\bw$ and a floating edge. In other words, we can assume that all components in $\qcsupp{Q}$ of all positive edges are very big.

    STEP 5: For a family $F\not=F_0$, take a prescribed minimal point $m_F$. If $e_{m_F}$ is positive, then there is another minimal point $m'$ with $e_{m'}$ non-positive. We have that $\tau(e_{m'})$ lies above some minimal point $m''$ in $F_0$, with $e_{m''}$ positive. We now perform a connection to change $e_{m'},e_{m''}$ into $f_{m'},f_{m''}$ with $f_{m'}$ positive. Then we perform a connection to change $e_{m_F},f_{m''}$ into $g_{m_F},g_{m''}$ with $g_{m''}$ positive. In this way, the edges at $m',m''$ are now positive, and the edge at $m_F$ is not.

    Thus the resulting GBS graph is in normal form, as desired.
\end{proof}

\begin{prop}\label{prop:uniqueness-normal-form-floating}
    Suppose that $(\Gamma,\psi),(\Delta,\phi)$ are in normal form. Suppose that they have the same $\qcmin{Q},\qcsupp{Q},\cla{Q}$ and the same number of edges in each conjugacy class {\rm(}up to edge sign-changes{\rm)}. Then there is a sequence of edge sign-changes, slides, swaps, and connections going from one to the other.
\end{prop}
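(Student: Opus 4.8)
The plan is to reduce the statement to a question about Nielsen equivalence of the tuples of vectors carried by the edges, and then to settle that question using the presence of floating pieces. The first step is to put the two configurations on a common combinatorial skeleton. The partition of $\qcmin{Q}$ into families is intrinsic to $Q$ (two minimal points lie in the same family precisely when their difference is supported in $\qcsupp{Q}$), so $\cF(Q)$, its vertex set, and the distinction between individual and non-individual families are shared by $(\Gamma,\psi)$ and $(\Delta,\phi)$. Since a normal form has full-support gaps, \Cref{prop:existence-normal-form-floating} lets me pass, through normal forms, to the case where both graphs are in normal form with respect to the same $F_0$ and the same minimal points $\{m_F\}_{F\neq F_0}$. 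After this the edges split in both graphs into the same three roles: one connecting edge at each $m_F$ pointing into $F_0$, one positive edge (a loop of its family in $\cF(Q)$) at each remaining minimal point, and the floating edges (loops of $F_0$). Cleanness and \Cref{lem:escape-minimal-regions} force this accounting, and the connecting edges form a spanning tree of $\cF(Q)$.

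Next I would match edges and reduce to vector data. The non-floating edges are matched canonically by their starting minimal point; the conjugacy class of such an edge is the fixed class of that minimal point, so the matching respects conjugacy classes automatically. Subtracting these forced contributions from the datum ``number of edges in each conjugacy class'' shows the two graphs have the same number of floating edges in each conjugacy class, so floating edges can be matched (up to edge sign-change) class by class. Using \Cref{prop:description-c-class} I then slide each matched pair to a common base point, so that the remaining data is, for each loop of $\cF(Q)$, a vector in $\cla{Q}$, namely the gap $\tau(e)-\iota(e)$, with the connecting-tree offsets also reduced to $\cla{Q}$ via $q_Q$. By \Cref{prop:cla-and-modular-homomorphism} these loop-vectors generate $\cla{Q}$, and since changing the length of a loop keeps it inside its conjugacy class, the vectors may be manipulated subject only to the requirement that they generate $\cla{Q}$.

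I would then show that slides, swaps and connections realise abelian Nielsen transformations of this tuple of loop-vectors. A swap combined with a floating edge used as a reservoir (as in Step 4 of the proof of \Cref{prop:existence-normal-form-floating}) makes any loop arbitrarily long, which keeps the transformed vectors inside the full-support positive cone when performing a subtraction $\bv_i\mapsto\bv_i-\bv_j$; slides and connections realise the additions $\bv_i\mapsto\bv_i+\bv_j$ and transfer length across the connecting tree. This identifies the reachable configurations with a single Nielsen class of generating tuples of $\cla{Q}$. The decisive input is that floating pieces place us in the regime $k'\ge k+1$ of the abelian Nielsen lemma (\Cref{sec:Nielsen-equiv-abelian}): a floating edge is expected to furnish a generator beyond a minimal generating set of $\cla{Q}$, so the determinant obstruction from the introductory remark becomes vacuous and any two admissible generating tuples are Nielsen equivalent. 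Finally I invoke the converse (\Cref{sec:Nielsen-equiv-big}), that Nielsen-equivalent tuples are related by a sequence of moves, to conclude.

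The main obstacle I expect lies in the third paragraph: faithfully realising each elementary Nielsen transformation by an explicit sequence of slides, swaps and connections while (i) staying within the class of clean, $F$-polished, normal-form graphs, (ii) preserving the per-conjugacy-class counts, and (iii) respecting the full-support-gap constraint, the last being exactly why the ``make edges long'' reservoir trick, and hence the hypothesis of floating pieces, is essential. The sharpest point is to confirm that a floating edge genuinely contributes a redundant generator, so that $k'\ge k+1$ and the determinant invariant drops out; this is precisely what separates the present case from the rigid case where the number of edges equals the number of minimal points.
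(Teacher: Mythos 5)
Your overall skeleton is the same as the paper's (reduce the two normal forms to tuples of gap-vectors generating $\cla{Q}$, realize Nielsen transformations by moves while staying in normal form, invoke the appendices, then match basepoints and the connecting endpoints $c_F$), but your ``decisive input'' is false. Floating pieces do \emph{not} place you in the regime $k'\ge k+1$ of \Cref{Nielsen-equiv}. Concretely: take one vertex, set of primes $\{2,3\}$, a single minimal point $m=(1,1)$, a positive edge $m\edge m+(1,2)$, and a floating edge $(3,3)\edge (3,3)+(2,1)$. This graph is clean, in normal form, has one qc-class with full-support gaps, and has floating pieces (two edges, one minimal point); yet $\cla{Q}=\gen{(1,2),(2,1)}\cong\bbZ^2$ cannot be generated by fewer than two elements, so the tuple of loop-vectors has exactly $k$ entries, not $k+1$. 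The determinant obstruction is therefore genuinely present: the ordered pairs $\bigl((1,2),(2,1)\bigr)$ and $\bigl((2,1),(1,2)\bigr)$ are \emph{not} Nielsen equivalent in $\cla{Q}$ (the change of basis has determinant $-1$), even though the two GBS configurations are related by a single swap move. So your identification of the reachable configurations with a single Nielsen class of generating tuples, justified by ``a floating edge furnishes a generator beyond a minimal generating set,'' breaks down exactly on such pairs: your argument has no way to connect two normal forms whose tuples differ by an odd permutation.

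What closes this gap in the paper is different and twofold. First, $\cla{Q}$ is a subgroup of $\bA\cong\bbZ/2\bbZ\oplus\bbZ^{\cP(\Gamma,\psi)}$, hence isomorphic to $\bbZ^k$ or $\bbZ/2\bbZ\oplus\bbZ^k$; for groups of this form, \Cref{Nielsen-equiv} shows the only possible obstruction between generating $r$-tuples is a sign (determinant $\pm1$ in the free case, nothing in the $\bbZ/2\bbZ$ case), so all generating $r$-tuples are Nielsen equivalent \emph{up to permutation}. Second --- and this is the actual role of the floating-pieces hypothesis --- arbitrary permutations of the vectors $\bx_1,\dots,\bx_r$, odd ones included, are realizable by swap moves, using slides to translate a floating edge into position next to any positive edge; this is precisely what is unavailable in the no-floating-pieces case, where only even permutations (via connections) can be realized and the sign survives as the assignment-map invariant of \Cref{defn:assignmentmap}. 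Replacing your false $k'\ge k+1$ claim by this permutation mechanism, the rest of your outline (Nielsen moves keeping all components large, \Cref{Nielsen-equiv-3big}, then adjusting the non-minimal basepoints by slides and self-slides and the $c_F$ by slides) goes through essentially as in the paper. One further point to watch: \Cref{Nielsen-equiv-3big} requires at least three vectors, so the case $r=2$ (one positive edge plus one floating edge) cannot be handled by citing that theorem alone and needs the extra freedom of the floating edge.
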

\begin{proof}
    Let $\Lambda$ be the affine representation of $(\Gamma,\psi)$. We have the edges $m_F\edge c_F$ for $F\not=F_0$ and $c_F\in F_0$. We can easily assume that all the other edges are given by $p_i\edge p_i+\bx_i$ with $\bx_i\in\pA$ and $\supp{\bx_i}=\qcsupp{Q}$, for $i=1,\dots,r$.

    It is fairly easy to see that, using swap moves, and up to translating the floating edges with slide moves, we can permute the vectors $\bx_1,\dots,\bx_r$ as we want. Once we are able to permute them as we want, we can also perform Nielsen moves among them (since we are able, for example, to change a floating edge by slide over another edge), provided that we keep all components in $\qcsupp{Q}$ of all $\bx_i$ big enough, so that the graph remains in normal form.

    By \Cref{prop:cla-and-modular-homomorphism} we have that $\bx_1,\dots\bx_r$ generate $\cla{Q}$. But $\cla{Q}$ is isomorphic to either $\bbZ^k$ or $\bbZ/2\bbZ\oplus\bbZ^k$ for some $k\in\bbN$. Thus in this case all $r$-tuples of generators for $\cla{Q}$ are Nielsen equivalent to each other up to permutation. Therefore, using the results of \Cref{sec:Nielsen-equiv-big}, we can perform moves to make $\bx_1,\dots,\bx_r$ to be the same in $(\Gamma,\psi)$ and $(\Delta,\phi)$.

    Now, the $p_i$ which are minimal points are already the same in $(\Gamma,\psi)$ and $(\Delta,\phi)$, by assumption (up to performing some edge sign-change). The $p_i$ which are non-minimal can be made to be the same by slides and self-slides (\Cref{lem:self-slide}) and permutations of $\bx_1,\dots,\bx_r$, since we are assuming that there are the same number of edges in each conjugacy class in $(\Gamma,\psi)$ and $(\Delta,\phi)$.

    Finally, we can change the endpoints $c_F$ by any combination of $\bx_1,\dots,\bx_r$ (since we can slide $c_F$ over at least one $\bx_i$, and we can permute $\bx_1,\dots,\bx_r$ as we want in any moment). Since $(\Gamma,\psi)$ and $(\Delta,\phi)$ have the same conjugacy classes, we can arrange for the $c_F$ to be the same too.

    The conclusion follows.
\end{proof}

\subsection{Rigid vectors}\label{sec:rigid-vectors}

Let $(\Gamma,\psi)$ be a totally reduced GBS graph with affine representation $\Lambda$. Suppose that $(\Gamma,\psi)$ has one qc-class $Q$ with $\qcsupp{Q}\not=\emptyset$, and that $(\Gamma,\psi)$ is clean.  Suppose that $(\Gamma,\psi)$ does not have floating pieces.

\begin{defn}\label{def:rigid-vector}
    A \textbf{rigid cycle} is a sequence of $\ell\ge1$ edges $m_1\edge m_2+\bw_1$, $m_2\edge m_3+\bw_2$, $\dots$, $m_\ell\edge m_1+\bw_\ell$ with the following properties:
    \begin{enumerate}
        \item $m_1,\dots,m_\ell$ are pairwise distinct minimal points.
        \item $\bw_1,\dots,\bw_\ell\in\pA$.
        \item\label{itm:rigidity} There are no distinct minimal points $m,m'$ such that $m+\bw_1+\dots+\bw_\ell\ge m'$.
    \end{enumerate}
    For a rigid cycle we define the associated \textbf{rigid vector} as $\bw=\bw_1+\dots+\bw_\ell\in\pA$.
\end{defn}

In the above hypothesis, we must have that $\supp{\bw}\not=\emptyset$ since $(\Gamma,\psi)$ is clean. In a rigid cycle, every vertex of every edge lies above a unique minimal point. In fact, if $m_{i+1}+\bw_i\ge m'$ for some minimal point $m'$, then in particular $m_{i+1}+\bw\ge m'$, and by \Cref{itm:rigidity} of \Cref{def:rigid-vector} this implies that $m'=m_{i+1}$. In particular, an edge can be part of at most one rigid cycle. Note also that $\bw\in\cla{Q}$, since we can, for example, slide one edge of the rigid cycle along the others to obtain an edge $m\edge m+\bw$.

\begin{defn}\label{def:multi-set-rigid-vectors}
    Define the set of \textbf{rigid vectors} of $(\Gamma,\psi)$ as the multi-set of the vectors $\bw$ associated with the rigid cycles of $(\Gamma,\psi)$.
\end{defn}

\begin{prop}
    The multi-set of rigid vectors of $(\Gamma,\psi)$ is invariant under edge sign-changes, slides, connections.
\end{prop}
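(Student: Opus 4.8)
The plan is to verify invariance one move at a time, in each case exhibiting a bijection between the rigid cycles of $(\Gamma,\psi)$ and those of the resulting graph that preserves the associated rigid vectors. First I would record the features of the setting that make the moves controllable. Since $(\Gamma,\psi)$ is clean with no floating pieces, the minimal points are in bijection with the edges, each edge $e_m$ having $\iota(e_m)=m$; in particular no swap is available (a swap needs two loops based at comparable points $p\le q$, but distinct minimal points are incomparable and two distinct loops cannot share a minimal base point without violating cleanness), so only edge sign-changes, slides and connections occur. By \Cref{cor:basic-invariants} the set of minimal points $\qcmin{Q}$ and the subgroup $\cla{Q}$ are preserved by these moves; hence the \emph{property} of a vector $\bw\in\pA$ being rigid --- that $m+\bw\ge m'$ holds for no two distinct minimal points --- is itself move-invariant, and it suffices to track which rigid vectors are realised by rigid cycles, and with what multiplicity.

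The key structural input is the observation following \Cref{def:rigid-vector}: along a rigid cycle every terminal vertex $\tau(e_{m_i})=m_{i+1}+\bw_i$ lies above a \emph{unique} minimal point, namely $m_{i+1}$ (since $\bw_i\le\bw$ and $\bw$ is rigid). This severely limits how a slide can involve an edge of a rigid cycle: a forward slide of $\tau(e_{m_i})$ along $e$ needs $\tau(e_{m_i})\ge\iota(e)$, forcing $\iota(e)=m_{i+1}$, i.e. $e=e_{m_{i+1}}$; a backward slide along $\ol{e}$ needs $\tau(e)\le\tau(e_{m_i})$, which by the same rigidity forces $\tau(e)$ to lie above $m_{i+1}$ only. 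The unifying principle is conservation of total displacement: writing the rigid vector as the image $\bw=q_Q(\sigma)$ of the cyclic path $\sigma$ (see \Cref{defn:modular_map} and \Cref{prop:cla-and-modular-homomorphism}), a slide along $e$ alters one edge's displacement by $\pm q(e)$ while simultaneously inserting or deleting $e$ from the cycle, so the telescoping sum around the cycle is unchanged.

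Concretely I would argue as follows. For an \textbf{edge sign-change} on $e_{m_i}$, both endpoints gain the torsion element $\be$; absorbing the shift at $\iota(e_{m_i})=m_i$ into the previous edge changes the two gaps $\bw_{i-1},\bw_i$ by $\be$, so the rigid vector changes by $2\be=\mathbf{0}$ and is preserved (a rigid loop is fixed outright). For a \textbf{slide}, the forward case shortcuts the cycle $\cdots m_i\to m_{i+2}\cdots$, dropping $m_{i+1}$ but keeping the total displacement $\bw$; the backward case inserts the (necessarily non-rigid) edge $e$ into the cycle, lengthening it while keeping $\bw$; slides disjoint from every rigid cycle leave all of them intact. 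For a \textbf{connection}, the distinguished loop $p\edge p+\bw$ is a length-one rigid cycle exactly when $\bw$ is rigid, and the move transports it to the loop $q\edge q+\bw$ at the minimal point $q$, preserving $\bw$; longer rigid cycles are untouched, as they contain no self-loops. In each case the transformation is invertible (its inverse is again a move of the same type), which upgrades the vector-preserving correspondence to a multiplicity-preserving bijection.

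The main obstacle I anticipate is precisely this last point: showing that the correspondence is a genuine \emph{bijection} of the multisets, not merely a map sending each old rigid cycle to a new one with the same vector. The danger is that a slide might merge two rigid cycles, or conjure a rigid cycle out of previously non-rigid edges. Ruling this out is where the uniqueness-of-minimal-point observation does the real work: it forces the edge $e$ along which one slides to lie either in the \emph{same} rigid cycle (forward case) or to be non-rigid and point above the same minimal point as the slid edge (backward case), so that no second rigid cycle is disturbed and the count of cycles carrying each rigid vector is preserved. Carefully enumerating these local configurations, and checking the analogous bookkeeping for connections, is the technical heart of the proof.
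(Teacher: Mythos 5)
Your strategy --- a move-by-move verification resting on the observation that in a rigid cycle every endpoint lies above a unique minimal point, together with cleanness and invertibility of the moves --- is exactly the ``routine to check using the definitions'' that the paper's one-line proof alludes to, and your treatment of edge sign-changes and slides is sound: the forward/backward dichotomy for slides (legitimate under the orientation convention you set up), the fact that the backward-slide edge is necessarily non-rigid, and the invertibility argument ruling out creation or merging of rigid cycles all hold up.

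The connection case, however, contains a genuine gap as written. A connection modifies \emph{two} edges, not one: it replaces the loop $p\edge p+\bw$ and the non-loop edge $q\edge p+\bw_1$ by the loop $q\edge q+\bw$ and the non-loop edge $p\edge q+\bw_2$. Your justification that ``longer rigid cycles are untouched, as they contain no self-loops'' only shows that a rigid cycle of length $\ge2$ avoids the loop; it says nothing about whether such a cycle contains the edge $q\edge p+\bw_1$, which the move destroys by converting it into a loop based at $q$. If that edge could lie in a rigid cycle, the multi-set of rigid vectors could lose an element under a connection, and your claimed bijection would fail. So you must additionally prove that $q\edge p+\bw_1$ lies in no rigid cycle of the original graph (and, symmetrically, that $p\edge q+\bw_2$ lies in no rigid cycle of the new one). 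This does follow from the uniqueness principle you already isolated, but it has to be said: if a rigid cycle contained $q\edge p+\bw_1$, its terminal vertex $p+\bw_1$ lies above the unique minimal point $p$, so the next edge of that cycle would be the unique edge at $p$, namely the loop $p\edge p+\bw$; but the cycle would then have length $\ge2$ (it visits both $q$ and $p$) while containing a loop, which is impossible, since the loop's terminal vertex $p+\bw$ would lie above both $p$ and the subsequent minimal point of the cycle, contradicting uniqueness. With this supplement and its mirror image after the move, your bookkeeping for connections closes and the proof is complete; without it, the invariance under connections is not established.
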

\begin{proof}
It is routine to check using the definitions.
\end{proof}

\subsection{GBS graphs with no floating pieces}\label{sec:invariants-no-floating}

Let $(\Gamma,\psi)$ be a totally reduced GBS graph with affine representation $\Lambda$. Suppose that $(\Gamma,\psi)$ has one qc-class $Q$ with $\qcsupp{Q}\not=\emptyset$, and that $(\Gamma,\psi)$ is clean.  Suppose that $(\Gamma,\psi)$ does not have floating pieces.

We denote with $\cM$ the finite set of minimal points of $Q$. This is partitioned into the non-empty sets $\cM_F$ for $F\in V(\cF(Q))$, according to which family a given minimal point belongs. We give an orientation to each edge of $\Lambda$: we say that $e$ is the \textit{oriented edge} of a pair $e,\ol{e}\in E(\Lambda)$ if $\iota(e)$ is a minimal point. Note that exactly one of $e,\ol{e}$ is the oriented edge. Since $(\Gamma,\psi)$ is clean and does not have floating pieces, we have a bijection between the set of oriented edges and $\cM$: the oriented edge $e$ corresponds to the minimal point $\iota(e)\in\cM$. Recall that $E(\cF(Q))=E(\Lambda)$, and in particular $\cF(Q)$ is an oriented graph. We fix a family $F_0\in V(\cF(Q))$, which will play the role of basepoint.

Choose a maximal tree $T$ for $\cF(Q)$, and let $t_1,\dots,t_h$ the oriented edges in $T$. Let $\bR=\{\br_1,\dots,\br_k\}$ be the multi-set of rigid vectors as in \Cref{def:multi-set-rigid-vectors}. For $i=1,\dots,k$, choose an oriented edge $e_i$ in the rigid cycle corresponding to $\br_i$, and such that $e_i$ does not belong to $T$. Note that such an $e_i$ exists, since the rigid cycle is a cycle in $\cF(Q)$; note that distinct rigid cycles give distinct edges, as they have no edges in common. Call $f_1,\dots,f_\ell$ the remaining oriented edges. This means that $e_1,\dots,e_k,f_1,\dots,f_\ell,t_1,\dots,t_h$ is a list of all the oriented edges of $\cF(Q)$, each appearing exactly once. In particular $\cM=\{\iota(e_1),\dots,\iota(e_k),\iota(f_1),\dots,\iota(f_\ell),\iota(t_1),\dots,\iota(t_h)\}$.

For $i=1,\dots,\ell$ we consider the path $\sigma_i$ which goes through $f_i$ and then back to the starting point through the maximal tree $T$. For $i=1,\dots,\ell$ we define $\bx_i=q_Q(\sigma_i)+\gen{\bR}\in\frac{\cla{Q}}{\gen{\bR}}$. We consider the tree $T$ with base-point $F_0$: every edge has an orientation, which is either pointing towards $F_0$, or away from $F_0$. We call $\epsilon\in\bbN$ the number of edges pointing away from $F_0$.

We define the map $\Theta:\cM\rightarrow \bR\sqcup\frac{\cla{Q}}{\gen{\bR}}\sqcup \left(V(\cF(Q))\setminus \{F_0\}\right)$ as follows:
\begin{enumerate}
    \item $\Theta(\iota(e_i))=\br_i\in\bR$ for $i=1,\dots,k$.
    \item $\Theta(\iota(f_i))=\bx_i\in\frac{\cla{Q}}{\gen{\bR}}$ for $j=1,\dots,\ell$.
    \item $\Theta(\iota(t_i))\in V(\cF(Q))\setminus \{F_0\}$ is defined as follows. If $t_i$ is pointing towards $F_0$, we set $\Theta(\iota(t_i))$ as the family of $\iota(t_i)$. If $t_i$ is pointing away from $F_0$, we set $\Theta(\iota(t_i))$ as the family of $\tau(t_i)$.
\end{enumerate}
We define the map $(-1)^\epsilon\Theta$ by pre-composing $\Theta$ with any permutation of $\cM$ of sign $(-1)^\epsilon$.

\begin{defn} \label{defn:assignmentmap}
    We define the \textbf{assignment map} of $(\Gamma,\psi)$ as the map $(-1)^\epsilon\Theta$, considered up to
    \begin{enumerate}
        \item Nielsen equivalence of $\bx_1,\dots,\bx_\ell$ in the group $\frac{\cla{Q}}{\gen{\bR}}$.
        \item Pre-composing $\Theta$ with an even permutation of $\cM$.
    \end{enumerate}
\end{defn}

The above construction can look artificial, but it becomes more natural if one thinks about the normal form introduced below, see \Cref{def:normal-form-no-floating}. We encourage the reader to think like this: for an oriented edge $e$ with $\iota(e)=m$ minimal, the map $\Theta(m)$ is telling us what the role of the edge $e$ in the given configuration is. If $\Theta(m)=\br\in\bR$, then you should imagine that $e$ is hosting the rigid vector $\br$ (as if $e$ would be $m\edge m+\br$). If $\Theta(m)=\bx\in\frac{\cla{Q}}{\gen{\bR}}$, then you should imagine that $e$ is hosting a ``very long'' vector $\bx$ (as if $e$ would be $m\edge m+\bx$); here ``very long'' means that it is able to play with he others through slide moves). If $\Theta(m)=F\in V(\cF(Q))\setminus \{F_0\}$, then you should imagine that $e$ is the edge connecting $F$ to the basepoint $F_0$ (as if $e$ would be connecting a minimal point in $F$ to a non-minimal point in $F_0$). And in fact, when $(\Gamma,\psi)$ is in the normal form of \Cref{def:normal-form-no-floating}, this is exactly what the map $\Theta$ is encoding.

\begin{lem}\label{lem:assignment-indep-rigid-edges}
    A different choice of the edges in the rigid cycles produces the same assignment map.
\end{lem}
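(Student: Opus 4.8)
The plan is to reduce the statement to a single elementary change and to track its effect on $\Theta$ through the modular homomorphism. First I would observe that distinct rigid cycles are edge-disjoint, and that the maximal tree $T$, the basepoint $F_0$, the integer $\epsilon$ and the multi-set $\bR$ do not depend on the choice of the edges $e_1,\dots,e_k$; moreover, changing the chosen edge inside one rigid cycle leaves untouched the $\bx$-values of all $f$-edges outside that cycle and the subgroup $\gen{\bR}$. Hence it suffices to fix one rigid cycle $\gamma_i$ and compare the two maps obtained by designating two different non-tree edges $e_i,e_i'$ of $\gamma_i$; the general case then follows by performing such swaps one cycle at a time.

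Next I would extract the key relation. Viewing $\gamma_i$ as a loop in $\cF(Q)$, \Cref{prop:cla-and-modular-homomorphism} together with the observation in \Cref{def:rigid-vector} (that one may slide the edges of the cycle onto a single edge $m\edge m+\bw$) gives $q_Q(\gamma_i)=\br_i$. Writing $[\gamma_i]$ in the free basis of $\pi_1(\cF(Q))$ determined by the non-tree edges, and using that $\gamma_i$ traverses each of its edges in its oriented direction, I obtain $\br_i=q_Q(\sigma_{e_i})+\sum_g q_Q(\sigma_g)$, where $g$ runs over the remaining non-tree edges of $\gamma_i$ (all of them $f$-edges). Reducing modulo $\gen{\bR}$ shows that, once $e_i$ is reclassified as an $f$-edge, its $\bx$-value equals $-\sum_g \bx_g$.

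I would then read off the effect on $\Theta$. Passing from $e_i$ to $e_i'=g_1$ moves the rigid vector $\br_i$ from $\iota(e_i)$ to $\iota(e_i')$ and makes $\iota(e_i)$ carry the $\bx$-value $-\sum_g \bx_g$, while all other values are unchanged. Thus the new map equals the old one pre-composed with the transposition $(\iota(e_i)\ \iota(e_i'))$ of $\cM$, followed by the Nielsen transformation of the $\bx$-tuple that replaces the entry $\bx_{g_1}$ by $-\bx_{g_1}-\sum_{g\ne g_1}\bx_g$. This transformation is one sign change together with additions, hence has determinant $-1$, matching the sign of the transposition.

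Finally I would check compatibility with \Cref{defn:assignmentmap}. Since $\epsilon$ is unchanged, the two representatives $(-1)^\epsilon\Theta$ differ by an odd permutation of $\cM$ and an odd-determinant Nielsen transformation of the $\bx$-tuple, and these two sign contributions cancel; therefore the class modulo even permutations and Nielsen equivalence is preserved, which is exactly the assertion. The main obstacle is precisely this sign bookkeeping: one must verify that the leading coefficient in the relation above is a unit, so that the accompanying transformation of the $\bx$-tuple is genuinely Nielsen and contributes determinant $-1$, and that the swap stays localized to the two edges involved, leaving $\gen{\bR}$ and every other $\bx$-value intact, so that the parity of the transposition is compensated precisely by the determinant of the Nielsen move.
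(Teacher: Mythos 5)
Your proposal is correct and follows essentially the same approach as the paper: your relation $\br_i=q_Q(\sigma_{e_i})+\sum_g q_Q(\sigma_g)$ is precisely the paper's key observation that the $\bx$-values attached to the non-tree edges of a rigid cycle sum to $\br_i$, hence to $\mathbf{0}$ in $\cla{Q}/\gen{\bR}$. Your explicit bookkeeping --- the change of designated edge acts as an odd transposition of $\cM$ compensated by a determinant $-1$ transformation of the $\bx$-tuple --- is exactly the sign cancellation the paper's proof invokes, only spelled out in more detail than the paper's terse ``up to even permutation and Nielsen equivalence.''
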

\begin{proof}
    Let $\gamma$ be a rigid cycle with rigid vector $\br$, let $T$ be a maximal tree, and let $s_1,s_2,\dots,s_h$ be the oriented edges of $\gamma$ that do not belong to $T$.
    
    For $i=1,\dots,k$, we call $\sigma_i$ the path that crosses $s_i$ and then closes through the maximal tree $T$, and let $\bx_i=q_Q(\sigma_i)+\gen{\bR}\in\cla{Q}/\gen{\bR}$. The key observation is that $\bx_1+\dots+\bx_k=\br$.
    
    It follows that, if we choose $s_i$ to be the edge of the rigid cycle, then we must set $\Theta(\iota(s_i))=\br$ and $\Theta(\iota(s_j))=\bx_j$ for $j\not=i$. The other values of $\Theta$ are independent of $i$. In particular, a different choice of $i$ will produce the same map $\Theta$ up to even permutation and Nielsen equivalence.
\end{proof}

\begin{lem}\label{lem:assignment-indep-max-tree}
    A different choice of maximal tree $T$ produces the same assignment map.
\end{lem}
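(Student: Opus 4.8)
The plan is to reduce to an elementary change of spanning tree and then verify, by an explicit permutation, that the two assignment maps agree. Any two spanning trees of the finite graph $\cF(Q)$ are connected by a sequence of elementary exchanges, each removing one edge $t$ of the current tree and adding one edge $g$ outside it so that the result is again a spanning tree. Unravelling the equivalence relation in \Cref{defn:assignmentmap}, the assignment maps attached to trees $T$ and $T'$ coincide precisely when there is a permutation $\sigma$ of $\cM$ carrying $\Theta_T$ to $\Theta_{T'}$ up to Nielsen equivalence of the long vectors $\bx_i$ and with $\mathrm{sign}(\sigma)=(-1)^{\epsilon_T-\epsilon_{T'}}$. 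It therefore suffices to treat a single exchange $T'=(T\setminus\{t\})\cup\{g\}$.

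For such an exchange, removing $t$ disconnects the tree into a part $A$ containing $F_0$ and a part $B$; the new edge $g$ reconnects $A$ and $B$, so in $T'$ the subtree $B$ is re-rooted along the unique path $P$ in $B$ from the old attaching vertex to the new one. I would then write down the permutation $\sigma$ that this re-rooting dictates: along $P$ it cyclically shifts the minimal points recording the families of $P$, and it exchanges the two crossing edges $t,g$, which swap their roles (one passes from recording a family to recording a value, the other the reverse). The three kinds of values match as follows. Each non-root family is recorded by exactly one tree edge, namely the edge to its parent, in both $T$ and $T'$, so the family part of $\sigma$ is forced and matches exactly. The long vectors $\bx_i$ are the images under $q_Q$ of a basis of fundamental cycles, and passing from $T$ to $T'$ changes this basis by the standard change of fundamental-cycle basis of an elementary tree exchange, which is a product of Nielsen transformations; hence the tuples are Nielsen equivalent in $\frac{\cla{Q}}{\gen{\bR}}$ by \Cref{prop:cla-and-modular-homomorphism}. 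Finally, a rigid vector is recorded by any chosen non-tree edge of its rigid cycle, so by \Cref{lem:assignment-indep-rigid-edges} I am free to re-select these representatives after the exchange, which matches the rigid part.

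The main obstacle is the sign. Here I would compute $\mathrm{sign}(\sigma)$, which is the sign of a single cycle whose length is two more than the number of edges of $P$, and compare it with the parity of $\epsilon_T-\epsilon_{T'}$. Only the tree edges lying on $P$ change their orientation relative to the root, reversing parent and child, together with the two crossing edges $t$ and $g$; every remaining edge keeps its contribution to $\epsilon$. The delicate point is that these two parities agree only because the edge orientations are not arbitrary: the source family of an oriented edge is the family of its minimal endpoint, so each family $F$ is the source of exactly $\lvert\cM_F\rvert$ oriented edges. This global constraint pins down the orientations of the edges crossing the cut between $A$ and $B$ and of the edges along $P$ enough to force the required parity; in the degenerate situations where the forced permutation alone does not already give the correct sign, I would absorb the discrepancy using the residual freedom available, namely transposing two long-vector slots (a Nielsen move, hence invisible to the assignment map) and re-selecting rigid representatives via \Cref{lem:assignment-indep-rigid-edges}. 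Carrying out this parity bookkeeping carefully, according to whether $t$ and $g$ are tree edges, rigid edges, or long edges, is the technical heart of the argument.
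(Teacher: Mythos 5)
Your overall scheme --- reducing to a single elementary exchange $T'=(T\setminus\{t\})\cup\{g\}$, matching the family values (which are forced), changing the other non-tree values by the class of the exchanged cycle, and re-selecting rigid representatives via \Cref{lem:assignment-indep-rigid-edges} --- is the same as the paper's. The gap sits exactly at the step you single out as the technical heart, the sign bookkeeping, and your way of closing it fails. The decisive flaw is the fallback claim that you may ``absorb the discrepancy'' by ``transposing two long-vector slots (a Nielsen move, hence invisible to the assignment map)''. A transposition of two slots is an \emph{odd} permutation of $\cM$, and \Cref{defn:assignmentmap} quotients only by \emph{even} permutations; nor is a plain swap of two entries of a tuple a Nielsen transformation --- only the signed swap $(a,b)\mapsto(b,-a)$ is (this is exactly how Lemma \ref{can-reorder} realizes $3$-cycles). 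Indeed, by Proposition \ref{Nielsen-equiv}, when the number of long vectors equals the minimal number of generators of $\cla{Q}/\gen{\bR}$, a swap multiplies the relevant determinant by $-1$ and hence changes the Nielsen class whenever $-1\not\equiv 1$ modulo $d_n$. If the freedom you invoke existed, the assignment map could not distinguish the two GBS graphs of Example \ref{ex10}, which differ precisely by exchanging the two long vectors at the two minimal points and are non-isomorphic exactly because that determinant is $-1$; your ``residual freedom'' would make the invariant this lemma is about vacuous.

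What is actually needed (and what the paper's ``routine check'' verifies) is a datum your argument never tracks: the class $\bx=q_Q(\gamma)+\gen{\bR}$ of the exchanged cycle may itself change sign when passing from $T$ to $T'$, since the fundamental cycle of $t$ with respect to $T'$ can be the \emph{reverse} of that of $g$ with respect to $T$, depending on the fixed orientations of the edges. The paper proves the correlation: $\bx$ flips sign if and only if the sign of the induced permutation of values, combined with the change in the parity $\epsilon$, is negative. This correlation is what makes the assignment map well defined, because an odd slot-permutation \emph{accompanied by} a sign flip of $\bx$ is a signed swap, hence a genuine Nielsen transformation, while an odd discrepancy alone could not be absorbed. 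Your primary mechanism --- that the counting constraint ``each family is the source of exactly $\lvert\cM_F\rvert$ oriented edges'' pins down the parities --- is asserted without proof and cannot substitute for this correlation; relatedly, your claim that the change of fundamental-cycle basis ``is a product of Nielsen transformations'' is already false in the orientation-reversing case, since negating a single entry of a tuple is not a Nielsen transformation. To repair the proof you must establish the sign-flip versus permutation-parity identity, which is precisely the content of the paper's routine check.
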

\begin{proof}
    Take an embedded cycle $\gamma$ in $\cF(Q)$, and call $\bx=q_Q(\gamma)+\gen{\bR}\in\cla{Q}/\gen{\bR}$. Suppose that $\gamma=\dots s_1^{\eta_1}\dots s_2^{\eta_2}\dots$ for some oriented edges $s_1,s_2$ and for some $\eta_1,\eta_2=\pm1$. Suppose that some maximal tree $T_1$ contains all of $\gamma$ except $s_2$, and let $T_2$ be the maximal tree obtained from $T_1$ by adding $s_2$ and removing $s_1$. Let $\Theta_1,\Theta_2$ be the maps associated with $T_1,T_2$ respectively.

    For an edge $e$ not appearing in $\gamma$, we have two cases. If $e\in T_1$ then $e\in T_2$ and $\Theta_1(\iota(e))=\Theta_2(\iota(e))$ and the contribution to $\epsilon$ does not change. If $e\not\in T_1$ then $e\not\in T_2$ and $\Theta_1(\iota(e))-\Theta_2(\iota(e))$ is $\mathbf{0}$ or $\pm\bx$.

    For the edges $e$ appearing in $\gamma$, the values of $\Theta_1,\Theta_2$ will be $\pm\bx$ and $F_1,F_2,\dots,F_r$ (the vertices appearing in $\gamma$, except the one nearest to $F_0$), in some order. When changing from $T_1$ from $T_2$, these values get permuted (the permutation consisting of a single cycle), the contributions to $\epsilon$ might change, and $\pm\bx$ might change sign. It is a routine check that these changes compensate each other, i.e. $\pm\bx$ changes sign if and only if the sign of the permutation of the values, plus the sum of the contributions to $\epsilon$, is negative.

    Thus we can change from $T_1$ to $T_2$ without affecting the assignment map. But with changes of this kind, we can go from any tree to any other tree. The conclusion follows.
\end{proof}

\begin{lem}
    The assignment map of $(\Gamma,\psi)$ is invariant under edge sign-changes, slides, connections.
\end{lem}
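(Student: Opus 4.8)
The plan is to leverage the two preceding lemmas, \Cref{lem:assignment-indep-rigid-edges} and \Cref{lem:assignment-indep-max-tree}, which establish that the assignment map does not depend on the auxiliary choices of maximal tree $T$ or of the representative edges $e_i$ inside the rigid cycles. Consequently, to prove invariance under a single move it suffices to perform the move and then \emph{re-choose} $T$ and the rigid edges on the resulting graph conveniently, so that $\Theta$ and $\epsilon$ differ from the original data only by the equivalences already built into \Cref{defn:assignmentmap}: a Nielsen move on the vectors $\bx_1,\dots,\bx_\ell$ in $\cla{Q}/\gen{\bR}$, an even permutation of $\cM$, and the compensating sign recorded by $(-1)^\epsilon$. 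Throughout I use that we may restrict to clean graphs with no floating pieces, so that each move preserves the bijection between oriented edges and $\cM$; I also use that each of the three moves preserves the multi-set $\bR$ of rigid vectors, and in fact carries rigid cycles to rigid cycles (the proposition of \Cref{sec:rigid-vectors}). Thus the only thing left to control is how the move acts on the $f_i$- and $t_i$-data together with its sign.

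First I would dispose of the \textbf{edge sign-change}. A sign-change of an edge $e$ shifts both $\iota_\Lambda(e)$ and $\tau_\Lambda(e)$ by the torsion element $\be$, so the modular value $q(e)=\tau_\Lambda(e)-\iota_\Lambda(e)$ is unchanged; hence $q_Q$, the graph $\cF(Q)$, its orientation, any fixed maximal tree, and all rigid cycles are literally unchanged. The only effect is to toggle a minimal point within its $\bbZ/2\bbZ$-pair, which does not change the minimal \emph{region}. Every ingredient of $\Theta$ and of $\epsilon$ is therefore untouched.

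Next, for a \textbf{slide} of an edge $d$ along an edge $e$ (necessarily internal, since in a one-qc-class graph there are no external edges), the key observation is that in $\cF(Q)$ the slide replaces $d$ by an oriented edge $d'$ with $q(d')=q(d)\pm q(e)$; that is, $d'$ is homotopic to $d$ followed by $e^{\pm 1}$. I would choose the post-slide tree and rigid edges to agree with the pre-slide ones except possibly at $d,d'$, and then read off the change in $\Theta$: the values on all other oriented edges are unchanged, while the value carried by $d$ changes by a term governed by $q(e)$. If $e$ is a cotree (free) edge this change is absorbed into a Nielsen move on the $\bx_i$; if $e$ lies in the tree it is absorbed into a change of maximal tree; and if $e$ is a rigid edge it is absorbed into $\gen{\bR}$. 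In every case the change is one of the harmless operations already quotiented out, and the bookkeeping of $\epsilon$ and of the permutation sign is identical to that in the proof of \Cref{lem:assignment-indep-max-tree}.

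Finally, the \textbf{connection} move is where the main difficulty lies. A connection recombines $\{q\edge p+\bw_1,\ p\edge p+\bw\}$ into $\{p\edge q+\bw_2,\ q\edge q+\bw\}$, and unlike a slide it reverses orientations and reshuffles which edges play the roles of tree, cotree, and rigid-cycle edges, while still fixing $\cM$, preserving the rigid cycles, and preserving the image $\cla{Q}$ of $q_Q$. Using the freedom in $T$ and in the rigid edges, I would present the connection as a composition of a change of maximal tree, a re-choice of a rigid edge, a Nielsen move on the $\bx_i$, and a single transposition of two elements of $\cM$ accompanied by a sign change of one $\bx_i$ --- exactly the compensating pattern analysed in \Cref{lem:assignment-indep-max-tree}. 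The hard part is precisely this sign accounting: one must check that the parity of the induced permutation of $\cM$ equals the change in $\epsilon$ plus the number of sign changes among the $\bx_i$, so that the decorated map $(-1)^\epsilon\Theta$ is genuinely unchanged. I expect this to follow from the same local embedded-cycle computation used before, carried out term by term, and this is the step that requires the most care.
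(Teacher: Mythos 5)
Your proposal is correct and takes essentially the same approach as the paper: the paper's entire proof is ``Immediate check with the definitions (using \Cref{lem:assignment-indep-rigid-edges} and \Cref{lem:assignment-indep-max-tree})'', which is precisely your strategy of performing each move and then re-choosing the maximal tree and rigid-cycle edges so that the resulting changes to $\Theta$ and $\epsilon$ are absorbed into the equivalences built into \Cref{defn:assignmentmap}. Your write-up is in fact more detailed than the paper's, including the honest flag that the sign bookkeeping for connections is the delicate part, which mirrors the compensation argument in \Cref{lem:assignment-indep-max-tree}.
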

\begin{proof}
    Immediate check with the definitions (using \Cref{lem:assignment-indep-rigid-edges} and \Cref{lem:assignment-indep-max-tree}).
\end{proof}

We now define a normal form for GBS graphs with no floating pieces. Fix a non-individual family $F_0\in V(\cF(Q))$. For every family $F\not=F_0$ we fix a minimal point $m_F$ in that family.

\begin{defn}[Normal form for GBSs with no floating pieces]\label{def:normal-form-no-floating}
    Suppose that $(\Gamma,\psi)$ has full-support gaps. We say that $(\Gamma,\psi)$ is in \textbf{normal form} {\rm(}with respect to the non-individual family $F_0$ and the minimal points $\{m_F\}_{F\not=F_0}${\rm)} if the following conditions hold:
    \begin{enumerate}
        \item For every family $F\not=F_0$ and for the minimal point $m_F$, take the unique edge $e$ with $\iota(e)=m_F$. We require that $\tau(e)$ is in $F_0$, and lies above all minimal points of $F_0$ with full-support gap.
        \item For every family $F$ and for every minimal point $m$ in $F$ {\rm(}except possibly $m_F${\rm)}, take the unique edge $e$ with $\iota(e)=m$. We require that $\tau(e)$ is in $F$. If $e$ is not a rigid cycle, we also require that $\tau(e)$ lies above all minimal points of $F$ with full-support gap.
    \end{enumerate}
\end{defn}

\begin{prop}\label{prop:existence-normal-form-no-floating}
    Suppose that $(\Gamma,\psi)$ has full-support gaps. Then, for every non-individual family $F_0$ and minimal points $\{m_F\}_{F\not=F_0}$, we can bring $(\Gamma,\psi)$ to normal form by a sequence of slides, swaps, and connections.
\end{prop}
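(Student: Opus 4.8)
The plan is to mirror, step by step, the proof of \Cref{prop:existence-normal-form-floating}, replacing the role played there by floating edges and isolating the rigid cycles as the one genuinely new obstruction. As before, cleanness together with the absence of floating pieces gives a bijection between minimal points and oriented edges, so for each minimal point $m$ we let $e_m$ be the unique edge with $\iota(e_m)=m$, and we call $e_m$ \emph{positive} if $\tau(e_m)$ lies above $m$ with full-support gap. By the full-support-gaps hypothesis and cleanness, $\tau(e_m)$ is never a minimal point, and if $e_m$ is not positive then $\tau(e_m)$ lies above a unique minimal point $m'\neq m$ with full-support gap; thus we may follow the chain $m\mapsto m'\mapsto m''\mapsto\cdots$ of non-positive edges exactly as in the floating case.

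First I would maximize the number of positive edges. Following such a chain either terminates at a positive edge---in which case, as in Step 1 of \Cref{prop:existence-normal-form-floating}, I slide $\tau(e_m)$ along the subsequent edges until it sits above the terminal minimal point with full support---or it closes up into a cycle on distinct minimal points. Here the dichotomy of \Cref{def:rigid-vector} enters: writing $\bw$ for the total gap around the cycle, if condition \ref{itm:rigidity} \emph{fails} then there are distinct minimal points $m,m'$ with $m+\bw\ge m'$, and sliding one edge of the cycle along the others breaks the cycle and strictly increases the number of positive edges; if condition \ref{itm:rigidity} \emph{holds} the cycle is a rigid cycle, which is an invariant and must be left untouched. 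Iterating, I reach a configuration in which every oriented edge is positive, lies on a rigid cycle, or is a non-positive edge whose chain terminates at a positive edge, with all such terminal gaps taken of full support.

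Next I would port Steps 2 and 3 of \Cref{prop:existence-normal-form-floating}. Passing to the graph of families $\cF(Q)$ and deleting the positive loops and the (preserved) rigid cycles, I show the remaining non-positive edges contain no embedded cycle---any such cycle would, by the previous paragraph, be either breakable or rigid---so they form a forest; connection moves then let me orient every non-positive edge toward the chosen basepoint family $F_0$ and push its terminal vertex high into $F_0$, above all minimal points of $F_0$ with full-support gap. Throughout this I must route the connections so as never to touch a rigid cycle; this is possible because every vertex of a rigid cycle lies above a \emph{unique} minimal point, so rigid edges never interact with slides or connections carried out elsewhere.

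The decisive difference from the floating case is the analogue of Step 4, where the positive (non-rigid, non-connecting) edges must be lengthened so that $\tau(e_m)$ dominates every full-support-gap minimal point of its own family---and now there is no floating edge to swap against. I would instead enlarge a positive edge $e_m\colon m\edge m+\bu$ (with $\supp{\bu}=\qcsupp{Q}$) by using another edge of the same family as the helper in the reverse slide of \Cref{reverse-slide}, repositioning helpers by the self-slide of \Cref{lem:self-slide} when necessary; the full-support condition $\supp{\bu}=\qcsupp{Q}$ is exactly what makes the control hypotheses of those lemmas hold, so $\bu$ can be increased while $\iota(e_m)=m$ stays fixed. Crucially, lengthening is required only in a family carrying two or more minimal points, and such a family automatically contains a second edge (lying entirely within the same copy of $\pA$, since it is positive or rigid) that serves as the helper without itself being altered, so the procedure never stalls. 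I still expect this step to be the main obstacle, since one must verify the control conditions of \Cref{reverse-slide} and \Cref{lem:self-slide} using only the edges sitting at minimal points, and confirm that enlarging one edge does not spoil the positivity or full-support gaps already arranged for the others. Finally, as in Step 5 of \Cref{prop:existence-normal-form-floating}, two connection moves per family transfer the non-positive connecting role onto the prescribed minimal point $m_F$; this yields a configuration satisfying both conditions of \Cref{def:normal-form-no-floating}, completing the proof.
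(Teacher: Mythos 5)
Your Steps 1--3 and 5 track the paper's argument, but the step you yourself flag as the main obstacle --- lengthening the positive edges in the absence of floating pieces --- fails as you describe it. You propose to enlarge $e_m\colon m\edge m+\bu$ via \Cref{reverse-slide} (repositioning helpers with \Cref{lem:self-slide}), taking as helper another edge of the same family based at a different minimal point $m''$. Both lemmas, however, require that $\ba,\bw$ \emph{controls} $\bb$, which by \Cref{def:control-a} means $\ba\le\bb\le\ba+k\bw$; with $\ba=m$ and $\bb=m''$ this forces $m''\ge m$, impossible for two distinct minimal points. Full support of $\bu$ only supplies the support half of the control condition, never the positivity half, so these lemmas can never be applied with helpers sitting at minimal points --- and in a clean configuration with no floating pieces, every other edge does sit at a minimal point. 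Nor can you first move a helper into position: slides, self-slides and swaps all require comparabilities among base points that minimality rules out. There is a second, related gap: non-rigidity of a vector $\bw$ only guarantees $m'+\bw\ge m''$ for \emph{some} pair of distinct minimal points, generally not for the point $m$ where $\bw$ currently sits; your procedure never relocates $\bw$, so it has no way to exploit the failure of rigidity at all.

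The missing idea is that connection moves are the only mechanism left for relocating vectors between minimal points, and the paper's proof of \Cref{prop:existence-normal-form-no-floating} is built on exactly this. Using the connecting edge $m_F\edge p$, whose endpoint $p$ lies above the minimal points of $F_0$ with full-support gap, the paper performs a chain of four connections that realizes a cyclic permutation of the loop vectors $\bx,\by,\bz$ sitting at $m_1,m_2\in F_0$ and $m\in F$. With this tool, a non-rigid loop $m\edge m+\bw$ is handled in three stages: transport $\bw$ by cyclic permutations to the particular minimal point $m'$ for which $m'+\bw\ge m''$; lengthen it there by ordinary slides over the edge at $m''$ (this is the only place the defining inequality of non-rigidity is used); then transport it back. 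Your proposal invokes connections only to orient the tree toward $F_0$ and in the final bookkeeping step, so the edge you try to lengthen stays stuck in place --- which is precisely the behaviour that, for genuinely rigid vectors, makes them isomorphism invariants.
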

\begin{proof}
    For every minimal point $m$, let $e_m$ be the unique edge such that $\iota(e_m)=m$. We say that $e_m$ is \textit{positive} if $\tau(e_m)$ is above $m$ with full-support gap.
    
    The first three steps in the proof of \Cref{prop:existence-normal-form-floating} apply here. Therefore, we can assume that every edge is positive, except for the edges $e_{m_F}$ for $F\not=F_0$, for which $\tau(e_{m_F})$ lies in $F_0$ and above all minimal points of $F_0$ with full-support gap.

    Let $F\not=F_0$ be a family containing at least two minimal points $m,m_F$, and let $m_1,m_2$ be two minimal points in $F_0$. Then we have edges $m_1\edge m_1+\bx$ and $m_2\edge m_2+\by$ and $m\edge m+\bz$ and $m_F\edge p$ with $p$ above both $m_1,m_2$. Then we can perform connections to change
    \[
        \begin{cases}
            m_1\edge m_1+\bx\\
            m_2\edge m_2+\by\\
            m\edge m+\bz\\
            m_F\edge p
        \end{cases}
        \quad \text{into} \quad
        \begin{cases}
            m_1\edge q\\
            m_2\edge m_2+\by\\
            m\edge m+\bz\\
            m_F\edge m_F+\bx
        \end{cases}
        \quad \text{into} \quad
        \begin{cases}
            m_1\edge m_1+\bz\\
            m_2\edge m_2+\by\\
            m\edge p'\\
            m_F\edge m_F+\bx
        \end{cases}
    \]
    \[
        \quad \text{into} \quad
        \begin{cases}
            m_1\edge m_1+\bz\\
            m_2\edge q'\\
            m\edge m+\by\\
            m_F\edge m_F+\bx
        \end{cases}
        \quad \text{into} \quad
        \begin{cases}
            m_1\edge m_1+\bz\\
            m_2\edge m_2+\bx\\
            m\edge m+\by\\
            m_F\edge p''
        \end{cases}
    \]
    ultimately getting a cyclic permutation of $\bx,\by,\bz$ (here $p,p',p''$ lie in $F_0$ above $m_1,m_2$, while $q,q'$ lie in $F$ above $m,m_F$).

    Now suppose that some positive edge $e_m$ given by $m\edge m+\bw$ is not a rigid edge, but $m+\bw$ does not lie above all minimal points in its family. Then we can find two minimal points $m',m''$ such that $m'+\bw\ge m''$. Using cyclic permutations as explained above, we can bring $\bw$ to $F_0$, and then to $m'$, so that we end up with an edge $m'\edge m'+\bw$. Using that $m'+\bw\ge m''$, we can perform slides and make all components of $\bw$ in $\qcsupp{Q}$ very big. Finally, using other cyclic permutations, we bring $\bw$ back to $m$.

    By reiterating this procedure, we eventually obtain a GBS graph in normal form, as desired.
\end{proof}

\begin{prop}\label{prop:uniqueness-normal-form-no-floating}
    Suppose that $(\Gamma,\psi),(\Delta,\phi)$ are in normal form. Suppose that the following conditions hold:
    \begin{enumerate}
        \item $(\Gamma,\psi),(\Delta,\phi)$ have the same linear invariants.
        \item $(\Gamma,\psi),(\Delta,\phi)$ have the same multi-set of rigid edges and the same assignment map.
        \item Either $F_0$ contains at least three minimal points, or there is at least another non-individual family besides $F_0$.
    \end{enumerate}
    Then there is a sequence of edge sign-changes, slides, swaps, and connections going from one to the other.
\end{prop}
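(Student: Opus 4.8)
The plan is to follow the blueprint of the floating-pieces case, Proposition \ref{prop:uniqueness-normal-form-floating}, but now the tuple of non-rigid vectors exactly generates the relevant group, so instead of exploiting an abundance of generators I must track the extra data recorded by the assignment map — the multi-set of rigid vectors and the parity $(-1)^\epsilon$. Fix the star maximal tree of $\cF(Q)$ centred at $F_0$. In normal form the oriented edges split into three types: the tree edges $m_F \edge c_F$ with $c_F \in F_0$, one for each family $F \neq F_0$; the rigid edges, assembled into rigid cycles inside single families; and the long edges $p \edge p+\bx$ lying within a single family, with $\supp{\bx}=\qcsupp{Q}$. With this star tree the rigid and long edges are automatically off-tree, each tree edge points towards $F_0$, so $\epsilon = 0$ and the assignment map of a normal-form graph is just the class of $\Theta$ up to Nielsen equivalence of the long vectors and up to even permutation of $\cM$.

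First I would reduce to the case in which the two maps $\Theta$ coincide on the nose. Since $(\Gamma,\psi),(\Delta,\phi)$ have the same assignment map and both have $\epsilon=0$, their maps $\Theta$ differ by an even permutation of $\cM$ together with a Nielsen transformation of the long vectors $\bx_1,\dots,\bx_\ell$ in $\cla{Q}/\gen{\bR}$. Both operations are realisable by moves while staying in normal form. The even permutation is obtained from the $3$-cycles produced by the connection moves of Proposition \ref{prop:existence-normal-form-no-floating}; these $3$-cycles are in fact the only permutation moves available — a naive swap of two long edges based at distinct minimal points is obstructed by the nesting condition of the swap move — so only even permutations can be realised, which is exactly the content of the parity invariant. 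The Nielsen transformation is realised by the machinery of \Cref{sec:Nielsen-equiv-big}. Hypothesis (3) is used precisely here: it guarantees enough room to cyclically permute and to apply that machinery, either when $F_0$ carries at least three minimal points or when a second non-individual family supplies the two-minimal-point configurations of the existence proof, so that we avoid the exceptional two-edge situation in which \Cref{sec:Nielsen-equiv-big}, needing three interacting vectors, fails to apply. When lifting a Nielsen move from the quotient $\cla{Q}/\gen{\bR}$ to $\cla{Q}$, adding a rigid vector to a long vector corresponds to sliding the long edge once around the relevant rigid cycle, so quotient-level moves lift to honest moves on $\Lambda$.

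Once $\Theta$ is matched, I would identify the three families of edges in turn. The rigid edges carry the same multi-set $\bR$ and, under the now-identical $\Theta$, sit at corresponding minimal points; since a rigid cycle cannot have its vector altered, these agree automatically. The tree edges $m_F \edge c_F$ are pinned down by the family value of $\Theta$, and their terminal points $c_F$ are brought into agreement by sliding along the long edges of $F_0$ — using that in normal form $c_F$ lies above all full-support-gap minimal points of $F_0$ — together with edge sign-changes reconciling the $\bbZ/2\bbZ$ components. Finally, the long edges now have identical initial vectors $\bx_i$, which by \Cref{prop:cla-and-modular-homomorphism} generate $\cla{Q}$, and the positions of their non-minimal endpoints are matched by slides and self-slides (\Cref{lem:self-slide}) exactly as in Proposition \ref{prop:uniqueness-normal-form-floating}, using that both graphs have the same number of edges in each conjugacy class.

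The main obstacle is the second step: realising the Nielsen equivalence of the long vectors by actual moves, and simultaneously verifying that the available moves are precisely those preserving the assignment map. Concretely, one must confirm that only even permutations of $\cM$ can be realised while staying in normal form, so that two configurations with equal assignment maps are genuinely move-equivalent whereas an odd permutation — which would change the class — is never forced; this amounts to re-running the bookkeeping behind \Cref{lem:assignment-indep-rigid-edges} and \Cref{lem:assignment-indep-max-tree} along the sequence of moves. Granting this, the remainder is a routine matching argument parallel to the floating-pieces case.
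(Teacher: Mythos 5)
Your overall skeleton does match the paper's proof: match the minimal points up to edge sign-changes, use connections to realize cyclic (hence even) permutations and to park the rigid cycles at the correct minimal points, realize the Nielsen equivalence of the long vectors by slides (with additions of rigid vectors lifted to slides around rigid cycles), and fix the tree edges $m_F\edge c_F$ last. However, there is a genuine gap at the step you yourself single out as the crux. You claim that hypothesis (3) lets you ``avoid the exceptional two-edge situation in which \Cref{sec:Nielsen-equiv-big}, needing three interacting vectors, fails to apply.'' This is false once rigid edges are present. Hypothesis (3) only guarantees at least three \emph{internal} edges (edges whose terminal vertex stays in the same family); it does not guarantee three \emph{non-rigid} ones, and rigid vectors cannot be counted among the ``interacting vectors'' of Theorem \ref{Nielsen-equiv-3big}, since that theorem requires $k\ge 3$ vectors in the tuple being transformed, the relative vectors not counting towards $k$. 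Concretely: if $F_0$ has three minimal points and one of them hosts a rigid loop $m\edge m+\br$, only two long vectors remain, and with two rigid loops you are down to a single long vector. In these configurations your invocation of the $k\ge3$ machinery simply does not apply, yet the Nielsen equivalence in $\cla{Q}/\gen{\bR}$ still has to be realized by moves keeping all components large.

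The paper's proof makes exactly the case split you are missing: if there is at least one rigid edge, it invokes Proposition \ref{positive-vector} instead of Theorem \ref{Nielsen-equiv-3big}. The point is that under the full-support-gaps hypothesis (which is built into the definition of normal form) together with cleanness, every nonzero gap $\bw_i$ in a rigid cycle has full support, hence the rigid vector $\br$ has $\supp{\br}=\qcsupp{Q}$; it therefore serves as the strictly positive vector in the span of the relative vectors that Proposition \ref{positive-vector} requires, and that proposition works for tuples of any length $k\ge1$. Only in the rigid-free case is hypothesis (3) needed, precisely to produce three long vectors so that Theorem \ref{Nielsen-equiv-3big} applies. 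Your quotient picture and the lifting of $\gen{\bR}$-translations to slides around rigid cycles are compatible with this fix, so the repair is local, but as written your argument fails on all normal forms containing a rigid cycle. A secondary point: the ``main obstacle'' you describe --- verifying that \emph{only} even permutations of $\cM$ can be realized --- is the invariance direction, which is already settled by the lemma stating that the assignment map is invariant under edge sign-changes, slides, and connections; for the present proposition only the realization direction is needed, so no such verification has to be re-run along the sequence of moves.
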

\begin{proof}
    Up to edge sign-changes, we can assume that the minimal points are exactly the same.
    
    As in the proof of \Cref{prop:existence-normal-form-no-floating}, we can perform cyclic permutations on the vectors at the edges, using connections. In particular, we can assume that all rigid edges are at the same minimal points in $(\Delta,\phi)$ as in $(\Gamma,\psi)$.

    If we have a (possibly rigid) edge $m\edge m+\bx$ and a non-rigid edge $m'\edge m'+\by$, then with a sequence of slides we can change $\by$ into $\by\pm\bx$, provided that all components in $\qcsupp{Q}$ of $\by$ remain large enough. Now we deal with two cases: if there is at least one rigid edge, then we want to use \Cref{positive-vector}; if there is no rigid edge, then by hypothesis there are at least $3$ positive vectors, and we can use \Cref{Nielsen-equiv-3big}. In both cases, since $(\Gamma,\psi),(\Delta,\phi)$ have the same linear invariant, we can obtain that the values of $\bx$ are the same in $(\Delta,\phi)$ as in $(\Gamma,\psi)$.

    Finally, for $F\not=F_0$ and for the edge $m_F\edge c_F$, we can change $c_F$ to any other point in the same conjugacy class, by means of slide moves (and connections, if we would need to add/subtract some vector which lies in $F$). In particular, we can set also these to be the same in $(\Delta,\phi)$ as in $(\Gamma,\psi)$. The conclusion follows.
\end{proof}

\subsection{The algorithm}

\begin{thm}\label{thm:isomorphism-oneqcc-fsgaps}
    There is an algorithm that, given two GBS graph $(\Gamma,\psi),(\Delta,\phi)$ with one qc-class and full-support gaps, decides whether there is a sequence of edge sign-changes, slides, swaps, and connections going from one to the other. In case such a sequence exists, the algorithm also computes one such sequence.
\end{thm}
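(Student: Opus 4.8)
The plan is to assemble the decision procedure from the reductions and normal-form results developed throughout \Cref{sec:one-qc-class-full-supp-gaps}, treating the degenerate, floating, and non-floating regimes in turn. First I would compute, for both input graphs, the full list of linear invariants (\Cref{def:linear-invariants}): the set of primes, the data $\qcmin{Q},\qcsupp{Q},\cla{Q}$ of the unique quasi-conjugacy class $Q$, and the number of edges in each conjugacy class. These are computable by \Cref{prop:qc-algorithm} and \Cref{prop:c-algorithm}, and since the graphs have one qc-class by hypothesis (a property checkable via \Cref{prop:qc-algorithm}) the data are meaningful. By \Cref{cor:basic-invariants}, agreement of the linear invariants is a necessary condition for the existence of a sequence of moves; since an edge sign-change merely shifts an edge between a conjugacy class $C$ and $C+\be$, this comparison is carried out up to the (finite) action of edge sign-changes. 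If the invariants disagree, the algorithm outputs that no sequence exists.

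Next I would dispose of the degenerate case $\qcsupp{Q}=\emptyset$, where $Q$ is finite and \Cref{cor:qcclasses-empty-support} leaves only finitely many configurations to enumerate. Assuming $\qcsupp{Q}\neq\emptyset$, I would apply the clean projection of \Cref{sec:projection-clean} followed by the polishing procedure of \Cref{sec:projection-polished} to both graphs. By Lemmas \ref{lem:clean-projection-1}, \ref{lem:clean-projection-2}, \ref{lem:polished-projection-1} and \ref{lem:polished-projection-2} these reductions are algorithmic and lose no generality, so I may assume both graphs are clean and $F$-polished for every individual family $F$, and that every sequence of moves stays within this class. At this point the number of edges equals the number of minimal points plus the number of floating edges, so comparing these two counts determines the regime (floating pieces versus none); this regime necessarily agrees for the two graphs once their linear invariants agree.

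In the floating case I would invoke \Cref{prop:existence-normal-form-floating} to bring each graph (which has full-support gaps by hypothesis) into normal form, and then \Cref{prop:uniqueness-normal-form-floating}: two such normal forms are related by edge sign-changes, slides, swaps and connections precisely when they share $\qcmin{Q},\qcsupp{Q},\cla{Q}$ and the edge counts in each conjugacy class. The decision thus reduces to the comparison of invariants already performed. In the non-floating case I would additionally compute the multi-set of rigid vectors (\Cref{def:multi-set-rigid-vectors}) and the assignment map (\Cref{defn:assignmentmap}), both invariant under the allowed moves, bring each graph to the normal form of \Cref{def:normal-form-no-floating} via \Cref{prop:existence-normal-form-no-floating}, and apply \Cref{prop:uniqueness-normal-form-no-floating} whenever its hypothesis~(3) holds, again reducing the problem to the comparison of a finite list of computable invariants. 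Since every reduction and every normal-form procedure is constructive, the algorithm can record and concatenate the moves it performs, thereby producing an explicit witnessing sequence whenever the answer is affirmative.

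The main obstacle is the exceptional configuration excluded by hypothesis~(3) of \Cref{prop:uniqueness-normal-form-no-floating}: when $F_0$ carries at most two minimal points and is the only non-individual family, so that too few non-rigid vectors interact for the Nielsen-equivalence machinery of \Cref{sec:Nielsen-equiv-big} to apply, the normal-form uniqueness argument breaks down. In this last case one must fall back on the finer \emph{limit-angle} invariant, whose intricate dynamical analysis is carried out separately in \cite{ACK-iso3}; I would invoke that analysis to complete the decision procedure and thereby close the remaining gap.
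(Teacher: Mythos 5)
Your proposal is correct and follows essentially the same route as the paper: check the linear invariants, reduce to clean and polished graphs, split into the floating and non-floating regimes with their respective normal forms, and defer the exceptional case (where hypothesis (3) of \Cref{prop:uniqueness-normal-form-no-floating} fails) to the limit-angle analysis of \cite{ACK-iso3}. The only difference is one of explicitness: the paper, before citing \cite{ACK-iso3}, first strips off the uniquely determined edges $m_F\edge c_F$ for $F\not=F_0$ to reduce to one-vertex GBS graphs with at most two edges, a step your sketch leaves implicit.
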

\begin{proof}
    First of all, we check that the conjugacy class $Q$ has the same $\qcmin{Q},\qcsupp{Q},\cla{Q}$ and the same number of edges in each conjugacy class (up to edge sign-changes), otherwise by \Cref{cor:basic-invariants} we can not change $(\Gamma,\psi)$ into $(\Delta,\phi)$.

    If $(\Gamma,\psi)$ has floating pieces, then $(\Delta,\phi)$ must have floating pieces too. We make both of them clean and we compute the linear invariant: this must be the same, otherwise there is no sequence of moves going from one to the other. If they have the same linear invariant, then we bring both of them in normal form as in \Cref{prop:existence-normal-form-floating}, and then we can go from one to the other by \Cref{prop:uniqueness-normal-form-floating}.

    If $(\Gamma,\psi)$ has no floating pieces, but it has a family with at least three minimal points, or at least two families with at least two minimal points each. Then $(\Delta,\phi)$ must have the same property (as it has the same families and minimal points). We make both of them clean and we compute the multi-set of rigid edges and the assignment map: these must be the same, otherwise there is no sequence of moves going from one to the other. If they have the same multi-set of rigid edges and the same assignment map, then we bring both of them in normal form as in \Cref{prop:existence-normal-form-no-floating}, and then we can go from one to the other by \Cref{prop:uniqueness-normal-form-no-floating}.

    Finally, suppose that $(\Gamma,\psi)$ has no floating pieces, and all families of $(\Gamma,\psi)$ are individual, except at most one with at most two minimal points; we call $F_0$ such family. Note that $(\Delta,\phi)$ must have the same property (as it has the same families and minimal points). In this case, we make both $(\Gamma,\psi)$ and $(\Delta,\phi)$ clean and $F$-polished for each family $F\not=F_0$, as in \Cref{sec:projection-polished}. From now on, we only deal with sequences of moves involving GBS graphs which are $F$-polished for each $F\not=F_0$.

    For every family $F\not=F_0$, we take the unique minimal point $m_F$ in that family, and the unique edge $m_F\edge c_F$ with $c_F\in F_0$, and we observe that $c_F$ can be changed to any other point in the same conjugacy class, using slide moves only; in particular, in any moment, we can arrange them to be the same in $(\Gamma,\psi)$ as in $(\Delta,\phi)$. Let $(\Gamma',\psi'),(\Delta',\phi')$ be the GBS graphs obtained from $(\Gamma,\psi),(\Delta,\phi)$ respectively, by removing all edges with vertices outside $F_0$. Then there is a sequence of moves going from $(\Gamma,\psi)$ to $(\Delta,\phi)$ if and only if there is a sequence of moves going from $(\Gamma',\psi')$ to $(\Delta',\phi')$. But $(\Gamma',\psi'),(\Delta',\phi')$ are GBS graphs with one vertex and at most two edges each. The conclusion thus follows from \cite{ACK-iso3}.
\end{proof}

\begin{cor}\label{cor:isomorphism-oneqcc-fsgaps}
    There is an algorithm that, given two GBS graph $(\Gamma,\psi),(\Delta,\phi)$ with one qc-class and full-support gaps, decides whether the corresponding GBS groups are isomorphic or not. In case they are, the algorithm also computes a sequence of sign-changes, inductions, slides, swaps, and connections going from $(\Gamma,\psi)$ to $(\Delta,\phi)$.
\end{cor}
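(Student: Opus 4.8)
The plan is to deduce \Cref{cor:isomorphism-oneqcc-fsgaps} from \Cref{thm:isomorphism-oneqcc-fsgaps} by reducing the genuine isomorphism question to the purely combinatorial question of whether two GBS graphs are related by a sequence of \emph{edge} sign-changes, slides, swaps, and connections. The bridge between these two formulations is \Cref{thm:sequence-new-moves}: two GBS groups are isomorphic if and only if their (algorithmically computable) totally reduced representatives are related by a sequence of slides, swaps, connections, sign-changes, and inductions, with all sign-changes and inductions pushed to the beginning of the sequence. So the first thing I would do is, given the input $(\Gamma,\psi),(\Delta,\phi)$, algorithmically replace each by a totally reduced GBS graph; if the vertex counts $\abs{V(\Gamma)}$ and $\abs{V(\Delta)}$ differ, the groups are non-isomorphic and we stop.

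Next I would dispose of the non-edge moves that \Cref{thm:isomorphism-oneqcc-fsgaps} does not handle, namely vertex sign-changes and inductions. Since in a one-qc-class GBS graph we may assume a single vertex (as recalled in the discussion after \Cref{quest:main}, \cite{ACK-iso1} reduces \Cref{quest:main} to one-vertex graphs with positive labels $\neq 1$), a vertex sign-change amounts to choosing a generator for the vertex group and can be enumerated in finitely many ways; similarly the bijection $b:V(\Gamma')\to V(\Gamma)$ is trivial in the one-vertex setting. The subtle point is induction: as noted in the text, there can in principle be infinitely many inductions to guess. Here I would invoke the preservation statements already established: inductions preserve the invariants of \Cref{itm:inv-1,itm:inv-2} of \Cref{def:linear-invariants}, and one must check that for GBS graphs with one qc-class and full-support gaps, the relevant inductions either do not apply or can be reduced to finitely many cases (this matches the pattern of Examples \ref{ex1part2}, \ref{ex2part2}, \ref{ex3}, \ref{ex4}, where each concludes by observing ``induction does not apply''). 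For each of the finitely many resulting configurations after applying a candidate sign-change/induction, the problem reduces to a pure edge-move question.

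Having stripped off the vertex sign-changes and inductions, the core is precisely the decision problem solved by \Cref{thm:isomorphism-oneqcc-fsgaps}: for the two normalized one-vertex graphs with full-support gaps, decide whether there is a sequence of edge sign-changes, slides, swaps, and connections from one to the other, and if so produce it. Running that algorithm for each of the finitely many sign-change/induction choices and taking a logical OR decides the original isomorphism question. For the constructive part, \Cref{thm:isomorphism-oneqcc-fsgaps} already returns an explicit sequence of edge moves; prepending the chosen (finitely many) sign-changes and inductions at the beginning, as permitted by the ``moreover'' clause of \Cref{thm:sequence-new-moves}, yields the full sequence of sign-changes, inductions, slides, swaps, and connections from $(\Gamma,\psi)$ to $(\Delta,\phi)$ asserted in the corollary.

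The main obstacle I anticipate is the induction step: unlike the worked examples, the general statement must guarantee that the full-support-gaps and one-qc-class hypotheses bound the inductions to finitely many effective possibilities, or else show that the only inductions needed are those converting between totally reduced representatives and are already subsumed. I would handle this by a careful divisibility analysis of the induction move (an edge with $\psi(\ol e)=1$, $\psi(e)=n$, and $\ell \mid n^k$) against the rigidity imposed by clean, $F$-polished normal forms, arguing that any genuinely new induction would alter $\qcmin{Q}$ or $\qcsupp{Q}$ and hence is detected by the linear invariants, leaving only finitely many admissible choices. Everything else — totally reducing the inputs, enumerating sign-changes, invoking \Cref{thm:isomorphism-oneqcc-fsgaps}, and reassembling the sequence — is bookkeeping built directly on results already proved in the excerpt.
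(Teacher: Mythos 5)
Your overall skeleton matches the paper's proof: invoke \Cref{thm:isomorphism-oneqcc-fsgaps} for the edge-move question, and observe that vertex sign-changes can be guessed in finitely many ways. The gap is in your treatment of induction, which is exactly the one step the corollary adds on top of the theorem. Your proposed mechanism --- ``any genuinely new induction would alter $\qcmin{Q}$ or $\qcsupp{Q}$ and hence is detected by the linear invariants, leaving only finitely many admissible choices'' --- contradicts what the paper itself establishes right after \Cref{def:linear-invariants}: inductions \emph{preserve} the invariants of \Cref{itm:inv-1,itm:inv-2} (in particular $\qcmin{Q}$, $\qcsupp{Q}$, $\cla{Q}$ and the edge counts per quasi-conjugacy class), and only change the conjugacy-class data of \Cref{itm:inv-3}, and even that just by a translation inside $\pA_v$. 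So inductions are invisible to the invariant you propose to detect them with, and no finiteness follows; indeed the paper explicitly warns (before \Cref{quest:main}) that there can be infinitely many inductions to consider, so a finite enumeration is not available.

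The paper resolves this differently: if a totally reduced GBS graph with one qc-class admits an induction at all, then it must have a single vertex, and a sequence of moves turns it into a \emph{controlled} GBS graph in the sense of \cite{ACK-iso1}; the isomorphism problem for that class was already solved there, so the entire induction-applicable case is delegated wholesale to \cite{ACK-iso1} rather than reduced to finitely many candidate inductions fed into \Cref{thm:isomorphism-oneqcc-fsgaps}. Without either that delegation or an actual proof of your finiteness claim (which, as noted, cannot be extracted from the linear invariants), your argument does not go through in the case where induction is possible.
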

\begin{proof}
    By \Cref{thm:isomorphism-oneqcc-fsgaps}, and since vertex sign-changes can be guessed in finitely many ways, we only have to deal with induction.

    But if $(\Gamma,\psi)$ allows for induction, is totally reduced, and has one qc-class, then it must have just one vertex, and using a sequence of moves we can make it into a controlled GBS graph. In this case, the statement follows from \cite{ACK-iso1}.
\end{proof}

\subsection{Examples}

As usual, in the examples we use $\bA=\bbZ^{\cP(\Gamma,\psi)}$, omitting the $\bbZ/2\bbZ$ summand.

\begin{ex}\label{ex10}
    In \Cref{fig:example10} we can see two GBS graphs and the corresponding affine representations. Note that they have the same linear invariants (i.e. same conjugacy classes and same number of edges in each conjugacy class).
    
    In this case, the invariant distinguishing them is the Nielsen equivalence class of the pair of vectors. The affine representations of the two GBS graphs are
    $$
        \begin{cases}
            \bm_1\edge\bm_1+(2,1)\\
            \bm_2\edge\bm_2+(1,2)
        \end{cases}
        \qquad\text{and}\qquad
        \begin{cases}
            \bm_1\edge\bm_1+(1,2)\\
            \bm_2\edge\bm_2+(2,1)
        \end{cases}
    $$
    respectively, where $\bm_1=(1,0)$ and $\bm_2=(0,1)$, and we put the two vectors as columns in the two matrices
    $$
        \begin{pmatrix}
            2&1\\
            1&2
        \end{pmatrix}
        \qquad\text{and}\qquad
        \begin{pmatrix}
            1&2\\
            2&1
        \end{pmatrix}
    $$
    respectively. The determinants of the matrix tells us the Nielsen equivalence class of the pair of vectors, and is thus an isomorphism invariant. The first matrix has determinant $+3$, while the second has determinant $-3$. Therefore the two groups are not isomorphic.
\end{ex}

\begin{figure}[H]
\centering
\includegraphics[width=0.8\textwidth]{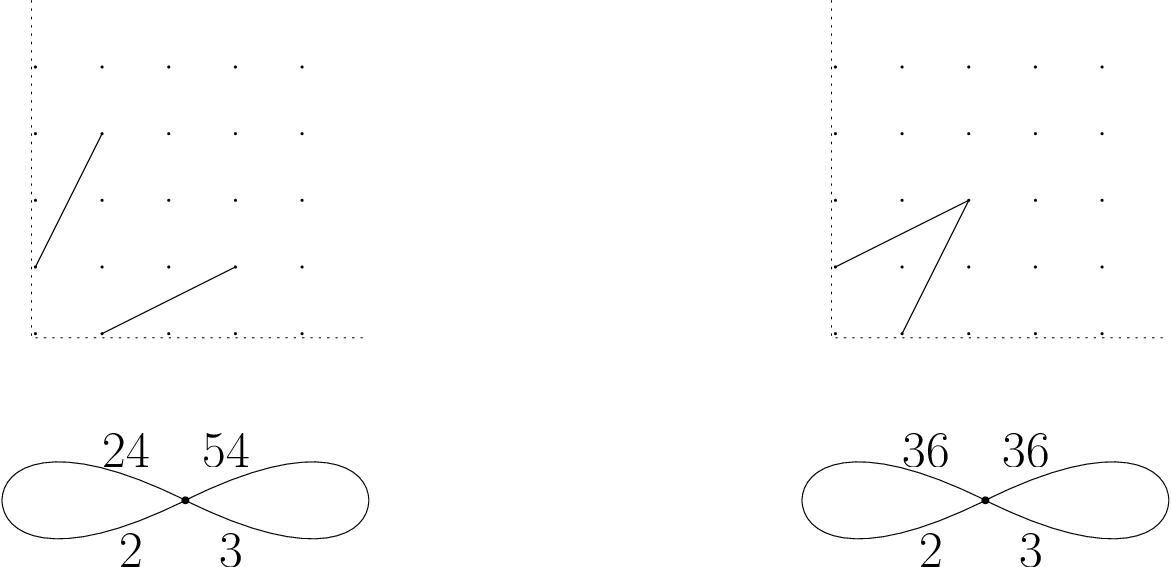}
\centering
\caption{Two non-isomorphic GBS graphs and the corresponding affine representations. The invariant distinguishing them is the Nielsen equivalence class of the couple of vectors: the determinant of the change of basis from one to the other is $-1$.}
\label{fig:example10}
\end{figure}

\begin{ex}\label{ex5}
    Consider the three GBS graphs $(\Gamma_1,\psi_1),(\Gamma_2,\psi_2),(\Gamma_3,\psi_3)$ as in Figure \ref{fig:example5-GBSgraphs}, with affine representations $\Lambda_1,\Lambda_2,\Lambda_3$ respectively. It is easy to see that the three GBS graphs have the same linear invariants.
    
    There is a quasi-conjugacy class $P$ satisfying: $\qcmin{P}=\{(3,0,0)\}$, $\qcsupp{P}=\{2\}$ and $\cla{P}=\gen{(7,0,0)}$; this quasi-conjugacy class contains one edge $(3,0,0)\edge(3,0,0)+\bv$, with $\bv=(7,0,0)$, that can not be changed along any sequence of slides, swaps, and connections (see \Cref{cor:only-one-edge}).

    There is a quasi-conjugacy class $Q$ satisfying: $\qcmin{Q}=\{(2,1,0),(1,2,0),(0,3,0)\}$, $\qcsupp{Q}=\{2,3,5\}$ and $\cla{Q}=\gen{(1,0,0),(0,1,0),(0,0,1)}$. The three minimal points are also the three minimal regions $M_1=\{(2,1,0)\}$ and $M_2=\{(1,2,0)\}$ and $M_3=\{(0,3,0)\}$. This quasi-conjugacy class contains three edges, $e_1$ given by $(2,1,0)\edge(2,1,0)+\bx_1$, and $e_2$ given by $(1,2,0)\edge(1,2,0)+\bx_2$, and $e_3$ given by $(0,3,0)\edge(0,3,0)+\bx_3$, for some $\bx_1,\bx_2,\bx_3\in\bA$ which are different in the three examples.

    The question is: which of these GBS graphs represent groups isomorphic to each other? We can use \Cref{Nielsen-equiv-3big} to rearrange the three elements $\bx_1,\bx_2,\bx_3$ as we want (and relative to $\bv$), provided that we check two conditions. The first condition, is that the components of $\bx_1,\bx_2,\bx_3$ must remain big enough. The second condition is that we must preserve the Nielsen equivalence class of the (ordered) triple $[\bx_1],[\bx_2],[\bx_3]$ in the abelian group
    \[
        \frac{\gen{\bx_1,\bx_2,\bx_3,\bv}}{\gen{\bv}}=\frac{\gen{(1,0,0),(0,1,0),(0,0,1)}}{\gen{(7,0,0)}}\cong\bbZ^2\oplus\bbZ/7\bbZ.
    \]
    By Proposition \ref{Nielsen-equiv}, we need to look at the determinant of the corresponding matrix modulo $7$. The three matrices for $(\Gamma_1,\psi_1),(\Gamma_2,\psi_2),(\Gamma_3,\psi_3)$ are given by
    \[
    \begin{pmatrix}
        3&2&2\\
        7&5&4\\
        6&4&5
    \end{pmatrix}
    \qquad\text{and}\qquad
    \begin{pmatrix}
        6&4&4\\
        7&5&4\\
        6&4&5
    \end{pmatrix}
    \qquad\text{and}\qquad
    \begin{pmatrix}
        9&6&6\\
        7&5&4\\
        6&4&5
    \end{pmatrix}
    \]
    respectively, and have determinants congruent to $1,2,3$ modulo $7$ respectively. So \Cref{Nielsen-equiv-3big} does not allow us to jump from one to the other.

    And in fact the three groups are pairwise non-isomorphic. Suppose that we start from one of $(\Gamma_1,\psi_1),(\Gamma_2,\psi_2),(\Gamma_3,\psi_3)$ and we perform a sequence of slides, swaps, and connections. By Lemma \ref{lem:escape-minimal-regions}, in each moment there must be at least one endpoint of some edge at $(2,1,0)$, one at $(1,2,0)$, and one at $(0,3,0)$. 
    We would like to deduce that the configuration can be written uniquely as
    \[
        \begin{cases}
            (2,1,0)\edge(2,1,0)+\bx_1\\
            (1,2,0)\edge(1,2,0)+\bx_2\\
            (0,3,0)\edge(0,3,0)+\bx_3
        \end{cases}
    \]
    for some $\bx_1,\bx_2,\bx_3\in\bA$. This is true, even though one has to be careful in to the degenerate cases where some edge connects two minimal regions. To be precise, there must be some edge with a vertex in $\{(2,1,0),(1,2,0),(0,3,0)\}$ and the other vertex outside; without loss of generality, assume it has an vertex in $(0,3,0)$. Then we call such edge $(0,3,0)\edge(0,3,0)+\bx_3$, uniquely determining $\bx_3$. Now there must be some edge with one vertex in $\{(2,1,0),(1,2,0)\}$ and the other vertex outside; without loss of generality, assume it has a vertex in $(1,2,0)$. Then we call such edge $(1,2,0)\edge(1,2,0)+\bx_2$, uniquely determining $\bx_2$. Finally, with a similar reasoning we uniquely determine $\bx_1$.

    Now we write the three vectors $\bx_1,\bx_2,\bx_3$ as columns of a $3\times 3$ matrix, and we compute the determinant modulo $7$. This is invariant by external slides (since the determinant is taken modulo $7$), by internal slides (which correspond to column operations on the matrix), by connections (which correspond to exchanging two columns, changing the sign of one column, and then performing a few column operation), and swaps can never be applied. It follows that the determinant modulo $7$ must be the same along any sequence of slides, swaps, and connections. Since in the three graphs $(\Gamma_1,\psi_1),(\Gamma_2,\psi_2),(\Gamma_3,\psi_3)$ the determinant has different values, there is no sequence of slides, swaps, and connections going from one of them to another. Finally, one can easily check that sign-changes can not help finding an isomorphism in this case. Induction does not apply to these examples.

    So these are three GBS graphs which exhibit a very similar behavior - same conjugacy classes, same number of edges in each conjugacy class - but are not isomorphic; and the isomorphism problem in this case is decided by a finer invariant, which is this determinant modulo $7$.
    
    In this specific case, if the determinant modulo $7$ would have been the same, then we would have immediately been able to provide an isomorphism using \Cref{Nielsen-equiv-3big}. In general, one also has to be careful about rigid vectors.
\end{ex}

\begin{figure}[H]
\centering
\includegraphics[width=\textwidth]{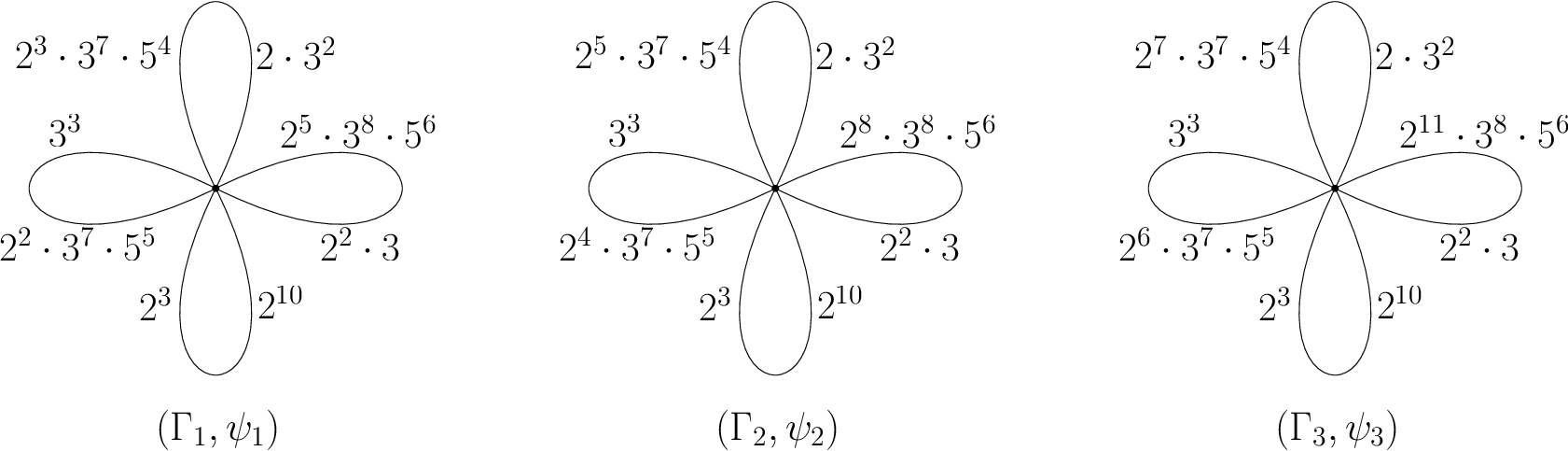}
\centering
\caption{Three pairwise non-isomorphic GBS groups, as in \Cref{ex5}.}
\label{fig:example5-GBSgraphs}
\end{figure}

\begin{ex}\label{ex8}
    Consider a GBS graph $(\Gamma,\psi)$ with affine representation $\Lambda$ as in \Cref{fig:example8-initial-config}. This means that $\Gamma$ has three vertices $v_1,v_2,v_3$ and set of primes $\cP(\Gamma,\psi)$ containing two prime numbers. The affine representation $\Lambda$ consists of three copies $\pA_{v_1},\pA_{v_2},\pA_{v_3}$ of $\bbN^2$, and of edges
    $$\begin{cases}
        (v_1,(11,0))\edge(v_1,(21,8))\\
        (v_1,(0,11))\edge(v_1,(14,9))\\
        (v_2,(10,1))\edge(v_3,(4,10))\\
        (v_2,(0,12))\edge(v_1,(15,15))\\
        (v_3,(2,6))\edge(v_2,(15,4))
    \end{cases}$$
    
    First, we compute the linear invariants of $(\Gamma,\psi)$. It is easy to check that all the edges belong to a common quasi-conjugacy class $Q$, with five minimal points $\bm_1=(v_1,(11,0))$ and $\bm_2=(v_1,(0,11))$ and $\bm_3=(v_2,(10,1))$ and $\bm_4=(v_2,(0,12))$ and $\bm_5=(v_3,(2,6))$. A computation shows that $\qcsupp{Q}=\cP(\Gamma,\psi)$ and $\cla{Q}=\gen{(1,0),(0,1)}$, and in particular $Q$ is also a conjugacy class. With this notation, $\Lambda$ can be rewritten as
    $$\begin{cases}
        \bm_1\edge\bm_1+(10,8)\\
        \bm_2\edge\bm_2+(14,9)\\
        \bm_3\edge\bm_5+(2,5)\\
        \bm_4\edge\bm_2+(15,4)\\
        \bm_5\edge\bm_3+(4,3)
    \end{cases}$$
    
    The graph of families $\cF(Q)$ coincides with $\Gamma$ in this case. We note that $(\Gamma,\psi)$ is already clean, and has no floating pieces, so we can assign an orientation to each edge, in such a way that edges are always oriented going out of minimal points. We can see that $(\Gamma,\psi)$ has a unique rigid cycle, given by the edges $\bm_5\edge\bm_3+(4,3)$ and $\bm_3\edge\bm_5+(2,5)$, with corresponding rigid vector $\br=(7,7)$.

    We now bring $(\Gamma,\psi)$ to normal form. First, we maximize the number of positive edges, by performing a slide and a connection, to get the configuration
    $$\begin{cases}
        \bm_1\edge\bm_1+(10,8)\\
        \bm_2\edge\bm_4+(13,14)\\
        \bm_3\edge\bm_3+(7,7)\\
        \bm_4\edge\bm_4+(14,9)\\
        \bm_5\edge\bm_3+(4,3)
    \end{cases}$$
    as in \Cref{fig:example8-not-normal-form}. This is already near to being a normal form, as all vectors are positive, except the two going from $\pA_{v_1},\pA_{v_3}$ to $\pA_{v_2}$. However, the two positive vectors at $\bm_1,\bm_4$, and the edge at $\bm_5$, have an endpoint which is not high enough. Thus we have to perform further manipulations to reach, for example, the configuration of \Cref{fig:example8-normal-form}, given by
    $$\begin{cases}
        \bm_1\edge\bm_1+(7,7)\\
        \bm_2\edge\bm_4+(13,14)\\
        \bm_3\edge\bm_3+(12,13)\\
        \bm_4\edge\bm_4+(17,15)\\
        \bm_5\edge\bm_4+(20,5)
    \end{cases}$$
    where the rigid vector has been moved at $\bm_1$, and all the other endpoints are sufficiently high above the minimal points.

    For the normal form of \Cref{fig:example8-normal-form}, the linear invariant $(-1)^\epsilon\Theta:\cM\rightarrow\{\br\}\sqcup\frac{\cla{Q}}{\gen{\br}}\sqcup\{v_1,v_3\}$ is given by $\Theta(\bm_1)=\br$ and $\Theta(\bm_2)=v_1$ and $\Theta(\bm_3)=(12,13)$ and $\Theta(\bm_4)=(17,15)$ and $\Theta(\bm_5)=v_3$. This means that the minimal points $\bm_2,\bm_5$ are hosting the edges connecting $\pA_{v_1},\pA_{v_3}$ to $\pA_{v_2}$ respectively, the minimal point $\bm_1$ is hosting the rigid vector $\br$, and the minimal points $\bm_3,\bm_4$ are hosting the vectors $(12,13),(17,15)$ of the abelian group $\frac{\cla{Q}}{\gen{\br}}\cong\bbZ\oplus\bbZ/7\bbZ$. Note that the couple $(12,13),(17,15)$ is Nielsen equivalent to the couple $(1,0),(0,1)$ in the abelian group $\frac{\cla{Q}}{\gen{\br}}$.
    
    If in \Cref{fig:example8-normal-form} we substitute the edge $\bm_4\edge\bm_4+(17,15)$ with $\bm_4\edge\bm_4+(17,14)$ while leaving the others unchanged, then we obtain a non-isomorphic GBS group. In fact, $(12,13),(17,14)$ is Nielsen equivalent to $(3,0),(0,1)$, and the matrix of change of basis from $(1,0),(0,1)$ to $(3,0),(0,1)$ has determinant $\equiv 3 \not\equiv 1$ modulo $7$. In fact, in $\bbZ\oplus\bbZ/7\bbZ$ there are exactly $6$ Nielsen equivalence classes of couples of generators, two couples being Nielsen equivalent if and only if the matrix of change of basis has determinant $\equiv1$ mod $7$ (see \Cref{Nielsen-equiv}). Fixed the linear invariants and the multiset of rigid vectors $\bR=\{\br\}$, there are exactly $6$ isomorphism classes of GBS groups, corresponding to the $6$ possible values of this determinant modulo $7$.
\end{ex}

\begin{figure}[ht!]
\centering
\includegraphics[width=\textwidth]{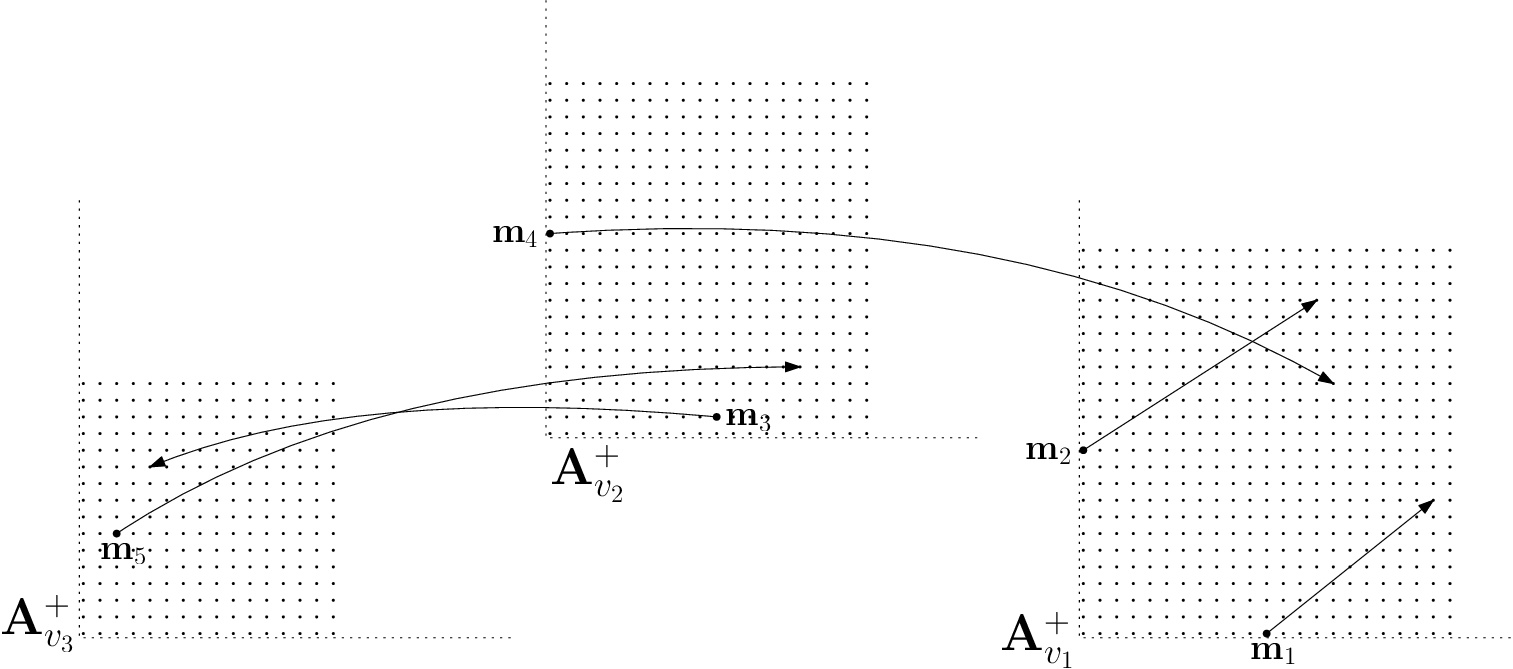}
\centering
\caption{The affine representation for the GBS graph of \Cref{ex8}. The edges have been assigned orientations going out of the minimal points.}
\label{fig:example8-initial-config}
\end{figure}

\begin{figure}[H]
\centering
\includegraphics[width=\textwidth]{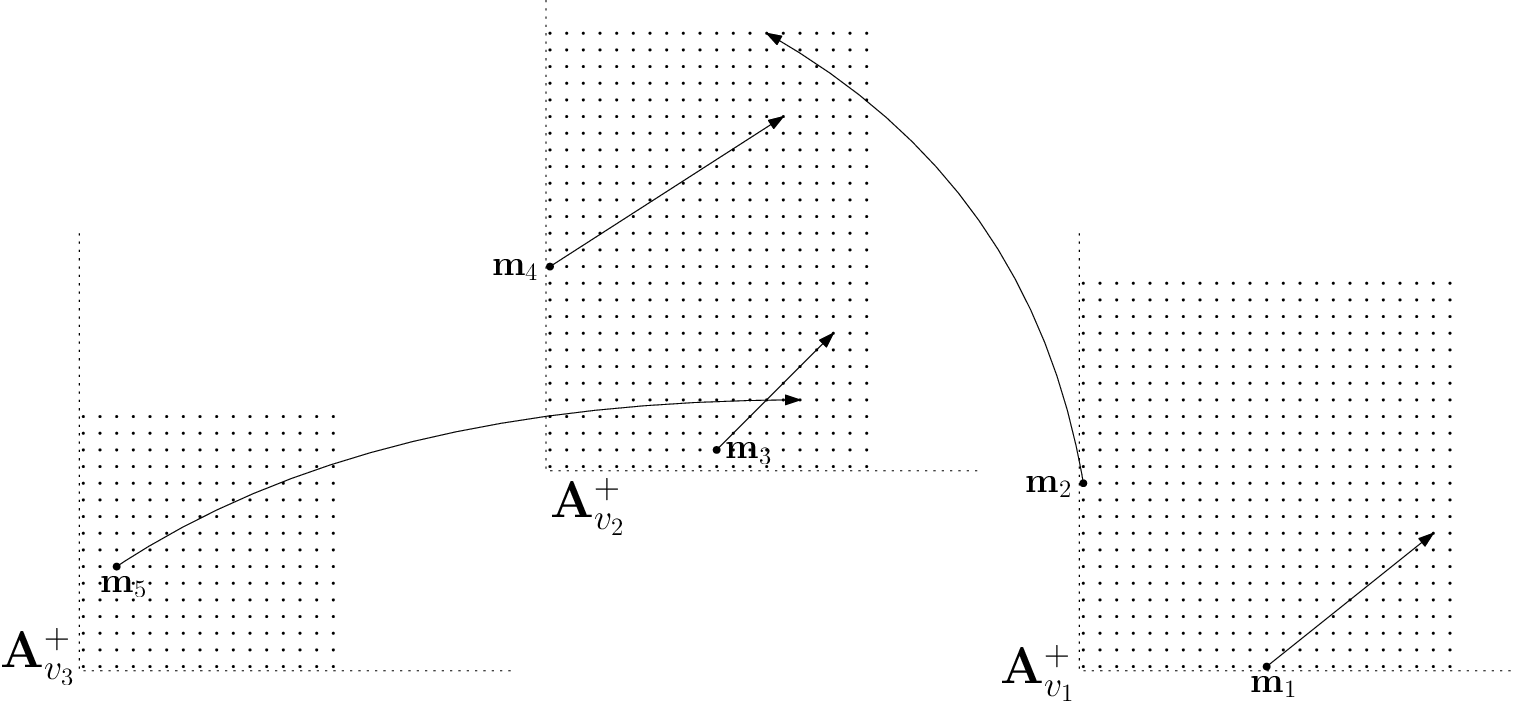}
\centering
\caption{A first manipulation maximizes the number of positive vectors, leaving only two edges (at $\bm_5,\bm_2$) going from a vertex to another.}
\label{fig:example8-not-normal-form}
\end{figure}

\begin{figure}[H]
\centering
\includegraphics[width=\textwidth]{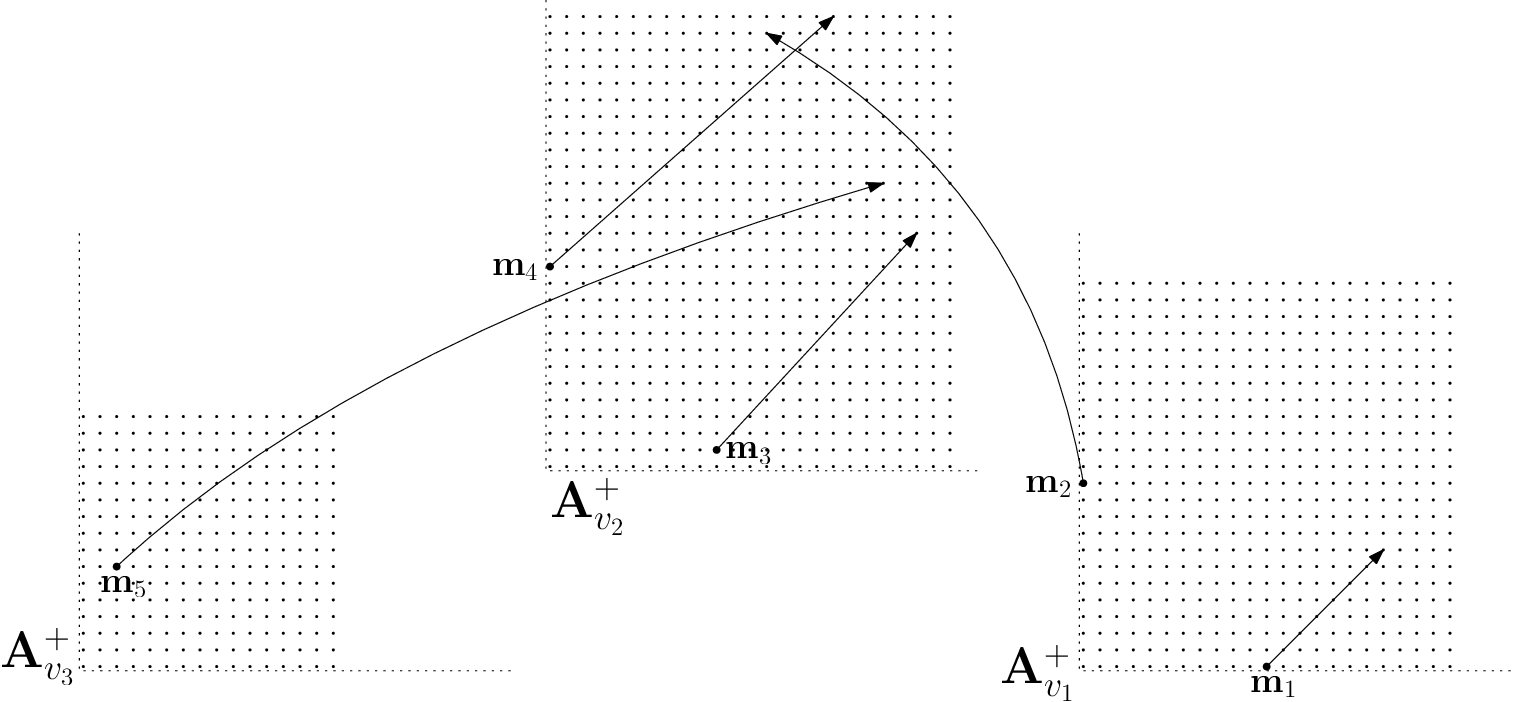}
\centering
\caption{A normal form for the GBS graph of \Cref{ex8}.}
\label{fig:example8-normal-form}
\end{figure}

\begin{ex}\label{ex9}
    Consider a GBS graph $(\Gamma,\psi)$ with affine representation $\Lambda$ as in \Cref{fig:example9}. This means that $\Gamma$ has one vertex and set of primes $\cP(\Gamma,\psi)$ with two prime numbers. The affine representation $\Lambda$ consists of a single copy $\pA\cong\bbN^2$ containing four edges
    $$\begin{cases}
        \bm_1\edge\bm_1+(2,3)\\
        \bm_2\edge\bm_2+(0,2)\\
        \bm_3\edge\bm_3+(6,2)\\
        \bm_4\edge\bm_4+(4,3)
    \end{cases}$$
    where $\bm_1=(6,0)$ and $\bm_2=(3,2)$ and $\bm_3=(2,5)$ and $\bm_4=(0,6)$. These four edges all belong to a common quasi-conjugacy class $Q$, characterized by $\qcmin{Q}=\{\bm_1,\bm_2,\bm_3,\bm_4\}$ and $\qcsupp{Q}=\cP(\Gamma,\psi)$ and $\cla{Q}=\gen{(2,0),(0,1)}$. The conjugacy class of $\bm_1,\bm_3,\bm_4$ contains three edges while the conjugacy class of $\bm_2$ contains one edge.

    This GBS graph has one qc-class and no floating pieces. Note however that it does not have full-support gaps. We can still look for rigid cycles as in \Cref{def:rigid-vector}. The only reasonable candidate to be a rigid vector would be $\br=(0,2)$, with rigid cycle $\bm_2\edge\bm_2+\br$. However, we can find minimal points $m\not=m'$ such that $m+\bw\ge m'$, as for example we have that $\bm_1+\br\ge\bm_2$ and $\bm_3+\br\ge\bm_4$. So the edge $\bm_2\edge\bm_2+\br$ does not satisfy the conditions of \Cref{def:rigid-vector}, and is thus not a rigid cycle.
    
    We now try to bring the GBS in normal form. It is easy to see that the other three edges can be made very long using slide moves. It remains however unclear how to change the edge $\bm_2\edge\bm_2+\br$. In fact, swap moves do not apply here, and changing it by slide or by connection would require some other vertex of some other edge to be of the form $\bm_2+(a,0)$ for some $a\in\bbN$. But this would imply that the conjugacy class of $\bm_2$ would contain more that one edge, and this is impossible, since the conjugacy class of $\bm_2$ contains only one edge. This means that no slide, swap, connection can ever change the edge $\bm_2\edge\bm_2+\br$.

    This shows that the hypothesis of being full-support gaps is really necessary for \Cref{prop:existence-normal-form-no-floating}, as otherwise one might encounter obstructions when trying to bring the GBS graph to normal form.
\end{ex}

\begin{figure}[H]
\centering
\includegraphics[width=0.5\textwidth]{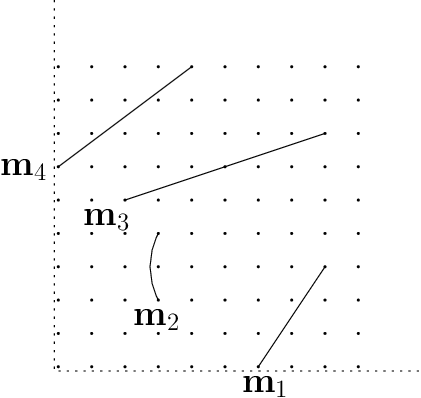}
\centering
\caption{The affine representation for the GBS graph of \Cref{ex9}.}
\label{fig:example9}
\end{figure}

\bibliographystyle{alpha}
\footnotesize

\normalsize

\appendix

\section{Nielsen equivalence in finitely generated abelian groups}\label{sec:Nielsen-equiv-abelian}

We consider elements of $\bbZ^n$ as column vectors. We denote with $\cM_{n\times k}(\bbZ)$ the set of matrices with $n$ rows and $k$ columns and integer coefficients. For $M\in\cM_{n\times k}(\bbZ)$ we denote with $M_i^j$ the integer number in the $i$-th row and $j$-th column of $M$, for $i=1,\dots,n$ and $j=1,\dots,k$; we denote with $\gencol{M}$ the subgroup of $\bbZ^n$ generated by the columns of $M$. A \textbf{column operation} on a matrix consists of choosing a column and adding or subtracting it from another column. A \textbf{(column) reordering operation} consists of interchanging two columns of the matrix. Notice that column and column reordering operations do not change $\gencol{M}$. A \textbf{row operation} consists of choosing a row and adding or subtracting it from another row. Row operations correspond to changing basis for the free abelian group $\bbZ^n$.

\subsection*{Smith normal form}

\begin{defn}\label{def:Smith}
A matrix $S\in\cM_{n\times k}(\bbZ)$ is in \textbf{Smith normal form} if it satisfies the following conditions:
\begin{enumerate}
\item $S_i^j=0$ whenever $i\not=j$.
\item $S_i^i\ge0$ for $i=1,\dots,m$ where $m=\min\{n,k\}$.
\item We have $S_{i+1}^{i+1}\divides S_i^i$ for all $i=1,\dots,m-1$.
\end{enumerate}
\end{defn}

Every matrix is equivalent, up to row and column operations and reordering operations, to a unique matrix in Smith normal form.
An interesting feature is that the integers $S_i^i$ can be computed directly from the initial matrix, as follows. For $M\in\cM_{n\times k}(\bbZ)$, define $D_\ell(M)\ge0$ as the greatest common divisor of all the determinants of the $\ell\times\ell$ minors of $M$, for $1\le\ell\le m$ where $m=\min\{n,k\}$. Notice that, if $M$ has rank $r$, then $D_\ell(M)=0$ if and only if $\ell\ge r+1$.

\begin{lem}\label{computing-Smith}
Let $M\in\cM_{n\times k}(\bbZ)$ be a matrix of rank $r$. Let $S$ be the unique matrix in Smith normal form obtained from $M$ by row and column operations. Then we have the following:
\begin{enumerate}
\item $D_1(M)\divides D_2(M)\divides \dots \divides D_m(M)$, where $m=\min\{n,k\}$.
\item $S_i^i=D_{m+1-i}(M)=0$ for $i=1,\dots,m-r$.
\item $S_i^i=D_{m+1-i}(M)/D_{m-i}(M)$ for $i=m-r+1,\dots,m-1$.
\item $S_m^m=D_1(M)$.
\end{enumerate}
\end{lem}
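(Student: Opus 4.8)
The plan is to reduce everything to two facts: that each $D_\ell$ is unchanged by integer row operations, column operations, and reorderings, and that $D_\ell$ is trivial to read off once the matrix is in Smith normal form. Since $S$ is obtained from $M$ by precisely such operations, this gives $D_\ell(M) = D_\ell(S)$ for all $\ell$, and the four assertions then reduce to bookkeeping.

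First I would prove the invariance. Every row operation, column operation, reordering, or sign change amounts to multiplying $M$ on the left or right by a matrix in $GL_n(\bbZ)$ or $GL_k(\bbZ)$. Writing $S = PMQ$ with $P,Q$ invertible over $\bbZ$, the Cauchy--Binet formula expresses each $\ell\times\ell$ minor of $S$ as a $\bbZ$-linear combination of $\ell\times\ell$ minors of $M$; hence $D_\ell(M)\divides D_\ell(S)$. Applying the same reasoning to $M = P^{-1}SQ^{-1}$ yields the reverse divisibility, so $D_\ell(M) = D_\ell(S)$ (both being nonnegative gcds). In particular the rank $r$, which equals the number of nonzero diagonal entries, is preserved.

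Next I would compute $D_\ell(S)$ directly. Because $S$ is diagonal, an $\ell\times\ell$ minor vanishes unless the chosen row-index set and column-index set coincide, in which case it equals $\prod_{i\in I} S_i^i$ for the chosen $\ell$-subset $I$. Thus $D_\ell(S) = \mcd{\prod_{i\in I} S_i^i \,:\, |I| = \ell}$. The divisibility chain $S_{i+1}^{i+1}\divides S_i^i$ of Definition \ref{def:Smith} means $S_a^a\divides S_b^b$ whenever $a\ge b$; so for any $I = \{i_1 > \dots > i_\ell\}$, comparing the $j$-th entry against $m-j+1$ and using $i_j \le m-j+1$ shows that the product over the top block $I_0 = \{m-\ell+1,\dots,m\}$ divides every other such product. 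Hence $D_\ell(S) = \prod_{i=m-\ell+1}^{m} S_i^i$. The hard part will be exactly this matching step, where the reversed ordering convention (so that the zeros and the largest invariant factor sit at the small indices) must be tracked carefully throughout.

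Finally I would read off the statement from the identity $D_\ell(M) = \prod_{i=m-\ell+1}^{m} S_i^i$. Item 1 is immediate, since each $D_{\ell+1}(M)$ is $D_\ell(M)$ multiplied by one further diagonal factor $S_{m-\ell}^{m-\ell}$. For items 3 and 4 the ratio $D_{m+1-i}(M)/D_{m-i}(M)$ telescopes to the single factor $S_i^i$, with item 4 being the case $\ell = 1$, namely $D_1(M) = S_m^m$; here one checks that the relevant denominators are nonzero precisely on the index range $i = m-r+1,\dots,m-1$. For item 2, when $i \le m-r$ the block $\{m-\ell+1,\dots,m\}$ with $\ell = m+1-i \ge r+1$ must contain an index carrying a zero diagonal entry, so $D_{m+1-i}(M) = 0$; and since the nonzero invariant factors occupy exactly the indices $m-r+1,\dots,m$, we also have $S_i^i = 0$ there, completing the proof.
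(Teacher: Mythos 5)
Your proof is correct and follows essentially the same route as the paper: observe that each $D_\ell$ is invariant under the allowed operations, then verify the statement directly for a matrix in Smith normal form. The paper compresses both steps into two sentences, while you supply the details (Cauchy--Binet for invariance, and the gcd computation $D_\ell(S)=\prod_{i=m-\ell+1}^{m}S_i^i$ via the divisibility chain), which is exactly what the paper's "immediate" claims amount to.
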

\begin{proof}
It is immediate to notice that $D_\ell(M)$ is invariant under row and column operations, as well as column swap operation. But for a matrix in Smith normal the statement holds, so it holds for all matrices.
\end{proof}

\subsection*{Finitely generated abelian groups}

Every finitely generated abelian group is isomorphic to
$$\bbZ/d_1\bbZ\oplus\ldots\oplus\bbZ/d_n\bbZ$$
for unique integers numbers $n,d_1,\dots,d_n\ge0$ satisfying $1\not=d_n\divides d_{n-1}\divides\dots\divides d_1$. Notice that some $d_i$ can be equal to $0$, producing $\bbZ$ summands.

\begin{lem}\label{Smith-to-abelian}
Let $M\in\cM_{n\times k}(\bbZ)$. Let $S\in\cM_{n\times k}(\bbZ)$ be the Smith normal form of $M$. Then $\bbZ^n/\gencol{M}$ is isomorphic to $\bbZ/d_1\bbZ\oplus\ldots\oplus\bbZ/d_{n'}\bbZ$ where
\begin{enumerate}
\item $n'\le\min\{n,k\}$ is the maximum integer such that $S_{n'}^{n'}\not=1$.
\item $d_i=S_i^i$ for $i=1,\dots,n'$.
\end{enumerate}
\end{lem}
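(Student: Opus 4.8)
The plan is to compute $\bbZ^n/\gencol{M}$ by reducing to the Smith normal form $S$, where the quotient can be read off directly from the diagonal, and then to trim the trivial summands. The crucial point is that the two kinds of operations producing $S$ from $M$ affect the quotient in two different, but equally harmless, ways, and these must be kept separate. Write $S=PMQ$, where $P\in GL_n(\bbZ)$ records the row operations and $Q\in GL_k(\bbZ)$ records the column operations together with the reordering operations (such a factorization exists since every matrix reduces to a unique Smith normal form by these operations). First I would argue that column and reordering operations leave the subgroup itself unchanged: as recalled in the excerpt, they do not change $\gencol{M}$, so $\gencol{MQ}=\gencol{M}$ and hence $\bbZ^n/\gencol{M}=\bbZ^n/\gencol{MQ}$ literally. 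Next, $P$ defines an automorphism $\phi$ of $\bbZ^n$ (row operations are a change of basis of $\bbZ^n$) with $\phi(\gencol{MQ})=\gencol{PMQ}=\gencol{S}$, so $\phi$ descends to an isomorphism $\bbZ^n/\gencol{MQ}\xrightarrow{\sim}\bbZ^n/\gencol{S}$. Composing yields $\bbZ^n/\gencol{M}\cong\bbZ^n/\gencol{S}$.

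Next I would compute $\bbZ^n/\gencol{S}$ explicitly. Since $S_i^j=0$ for $i\neq j$, the $j$-th column of $S$ equals $S_j^j\,e_j$ for $j\le m=\min\{n,k\}$ and is zero for $j>m$, so $\gencol{S}=\bigoplus_{i=1}^m (S_i^i\bbZ)\,e_i$ is a direct sum of cyclic subgroups carried by distinct standard basis vectors. This gives at once the direct-sum decomposition $\bbZ^n/\gencol{S}\cong\bigoplus_{i=1}^m \bbZ/S_i^i\bbZ$ when $m=n$ (the case $n\le k$), the zero diagonal entries contributing the free $\bbZ$ summands. When $n>k$ one may first adjoin $n-k$ zero columns to $M$, which changes neither $\gencol{M}$ nor the nonzero invariant factors but makes $m=n$; this reduction lets us treat the coordinates beyond the $k$-th uniformly as further zero diagonal entries, so there is no loss of generality in assuming $m=n$.

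Finally I would discard the trivial summands and relabel. A summand $\bbZ/S_i^i\bbZ$ vanishes exactly when $S_i^i=1$, and the small but essential observation is that such indices form a terminal block. Indeed, by the divisibility chain $S_{i+1}^{i+1}\divides S_i^i$ of the Smith normal form together with $S_i^i\ge 0$, if $S_i^i=1$ then $S_{i+1}^{i+1}\divides 1$, and since $0\nmid 1$ this forces $S_{i+1}^{i+1}=1$; hence once the diagonal reaches $1$ it stays $1$. Thus the nontrivial diagonal entries are precisely $S_1^1,\dots,S_{n'}^{n'}$, where $n'$ is the largest index with $S_{n'}^{n'}\neq1$, and setting $d_i=S_i^i$ we obtain $\bbZ^n/\gencol{M}\cong\bigoplus_{i=1}^{n'}\bbZ/d_i\bbZ$, with the entries $d_i=0$ producing the free summands. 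The only step requiring genuine care is the first, namely keeping the role of row operations (a genuine isomorphism via change of basis) distinct from that of column and reordering operations (an equality of subgroups); the rest is bookkeeping with the divisibility and positivity conventions of Definition \ref{def:Smith}.
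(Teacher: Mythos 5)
Your proof is correct and takes essentially the same route as the paper's own (much terser) proof: column and reordering operations leave $\gencol{M}$ unchanged, row operations act as a change of basis of $\bbZ^n$, hence $\bbZ^n/\gencol{M}\cong\bbZ^n/\gencol{S}$, and the conclusion is read off the diagonal of $S$ using the divisibility convention to see that the entries equal to $1$ form a terminal block. You are in fact more careful than the paper in the case $n>k$: your zero-column augmentation correctly produces the extra free summands $\bbZ^{n-k}$, which incidentally shows that the lemma as literally stated (with $n'\le\min\{n,k\}$ and $d_i$ taken from the $n\times k$ form $S$) is accurate only when $k\ge n$ — the only case in which the paper ever applies it.
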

\begin{proof}
Column and reordering operations do not change $\gencol{M}$, and in particular they do not change the quotient $\bbZ^n/\gencol{M}$. Row operations correspond to changing basis for $\bbZ^n$, so they do not change the isomorphism type of $\bZ^n/\gencol{M}$. Thus we have that $\bbZ^n/\gencol{M}$ is isomorphic to $\bbZ^n/\gencol{S}$. The conclusion follows.
\end{proof}

\subsection*{Nielsen equivalence in finitely generated abelian groups}

Let $A$ be a finitely generated abelian group. Let $(w_1,\dots,w_k)$ be an ordered $k$-tuple of elements of $A$: a \textbf{Nielsen move} on $(w_1,\dots,w_k)$ is any operation that consists of the substitution $w_i$ with $w_i+w_j$ or with $w_i-w_j$, for some $j\not=i$. Two ordered $k$-tuples are \textbf{Nielsen equivalent} if there is a sequence of Nielsen moves going from one to the other. Our aim in this section is to classify the $k$-tuples of generators for $A$ up to Nielsen equivalence (we ignore $k$-tuples that do not generate the whole group $A$, as they would require us to change the ambient group we are working in).

Fix an isomorphism
$$A=\bbZ/d_1\bbZ\oplus\bbZ/d_2\bbZ\oplus\ldots\oplus\bbZ/d_n\bbZ$$
for integers $n,d_1,\dots,d_n\ge0$ with $1\not=d_n\divides d_{n-1}\divides\dots\divides d_1$. Then we can represent elements of $A$ as column vectors with $n$ entries, where the $i$-th entry is an element of $\bbZ/d_i\bbZ$ for $i=1,\dots,n$. We represent an ordered $k$-tuple $(w_1,\dots,w_k)$ as a matrix $M=M(w_1,\dots,w_k)$ with $n$ rows and $k$ columns. Nielsen moves correspond to column operations on the matrix $M$.

\begin{lem}\label{min-generators}
The group $A=\bbZ/d_1\bbZ\oplus\ldots\oplus\bbZ/d_n\bbZ$ can not be generated by less than $n$ elements.
\end{lem}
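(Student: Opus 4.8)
The plan is to show that the abelian group $A=\bbZ/d_1\bbZ\oplus\ldots\oplus\bbZ/d_n\bbZ$, with $1\neq d_n\divides d_{n-1}\divides\dots\divides d_1$, cannot be generated by fewer than $n$ elements. The cleanest approach is to quotient out so that the obstruction becomes visible in a vector space over a field, where rank is unambiguous. The key observation is that every $d_i$ is either $0$ or divisible by some prime $p$; since $d_n\neq 1$ and $d_n\divides d_i$ for all $i$, there is a prime $p$ dividing $d_n$, and hence $p$ divides every $d_i$ that is nonzero, while the $d_i=0$ summands are copies of $\bbZ$ which certainly surject onto $\bbZ/p\bbZ$.

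First I would fix a prime $p$ dividing $d_n$ (such a prime exists because $d_n\ge 2$). Then I would consider the quotient map $A\to A/pA$. For each summand $\bbZ/d_i\bbZ$ with $p\divides d_i$, the quotient $(\bbZ/d_i\bbZ)/p(\bbZ/d_i\bbZ)\cong\bbZ/p\bbZ$; and for each summand $\bbZ/d_i\bbZ$ with $d_i=0$, i.e.\ $\bbZ$, the quotient is again $\bbZ/p\bbZ$. Since $p\divides d_n\divides d_i$ for every $i$ with $d_i\neq 0$, and the remaining summands are $\bbZ$, in all cases the $i$-th factor contributes exactly one copy of $\bbZ/p\bbZ$. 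Therefore
\[
A/pA \cong (\bbZ/p\bbZ)^n,
\]
which is an $n$-dimensional vector space over the field $\bbZ/p\bbZ$.

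Next I would use the fact that any generating set of $A$ maps to a generating set of the quotient $A/pA$: if $w_1,\dots,w_k$ generate $A$, then their images generate $A/pA$. Since $A/pA$ is an $n$-dimensional vector space over $\bbZ/p\bbZ$, any spanning set must have at least $n$ elements, so $k\ge n$. This is a standard linear-algebra fact (the dimension of a finite-dimensional vector space is a lower bound for the size of any spanning set), and it immediately yields the claim.

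I do not expect any genuine obstacle here; the only point requiring a moment's care is handling the two kinds of summands uniformly, namely checking that both the finite factors $\bbZ/d_i\bbZ$ with $p\divides d_i$ and the free factors $\bbZ$ each reduce to a single copy of $\bbZ/p\bbZ$ modulo $p$. This is exactly where the hypothesis $d_n\divides d_{n-1}\divides\dots\divides d_1$ together with $d_n\neq 1$ is used: it guarantees a single prime $p$ dividing all nonzero invariant factors at once, so that the reduction modulo $p$ is clean across every summand. The rest is the elementary observation that reduction modulo $p$ sends generating sets to spanning sets.
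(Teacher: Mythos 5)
Your proof is correct and follows essentially the same route as the paper: both fix a prime $p$ dividing $d_n$, observe that $A$ surjects onto the $n$-dimensional $\bbZ/p\bbZ$-vector space $(\bbZ/p\bbZ)^n$ (your $A/pA$), and conclude since a spanning set of an $n$-dimensional vector space has at least $n$ elements. You simply spell out the details of the projection, including the $d_i=0$ summands, which the paper leaves implicit.
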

\begin{proof}
Take a prime $p\divides d_n$. Consider the surjective projection map from $\bbZ/d_1\bbZ\oplus\ldots\oplus\bbZ/d_n\bbZ$ to $(\bbZ/p\bbZ)^n$. But $(\bbZ/p\bbZ)^n$ can not be generated by less than $n$ elements.
\end{proof}

\begin{prop}\label{Nielsen-equiv}
For every $k\ge n+1$, every two ordered $k$-tuples of generators for $A$ are Nielsen equivalent. Two ordered $n$-tuples of generators for $A$ are Nielsen equivalent if and only if the corresponding matrices have the same determinant modulo $d_n$.
\end{prop}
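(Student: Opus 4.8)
The plan is to encode an ordered $k$-tuple $(w_1,\dots,w_k)$ as the matrix $M=M(w_1,\dots,w_k)\in\cM_{n\times k}(\bbZ)$ described above, keeping in mind that the $i$-th row is only defined modulo $d_i$, and to exploit that a Nielsen move is exactly an elementary column operation. Since elementary matrices generate $\mathrm{SL}_k(\bbZ)$ for $k\ge 2$, two tuples are Nielsen equivalent precisely when their matrices differ by right multiplication by an element of $\mathrm{SL}_k(\bbZ)$ (computed over $A$), and the tuple generates $A$ exactly when the columns of $M$ generate $A$. This already yields the easy direction of the second assertion: for $k=n$ the quantity $\det M\bmod d_n$ is well defined, because changing the chosen lift alters an entry of row $i$ by a multiple of $d_i$ and hence $\det M$ by a multiple of $d_i$, while $d_n\mid d_i$; and it is unchanged by column operations, which have determinant $+1$. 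Thus $\det M\bmod d_n$ is a Nielsen invariant. A reduction-mod-$p$ argument (for each prime $p\mid d_n$ the induced surjection onto $(\bbZ/p\bbZ)^n$ forces $p\nmid\det M$) shows moreover that a generating $n$-tuple has $\det M\in(\bbZ/d_n\bbZ)^{\times}$, and every unit $\delta$ is realized, e.g. by a suitable $\mathrm{diag}(\delta,1,\dots,1)$ with the first entry lifted to a unit mod $d_1$.

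For the substantive direction I would prove, by induction on $n$, that every generating tuple can be carried by Nielsen moves to a normal form depending only on the stated invariant: to $[\,I_n\mid 0\,]$ when $k\ge n+1$, and to $\mathrm{diag}(1,\dots,1,\delta)$ with $\delta=\det M\bmod d_n$ when $k=n$. The base case $n=1$ is a direct Euclidean-algorithm computation: column operations reduce a generating row over $\bbZ/d_1\bbZ$ to $(g,0,\dots,0)$ with $g$ a unit, and when $k\ge 2$ a single $\mathrm{SL}_2(\bbZ)$-move on two columns (available precisely because $g$ is a unit) replaces $g$ by $1$, while for $k=1$ there are no moves and the class is the element itself.

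For the inductive step, write $A=A'\oplus\bbZ/d_n\bbZ$ with $A'=\bigoplus_{i<n}\bbZ/d_i\bbZ$. The top $n-1$ rows of $M$ form a matrix whose columns generate $A'$, being the image of a generating set under the projection $A\to A'$; crucially $k\ge n=(n-1)+1$, so this smaller tuple lies in the ``extra generator'' regime for $A'$, and by induction a product of Nielsen moves carries it to $[\,I_{n-1}\mid 0\,]$. Applying the same column operations to all of $M$ produces $\left(\begin{smallmatrix} I_{n-1} & 0\\ r & s\end{smallmatrix}\right)$, and it remains only to clean up the bottom row, which generates $\bbZ/d_n\bbZ$. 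When $k=n$ there is exactly one column with vanishing top part; its bottom entry $t$ equals $\det M$ by block-triangularity, is therefore a unit mod $d_n$, and can be used to clear the rest of the bottom row, leaving $\mathrm{diag}(1,\dots,1,t)$ with $t\equiv\delta$. When $k\ge n+1$ there are at least two columns with vanishing top part, and this spare zero column gives the room to fold the remaining bottom entries together and normalize the surviving entry to $1$, reaching $[\,I_n\mid 0\,]$; this is exactly where the determinant obstruction disappears.

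The main obstacle is this final clean-up of the bottom row when $k\ge n+1$: unlike the top block it cannot be handled by a blind appeal to the inductive hypothesis, since the columns carrying the troublesome entries $r_j$ also carry the fixed vectors $e_j$ in their top part, so one cannot feed $r_j$ into the spare column without temporarily disturbing the already-normalized block. Making the shuttle through the spare zero column precise---returning each auxiliary column to normalized form while transvecting the bottom entries to unit gcd---is the routine but genuinely fiddly heart of the argument, and it is precisely the ``one extra generator'' (stable-range) effect responsible for the dichotomy between $k=n$ and $k\ge n+1$. Once the normal forms are reached the two statements follow at once: for $k\ge n+1$ every generating tuple meets $[\,I_n\mid 0\,]$, hence they are mutually Nielsen equivalent; and for $k=n$ two tuples meet the same $\mathrm{diag}(1,\dots,1,\delta)$ if and only if they share $\delta$, which together with the invariance established in the first paragraph completes the proof.
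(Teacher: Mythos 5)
You follow the paper's strategy --- encode tuples as matrices over $A$, treat Nielsen moves as column operations, reduce to the diagonal normal forms $[\,I_n\mid 0\,]$ (for $k\ge n+1$) and $\mathrm{diag}(1,\dots,1,\delta)$ (for $k=n$), with the determinant modulo $d_n$ as the complete invariant in the square case --- and your easy direction, your base case, and your $k=n$ clean-up are all correct. The gap sits exactly at the step you yourself flag as the unexecuted ``fiddly heart'': for $k\ge n+1$ you never actually normalize the bottom row, and the plan you sketch (shuttling the entries $r_j$ through the spare zero column) rests on a misconception, namely that the entries $r_j$ are needed at all to produce a unit. As written, the $k\ge n+1$ case of your induction is not proved.

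In fact the step is immediate once you make one observation. After the top block is $[\,I_{n-1}\mid 0\,]$, express $(0,\dots,0,1)\in A$ as an integral combination of the columns: the top block forces the coefficient of the $j$-th column to be $\equiv 0 \pmod{d_j}$ for $j\le n-1$, and since $d_n\divides d_j$ these columns contribute nothing modulo $d_n$; hence the bottom entries $s_n,\dots,s_k$ of the zero-top columns already satisfy $\mcd{s_n,\dots,s_k,d_n}=1$. So the whole clean-up happens inside the zero-top columns and never touches the normalized block: the Euclidean algorithm brings their bottom entries to $(g,0,\dots,0)$ with $g$ a unit modulo $d_n$; the spare zero column (which exists because $k-n+1\ge 2$) converts $g$ into $1$, either by your own $\mathrm{SL}_2$-lifting trick or by the paper's device of rewriting that column's bottom entry $0$ as $d_n$ and running Euclid on the pair $(g,d_n)$; and the resulting column with top $0$ and bottom $1$ clears $r_1,\dots,r_{n-1}$. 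This observation closes the gap; alternatively, running your induction top-down as the paper does --- process row $1$ first, then induct on rows $2,\dots,n$ using only columns $2,\dots,k$, normalizing each pivot to $1$ on the spot via the rewrite-$0$-as-$d_i$ trick --- avoids the issue altogether, and everything else in your argument stands.
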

\begin{proof}
Let $(w_1,\dots,w_k)$ and $(u_1,\dots,u_k)$ be $k$-tuples of generators for $A$ and let $M,N\in\cM_{n\times k}(\bbZ)$ be the matrices whose columns represent $w_1,\dots,w_k$ and $u_1,\dots,u_k$ respectively. If $k=n$ and $w_1,\dots,w_k$ and $u_1,\dots,u_k$ are Nielsen equivalent, then there must be a sequence of columns operations going from $M$ to $N$, and in particular the two matrices must have the same determinant.

We look at the first row of $M$, and using column operations we perform the Euclidean algorithm; we obtain a new matrix $M_1^1\not=0$ and $M_1^j=0$ for $j=2,\dots,k$. But since $w_1,\dots,w_k$ generate the whole group, we must have that $M_1^1$ is coprime with $d_1$. We add $d_1$ to $M_1^2$ (we can, since elements of the first row are elements of $\bbZ/d_1\bbZ$), and then we perform the Euclidean algorithm again, obtaining a new matrix with $M_1^1=1$ and $M_1^j=0$ for $j=2,\dots,k$.

We reiterate the same reasoning by induction. If $k\ge n+1$ we obtain a matrix with $M_i^i=1$ for $i=1,\dots,n$ and $M_i^j=0$ for $j\not=i$. Performing the same procedure on $N$ we obtain the same matrix, and in particular there is a sequence of column operations going from $M$ to $N$, and thus $w_1,\dots,w_k$ is Nielsen equivalent to $u_1,\dots,u_k$. If $k=n$, we obtain a matrix with $M_i^i=1$ for $i=1,\dots,n-1$ and $M_n^n$ invertible modulo $d_n$ and $M_i^j=0$ for $j\not=n$. Notice that the determinant of $M$ modulo $d_n$ is an invariant under column operations, and for the matrix reduced in this form it is equal to $M_n^n$. If the determinant of $M$ was equal to the one of $N$, then performing the same procedure on $N$ we obtain the same matrix, and in particular there is a sequence of column operations going from $M$ to $N$, and thus $(w_1,\dots,w_k)$ is Nielsen equivalent to $(u_1,\dots,u_k)$.
\end{proof}

We will also need the following two lemmas.

\begin{lem}\label{Nielsen-equiv-rel1}
Let $a\in A$ be an element represented by a vector $(a_1,\dots,a_n)$, and suppose that $\mcd{a_1,\dots,a_{n-1},a_n}=1$. Let $(a,w_2,\dots,w_k)$, $(a,u_2,\dots,u_k)$ be Nielsen equivalent $k$-tuples of generators for $A$ with $k\ge n+1$. Then $([w_2],\dots,[w_k])$, $([u_2],\dots,[u_k])$ are Nielsen equivalent $(k-1)$-tuples of generators for $A/\gen{a}$.
\end{lem}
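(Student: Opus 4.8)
The plan is to observe that, for $k\ge n+1$, both the hypothesis and the conclusion are governed by \Cref{Nielsen-equiv}, so the only genuine work is a count of the generators of the quotient $A/\gen{a}$. First I would record the effect of the hypothesis $\mcd{a_1,\dots,a_n}=1$: the integer vector $\tilde a=(a_1,\dots,a_n)$ is primitive (unimodular) in $\bbZ^n$, and therefore completes to a $\bbZ$-basis $\tilde a=b_1,b_2,\dots,b_n$ of $\bbZ^n$. Writing $\pi\colon\bbZ^n\to A=\bbZ/d_1\bbZ\oplus\dots\oplus\bbZ/d_n\bbZ$ for the quotient map, so that $\pi(\tilde a)=a$, the images $\pi(b_1)=a,\pi(b_2),\dots,\pi(b_n)$ generate $A$; hence $\pi(b_2),\dots,\pi(b_n)$ generate $A/\gen{a}$. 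Thus $A/\gen{a}$ is generated by $n-1$ elements, and by \Cref{min-generators} its minimal number of generators $m'$ (equal to the number of nontrivial invariant factors of $A/\gen{a}$) satisfies $m'\le n-1$.

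Next I would check that the two $(k-1)$-tuples in the statement are generating tuples of $A/\gen{a}$. Since $(a,w_2,\dots,w_k)$ generates $A$ and the quotient map $q\colon A\to A/\gen{a}$ is surjective with $q(a)=0$, the images $[w_2],\dots,[w_k]$ already generate $A/\gen{a}$; the same argument applies to $[u_2],\dots,[u_k]$.

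Finally I would apply \Cref{Nielsen-equiv} to the group $B=A/\gen{a}$. Its minimal number of generators is $m'\le n-1$, while the tuples at hand have length $k-1$; from $k\ge n+1$ we obtain $k-1\ge n\ge (n-1)+1\ge m'+1$, so $k-1$ exceeds the minimal number of generators of $B$ by at least one. In this regime \Cref{Nielsen-equiv} asserts that any two ordered generating $(k-1)$-tuples of $B$ are Nielsen equivalent, and applying it to $([w_2],\dots,[w_k])$ and $([u_2],\dots,[u_k])$ yields the conclusion.

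There is no serious obstacle: the genuine content is the bound $m'\le n-1$, which is precisely where primitivity of $a$ is used and without which the length inequality would fail (for instance $a=0$ leaves $A/\gen{a}=A$, needing $n$ generators). It is worth noting that the stated hypothesis that the two $k$-tuples are Nielsen equivalent in $A$ is not actually needed in this range, since by \Cref{Nielsen-equiv} it already holds automatically for any two generating $k$-tuples when $k\ge n+1$; the hypothesis is presumably retained to parallel the companion statement for the critical length $k=n$. The one point to state carefully is the meaning of the gcd hypothesis on torsion coordinates: the clean reading is that $(a_1,\dots,a_n)$ is a fixed integer lift of $a$ with $\mcd{a_1,\dots,a_n}=1$, which is exactly what makes $\tilde a$ primitive in $\bbZ^n$.
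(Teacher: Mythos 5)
Your proof is correct, and it shares the paper's overall skeleton: both arguments reduce the lemma to bounding the minimal number of generators of $A/\gen{a}$ by $n-1$, and then invoke the first part of \Cref{Nielsen-equiv} in the regime where the tuple length exceeds that minimum by at least one (as you note, this makes the Nielsen-equivalence hypothesis on the original $k$-tuples superfluous in this range). Where you differ is in how that bound is obtained. The paper works matricially: it presents $A/\gen{a}$ as $\bbZ^n/\gencol{R'}$, where $R'$ is the diagonal relation matrix $R$ augmented by the column $(a_1,\dots,a_n)$, and computes $D_1(R')=1$ and $D_2(R')=d_n$ via \Cref{computing-Smith}, \Cref{Smith-to-abelian} and \Cref{min-generators}. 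You instead use the elementary fact that a primitive vector of $\bbZ^n$ extends to a $\bbZ$-basis, so that $A$ is generated by $a$ together with $n-1$ further elements, whose images then generate the quotient. Your route has a small advantage worth pointing out: it runs directly from the stated hypothesis $\mcd{a_1,\dots,a_n}=1$, whereas the paper's computation of $D_2(R')$ actually invokes $\mcd{a_1,\dots,a_{n-1}}=1$ (the hypothesis of \Cref{Nielsen-equiv-rel2}), which is formally stronger than what \Cref{Nielsen-equiv-rel1} assumes; your argument confirms the lemma under the weaker, stated hypothesis.
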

\begin{proof}
Let $R\in\cM_{n\times n}(\bbZ)$ be the matrix given by $R_i^i=d_i$ for $i=1,\dots,n$ and $R_i^j=0$ for $j\not=i$. We have that $A=\bbZ^n/\gencol{R}$. Let $R'$ be the matrix obtained from $R$ by adding an extra column, equal to $(a_1,\dots,a_n)$. Then $A/\gen{a}=\bbZ^n/\gencol{R'}$. Since $\mcd{a_1,\dots,a_{n-1}}=1$, we see that $D_1(R')=1$ and $D_2(R')=d_n\not=1$. In particular, by Lemmas \ref{computing-Smith}, \ref{Smith-to-abelian} and \ref{min-generators}, we have that the minimum number of generators for $A/\gen{a}$ is $n-1$. But then by Proposition \ref{Nielsen-equiv}, the $(k-1)$-tuples of generators $([w_2],\dots,[w_k])$, $([u_2],\dots,[u_k])$ for $A/\gen{a}$ are Nielsen equivalent, since $k-1\ge n$.
\end{proof}

\begin{lem}\label{Nielsen-equiv-rel2}
Let $a\in A$ be an element represented by a vector $(a_1,\dots,a_n)$, and suppose that $\mcd{a_1,\dots,a_{n-1}}=1$. Let $(a,w_2,\dots,w_n)$, $(a,u_2,\dots,u_n)$ be Nielsen equivalent $n$-tuples of generators for $A$. Then $([w_2],\dots,[w_n])$, $([u_2],\dots,[u_n])$ are Nielsen equivalent $(n-1)$-tuples of generators for $A/\gen{a}$.
\end{lem}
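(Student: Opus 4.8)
The plan is to follow the proof of Lemma \ref{Nielsen-equiv-rel1}, but to observe that reducing an $n$-tuple to an $(n-1)$-tuple lands us at \emph{exactly} the minimal number of generators of $A/\gen{a}$, i.e.\ in the boundary case of Proposition \ref{Nielsen-equiv} where Nielsen equivalence is detected by a determinant modulo the bottom invariant factor rather than being automatic. So, unlike in Lemma \ref{Nielsen-equiv-rel1}, we must actually use the hypothesis that the $n$-tuples are Nielsen equivalent, and the heart of the argument is a determinant bookkeeping.

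First I would fix the presentation $A=\bbZ^n/\gencol{R}$ with $R\in\cM_{n\times n}(\bbZ)$ diagonal, $R_i^i=d_i$, so that $A/\gen{a}=\bbZ^n/\gencol{R'}$ where $R'\in\cM_{n\times(n+1)}(\bbZ)$ is $R$ with the extra column $(a_1,\dots,a_n)$. Using $\mcd{a_1,\dots,a_{n-1}}=1$ I would check that $D_1(R')=1$ and that $D_2(R')=d_n$: every $2\times2$ minor of $R'$ is one of $d_id_j$ or $\pm d_ia_j$, all divisible by $d_n$, while $\mcd{d_na_1,\dots,d_na_{n-1}}=d_n$ already occurs among them. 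By Lemmas \ref{computing-Smith}, \ref{Smith-to-abelian} and \ref{min-generators} this forces $A/\gen{a}\cong\bbZ/d_1'\bbZ\oplus\dots\oplus\bbZ/d_{n-1}'\bbZ$ with smallest invariant factor $d_{n-1}'=D_2(R')=d_n$; in particular $A/\gen{a}$ needs exactly $n-1$ generators, and by Proposition \ref{Nielsen-equiv} two of its $(n-1)$-tuples of generators are Nielsen equivalent if and only if their matrices have equal determinant modulo $d_n$.

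The key step is to express this reduced determinant in terms of the full one. Let $P\in GL_n(\bbZ)$ be the row-operation factor of a Smith reduction of $R'$; crucially, $P$ depends only on $a$, not on the $w_i$, and the identification $A/\gen{a}\cong\bigoplus_{j<n}\bbZ/d_j'\bbZ$ sends $[v]$ to the first $n-1$ coordinates of $Pv$. Writing $M=[a\mid w_2\mid\dots\mid w_n]$ and $\tilde M$ for the top $(n-1)\times(n-1)$ block of $P[w_2\mid\dots\mid w_n]$ (the matrix of the reduced tuple), I would argue as follows: since $Pa$ lies in the column span of the Smith form, its first $n-1$ entries are multiples of the $d_j'$, hence $\equiv 0\pmod{d_n}$ because $d_n=d_{n-1}'$ divides every $d_j'$. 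Writing $c_n$ for the last entry of $Pa$ and expanding $\det(PM)$ along its first column modulo $d_n$ then yields
$$\pm\det M\;\equiv\;(-1)^{n+1}\,c_n\,\det\tilde M \pmod{d_n},$$
where the sign is $\det P$ and both $\det P$ and $c_n$ depend only on $a$. Since a generating tuple has unit determinant modulo the bottom invariant factor (as in the reduction inside Proposition \ref{Nielsen-equiv}), $c_n$ is automatically a unit modulo $d_n$, so this is a genuine proportionality.

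To finish: the $n$-tuples $(a,w_2,\dots,w_n)$ and $(a,u_2,\dots,u_n)$ are Nielsen equivalent generating sets of $A$, which needs $n$ generators by Lemma \ref{min-generators}, so Proposition \ref{Nielsen-equiv} gives $\det M\equiv\det N\pmod{d_n}$ for the corresponding $n\times n$ matrices. Feeding this into the displayed proportionality — whose constant $(\pm1)(-1)^{n+1}c_n$ is the \emph{same} for both tuples because it only involves $a$ — gives $\det\tilde M\equiv\det\tilde N\pmod{d_n}$, where $\tilde N$ is the analogous reduced matrix of the $u$-tuple. As $[w_2],\dots,[w_n]$ and $[u_2],\dots,[u_n]$ generate $A/\gen{a}$ (images of generating sets of $A$, with $[a]=0$), Proposition \ref{Nielsen-equiv} concludes that they are Nielsen equivalent. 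The main obstacle is the third paragraph: arranging the cofactor expansion so that the proportionality constant is genuinely independent of the tuple, which is exactly what transports the hypothesis on the $n$-tuples to the reduced $(n-1)$-tuples.
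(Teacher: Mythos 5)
Your proof is correct, and it follows the same overall strategy as the paper's: use $\mcd{a_1,\dots,a_{n-1}}=1$ to get $D_1(R')=1$ and $D_2(R')=d_n$, so that $A/\gen{a}$ needs exactly $n-1$ generators with bottom invariant factor $d_n$ and, by Proposition~\ref{Nielsen-equiv}, everything reduces to a determinant congruence modulo $d_n$; then transfer the congruence $\det M\equiv\det N \pmod{d_n}$ from the $n$-tuples to the reduced $(n-1)$-tuples via a change of basis of $\bbZ^n$ depending only on $a$, followed by extraction of a minor. The only real difference is how that transfer is implemented. The paper performs simultaneous row operations on $R,M,N$, running the Euclidean algorithm on the common first column $a$ (whose entries have gcd $1$ by hypothesis) so that it becomes $(1,0,\dots,0)^T$; the resulting identity $\det\ol{M}\equiv\det\ol{M}''$ is then exact with constant $1$, and no invertibility argument is needed. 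You instead take the row factor $P$ of a Smith reduction of $R'$, which only gives $Pa\equiv(0,\dots,0,c_n)^T\pmod{d_n}$, so your proportionality carries the constant $(\det P)(-1)^{n+1}c_n$ and you must additionally show $c_n$ is a unit modulo $d_n$ — which you do correctly, since $\pm\det M$ is a unit mod $d_n$ because $(a,w_2,\dots,w_n)$ generates $A$. Both arguments hinge on the identical key point, namely that the base change is determined by $a$ alone and hence is the same for both tuples; yours trades the paper's explicit Euclidean normalization of $a$ for slightly longer congruence bookkeeping, and as a small bonus spells out the computation $D_2(R')=d_n$ that the paper merely asserts.
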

\begin{proof}
Let $R\in\cM_{n\times n}(\bbZ)$ be the matrix given by $R_i^i=d_i$ for $i=1,\dots,n$ and $R_i^j=0$ for $j\not=i$. We have that $A=\bbZ^n/\gencol{R}$. Let also $M,N\in\cM_{n\times k}(\bbZ)$ be matrices whose columns represent $a,w_2,\dots,w_n$ and $a,u_2,\dots,u_n$ respectively. By Proposition \ref{Nielsen-equiv}, $M$ and $N$ must have the same determinant modulo $d_n$.

Let $R'$ be the matrix obtained from $R$ by adding an extra column, equal to the first column of $M$ and of $N$. We have that $A/\gen{a}=\bbZ^n/\gencol{R'}$ and $D_1(R')=1$ and $D_2(R')=d_n$ (since $\mcd{a_1,\dots,a_{n-1}}=1$). Thus, by Proposition \ref{Nielsen-equiv}, we only need to check that the matrices representing $([w_2],\dots,[w_n])$ and $([u_2],\dots,[u_n])$ have the same determinant modulo $d_n$.

We perform row operations at the same time on $R,M,N$, running the Euclidean algorithm on the first column of $M$ (which is also the first column on $N$). We obtain matrices $\ol{R},\ol{M},\ol{N}$ such that the first column of $\ol{M}$ (which is also the first column of $\ol{N}$) is equal to $(1,0,\dots,0,0)$. Notice that the determinant of $\ol{M}$ is the same as the one of $\ol{N}$ modulo $d_n$, since row operations do not change the determinant. Since row operations correspond to changing basis of $\bbZ^n$, we have an isomorphism $A\cong\bbZ^n/\gencol{\ol{R}}$, with the two matrices $\ol{M}$ and $\ol{N}$ representing the two $k$-tuples $a,w_2,\dots,w_n$ and $a,u_2,\dots,u_n$. Let $\ol{M}''$ be the minor of $\ol{M}$ obtained by canceling the first row and the first column of $\ol{M}$, and define $\ol{N}''$ analogously; let also $\ol{R}''$ be the matrix obtained from $\ol{R}$ by removing the first row. The isomorphism $A\cong\bbZ^n/\gencol{\ol{R}}$ induces an isomorphism $A/\gen{a}\cong\bbZ^{n-1}/\gencol{\ol{R}''}$ and the matrices $\ol{M}''$ and $\ol{N}''$ represent the $(n-1)$-tuples $([w_2],\dots,[w_n])$ and $([u_2],\dots,[u_n])$. But since the first column of $\ol{M}$ and of $\ol{N}$ is $(1,0,\dots,0)$ and $\ol{M},\ol{N}$ have the same determinant modulo $d_n$, it follows that $\ol{M}''$ and $\ol{N}''$ have the same determinant modulo $d_n$. The conclusion follows.
\end{proof}

\section{Nielsen equivalence for big vectors}\label{sec:Nielsen-equiv-big}

Fix a free abelian group $\bbZ^n$. Let $(w_1,\dots,w_k)$ be an ordered $k$-tuple of vectors of $\bbZ^n$. For an integer $M\ge1$ we say that an ordered $k$-tuple $(w_1,\dots,w_k)$ is \textbf{$M$-big} if each component of each of $w_1,\dots,w_k$ is $\ge M$. Our aim is to classify the $M$-big ordered $k$-tuples of vectors up to Nielsen moves. This means that, given two $M$-big ordered $k$-tuples, we want to know whether there is a sequence of Nielsen moves that goes from one to the other, and such that all the $k$-tuples that we write along the sequence are $M$-big.

We are going to need also a relative version of the same problem, that we now introduce. Let $v_1,\dots,v_h$ be vectors of $\bbZ^n$ and let $(w_1,\dots,w_k)$ be an ordered $k$-tuple of vectors of $\bbZ^n$. A \textbf{Nielsen move relative to $v_1,\dots,v_h$}, also called \textbf{relative Nielsen move} when there is no ambiguity, is any operation that consists of substituting $w_i$ with $w_i+v_j$ or $w_i-v_j$ for some $i\in\{1,\dots,k\}$ and $j\in\{1,\dots,h\}$. Fixed vectors $v_1,\dots,v_h$, we want to classify the $M$-big ordered $k$-tuples up to Nielsen moves and relative Nielsen moves. This means that, given two $M$-big ordered $k$-tuples, we want to know whether there is a sequence of Nielsen moves and relative Nielsen moves that goes from one to the other, and such that all the $k$-tuples that we write along the sequence are $M$-big.

Given vectors $v_1,\dots,v_h\in\bbZ^n$ and an ordered $k$-tuple $(w_1,\dots,w_k)$, we define the finitely generated abelian group $$A(w_1,\dots,w_k;v_1,\dots,v_h)=\gen{v_1,\dots,v_h,w_1,\dots,w_k}/\gen{v_1,\dots,v_h}.$$
It is immediate to see that $A$ is invariant if we change the $k$-tuple by Nielsen moves and relative Nielsen moves. Moreover, we can look at the ordered $k$-tuple of generators $([w_1],\dots,[w_k])$ for the abelian group $A$, and we notice that Nielsen moves on $(w_1,\dots,w_k)$ correspond to Nielsen moves on $([w_1],\dots,[w_k])$, while relative Nielsen moves on $(w_1,\dots,w_k)$ correspond to the identity on $([w_1],\dots,[w_k])$.

The problem of classification of $M$-big ordered $k$-tuples of vectors up to Nielsen moves and relative Nielsen moves, and the problem of classification of ordered $k$-tuples of generators for $A$ up to Nielsen equivalence, are strictly related. In fact, we will prove that they are equivalent for $k\ge 3$, regardless of $M$. The main result of Appendix \ref{sec:Nielsen-equiv-big} is the following theorem:

\begin{thm}\label{Nielsen-equiv-3big}
Let $v_1,\dots,v_h\in\bbZ^n$ and let $M\ge1$. Let $k\ge 3$ and let $(w_1,\dots,w_k),(w_1',\dots,w_k')$ be $M$-big ordered $k$-tuples. Then the following are equivalent:
\begin{enumerate}
\item\label{neb1} There is a sequence of Nielsen moves and Nielsen moves relative to $v_1,\dots,v_h$ going from $(w_1,\dots,w_k)$ to $(w_1',\dots,w_k')$ such that all the $k$-tuples that we write along the sequence are $M$-big.
\item\label{neb2} We have that $A(w_1,\dots,w_k;v_1,\dots,v_h)=A(w_1',\dots,w_k';v_1,\dots,v_h)=A$ and the two ordered $k$-tuples of generators $([w_1],\dots,[w_k]),([w_1'],\dots,[w_k'])$ are Nielsen equivalent in $A$.
\end{enumerate}
\end{thm}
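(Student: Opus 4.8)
For the direction \ref{neb1} $\Rightarrow$ \ref{neb2} I would simply invoke the observations recorded just before the statement: the group $A = A(w_1,\dots,w_k;v_1,\dots,v_h)$ is unchanged by Nielsen and relative Nielsen moves, a Nielsen move $w_i \mapsto w_i \pm w_j$ projects to the same Nielsen move on the classes $[w_i]\in A$, and a relative Nielsen move projects to the identity on the classes. Hence a big-safe sequence as in \ref{neb1} projects to a sequence of Nielsen moves in $A$ carrying $([w_1],\dots,[w_k])$ to $([w_1'],\dots,[w_k'])$, which is exactly \ref{neb2} (both $k$-tuples generate $A$ by construction).

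The content is the converse. I would first isolate the two features of \ref{neb2}. Adding one vector to another, $w_i \mapsto w_i + w_j$ with $i \neq j$, always preserves $M$-bigness, since every component of $w_j$ is $\ge M \ge 1$; the moves that threaten bigness are the subtractions $w_i \mapsto w_i - w_j$ and the relative moves $w_i \mapsto w_i \pm v_l$ (the $v_l$ may have components of either sign). The plan is to realize the given abstract Nielsen equivalence in $A$ by genuine moves on the vectors, and then to correct the remaining discrepancy, which by construction lies in $V = \gen{v_1,\dots,v_h}$, by relative moves. For the first part I would argue by induction on $k$: using big-safe moves I would normalize the last vector so that $w_k = w_k'$, and then promote $w_k$ to the list of relative vectors. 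This replaces the problem by the one for the $(k-1)$-tuple $(w_1,\dots,w_{k-1})$ relative to $v_1,\dots,v_h,w_k$, whose ambient abelian group is $A/\gen{[w_k]}$; Lemma \ref{Nielsen-equiv-rel1} guarantees that the reduced classes stay Nielsen equivalent there, so the inductive hypothesis applies. Lemma \ref{Nielsen-equiv-rel2} plays the analogous role in the tight boundary case, and Proposition \ref{Nielsen-equiv} closes the recursion once $A$ has strictly fewer than $k$ minimal generators, which is what lets the argument avoid ever needing a genuine $k=2$ base case.

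The engine that must make every step go through — and the step I expect to be the main obstacle — is an enlargement lemma asserting that each individual abstract move can be carried out by genuine moves while all components stay $\ge M$. Positive transvections are free, so the real difficulty is a subtraction $w_i \mapsto w_i - w_j$ when $w_i \not\ge w_j + M\mathbf{1}$ componentwise. This is exactly where $k \ge 3$ is indispensable: the plan is to use a third register $w_p$ as a buffer, enlarging $w_i$ by a large positive combination of the other vectors so that the subtraction becomes big-safe, and then arranging — via the $SL_k(\bbZ)$ relations available only for $k \ge 3$ — for the auxiliary bulk to be absorbed without ever having to subtract a large vector from the now-small $w_i - w_j$. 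With only two vectors no such rerouting exists, and the statement indeed fails for $k=2$ (the regime studied through limit angles in \cite{ACK-iso3}). The hardest bookkeeping will be to make the enlargement quantitatively sufficient: since the reduction word and the expression of the residual $V$-discrepancy in terms of $v_1,\dots,v_h$ are both finite with fixed integer coefficients, I would fix them first and only afterwards choose the enlargement constant $N$ large enough — compared to those coefficients, the entries of the $v_l$, and $M$ — that every intermediate tuple along the entire realization remains $M$-big. Breaking this apparent circularity between how much to enlarge and how large the ensuing correction is, is the decisive point of the proof.
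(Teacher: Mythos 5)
Your direction \ref{neb1}$\Rightarrow$\ref{neb2} is fine, and your high-level skeleton for the converse (make one vector common, promote it to the relative vectors, pass to the quotient via Lemmas \ref{Nielsen-equiv-rel1} and \ref{Nielsen-equiv-rel2}) is indeed the paper's skeleton. The first genuine gap is that the step you treat as routine --- ``using big-safe moves I would normalize the last vector so that $w_k=w_k'$'' --- is the actual content of the proof, and it cannot be done for the given $w_k'$ in general. What big-safe manipulation yields (Proposition \ref{finding-common-vector}, whose Euclidean algorithm on the coefficient tuple is the real place where $k\ge3$ enters) is only $w_1'=d(w_1''-w_2'')+\mu_1v_1+\dots+\mu_hv_h$ with an unknown integer multiplier $d$; removing $d$ requires $[w_1']$ not to be a proper power in $A$. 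Worse, the quotient step you invoke has arithmetic hypotheses you never arrange: Lemma \ref{Nielsen-equiv-rel1} (resp.\ \ref{Nielsen-equiv-rel2}) requires the common class to be represented by a vector with all components (resp.\ the first $n-1$ components) coprime, and without this, Nielsen equivalence simply does not descend to the quotient. Concretely, in $A=\bbZ^2$ the $3$-tuples $\bigl((3,0),(1,0),(0,1)\bigr)$ and $\bigl((3,0),(1,0),(0,-1)\bigr)$ are Nielsen equivalent (Proposition \ref{Nielsen-equiv}, since $k\ge n+1$), but their images in $A/\gen{(3,0)}\cong\bbZ\oplus\bbZ/3\bbZ$ have change-of-basis determinants $\pm1$, which differ modulo $3$, so they are \emph{not} Nielsen equivalent there. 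This is exactly why the paper first prepares the target tuple with Propositions \ref{finding-primitive1} and \ref{finding-primitive2}, making the to-be-common vector non-proper-power with coprime first two coordinates, before making it common; your proposal has no substitute for this preparation.

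The second gap is your ``enlargement engine'': realizing each \emph{individual} abstract Nielsen move by big-safe moves is not just quantitatively delicate, it is false. If $\gen{v_1,\dots,v_h}$ contains no strictly positive vector (say $h=0$ with $w_1,\dots,w_k$ linearly independent), then classes equal vectors, and the class $[w_i]-[w_j]$ may have no $M$-big representative at all; no buffering can realize a move whose target configuration does not exist, and ``absorbing the bulk'' into another register means you are no longer following the prescribed abstract sequence but rerouting it --- and the existence of a reroute through $M$-big configurations is precisely the theorem to be proved, so the plan is circular. The paper's engine, Lemma \ref{can-slide} (which needs only two registers, not three), realizes \emph{composite} moves between $M$-big endpoints, and genuine move-by-move realization is legitimate only once a strictly positive vector lies in the relative subgroup: that is Proposition \ref{positive-vector}, which your proposal never invokes. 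This omission also breaks your induction: after one reduction the promoted $w_k$ is $M$-big, hence strictly positive, so Proposition \ref{positive-vector} finishes for any remaining tuple size and no recursion is needed; as written, your recursion would descend to $k-1=2$, where the theorem fails, and Proposition \ref{Nielsen-equiv} cannot ``close the recursion'' because it concerns abstract Nielsen equivalence in $A$ only, not $M$-big realizability. (You also skip the cases handled separately in the paper, $A$ finite and $A\cong\bbZ$, the latter requiring its own argument, Proposition \ref{case-Z}.)
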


\subsection*{Preliminary results}

We provide a couple of preliminary lemmas, that will be useful in what follows. Fix $n\ge1$ and vectors $v_1,\dots,v_h$ in $\bbZ^n$; fix also an integer $M\ge1$.

\begin{lem}\label{can-reorder}
Let $k\ge1$ and let $(w_1,\dots,w_k)$ be an $M$-big ordered $k$-tuple. Then for every even permutation $\sigma$ of $\{1,\dots,k\}$ there is a sequence of Nielsen moves going from $(w_1,\dots,w_k)$ to $(w_{\sigma(1)},\dots,w_{\sigma(k)})$ such that all the $k$-tuples that we write along the sequence are $M$-big.
\end{lem}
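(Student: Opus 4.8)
The plan is to reduce the statement to the case of a single $3$-cycle acting on three of the vectors, and then to realize such a $3$-cycle by an explicit sequence of Nielsen moves engineered to remain $M$-big throughout. First I would record the one completely safe operation: an addition $w_i \mapsto w_i + w_j$ never destroys $M$-bigness, since every component of every $w_t$ is $\ge M \ge 1 > 0$, so adding one vector to another only increases components. Thus the only moves that can break the $M$-big condition are the subtractions $w_i \mapsto w_i - w_j$, and the entire difficulty is concentrated there.

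Next comes the reduction. For $k \ge 3$ the even permutations form the alternating group, which is generated by $3$-cycles; moreover a $3$-cycle on positions $\{i,j,l\}$ leaves all other entries of the tuple untouched, and those entries remain $M$-big. Hence it suffices to treat one $3$-cycle at a time: after realizing a single $3$-cycle the tuple is again an honest reordering of the original $M$-big vectors (so it is $M$-big), and I can then apply the construction to the next $3$-cycle in a factorization of $\sigma$. Composing the per-$3$-cycle sequences according to such a factorization will yield the desired global sequence. This reduces the problem to the following: given three $M$-big vectors $a,b,c$, pass from $(a,b,c)$ to $(b,c,a)$ through $M$-big triples.

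For the $3$-cycle itself I would use the fact that the corresponding column-permutation matrix lies in $SL_3(\bbZ)$ --- precisely because a $3$-cycle is even --- and is therefore a product of elementary transvections, which abstractly produces a sequence of Nielsen moves effecting the cyclic permutation. This abstract word need not be $M$-big, and here the main obstacle appears: the transvection word necessarily contains subtractions, since no product of ``$+$''-transvections can equal a nontrivial permutation matrix (such a product has all diagonal entries $\ge 1$, whereas a nontrivial permutation matrix has a zero on the diagonal). To enforce positivity I would interleave the word with the safe additions from the first step: before each subtraction $w_i \mapsto w_i - w_j$ I inflate the minuend $w_i$ by adding $w_j$ (and, when needed, the third vector) enough times that the result still exceeds $M$ componentwise, and I arrange the bookkeeping so that these auxiliary additions are cancelled later, leaving the net transformation equal to the $3$-cycle. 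Because inflation is always available and the target values are fixed and $M$-big, this can be carried out with only boundedly many extra moves.

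The principal difficulty, then, is exactly maintaining $M$-bigness across the unavoidable subtractions, and it is resolved by this inflate-then-deflate device. The argument rests on two features guaranteed by the hypotheses: the strict positivity of all components (so that additions are always free) and the availability of $k \ge 3$ slots (so that $3$-cycles generate every even permutation, and a third vector is always on hand to serve as scratch during each cyclic rotation). I expect the only genuinely delicate point in the writeup to be verifying, by a routine but careful componentwise check, that the inflating additions and their cancellations can be ordered so that \emph{every} intermediate triple is $M$-big; the rest is formal.
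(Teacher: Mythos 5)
Your reduction is exactly the paper's: for $k\ge 3$ the even permutations are generated by $3$-cycles, a $3$-cycle touches only three entries of the tuple, so everything comes down to passing from $(a,b,c)$ to $(b,c,a)$ through $M$-big triples; your observation that additions are always safe and only subtractions can break $M$-bigness is likewise the engine of the paper's argument. The soft spot is in how you realize the $3$-cycle. The device ``before each subtraction, inflate the minuend by adding the subtrahend enough times, and cancel the auxiliary additions later'' is circular as stated: inflating $w_i$ by $Nw_j$ and then subtracting $w_j$ once has net effect $w_i+(N-1)w_j$, so to recover the intended move you must later subtract the inflation off again, and those cancellations are themselves subtractions facing the very same positivity problem. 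The ``bookkeeping'' you defer is therefore not a routine check bolted onto a known word; it \emph{is} the content of the lemma, and a local patch-each-subtraction scheme does not obviously terminate.

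What closes the gap --- and what the paper does --- is to design the word globally rather than locally: perform all additions first, then perform subtractions each of whose results is again a $0/1$-combination of the original vectors. Explicitly,
\begin{align*}
(a,b,c)&\rightarrow(a,\,b+c,\,c)\rightarrow(a,\,b+c,\,c+a)\rightarrow(a+b+c,\,b+c,\,c+a)\\
&\rightarrow(b,\,b+c,\,c+a)\rightarrow(b,\,c,\,c+a)\rightarrow(b,\,c,\,a),
\end{align*}
where the last three moves are the subtractions $(a+b+c)-(c+a)=b$, then $(b+c)-b=c$, then $(c+a)-c=a$. The invariant that makes the verification trivial is that every intermediate entry is a non-negative, non-zero integer combination of $a,b,c$, hence automatically $M$-big; in particular the word is uniform in the triple, and no quantitative ``enough times'' estimates are needed. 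With this sequence substituted for your inflate-then-deflate paragraph, your argument coincides with the paper's proof.
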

\begin{proof}
Consider the sequence of Nielsen moves
\begin{gather}\notag
\begin{split}
(w_1,w_2,w_3)\rar & (w_1,w_2+w_3,w_3)\rar (w_1,w_2+w_3,w_3+w_1)\rar \\
\rar & (w_1+w_2+w_3,w_2+w_3,w_3+w_1)\rar (w_2,w_2+w_3,w_3+w_1)\rar \\ \rar & (w_2,w_3,w_3+w_1)\rar (w_2,w_3,w_1)
\end{split}
\end{gather}
and notice that in the same way we can obtain any permutation which is a $3$-cycle. But $3$-cycles generate all even permutations; the conclusion follows.
\end{proof}

\begin{lem}\label{can-slide}
Let $k\ge2$ and let $(w_1,w_2,\dots,w_k),(w_1',w_2,\dots,w_k)$ be $M$-big ordered couples. Suppose that $w_1'=w_1+\lambda_2w_2+\dots+\lambda_kw_k+\mu_1v_1+\dots+\mu_hv_h$ for some $\lambda_2,\dots,\lambda_k,\mu_1,\dots,\mu_h\in\bbZ$. Then there is a sequence of Nielsen moves and relative Nielsen moves going from $(w_1,w_2,\dots,w_k)$ to $(w_1',w_2,\dots,w_k)$ such that all the $k$-tuples that we write along the sequence are $M$-big.
\end{lem}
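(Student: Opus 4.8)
The plan is to modify only the first entry $w_1$, leaving $w_2,\dots,w_k$ untouched throughout the whole sequence, which is legitimate since these entries coincide in the source and target tuples. With this restriction a Nielsen move is just $w_1\mapsto w_1\pm w_i$ for some $i\ge 2$, and a relative Nielsen move is $w_1\mapsto w_1\pm v_j$. So the task reduces to travelling from $w_1$ to $w_1'=w_1+c$, where $c=\sum_{i=2}^k\lambda_i w_i+\sum_{j=1}^h\mu_j v_j$, by repeatedly adding or subtracting one of $w_2,\dots,w_k,v_1,\dots,v_h$, while keeping the single moving vector $M$-big at every step. The only obstruction is that subtractions forced by negative $\lambda_i$ or $\mu_j$ (together with the fact that the $v_j$ need not have nonnegative entries) could push some component below $M$.

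To sidestep this I would use $w_2$ as a reservoir; this is exactly where the hypothesis $k\ge 2$ enters. First I inflate: add $w_2$ to $w_1$ a total of $N$ times, reaching $w_1+Nw_2$. Since $w_2$ is $M$-big, every such addition only increases each coordinate, so all intermediate tuples are $M$-big. Set $B=\max\bigl(\max_i\|w_i\|_\infty,\ \max_j\|v_j\|_\infty\bigr)$ and $L=\sum_{i=2}^k|\lambda_i|+\sum_{j=1}^h|\mu_j|$, and choose $N$ with $NM\ge BL$.

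Next, starting from $w_1+Nw_2$, I perform in any order the $L$ moves that realize $+c$. Each such move shifts every coordinate by at most $B$ in absolute value, so after any prefix of these $L$ moves each coordinate differs from its value at $w_1+Nw_2$ by at most $BL$. As every coordinate of $w_1+Nw_2$ is at least $(N+1)M$ and $BL\le NM$, each coordinate stays $\ge (N+1)M-NM=M$ throughout. This phase ends at $w_1+Nw_2+c=w_1'+Nw_2$.

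Finally I deflate: subtract $w_2$ exactly $N$ times, passing through $w_1'+(N-t)w_2$ for $t=0,\dots,N$; since $w_1'$ and $w_2$ are both $M$-big, every coordinate here is at least $M+(N-t)M\ge M$, so all these tuples are $M$-big, and the endpoint is $w_1'$. The net change in the coefficient of $w_2$ over the three phases is $N+\lambda_2-N=\lambda_2$, so we indeed arrive at $w_1'=w_1+c$ with $w_2,\dots,w_k$ unchanged. The main (and essentially only) delicate point is the choice of $N$ ensuring $M$-bigness during the middle phase, which the inequality $NM\ge BL$ handles; the rest is routine bookkeeping.
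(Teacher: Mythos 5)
Your proposal is correct and follows essentially the same route as the paper's proof: inflate $w_1$ by a large multiple of $w_2$, perform the moves realizing $w_1'-w_1$ in the safely inflated range, then deflate by subtracting off the multiples of $w_2$. The only difference is that you make explicit the quantitative choice ($NM\ge BL$) that the paper leaves as "provided that $C$ had been chosen big enough."
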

\begin{proof}
We choose an arbitrary positive integer $C$ which is much bigger than all the components of $w_2,\dots,w_k,v_1,\dots,v_h$ and than all of $\lambda_2,\dots,\lambda_k,\mu_1,\dots,\mu_h$. We proceed as follows
$$(w_1,w_2,\dots,w_k)\xrar{*} (w_1+Cw_2,w_2,\dots,w_k)\xrar{*}(w_1'+Cw_2,w_2,\dots,w_k)\xrar{*} (w_1',w_2,\dots,w_k)$$
where $\xrar{*}$ denotes a finite sequence of Nielsen moves and relative Nielsen moves. It is evident that the first and the third $\xrar{*}$ can be performed in such a way that every $k$-tuple that we write along the sequence is $M$-big. All the $k$-tuples that we write during the second $\xrar{*}$ are $M$-big, provided that $C$ had been chosen big enough.
\end{proof}

\begin{prop}\label{positive-vector}
Let $k\ge 1$ and let $(w_1,\dots,w_k),(w_1',\dots,w_k')$ be $M$-big ordered $k$-tuples such that $A(w_1,\dots,w_k;v_1,\dots,v_h)=A(w_1',\dots,w_k';v_1,\dots,v_h)=A$. Suppose there is a vector in $\gen{v_1,\dots,v_h}$ such that all of its components are strictly positive. Then the following are equivalent:
\begin{enumerate}
\item\label{pv1} There is a sequence of Nielsen moves and relative Nielsen moves going from $(w_1,\dots,w_k)$ to $(w_1',\dots,w_k')$ such that all the $k$-tuples that we write along the sequence are $M$-big.
\item\label{pv2} The ordered $k$-uples of generators $([w_1],\dots,[w_k]),([w_1'],\dots,[w_k'])$ are Nielsen equivalent in $A$.
\end{enumerate}
\end{prop}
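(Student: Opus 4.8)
The plan is to prove the two implications separately. The forward direction \ref{pv1}$\Rightarrow$\ref{pv2} is formal, and I would handle it exactly as in the paragraph preceding \Cref{Nielsen-equiv-3big}: project the entire sequence to $A=\gen{v_1,\dots,v_h,w_1,\dots,w_k}/\gen{v_1,\dots,v_h}$. Since Nielsen and relative Nielsen moves preserve both $\gen{v_1,\dots,v_h,w_1,\dots,w_k}$ and $\gen{v_1,\dots,v_h}$, the group $A$ is unchanged throughout; every relative move acts as the identity on $([w_1],\dots,[w_k])$ because $[v_j]=0$, while every ordinary Nielsen move descends to the corresponding Nielsen move on the images. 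Hence the concrete sequence projects to a witness of Nielsen equivalence in $A$.

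The content is in \ref{pv2}$\Rightarrow$\ref{pv1}, and the strictly positive vector is the crucial tool. Fix $v\in\gen{v_1,\dots,v_h}$ with all components strictly positive. First I would record the elementary observation that any single entry $\hat w_i$ can be changed by an arbitrary $u\in\gen{v_1,\dots,v_h}$ through $M$-big relative Nielsen moves, \emph{provided} the final vector $\hat w_i+u$ is itself $M$-big: inflate $\hat w_i$ to $\hat w_i+Cv$ for $C$ large (this only increases components), realize the addition of $u$ one generator $v_j$ at a time (all intermediate vectors stay $\ge M$ componentwise once $C$ dominates the coefficients of $u$), and then deflate by subtracting $Cv$ in unit steps, which stays $M$-big because $\hat w_i+u$ already is and the remaining multiples of $v$ are nonnegative. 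Note this argument needs no lower bound on $k$.

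With this tool, the main step is to lift an abstract Nielsen sequence in $A$ carrying $([w_i])$ to $([w_i'])$, maintaining the invariant that the current concrete tuple $(\hat w_1,\dots,\hat w_k)$ is $M$-big with $[\hat w_i]$ equal to the current abstract entry. An abstract addition $[\hat w_i]\mapsto[\hat w_i]+[\hat w_j]$ is realized by the genuine move $\hat w_i\mapsto \hat w_i+\hat w_j$, which is automatically $M$-big. An abstract subtraction $[\hat w_i]\mapsto[\hat w_i]-[\hat w_j]$ is realized by first inflating $\hat w_i\mapsto \hat w_i+Cv$ with $C$ large enough that $\hat w_i+Cv-\hat w_j$ is $M$-big, then performing the Nielsen move $\hat w_i+Cv\mapsto \hat w_i+Cv-\hat w_j$; in $A$ this equals $[\hat w_i]-[\hat w_j]$ since $[v]=0$. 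The leftover multiples of $v$ accumulate but are invisible in $A$, so I deliberately do \emph{not} deflate between moves. After realizing the full sequence I reach an $M$-big tuple with $[\hat w_i]=[w_i']$, hence $\hat w_i-w_i'\in\gen{v_1,\dots,v_h}$ for each $i$; applying the single-entry relative slide above to each $i$ (with $u=w_i'-\hat w_i$ and $M$-big target $w_i'$) cleans these up and produces exactly $(w_1',\dots,w_k')$.

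The main obstacle is maintaining $M$-bigness through the subtraction moves, and this is precisely where the strictly positive vector is indispensable: unlike \Cref{Nielsen-equiv-3big}, whose proof uses the permutation trick of \Cref{can-reorder} and the slide of \Cref{can-slide} requiring $k\ge 3$, here $v$ furnishes unlimited room to inflate any entry before a potentially negative subtraction, so the argument runs for all $k\ge 1$. The only care needed is to choose each inflation constant $C$ large enough (uniformly boundable from the finitely many vectors and coefficients appearing) and to verify that the final clean-up ever only subtracts nonnegative multiples of $v$, so that $M$-bigness holds at every step.
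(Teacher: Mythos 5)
Your forward implication and your overall strategy (lift an abstract Nielsen sequence in $A$ to $M$-big representatives and realize each abstract move by legal moves) are the same as the paper's. The gap is in your basic tool: the ``inflation'' $\hat w_i \mapsto \hat w_i + Cv$ is not a relative Nielsen move. A relative move adds a single $\pm v_j$, so adding $Cv$ must itself be decomposed into moves adding the generators $v_j$ one at a time, and those intermediate vectors need not be $M$-big --- at that stage there is nothing to dominate them. Your parenthetical ``this only increases components'' compares only the two endpoints, not the intermediate tuples, even though you are careful about exactly this issue when adding $u$. Concretely, take $n=2$, $h=2$, $v_1=(2,-1)$, $v_2=(-1,2)$, so that $v=v_1+v_2=(1,1)$ is strictly positive, and let $M=1$ and $\hat w_i=(1,1)$. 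The four available relative moves send $(1,1)$ to $(3,0)$, $(-1,2)$, $(0,3)$, $(2,-1)$, none of which is $M$-big, so the inflation cannot even begin.

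This also kills your claim that the argument ``needs no lower bound on $k$'': with $k=1$, $w_1=(1,1)$, $w_1'=(2,2)$ and $v_1,v_2$ as above, both tuples are $M$-big and $A=A(w_1;v_1,v_2)=A(w_1';v_1,v_2)=0$ (since $w_1=v_1+v_2$ and $w_1'=2v_1+2v_2$ lie in $\gen{v_1,v_2}$), so condition \ref{pv2} holds, yet condition \ref{pv1} fails because no $M$-big move at all is available from $((1,1))$. So the single-entry slide you formulate is simply false for $k=1$ (and, incidentally, this shows the hypothesis ``$k\ge1$'' in the statement is too generous: the statement really needs $k\ge2$, which is what the paper's own proof uses, since it invokes \Cref{can-slide}, stated only for $k\ge2$). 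For $k\ge2$ your argument can be repaired, but only by abandoning inflation-by-$v$ in favor of inflation by another entry of the tuple: adding $\hat w_j$ ($j\ne i$) is a single genuine Nielsen move, $\hat w_j$ is componentwise $\ge M>0$, so the buffer $C\hat w_j$ can be built up and torn down through $M$-big tuples, and all the $\pm v_{j'}$ additions in between are dominated by it. That is exactly the paper's \Cref{can-slide}, which its proof applies entrywise to consecutive $M$-big lifts of the abstract sequence; the strictly positive vector $v$ is used only to construct the $M$-big lifts themselves, never as something to be added by moves.
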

\begin{proof}
\ref{pv1} $\Rar$ \ref{pv2}. Trivial.

\ref{pv2} $\Rar$ \ref{pv1}. Let $v$ be a vector in $\gen{v_1,\dots,v_h}$ with all components strictly positive. For every element $[w]\in A$ we can construct a lifting $w\in\gen{w_1,\dots,w_k,v_1,\dots,v_h}$ such that all the components of $w$ are $\ge M$; in fact, we can just pick any lifting and add $v$ repeatedly until the components become all $\ge M$.

Suppose there is a sequence of Nielsen moves going from $([w_1],\dots,[w_k])$ to $([w_1'],\dots,[w_k'])$ in $A$. Let $([w_1^r],\dots,[w_k^r])$ be such a sequence, for $r=1,\dots,R$, so that $[w_i^1]=[w_i]$ and $[w_i^R]=[w_i']$ for $i=1,\dots,k$. Let $w_i^r$ be an $M$-big lifting of $[w_i^r]$ for $i=1,\dots,k$ and $r=1,\dots,R$, and we can take $w_i^1=w_i$ and $w_i^R=w_i'$ for $i=1,\dots,k$. By Lemma \ref{can-slide}, it is possible to pass from $(w_1^r,\dots,w_k^r)$ to $(w_1^{r+1},\dots,w_k^{r+1})$ with a sequence of Nielsen moves and relative Nielsen moves. The conclusion follows.
\end{proof}

\subsection*{Nielsen equivalence for three or more big vectors}

Fix $n\ge1$ and vectors $v_1,\dots,v_h$ in $\bbZ^n$; fix also an integer $M\ge1$. This section is dedicated to the proof of Theorem \ref{Nielsen-equiv-3big}. The idea is to try and apply Proposition \ref{positive-vector}. When we do not have a positive vector in $\gen{v_1,\dots,v_h}$, we essentially use one of the $w_i$s instead. This means that, given two $M$-big ordered $k$-tuples $(w_1,\dots,w_k),(w_1',\dots,w_k')$, we want to manipulate them until they end up with a common vector. The following Proposition \ref{finding-common-vector} explains how to manipulate one of the two $k$-tuples in order to get a vector ``almost'' in common with the other, up to an integer multiple; the next Propositions \ref{finding-primitive1} and \ref{finding-primitive2} are aimed at removing the integer multiple, and to make the vector ``really'' in common to the $k$-tuples.

\begin{prop}\label{finding-common-vector}
Let $k\ge 3$ and let $(w_1,\dots,w_k),(w_1',\dots,w_k')$ be $M$-big ordered $k$-tuples such that $A(w_1,\dots,w_k;v_1,\dots,v_h)=A(w_1',\dots,w_k';v_1,\dots,v_h)=A$. Then there is an $M$-big ordered $k$-tuple $(w_1'',\dots,w_k'')$ such that:
\begin{enumerate}
\item There is a sequence of Nielsen moves and relative Nielsen moves going from $(w_1,\dots,w_k)$ to $(w_1'',\dots,w_k'')$ such that all the $k$-tuples that we write along the sequence are $M$-big.
\item We have $w_1'=d(w_1''-w_2'')+\mu_1v_1+\dots+\mu_hv_h$ for some $d,\mu_1,\dots,\mu_h\in\bbZ$.
\end{enumerate}
\end{prop}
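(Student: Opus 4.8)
The plan is to pass to the abelian group $A$, reduce the statement to a coefficient-theoretic fact, and then lift back to $M$-big vectors using the flexibility of Lemmas \ref{can-reorder} and \ref{can-slide}. First I would record that the hypothesis $A(w_1,\dots,w_k;v_1,\dots,v_h)=A(w_1',\dots,w_k';v_1,\dots,v_h)$ forces $\gen{v_1,\dots,v_h,w_1,\dots,w_k}=\gen{v_1,\dots,v_h,w_1',\dots,w_k'}$ as subgroups of $\bbZ^n$ (both contain $\gen{v_1,\dots,v_h}$ and have the same quotient). In particular $w_1'\in\gen{v_1,\dots,v_h,w_1,\dots,w_k}$, so I may write $w_1'=\sum_i a_iw_i+\sum_\ell\mu_\ell v_\ell$ for integers $a_i,\mu_\ell$; equivalently $[w_1']=\sum_i a_i[w_i]$ in $A$. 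Set $\delta=\mcd{a_1,\dots,a_k}$. If $\delta=0$ then $w_1'\in\gen{v_1,\dots,v_h}$ and the conclusion holds with $d=0$ and $(w_i'')=(w_i)$, so assume $\delta\ge1$.

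Read at the level of $A$, the goal is to transform $([w_1],\dots,[w_k])$ by Nielsen moves (all that the relative moves and the $v_\ell$ contribute, since they act trivially on $A$-classes) into a tuple $([w_1''],\dots,[w_k''])$ with $[w_1'']-[w_2'']=c_0$, where $c_0:=\sum_i(a_i/\delta)[w_i]$ has a coprime coefficient vector and satisfies $[w_1']=\delta\,c_0$. Once this is achieved, $w_1'-\delta(w_1''-w_2'')$ maps to $\mathbf 0$ in $A$, hence lies in $\gen{v_1,\dots,v_h}$, which is exactly conclusion (2) with $d=\delta$. To produce such a tuple I would argue abstractly first: since $\mcd{a_1/\delta,\dots,a_k/\delta}=1$ and $k\ge2$, the vector $(a_1/\delta,\dots,a_k/\delta)$ can be carried to $(1,0,\dots,0)$ by the elementary operations dual to Nielsen moves (a Nielsen move $w_i\mapsto w_i+w_j$ induces $a_j\mapsto a_j-a_i$, and such transvections generate $SL_k(\bbZ)$, which acts transitively on primitive vectors). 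This yields a Nielsen-equivalent tuple whose first class equals $c_0$; a single further move $w_1\mapsto w_1+w_2$ then makes $c_0=[w_1'']-[w_2'']$.

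The delicate point — the part I expect to be the real obstacle — is carrying out this reduction through a sequence of $M$-big tuples, since the operations dual to the Euclidean algorithm involve subtractions $w_i\mapsto w_i-w_j$ whose results need not have all components $\ge M$. This is precisely where Lemma \ref{can-slide} and the hypothesis $k\ge3$ enter. Lemma \ref{can-slide} lets me change one coordinate by an arbitrary integer combination of the remaining ones (and of the $v_\ell$) while staying $M$-big, by first padding with a large positive multiple $Cw_j$ of another coordinate — legitimate since every $w_j$ is $M$-big, hence strictly positive — performing the combination in the padded, componentwise-huge regime, and then removing the padding down to the $M$-big target. With $k\ge3$ there is always a spare positive coordinate to serve as padding while two others are being combined, and Lemma \ref{can-reorder} supplies the even reorderings needed to place each coordinate into the position on which Lemma \ref{can-slide} acts. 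Thus each step of the abstract reduction is realized by a finite sequence of $M$-big Nielsen and relative Nielsen moves.

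Finally I would assemble the pieces: apply the padded reduction to reach a tuple with $[w_1'']-[w_2'']=c_0$, verify $M$-bigness of the endpoint, and read off $d=\delta$ together with the $\mu_\ell$ from the projection to $A$. The only genuine computation to spell out is the choice of padding constants $C$ large enough relative to all coefficients produced by the Euclidean algorithm so that no intermediate coordinate ever drops below $M$; this is routine once the padding scheme is fixed, exactly as in the proof of Lemma \ref{can-slide}.
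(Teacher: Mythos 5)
Your setup is correct and matches the paper's: writing $w_1'=\sum_i a_iw_i+\sum_\ell\mu_\ell v_\ell$ and tracking how Nielsen moves on the tuple act on the coefficient vector is exactly how the paper's proof begins. The genuine gap is in the realization step, precisely at the point you yourself flag as ``the real obstacle''. Your abstract reduction of the primitive coefficient vector to $(1,0,\dots,0)$ uses transvections in both directions, and the coefficient moves $a_j\mapsto a_j+a_i$ are dual to vector \emph{subtractions} $w_i\mapsto w_i-w_j$. You claim each such step can be realized through $M$-big tuples by padding via Lemma~\ref{can-slide}, but that lemma has as a hypothesis that \emph{both} endpoints are $M$-big: it lets you travel between two $M$-big tuples differing in one coordinate, it cannot make a non-$M$-big target reachable. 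If a step of your abstract reduction has an endpoint that is not $M$-big, then \emph{no} sequence of moves realizes that step, since the endpoint itself must appear in the sequence; ``removing the padding'' lands you exactly on the forbidden tuple. This is not a corner case: take $n=1$, $M=1$, $h=0$, $w_1=w_2=w_3=(1)$; every difference $w_i-w_j$ equals $(0)$, so the first subtraction your reduction calls for cannot be realized by any $M$-big sequence, padded or not. Transitivity of $SL_k(\bbZ)$ on primitive vectors gives you no control over which transvections are used, hence no control over whether the intermediate tuples are $M$-big.

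What is actually needed --- and what the paper's proof supplies --- is to organize the coefficient reduction so that it uses \emph{only} moves of the form $\lambda_i\mapsto\lambda_i-\lambda_j$, because by the identity $\lambda_iw_i+\lambda_jw_j=(\lambda_i-\lambda_j)w_i+\lambda_j(w_j+w_i)$ these are dual to vector \emph{additions} $w_j\mapsto w_j+w_i$, which preserve $M$-bigness automatically, with no padding needed at all. The whole difficulty then becomes arithmetic: show that subtraction-only coefficient moves suffice to reach the form $(d,-d,0,\dots,0)$. The paper does this by using $k\ge3$ to first arrange $\lambda_k<0$, then subtracting $\lambda_k$ to make $\lambda_1,\dots,\lambda_{k-1}$ positive while invoking Lemma~\ref{mcd-concentrate} to arrange $\mcd{\lambda_1,\dots,\lambda_{k-1}}=\mcd{\lambda_1,\dots,\lambda_k}$, and then running a subtraction-only Euclidean algorithm on the positive entries. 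Your proposal replaces this --- which is the actual content of the proposition --- with an appeal to $SL_k(\bbZ)$ plus a padding scheme that cannot produce non-$M$-big endpoints, so the proof does not go through as written.
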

\begin{proof}
Let $w_1'=\lambda_1w_1+\dots+\lambda_kw_k+\mu_1v_1+\dots+\mu_hv_h$ for $\lambda_1,\dots,\lambda_k,\mu_1,\dots,\mu_h\in\bbZ$. We notice that for $i\not=j$ we have the identity
$$\lambda_iw_i+\lambda_jw_j=(\lambda_i-\lambda_j)w_i+\lambda_j(w_j+w_i).$$
For an ordered $k$-tuple of integers $(\lambda_1,\dots,\lambda_k)$ we define a \textit{subtraction} to be any operation that consists of substituting $\lambda_i$ with $\lambda_i-\lambda_j$ for some $i\not=j$. We can perform any sequence of subtractions to the $k$-tuple of integers $(\lambda_1,\dots,\lambda_k)$, as these are achieved by Nielsen moves on the $k$-tuple of vectors $(w_1,\dots,w_k)$ that keep it $M$-big.

We now work on the ordered $k$-tuple of integers $(\lambda_1,\dots,\lambda_k)$. As $k\ge 3$, it is easy to see that using subtractions we can assume that $\lambda_k<0$. We now subtract $\lambda_k$ from each of $\lambda_1,\dots,\lambda_{k-1}$ sufficiently many times, in order to get $\lambda_1,\dots,\lambda_{k-1}>0$. By Lemma \ref{mcd-concentrate}, we can subtract $\lambda_k$ from $\lambda_1$ the right amount of times, in order to get $\mcd{\lambda_1,\dots,\lambda_{k-1}}=\mcd{\lambda_1,\dots,\lambda_k}$. We now use subtractions on the non-negative integers $\lambda_1,\dots,\lambda_{k-1}$ in order to perform the Euclidean algorithm (and here we need $k\ge3$): we reduce to a situation of the type $d,0,\dots,0,-cd$ for some integers $c,d\ge1$. From this we obtain $d,-d,0,\dots,0,-cd$ and finally $d,-d,0,\dots,0$. The conclusion follows.
\end{proof}

\begin{lem}\label{mcd-concentrate}
Let $a,b,c\in\bbZ$ with $b\not=0$. Then there is $\lambda\in\bbZ$ such that $$\mcd{a,b,c}=\mcd{a+\lambda c,b}.$$
Moreover, the same holds with $\lambda'$ instead of $\lambda$, where $\lambda'$ is any element of the coset $\lambda+b\bbZ$.
\end{lem}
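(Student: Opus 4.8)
The plan is to reduce the statement to a coprimality assertion and then solve it prime by prime via the Chinese Remainder Theorem. First I would set $g=\mcd{a,b,c}$; since $b\neq 0$ we have $g\geq 1$, so I may write $a=ga_0$, $b=gb_0$, $c=gc_0$ with $\mcd{a_0,b_0,c_0}=1$ and $b_0\neq 0$. Because $a+\lambda c=g(a_0+\lambda c_0)$ and $b=gb_0$, the common factor $g$ pulls out of the gcd, giving $\mcd{a+\lambda c,b}=g\cdot\mcd{a_0+\lambda c_0,b_0}$. Hence the desired identity $\mcd{a+\lambda c,b}=\mcd{a,b,c}=g$ is equivalent to finding $\lambda$ with $\mcd{a_0+\lambda c_0,b_0}=1$, and this is the reduced problem I would actually solve.

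Next I would analyse the reduced problem one prime at a time. The condition $\mcd{a_0+\lambda c_0,b_0}=1$ holds if and only if no prime $p$ dividing $b_0$ divides $a_0+\lambda c_0$. Fix such a prime $p$. Since $\mcd{a_0,b_0,c_0}=1$ and $p\mid b_0$, the prime $p$ cannot divide both $a_0$ and $c_0$. If $p\mid c_0$, then $p\nmid a_0$ and $a_0+\lambda c_0\equiv a_0\not\equiv 0\pmod p$ for every $\lambda$, so no constraint arises. If $p\nmid c_0$, then $\lambda\mapsto a_0+\lambda c_0$ is a bijection of $\bbZ/p\bbZ$, so $p\mid a_0+\lambda c_0$ for exactly one residue class, namely $\lambda\equiv -a_0c_0^{-1}\pmod p$, and I must forbid that single class. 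In either case $\lambda$ is required to avoid at most one residue class modulo $p$.

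Finally, there are only finitely many primes dividing $b_0$, and for each such $p\geq 2$ at least one residue class modulo $p$ remains admissible. By the Chinese Remainder Theorem I can therefore choose a single $\lambda$ lying in an admissible class modulo every prime dividing $b_0$ at once; such a $\lambda$ satisfies $\mcd{a_0+\lambda c_0,b_0}=1$, hence $\mcd{a+\lambda c,b}=g=\mcd{a,b,c}$, as required. The ``moreover'' part is then immediate: every constraint on $\lambda$ depends only on its residue modulo some prime $p\mid b_0$, and since $b_0\mid b$ we have $p\mid b$, so any $\lambda'\equiv\lambda\pmod b$ satisfies exactly the same constraints and works as well. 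I do not expect a genuine obstacle in this argument; the only points needing a little care are verifying that the factored-out $g$ really commutes with the gcd and that the residue conditions depend solely on $\lambda\bmod b$, which is precisely what delivers the coset statement.
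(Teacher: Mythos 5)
Your proof is correct and follows essentially the same route as the paper's: for each prime $p$ dividing $b$, at most one residue class of $\lambda$ modulo $p$ needs to be forbidden, and the Chinese Remainder Theorem glues the local choices, with the coset statement following because all constraints are modulo primes dividing $b$. The only real difference is your preliminary normalization of dividing out $g=\mcd{a,b,c}$, which turns the paper's three-case comparison of valuations $v_p(a)$ versus $v_p(c)$ into a cleaner two-case coprimality check for $\mcd{a_0+\lambda c_0,b_0}=1$; this is a tidier presentation of the same argument.
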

\begin{proof}
Let $p\divides b$ be a prime. If $v_p(c)>v_p(a)$ then we have $v_p(a+\lambda c)=v_p(a)=\min\{v_p(a),v_p(c)\}$. If $v_p(c)<v_p(a)$ then we impose the condition $\lambda\not\equiv 0$ mod $p$; this implies that $v_p(a+\lambda c)=v_p(c)=\min\{v_p(a),v_p(c)\}$. If $v_p(c)=v_p(a)$ then we write $a=p^{v_p(a)}a'$ and $c=p^{v_p(c)}c'$ with $a',c'$ coprime with $p$, and we impose the condition $\lambda\not\equiv-a'/c'$ mod $p$; this implies that $v_p(a+\lambda c)=v_p(a)=v_p(c)$.

By the Chinese reminder theorem, we can find $\lambda$ satisfying the above conditions for all primes $p\divides b$. Now for every prime $p$ that divides $b$ we have $v_p(\mcd{a+\lambda c,b})=\min\{v_p(a+\lambda c),v_p(b)\}=\min\{v_p(a),v_p(b),v_p(c)\}=v_p(\mcd{a,b,c})$, and for every prime $p$ that does not divide $b$ we have $v_p(\mcd{a+\lambda c,b})=0=v_p(\mcd{a,b,c})$. It follows that $\mcd{a+\lambda c,b}=\mcd{a,b,c}$, as desired.
\end{proof}

\begin{prop}\label{finding-primitive1}
Let $k\ge 2$ and let $(w_1,\dots,w_k)$ be an $M$-big ordered $k$-tuple. Suppose $A=A(w_1,\dots,w_k;v_1,\dots,v_h)\cong\bbZ\oplus\bbZ\oplus A'$ for some finitely generated abelian group $A'$. Then there is an $M$-big ordered $k$-tuple $(w_1',\dots,w_k')$ such that:
\begin{enumerate}
\item There is a sequence of Nielsen moves and relative Nielsen moves going from $(w_1,\dots,w_k)$ to $(w_1',\dots,w_k')$ such that each $k$-tuple that we write along the sequence is $M$-big.
\item The first two components of $[w_1']\in A\cong\bbZ\oplus\bbZ\oplus A'$ are coprime, and $[w_1']$ is not a proper power.
\end{enumerate}
\end{prop}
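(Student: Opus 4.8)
The plan is to translate the statement into a question about the abelian group $A\cong\bbZ\oplus\bbZ\oplus A'$ and then realize the necessary manipulation by $M$-big slides. Write $\pi\colon A\to\bbZ\oplus\bbZ$ for the projection onto the two free cyclic factors, so $\pi([w_i])=(a_i,b_i)$ records the first two components of $[w_i]$. The first point is that the two required conditions are linked: if $\pi([w_1'])$ is a \emph{primitive} vector of $\bbZ^2$ (its two entries coprime), then $[w_1']$ cannot be a proper power. Indeed, if $[w_1']=m\cdot x$ with $m\ge 2$, then $\pi([w_1'])=m\cdot\pi(x)$ forces $m$ to divide both entries of $\pi([w_1'])$, contradicting coprimality. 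So it suffices to arrange that $\pi([w_1'])$ be primitive, which also gives exactly the ``coprime first two components'' conclusion.

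Next I would reduce the manipulation to changes of $w_1$ alone. By \Cref{can-slide} I may replace $w_1$ by $w_1+\sum_{i\ge 2}\lambda_i w_i+\sum_j\mu_j v_j$ through an $M$-big sequence of Nielsen and relative moves, provided the result is again $M$-big. At the level of $A$ this sends $[w_1]$ to $[w_1]+\sum_{i\ge 2}\lambda_i[w_i]$, so $\pi([w_1])$ moves to an arbitrary element of the coset $(a_1,b_1)+H$, where $H=\gen{(a_2,b_2),\dots,(a_k,b_k)}\le\bbZ^2$. Since $[w_1],\dots,[w_k]$ generate $A$, their images generate $\bbZ^2$, and hence $\gen{(a_1,b_1)}+H=\bbZ^2$.

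The key algebraic lemma I would isolate is that the coset $(a_1,b_1)+H$ always contains a primitive vector. This follows from the structure of $H$: since $\gen{(a_1,b_1)}+H=\bbZ^2$, the quotient $\bbZ^2/H$ is cyclic, so by Smith normal form there is a basis $f_1,f_2$ of $\bbZ^2$ with either $H=\gen{f_1,\,d f_2}$ (rank $2$, some $d\ge 1$) or $H=\gen{f_1}$ (rank $1$). In the rank-$2$ case the class $(a_1,b_1)$ must have $f_2$-coordinate coprime to $d$, and choosing the $f_1$-coordinate coprime to it yields a primitive representative; in the rank-$1$ case the whole coset line $(a_1,b_1)+\bbZ f_1$ consists of primitive vectors, since $(a_1,b_1)$ together with $f_1$ forms a basis.

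The main obstacle, and the final step, is to upgrade this existence statement to an $M$-big representative: I must exhibit a vector in $w_1+\gen{w_2,\dots,w_k,v_1,\dots,v_h}$ that is $M$-big and whose image is primitive. Here the strict positivity of $w_2,\dots,w_k$ (all components $\ge M$) is essential. I would slide $w_1$ along the $w_i$ with coefficients $\lambda_i\ge N_0$ for $N_0$ large, which keeps $w_1'$ $M$-big and realizes images in $(a_1,b_1)+S$ with $S=\{\sum_{i\ge 2}\lambda_i(a_i,b_i):\lambda_i\ge N_0\}\subseteq H$. When $H$ has rank $2$ this set fills a full-dimensional cone extending to infinity inside the coset $(a_1,b_1)+H$, and such a region necessarily contains primitive vectors; when $H$ has rank $1$ every element of the coset line is already primitive by the structural lemma, so any large positive slide along a suitable $w_i$ lands on a primitive vector. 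In either case \Cref{can-slide} then produces the desired $M$-big sequence of moves from $(w_1,\dots,w_k)$ to $(w_1',\dots,w_k)$, so that $[w_1']$ has coprime first two components and is not a proper power. Controlling this interaction between positivity (needed for $M$-bigness) and the divisibility forced by primitivity — rather than the purely group-theoretic existence of \Cref{Nielsen-equiv} — is the crux of the argument.
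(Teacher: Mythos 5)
Your reduction is sound and genuinely different from the paper's route: invoking \Cref{can-slide} to reduce everything to exhibiting a \emph{single} $M$-big vector $w_1'$ in the coset $w_1+\gen{w_2,\dots,w_k,v_1,\dots,v_h}$ whose projection to $\bbZ\oplus\bbZ$ is primitive, the observation that primitivity of the projection kills proper powers, and the Smith-form analysis of the coset $(a_1,b_1)+H$ are all correct. The gap is exactly at the step you yourself call the crux, which is asserted rather than proved, and whose stated justification is not right. The set $(a_1,b_1)+S$ with $S=\{\sum_{i\ge2}\lambda_i(a_i,b_i):\lambda_i\ge N_0\}$ does \emph{not} ``fill a full-dimensional cone extending to infinity inside the coset $(a_1,b_1)+H$'': $S$ is a translated \emph{monoid}, and a monoid generated by lattice vectors generally misses infinitely many points of $H$ lying in its cone. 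For instance, if the projections of $w_2,w_3,w_4$ are $(2,0),(3,0),(0,1)$, then $H=\bbZ^2$, but no point of $(a_1,b_1)+S$ has first coordinate equal to $a_1+5N_0+1$, so an entire line of coset points inside the cone is unreachable. Hence you cannot appeal to a statement about cone-shaped regions of the coset of $H$; you must prove that the monoid-translate itself contains a primitive vector. Moreover, even for genuine cone regions, ``such a region necessarily contains primitive vectors'' is not free: it is precisely the positivity-versus-divisibility tension, and it needs an arithmetic argument (CRT or Dirichlet), which is absent.

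Both defects are repairable, and the repair shows what is really needed. For every $p\in(a_1,b_1)+H$, writing $p=(a_1,b_1)+\sum_{i\ge2}c_i(a_i,b_i)$ and setting $\sigma=\sum_{i\ge2}(a_i,b_i)$, the ray $p+N\sigma=(a_1,b_1)+\sum_{i\ge2}(c_i+N)(a_i,b_i)$ lies in $(a_1,b_1)+S$ for all large $N$; so it suffices to pick, via your structural lemma, a primitive $p$ in the coset not parallel to $\sigma$, write $p=\alpha\tau+\beta\rho$ in a basis $\tau,\rho$ of $\bbZ^2$ with $\sigma=e\tau$, $\beta\neq0$, and then choose arbitrarily large $N$ with $\mcd{\alpha+Ne,\beta}=1$ by the Chinese remainder theorem. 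That missing CRT input is exactly what the paper packages as \Cref{mcd-concentrate}: its proof chooses the Nielsen coefficients nonnegative from the start (so $M$-bigness is automatic) and then the whole argument is elementary gcd bookkeeping on the first two coordinates, with no geometry of cones at all. As written, your proof is incomplete at its decisive step; completed as above it becomes a valid alternative to the paper's argument.
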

\begin{proof}
Let $[w_i]$ be equal to the vector $(\alpha_i,\beta_i,\gamma_i)$ through the isomorphism $A\cong\bbZ\oplus\bbZ\oplus A'$, for $i=1,\dots,k$ and $\alpha_i,\beta_i\in\bbZ$ and $\gamma_i\in A'$. Since $[w_1],\dots,[w_k]$ generate $A$, we must have $\alpha_i\beta_j-\alpha_j\beta_i\not=0$ for some $i,j\in\{1,\dots,k\}$. By Lemma \ref{can-reorder}, we can reorder the vectors with an arbitrary even permutation. Thus we can assume that
$$\alpha_1\beta_2-\alpha_2\beta_1\not=0.$$
If $\alpha_2=0$ then $\alpha_1\not=0$ (since $\alpha_1\beta_2-\alpha_2\beta_1\not=0$) and we substitute $w_2$ with $w_2+w_1$. Thus we can assume that
$$\alpha_2\not=0.$$
By Lemma \ref{mcd-concentrate}, we find $\lambda\ge0$ such that $\mcd{\alpha_1+\lambda\alpha_3,\alpha_2}=\mcd{\alpha_1,\alpha_2,\alpha_3}$. We substitute $w_1$ with $w_1+\lambda w_3$ and thus we can assume $\mcd{\alpha_1,\alpha_2}=\mcd{\alpha_1,\alpha_2,\alpha_3}$. Since $\lambda$ can be chosen modulo $\alpha_2$, we can also take $\lambda$ in such a way that $\alpha_1\beta_2-\alpha_2\beta_1$ remains $\not=0$. Reiterating the same reasoning, we obtain $\mcd{\alpha_1,\alpha_2}=\mcd{\alpha_1,\dots,\alpha_k}$, which is $1$ since $[w_1],\dots,[w_k]$ generate $A$. Thus we can assume that
$$\mcd{\alpha_1,\alpha_2}=1.$$
By Lemma \ref{mcd-concentrate}, we find $C\ge0$ such that $\mcd{\alpha_1+C\alpha_2,\alpha_1\beta_2-\alpha_2\beta_1}=\mcd{\alpha_1,\alpha_2,\alpha_1\beta_2-\alpha_2\beta_1}=1$. We now have that
$$
\begin{array}{l}
\mcd{\alpha_1+C\alpha_2,\alpha_1\beta_2-\alpha_2\beta_1-\beta_2(\alpha_1+C\alpha_2)}=1 \\
\mcd{\alpha_1+C\alpha_2,-\alpha_2(\beta_1+C\beta_2)}=1 \\
\mcd{\alpha_1+C\alpha_2,\beta_1+C\beta_2}=1
\end{array}
$$
We substitute $w_1$ with $w_1+Cw_2$ and thus obtain that $(\alpha_1,\beta_1)=1$. This also implies that the element $[w_1]$ can not be a proper power in $A$.
\end{proof}

\begin{prop}\label{finding-primitive2}
Let $k\ge 3$ and let $(w_1,\dots,w_k)$ be an $M$-big ordered $k$-tuple. Suppose that $A=A(w_1,\dots,w_k;v_1,\dots,v_h)\cong\bbZ\oplus\bbZ/d_2\bbZ\oplus A'$ for some $d_2\ge 2$ and $A'$ finitely generated abelian group. Then there is an $M$-big ordered $k$-tuple $(w_1',\dots,w_k')$ such that:
\begin{enumerate}
\item There is a sequence of Nielsen moves and relative Nielsen moves going from $(w_1,\dots,w_k)$ to $(w_1',\dots,w_k')$ such that each $k$-tuple that we write along the sequence is $M$-big.
\item The first two components of $[w_1']\in A\cong\bbZ\oplus\bbZ/d_2\bbZ\oplus A'$ are coprime, and $[w_1']$ is not a proper power.
\end{enumerate}
\end{prop}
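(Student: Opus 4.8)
The plan is to follow the proof of Proposition \ref{finding-primitive1} as closely as possible and to isolate the one place where the torsion in the second factor forces a new argument. Write $[w_i]=(\alpha_i,\beta_i,\gamma_i)$ under the fixed isomorphism $A\cong\bbZ\oplus\bbZ/d_2\bbZ\oplus A'$, with $\alpha_i\in\bbZ$, $\beta_i\in\bbZ/d_2\bbZ$, $\gamma_i\in A'$, and fix integer lifts $\tilde\beta_i$ of the $\beta_i$. Every Nielsen/relative move induces the corresponding column operation on the columns $(\alpha_i,\tilde\beta_i)$ (the $v_j$ acting trivially on $A$), and an addition move $w_i\mapsto w_i+\lambda w_j$ with $\lambda\ge 0$ keeps the tuple $M$-big automatically; so, exactly as in Proposition \ref{finding-primitive1}, I would work only with such non-negative additions and the reorderings of Lemma \ref{can-reorder}, using Lemma \ref{mcd-concentrate} to realise gcd conditions inside this positive regime. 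The generation hypothesis gives, after projecting to the free factor and to $\bbZ\oplus\bbZ/d_2\bbZ$ and computing Smith invariants (Lemmas \ref{computing-Smith}, \ref{Smith-to-abelian}), that $\mcd{\alpha_1,\dots,\alpha_k}=1$ and $\mcd{\{\alpha_i\tilde\beta_j-\alpha_j\tilde\beta_i\}_{i,j},\,d_2}=1$.

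The decisive difference from the torsion-free case is the meaning of ``primitive''. Over $\bbZ\oplus\bbZ$ the condition $\mcd{\alpha_1,\beta_1}=1$ already forces $(\alpha_1,\beta_1)$ to be a non-power. Over $\bbZ\oplus\bbZ/d_2\bbZ$ this fails: I would first record the correct criterion, namely that $(\alpha_1,\beta_1)$ is \emph{not} a proper power iff $\mcd{\alpha_1,\tilde\beta_1}=1$ \emph{and} every prime factor of $\alpha_1$ divides $d_2$ (for a prime $p\mid\alpha_1$ with $p\nmid d_2$ one always has $(\alpha_1,\beta_1)=p\,(\alpha_1/p,\,p^{-1}\beta_1)$, $p$ being invertible mod $d_2$). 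Thus besides coprimality of the first two entries I must also arrange the radical condition $\mathrm{rad}(\alpha_1)\mid d_2$, and this is where the proof genuinely departs from Proposition \ref{finding-primitive1}.

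To achieve it I would exploit $k\ge 3$. Using Lemma \ref{mcd-concentrate} repeatedly to fold the other first-components into $\alpha_2$ (as in the iteration of Proposition \ref{finding-primitive1}), I can assume $\mcd{\alpha_2,\alpha_3}=\mcd{\alpha_1,\dots,\alpha_k}=1$, after a reordering (Lemma \ref{can-reorder}) placing in positions $2,3$ a pair of columns whose minor $\alpha_3\tilde\beta_2-\alpha_2\tilde\beta_3$ is coprime to some prime $p\mid d_2$ (such a pair and such a $p$ exist since $\mcd{\{\text{minors}\},d_2}=1$ and $d_2\ge 2$). Because $\alpha_2,\alpha_3$ are now coprime, the non-negative combinations $c_2\alpha_2+c_3\alpha_3$ hit every sufficiently large integer, so the single move $w_1\mapsto w_1+c_2w_2+c_3w_3$ can set the new first component equal to a large power $p^{\,j}$, giving $\mathrm{rad}(\alpha_1)=p\mid d_2$. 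Among the solutions of $c_2\alpha_2+c_3\alpha_3=p^{\,j}-\alpha_1$, replacing $(c_2,c_3)$ by $(c_2+\alpha_3 t,\,c_3-\alpha_2 t)$ leaves $\alpha_1$ fixed and shifts $\tilde\beta_1$ by $t(\alpha_3\tilde\beta_2-\alpha_2\tilde\beta_3)$; since this coefficient is a unit mod $p$ and, for $j$ large, $t$ may range over at least $p$ consecutive values keeping $c_2,c_3\ge 0$, I can also force $p\nmid\tilde\beta_1$, i.e. $\mcd{\alpha_1,\tilde\beta_1}=1$.

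The outcome $(\alpha_1,\beta_1)=(p^{\,j},\beta_1)$ has coprime first two components and $\mathrm{rad}(\alpha_1)\mid d_2$, so by the criterion above $[w_1']$ is not a proper power in $A$; every move used was a non-negative addition or a reordering, so the tuple stayed $M$-big, giving item (1). I expect the genuine obstacle to be precisely the radical condition: unlike the torsion-free setting, coprimality of the first two entries does not prevent $[w_1']$ from being a proper power, and one must manufacture a first component supported only on primes of $d_2$ while simultaneously respecting the $M$-bigness constraint (which permits only additions) and the coprimality with the second entry. The mechanism resolving it is the extra freedom created by the third vector through Lemma \ref{mcd-concentrate}, which turns the reachable set of first components into a numerical semigroup containing arbitrarily large prime powers; the remaining points (handling $\alpha_3=0$, and verifying the $t$-range and the choice of the good minor) are the kind of routine bookkeeping already present in Proposition \ref{finding-primitive1}.
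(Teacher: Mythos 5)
Your proof is correct and is essentially the paper's own argument: both fix a prime $p\mid d_2$, use the generation hypothesis to find a $2\times 2$ minor that is a unit mod $p$, use Lemmas \ref{can-reorder} and \ref{mcd-concentrate} with non-negative coefficients (so $M$-bigness is automatic) to make two of the first coordinates coprime while keeping that minor a unit mod $p$, and then manufacture an element whose $\bbZ$-coordinate is a large power of $p$ and whose $\bbZ/d_2\bbZ$-coordinate is a unit mod $p$, from which non-properness follows by the same two-line divisibility argument (the paper's $\alpha_3=p^e$, $\mcd{p,\beta_3}=1$). The only deviations are in bookkeeping: you modify $w_1$ directly, hitting $p^{\,j}$ via Sylvester--Frobenius on the coprime pair $(\alpha_2,\alpha_3)$ and then fixing $\beta_1$ mod $p$ by shifting along the kernel line $(c_2,c_3)\mapsto(c_2+\alpha_3 t,\,c_3-\alpha_2 t)$, whereas the paper modifies $w_3$ via the Chinese remainder theorem together with the multiplicative order of $p$ modulo $\alpha_2$ and then permutes it to the front (and the same implicit sign/zero caveats on the $\alpha_i$, plus the need to preserve the chosen minor mod $p$ while folding --- which the paper handles by first arranging $\alpha_2\not\equiv 0$ mod $p$ and taking $\lambda\equiv 0$ mod $p$ --- are glossed as routine in both write-ups); your explicit radical criterion for proper powers in $\bbZ\oplus\bbZ/d_2\bbZ$ is a clean way of making explicit what the paper only verifies for the specific element it produces.
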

\begin{proof}
Let $[w_i]$ be equal to the vector $(\alpha_i,\beta_i,\gamma_i)$ through the isomorphism $A\cong\bbZ\oplus\bbZ/d_2\bbZ\oplus A'$, for $i=1,\dots,k$ and $\alpha_i\in\bbZ$ and $\beta_i\in\bbZ/d_2\bbZ$ and $\gamma_i\in A'$. Let also $p$ be a prime that divides $d_2$.

Since $[w_1],\dots,[w_k]$ generate $A$, we must have $\alpha_i\beta_j-\alpha_j\beta_i\not\equiv 0\mod p$ for some $i,j\in\{1,\dots,k\}$. By Lemma \ref{can-reorder}, we can reorder the vectors with an arbitrary even permutation. Thus we can assume
$$\alpha_1\beta_2-\alpha_2\beta_1\not\equiv 0\mod p.$$
If $\alpha_2\equiv 0\mod p$ then $\alpha_1\not\equiv 0\mod p$ (since $\alpha_1\beta_2-\alpha_2\beta_1\not\equiv 0\mod p$) and we substitute $w_2$ with $w_2+w_1$. Thus we can assume
$$\alpha_2\not\equiv 0\mod p.$$
By Lemma \ref{mcd-concentrate}, we find $\lambda\ge0$ such that $\mcd{\alpha_1+\lambda\alpha_3,\alpha_2}=\mcd{\alpha_1,\alpha_2,\alpha_3}$. We substitute $w_1$ with $w_1+\lambda w_3$ and thus we can assume $\mcd{\alpha_1,\alpha_2}=\mcd{\alpha_1,\alpha_2,\alpha_3}$. Since $\lambda$ can be chosen modulo $\alpha_2$, which is coprime with $p$, we can also take $\lambda$ to be a multiple of $p$, so that $\alpha_1\beta_2-\alpha_2\beta_1 \mod p$ remains the same. Reiterating the same reasoning, we obtain $\mcd{\alpha_1,\alpha_2}=\mcd{\alpha_1,\dots,\alpha_k}$, which is $1$ since $[w_1],\dots,[w_k]$ generate $A$. Thus we can assume
$$\mcd{\alpha_1,\alpha_2}=1.$$
By the Chinese remainder theorem we find $C\ge0$ such that $\alpha_3+C\alpha_1\equiv 1 \mod \alpha_2$ and $(\alpha_3\beta_2-\alpha_2\beta_3)+C(\alpha_1\beta_2-\alpha_2\beta_1)\not\equiv 0\mod p$; here we are using the facts that $\mcd{\alpha_2,p}=1$ and $\mcd{\alpha_1,\alpha_2}=1$ and $\mcd{\alpha_1\beta_2-\alpha_2\beta_1,p}=1$. Since $\mcd{\alpha_2,p}=1$ we can find integers $e\ge0$ as big as we want and such that $p^e\equiv 1\mod \alpha_2$. Using the fact that $\alpha_3+C\alpha_1\equiv 1 \mod \alpha_2$ we are thus able to find integers $D\ge0$ and $e\ge0$ such that $\alpha_3+C\alpha_1+D\alpha_2=p^e$. We now have that
$$
\begin{array}{l}
\mcd{p,(\alpha_3\beta_2-\alpha_2\beta_3)+C(\alpha_1\beta_2-\alpha_2\beta_1)}=1,\\
\mcd{p,(\alpha_3\beta_2-\alpha_2\beta_3)+C(\alpha_1\beta_2-\alpha_2\beta_1)-\beta_2(\alpha_3+C\alpha_1+D\alpha_2)}=1,\\
\mcd{p,-\alpha_2(\beta_3+C\beta_1+D\beta_2)}=1,\\
\mcd{p,\beta_3+C\beta_1+D\beta_2}=1.
\end{array}
$$
We substitute $w_3$ with $w_3+Cw_1+Dw_2$ and thus we can assume that $\alpha_3=p^e$ and $(p,\beta_3)=1$. In particular $(\alpha_3,\beta_3)=1$. The element $[w_3]$ can be a proper power in $A$: if $[w_3]=d[w]$, then $\alpha_3=p^e$ implies that $d\divides p^e$, and $(p,\beta_3)=1$ implies that $(p,d)=1$; it follows that $d=\pm1$. To conclude, we permute $(w_1,w_2,w_3)$ using Lemma \ref{can-reorder} in order to bring $w_3$ in first position.
\end{proof}

\begin{prop}\label{case-Z}
Let $k\ge 3$ and let $(w_1,\dots,w_k),(w_1',\dots,w_k')$ be $M$-big ordered $k$-tuples such that $A(w_1,\dots,w_k;v_1,\dots,v_h)=A(w_1',\dots,w_k';v_1,\dots,v_h)=A\cong\bbZ$. Then there is a sequence of Nielsen moves and relative Nielsen moves from $(w_1,\dots,w_k)$ to $(w_1',\dots,w_k')$ such that every $k$-tuple that we write along the sequence is $M$-big.
\end{prop}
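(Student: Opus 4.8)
The plan is to reduce to Proposition \ref{positive-vector} by manufacturing an $M$-big vector that is literally shared by the two tuples, which can then play the role of the positive reference that $\gen{v_1,\dots,v_h}$ may fail to provide. First I observe that the abstract hypothesis is automatic: since $A\cong\bbZ$ has minimal number of generators $1$ and $k\ge 3\ge 1+1$, Proposition \ref{Nielsen-equiv} shows that the image tuples $([w_1],\dots,[w_k])$ and $([w_1'],\dots,[w_k'])$ are Nielsen equivalent in $A$. Thus condition \ref{neb2} of Theorem \ref{Nielsen-equiv-3big} holds for free, and the entire content of the statement is to realize a connection by a sequence of $M$-big Nielsen and relative Nielsen moves.

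Next I apply Proposition \ref{finding-common-vector} to pass from $(w_1,\dots,w_k)$ to an $M$-big tuple $(w_1'',\dots,w_k'')$ with $w_1'=d(w_1''-w_2'')+\mu_1 v_1+\dots+\mu_h v_h$ for some $d\in\bbZ$; applying the projection $\pi\colon\gen{v_1,\dots,v_h,w_1,\dots,w_k}\to A\cong\bbZ$ gives $[w_1']=d([w_1'']-[w_2''])$. In the higher-rank cases of the induction one removes the factor $d$ using Proposition \ref{finding-primitive1} or \ref{finding-primitive2}; neither is available here, since $A\cong\bbZ$ has neither a second free summand nor torsion, and this is precisely why the case $A\cong\bbZ$ must be treated on its own. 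Instead I arrange, by adapting the reduction in Proposition \ref{finding-common-vector} (or by an additional buffered manipulation of the first tuple), that $[w_1'']-[w_2'']=[w_1']$, noting that $[w_1']$ is a fixed integer; this forces $d=1$. Then $w_1'-w_1''=-w_2''+\sum_j\mu_j v_j\in\gen{w_2'',\dots,w_k'',v_1,\dots,v_h}$, so Lemma \ref{can-slide} lets me slide $w_1''$ onto $w_1'$ through $M$-big tuples, producing a genuine common vector in the first position.

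Once $w_1''=w_1'$, I treat $w_1'$ as an extra relative vector: it is $M$-big, hence positive, so $\gen{v_1,\dots,v_h,w_1'}$ contains a positive vector and the reduced group $A''=A/\gen{[w_1']}$ is a quotient of $\bbZ$, with minimal number of generators at most $1$. Since $k-1\ge 2$, Proposition \ref{Nielsen-equiv} again makes the reduced image tuples Nielsen equivalent in $A''$, and Proposition \ref{positive-vector} then connects $(w_2'',\dots,w_k'')$ to $(w_2',\dots,w_k')$ by $M$-big Nielsen moves and moves relative to $v_1,\dots,v_h,w_1'$, all while keeping $w_1'$ fixed. Lifting these back to the full $k$-tuple (moves relative to $w_1'$ are honest Nielsen moves of the whole tuple) completes the connection in the correct order, so no reordering issue arises.

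The main obstacle is the step forcing $d=1$, that is, realizing the image identity $[w_1'']-[w_2'']=[w_1']$ while every vector stays $M$-big. When $\gen{v_1,\dots,v_h}$ contains no positive vector one cannot lift an arbitrary intermediate image configuration to an $M$-big vector, so the manipulation must be carried out as a Euclidean-type reduction on the images that never drives any component below $M$; here the hypothesis $k\ge 3$ is essential, since the third vector (itself $M$-big, hence positive) acts as a reservoir in the spirit of Lemma \ref{can-slide}, absorbing the subtractions that would otherwise dip below the threshold. For $k=2$ this room disappears and a genuine invariant obstructs the connection, which is the exceptional limit-angle phenomenon treated separately; the careful bookkeeping that keeps the whole reduction $M$-big is the technical heart of the argument.
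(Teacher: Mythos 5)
Your outline is right to isolate why $A\cong\bbZ$ is exceptional (neither Proposition \ref{finding-primitive1} nor \ref{finding-primitive2} applies, so the factor $d$ from Proposition \ref{finding-common-vector} cannot be killed by making $[w_1']$ a non-proper-power), and your closing step --- once a common $M$-big vector $w_1'$ is in hand, adjoin it to the relative vectors, note that $A/\gen{[w_1']}$ is cyclic so Proposition \ref{Nielsen-equiv} and Proposition \ref{positive-vector} finish --- is sound. But the proof has a genuine gap exactly at its central step: you never prove that one can arrange $[w_1'']-[w_2'']=[w_1']$ (equivalently force $d=1$) through $M$-big tuples; you only assert it ``by adapting the reduction'' or ``by an additional buffered manipulation,'' and then concede in the last paragraph that this bookkeeping is ``the technical heart of the argument.'' This cannot be waved through. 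When $\gen{v_1,\dots,v_h}$ contains no positive vector, you cannot lift an arbitrary image configuration to an $M$-big vector (that lifting trick is available only inside Proposition \ref{positive-vector}); the only manipulations that are automatically safe are additions of non-negative combinations of $M$-big vectors. Moreover, the integer $d$ produced by Proposition \ref{finding-common-vector} is the gcd of a chosen coefficient representation of $w_1'$, and modifying $(w_1'',w_2'')$ after the fact destroys the relation $w_1'=d(w_1''-w_2'')+\sum\mu_jv_j$, so the constraint must be built into the Euclidean reduction itself --- precisely the argument that is missing.

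For comparison, the paper's proof avoids this entirely and is more elementary. It first observes a sign dichotomy (which your write-up also skips): if among the integers $\alpha_i=[w_i],\alpha_i'=[w_i']$ there are both positive and negative values, then a positive combination of two $M$-big vectors lies in $\gen{v_1,\dots,v_h}$, producing a strictly positive vector there, and Proposition \ref{positive-vector} concludes. Otherwise all images are positive integers, and the paper makes the two image tuples \emph{literally equal} ($\alpha_i=\alpha_i'$ for all $i$) using only operations of the form $w_i\mapsto w_i+\lambda w_j$ with $\lambda\ge0$: Lemma \ref{mcd-concentrate} arranges $\mcd{\alpha_1,\alpha_2}=1$, and then, since every sufficiently large integer is a non-negative combination of two coprime positive integers, each $\alpha_i$ and $\alpha_i'$ can be steered to a common large value one index at a time. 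Since only non-negative multiples of $M$-big vectors are ever added, $M$-bigness is preserved for free, and Lemma \ref{can-slide} then replaces each $w_i$ by $w_i'$ (their difference lies in $\gen{v_1,\dots,v_h}$). If you want to salvage your route, the honest fix is to run this same machinery (Lemma \ref{mcd-concentrate} plus Frobenius-type representability, additions only) to realize $\alpha_1''=\alpha_2''+\alpha_1'$ --- but at that point you have reproduced the paper's argument, and the detour through Proposition \ref{finding-common-vector} and the quotient $A/\gen{[w_1']}$ is redundant.
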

\begin{proof}
Let $[w_i],[w_i']$ be equal to $\alpha_i,\alpha_i'$ through the isomorphism $A\cong\bbZ$, for $i=1,\dots,k$ and $\alpha_i,\alpha_i'\in\bbZ$. If among $\alpha_i,\alpha_i'$ there is at least one integer $>0$ and at least one integer $<0$, then $\gen{v_1,\dots,v_h}$ has to contain a vector whose components are all strictly positive, and thus we are done by Proposition \ref{positive-vector}. Thus we assume $\alpha_i,\alpha_i'>0$ for all $i=1,\dots,k$.

By Lemma \ref{mcd-concentrate}, we find $\lambda\ge0$ such that $\mcd{\alpha_1+\lambda\alpha_3,\alpha_2}=\mcd{\alpha_1,\alpha_2,\alpha_3}$. We substitute $w_1$ with $w_1+\lambda w_3$ and thus we can assume $\mcd{\alpha_1,\alpha_2}=\mcd{\alpha_1,\alpha_2,\alpha_3}$. By reiterating the same reasoning, we can assume $\mcd{\alpha_1,\alpha_2}=\mcd{\alpha_1,\dots,\alpha_k}=1$. Now, by substituting $w_3$ with $w_3+\lambda_1w_1+\lambda_2w_2$ for $\lambda_1,\lambda_2\ge0$, we can set $\alpha_3$ to be equal to any sufficiently large natural number. Similarly, we can assume $\mcd{\alpha_1',\alpha_2'}=1$, and we are able to set $\alpha_3'$ to be any sufficiently large natural number. We choose to set $\alpha_3=\alpha_3'$ and such that $\mcd{\alpha_1,\alpha_3}=\mcd{\alpha_1',\alpha_3}=1$. Now, by substituting $w_2$ with $w_2+\mu_1w_1+\mu_3w_3$ for $\mu_1,\mu_3\ge0$, we can set $\alpha_2$ to be any sufficiently large natural number. Similarly, we are able to set $\alpha_2'$ to be any big enough natural number. We choose to set $\alpha_2=\alpha_2'$ and such that $\mcd{\alpha_2,\alpha_3}=1$. Reasoning in the same way, and keeping  $w_2,w_3,w_2',w_3'$ fixed, we can set $\alpha_1=\alpha_1'$ and $\alpha_i=\alpha_i'$ for $i\ge4$.

To conclude, for $i=1,\dots,k$, we observe that $\alpha_i=\alpha_i'$ implies that $w_i'=w_i+\mu_1v_1+\dots+\mu_hv_h$ for some $\mu_1,\dots,\mu_h\in\bbZ$ and thus by Lemma \ref{can-slide}, we can obtain $w_i=w_i'$ by  Nielsen moves and relative Nielsen moves. The statement follows.
\end{proof}

\begin{proof}[Proof of Theorem \ref{Nielsen-equiv-3big}]
\ref{neb1} $\Rar$ \ref{neb2}. Trivial.

\ref{neb2} $\Rar$ \ref{neb1}. If $A$ is a finite abelian group, then we must have $d[w_1]=0$ in $A$ for some $d\ge 1$, and this implies that $\gen{v_1,\dots,v_h}$ contains the vector $dw_1$, which has all the components strictly positive; the conclusion follows from Proposition \ref{positive-vector}. If $A$ is isomorphic to $\bbZ$, then we are done by Proposition \ref{case-Z}.

Otherwise we write $A\cong\bbZ\oplus\bbZ/d_2\bbZ\oplus\ldots \oplus\bbZ/d_m\bbZ$ for integers $m\ge2$ and $d_2,\dots,d_m\ge0$ with $1\not=d_m\divides d_{m-1}\divides\dots\divides d_2$. By either Proposition \ref{finding-primitive1} or Proposition \ref{finding-primitive2}, we are able to change $(w_1',\dots,w_k')$ by means of Nielsen moves and relative Nielsen moves in such a way that $[w_1']$ is not a proper power in $A$, and that $[w_1']$ can be represented by a vector whose first two components are coprime. We now apply Proposition \ref{finding-common-vector} and we apply a sequence of Nielsen moves and relative Nielsen moves to $(w_1,\dots,w_k)$, in order to obtain that $w_1'=w_1-w_2+\mu_1v_1+\dots+\mu_hv_h$. By Proposition \ref{can-slide} we can easily change $w_1$ in such a way that $w_1=w_1'$.

If $m\ge3$ then we can apply Lemma \ref{Nielsen-equiv-rel2}, since $[w_1']$ can be represented by a vector whose first two components are coprime, and we obtain that $([w_2],\dots,[w_k]),([w_2'],\dots,[w_k'])$ are Nielsen equivalent $(k-1)$-tuples of generators for $A/\gen{[w_1]}$. If $m=2$ then $[w_1']$ can be represented by a vector whose components are coprime, and thus we can apply Lemma \ref{Nielsen-equiv-rel1} to obtain that $([w_2],\dots,[w_k]),([w_2'],\dots,[w_k'])$ are Nielsen equivalent $(k-1)$-tuples of generators for $A/\gen{[w_1]}$. In both cases, the conclusion follows from Proposition \ref{positive-vector}.
\end{proof}

\end{document}